\theoremstyle{plain}
\newtheorem{thm}{Theorem}[section]
\newtheorem{lem}[thm]{Lemma}
\newtheorem{cor}[thm]{Corollary}
\newtheorem{conj}[thm]{Conjecture}
\theoremstyle{definition} 
\newtheorem{defi}[thm]{Definition}
\newtheorem{rmk}[thm]{Remark}
\newtheorem*{ack}{Acknowledgments}
\begin{document}

\subjclass[2020]{Primary 14E30; Secondary 14B05}

\keywords{generalized pair, minimal log discrepancy, ACC conjecture, LSC conjecture}

\title[Generalized mld]{On generalized minimal log discrepancy}
\author{Weichung Chen}
\author{Yoshinori Gongyo}
\author{Yusuke Nakamura}

\address{Graduate School of Mathematical Sciences, The University of Tokyo, 3-8-1 
Komaba, Meguro-ku, 
Tokyo, 153-8914, Japan.}
\email{weichung@g.ecc.u-tokyo.ac.jp}

\address{Graduate School of Mathematical Sciences, The University of Tokyo, 3-8-1 
Komaba, Meguro-ku, 
Tokyo, 153-8914, Japan.}
\email{gongyo@ms.u-tokyo.ac.jp}

\address{Graduate School of Mathematical Sciences, The University of Tokyo, 3-8-1 
Komaba, Meguro-ku, 
Tokyo, 153-8914, Japan.}
\email{nakamura@ms.u-tokyo.ac.jp}

\begin{abstract}
We discuss the ACC conjecture and the LSC conjecture for minimal log discrepancies of generalized pairs. 
We prove that some known results on these two conjectures for usual pairs are still valid for generalized pairs. 
We also discuss the theory of complements for generalized pairs. 
\end{abstract}

\maketitle

\tableofcontents

\section{Introduction}
Generalized pairs are introduced by Birkar and Zhang \cite{BZ16}, and they are generalizations of the usual log pairs. 
Generalized pairs play an essential role in Birkar's papers \cite{Bir19a, Bir16b}. 
In these papers, Birkar proves the BAB conjecture, which states that a certain class of singular Fano varieties is bounded. 
Generalized pairs also have many applications to important topics in birational geometry
(for example, the Iitaka conjecture \cite{BZ16}, and the termination problem and the existence of minimal models \cite{Mor, HMo, Mor2}).
We refer the reader to his expository article \cite{Bir20} for the motivation and related problems. 

In this article, we focus on the singularity aspect of generalized pairs, 
in particular, their minimal log discrepancies. 
The minimal log discrepancy is an invariant of singularities introduced by Shokurov (for usual log pairs) 
in order to attack the termination of flips. 
Shokurov proved that two conjectures on the minimal log discrepancies, 
the LSC (lower semi-continuity) conjecture and 
the ACC (ascending chain condition) conjecture, 
imply the conjecture of termination of flips \cite{Sho04}. 

The ACC conjecture predicts that 
the set of minimal log discrepancies of fixed dimension 
with the suitable restrictions on the coefficients of the boundary divisors satisfies the ascending chain condition. 
\begin{conj}[{ACC conjecture, \cite{Sho96}}]\label{conj:ACCu}
Let $d \in \mathbb{Z} _{>0}$ and let $I \subset [0,1]$ be a subset that satisfies the DCC. 
Then the set $A(d,I)$ defined by
\[
A(d,I) := 
\left\{ \operatorname{mld}_x (X, \Delta) \ \middle | 
\begin{array}{l}
\text{$(X, \Delta)$ is a log pair with}\\
\text{$\dim X = d$, $\Delta \in I$ and $x \in |X|_{0}$.}
\end{array}
\right \}
\]
satisfies the ACC, where $|X|_{0}$ denotes the set of all closed points of $X$. 
\end{conj}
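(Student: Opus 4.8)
The plan is to argue by contradiction, following the strategy Hacon--McKernan--Xu used for the ACC of log canonical thresholds, but upgraded so as to control the actual value of the minimal log discrepancy and not merely its vanishing. Suppose $A(d,I)$ fails the ACC. Then there is a strictly increasing sequence $a_i = \operatorname{mld}_{x_i}(X_i,\Delta_i)$ with $\dim X_i = d$, $\Delta_i \in I$, and $x_i \in |X_i|_{\rm cl}$. Since $\operatorname{mld}_{x_i}(X_i,\Delta_i) \le d$ with equality only when $(X_i,\Delta_i)$ is smooth at $x_i$ with $x_i \notin \operatorname{Supp}\Delta_i$ (a value that is isolated at the top), and since values $<0$ are set to $-\infty$, after truncation we may assume $0 \le a_i < d$ and $a_i \nearrow a \le d$. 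For each $i$, choose a prime divisor $E_i$ over $X_i$ with centre $x_i$ computing the minimal log discrepancy, i.e. $a(E_i; X_i,\Delta_i) = a_i$; running a suitable (relative) minimal model program one may realize $E_i$ as the unique exceptional divisor of a birational morphism $Y_i \to X_i$, on which $Y_i$ is of Fano type over $X_i$.

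The crucial step is then to prove that the family of triples $(Y_i, E_i, \Delta_{Y_i})$, where $\Delta_{Y_i}$ is the strict transform of $\Delta_i$, is \emph{birationally bounded}: there is a scheme of finite type and a bounded family whose fibres dominate these triples birationally. This should follow from the DCC hypothesis on $I$ together with boundedness results for Fano-type varieties in the spirit of Birkar's theorems and boundedness of complements, exactly as in the lct case, except that one must now also track the coefficient $1 - a_i$ of $E_i$ in the relevant boundary on $Y_i$, which is what encodes $a_i$. Once birational boundedness is available, after passing to a subsequence I would spread the data out over a single irreducible base $T$ of finite type, so that the $(X_i,\Delta_i,x_i)$ become fibres of one family $\mathcal X \to T$, the $E_i$ become traces of a single $b$-divisor $\mathcal E$ over $\mathcal X$, and $t \mapsto a(\mathcal E; \mathcal X_t, \Delta_t)$ is a constructible function on $T$; hence it takes only finitely many values on $T$ and on its locally closed strata, so the $a_i$ attain only finitely many values, contradicting strict monotonicity.

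The main obstacle is precisely the birational boundedness step. Unlike for log canonical thresholds, where one need only bound a divisor witnessing the threshold and is free to enlarge a coefficient until the pair becomes log canonical, here the divisor must be the \emph{minimizer} of the log discrepancy and there is no such monotonicity trick reducing the problem to a bounded family; this is why Conjecture~\ref{conj:ACCu} is open for $d \ge 3$ and is known only in low dimension, for bounded singularities, in the toric case, and in exceptional or canonical-type situations. For generalized pairs the same outline applies verbatim once one has the generalized analogues of the MMP, of boundedness of complements (discussed later in this paper), and of boundedness of generalized Fano-type varieties, with the extra $b$-divisor datum $M$ absorbed into the boundary on a sufficiently high model and checked to satisfy the requisite DCC; thus what one can actually establish unconditionally are the generalizations of the known partial cases rather than the conjecture in full.
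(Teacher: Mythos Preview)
The statement you are attempting to prove is Conjecture~\ref{conj:ACCu}, which the paper records as an \emph{open conjecture} and does not prove. There is no ``paper's own proof'' to compare against: the paper only establishes special cases (surfaces in Theorem~\ref{thm:ACC_2dim}, fixed Gorenstein index in Theorem~\ref{thm:ACCfixed}, canonical threefolds in Corollary~\ref{cor:ACC3dim}), and each of these uses arguments specific to those hypotheses rather than the global HMX-style strategy you sketch.

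Your outline is a heuristic, and you correctly identify the gap yourself: the birational boundedness of the extractions $(Y_i,E_i,\Delta_{Y_i})$ is exactly what is not known. In the lct setting one forces a coefficient to $1$, which puts the pair on the boundary of the lc region and gives the numerical leverage needed for boundedness; for an arbitrary mld value $a_i$ there is no analogous constraint on $E_i$, and no known mechanism produces a bounded family. Two further technical issues deserve mention. First, the extraction of $E_i$ as the unique exceptional divisor (as in Theorem~\ref{thm:extraction}) requires $a_{E_i}(X_i,\Delta_i)\le 1$, which fails once $a_i>1$; the interesting range $[1,d]$ would need separate treatment. Second, even granting birational boundedness, the spreading-out step is more delicate than you indicate: the coefficients of $\Delta_i$ lie in a DCC set rather than a finite one, so the $\Delta_i$ do not sit in a single algebraic family without further reduction, and the constructibility of $t\mapsto a(\mathcal E;\mathcal X_t,\Delta_t)$ for a fixed $b$-divisor $\mathcal E$ does not by itself control the \emph{minimum} over all divisors unless one already knows the computing divisor is uniform in the family.
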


\noindent
The ACC conjecture is proved when $d \le 2$ by Alexeev \cite{Ale93} and Shokurov \cite{Sho}. 
In \cite{Kaw15, Kaw14}, Kawakita proves the ACC conjecture on the interval $[1,3]$ for three-dimensional smooth varieties, 
and the discreteness of the minimal log discrepancies for a fixed variety. 
In \cite{Nak16b}, the third author generalizes the discreteness result by Kawakita to the varieties with fixed Gorenstein index and 
proves the ACC conjecture for canonical three-folds when $I$ is a finite set. 
We refer the reader to \cite{Amb06, MN18, Kaw, HLS, HanLuo, Mor21, Mor3} for other developments related to the ACC conjecture.

The LSC conjecture predicts that the minimal log discrepancies satisfy the lower semi-continuity. 
\begin{conj}[{LSC conjecture, \cite{Amb99}}]\label{conj:LSCu}
Let $(X, \Delta)$ be a log pair. 
Then the function 
\[
m: |X|_{0} \to \mathbb{R} \cup \{ - \infty \}; \quad x \mapsto \operatorname{mld}_{x}(X, \Delta)
\]
is lower semi-continuous, where $|X|_{0}$ denotes the set of all closed points of $X$ with the Zariski topology.
\end{conj}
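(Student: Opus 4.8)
The plan is to recast lower semi-continuity as a closedness statement, reduce to the log canonical case, and then attack the core comparison through the theory of arc spaces, recording the situations in which this succeeds; since the conjecture is open in full generality, the realistic goal is to isolate the obstruction. Concretely, $m$ is lower semi-continuous precisely when $\{x\in|X|_{\mathrm{cl}}\mid\operatorname{mld}_x(X,\Delta)\le c\}$ is closed for every $c\in\mathbb R$. The value $-\infty$ is taken exactly on the locus where $(X,\Delta)$ fails to be log canonical, which is closed, so one discards it and assumes $(X,\Delta)$ log canonical and $m$ real-valued. By Noetherian induction the conjecture then reduces to the following: for every irreducible closed subvariety $Z\subseteq X$, the restriction of $m$ to some dense open subset of $Z$ is constant, say with value $m_Z$, and $m(x)\le m_Z$ for every closed point $x\in Z$. (This implication is routine: closing up the locus $\{m\le c\}$ and applying these two facts to each of its irreducible components puts every point of the closure back into the locus.) Thus everything comes down to bounding the minimal log discrepancy at a special closed point of $Z$ by its value at a general closed point.

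I would first treat $X$ smooth, where the arc space is available. Writing $\Delta=\sum_i a_iD_i$, Musta\c{t}\u{a}'s formula expresses the minimal log discrepancy of $(X,\Delta)$ over a subvariety as an infimum of the shape
\[
\inf_C\Bigl(\operatorname{codim}(C,X_\infty)-\sum_i a_i\,\operatorname{ord}_{D_i}(C)\Bigr),
\]
taken over irreducible cylinders $C$ in the arc space $X_\infty$ whose associated point of $X$ lies in the prescribed subvariety. The decisive input is geometric: as the base point ranges over an irreducible $Z$, the competing cylinders fit into a family over $Z$, the codimension of a constructible subset of a jet scheme $X_m$ is lower semi-continuous along the base, and restriction to the fibre over a general versus a special point of $Z$ shifts these codimensions in the predicted way; passing to the limit $m\to\infty$ one obtains $m(x)\le m_Z$ for special $x\in Z$. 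This is in substance the argument of Ein--Musta\c{t}\u{a}--Yasuda, settling the conjecture for smooth $X$; it extends to the toric case by work of Ambro \cite{Amb06}, and to locally complete intersection $X$ by Ein--Musta\c{t}\u{a} via the precise inversion of adjunction comparing $X_\infty$ with the arc space of an ambient smooth variety. The surface case $\dim X\le 2$ is classical (cf.\ \cite{Ale93, Sho}), minimal log discrepancies there being computable from an explicit log resolution.

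The main obstacle is exactly the step to an arbitrary singular $X$. There is no inversion of adjunction statement strong enough to reduce to the smooth case, and although the work of de Fernex--Docampo supplies a workable dimension theory for arc spaces of singular varieties, the bare semicontinuity of codimensions of contact cylinders in that setting is not known to yield the sharp comparison $m(x)\le m_Z$. An honest attack must therefore either prove inversion of adjunction in full generality---itself a deep open problem, entangled with the LSC and ACC conjectures---or produce a substitute for Musta\c{t}\u{a}'s jet-scheme formula valid over a singular base. For the purposes of this paper the reachable statements are thus the cases just listed, and their generalized-pair analogues follow by the same route once the generalized log discrepancy of $(X,B,M)$ is rewritten in terms of an honest boundary on a fixed higher model $X'\to X$ on which the nef part descends.
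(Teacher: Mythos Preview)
The statement you were handed is Conjecture~\ref{conj:LSCu}, which the paper records as an \emph{open conjecture}; there is no proof of it in the paper to compare your attempt against. You correctly recognise this and give instead a survey of the known cases and the obstruction, which is the honest response.

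Two corrections to your survey. First, the low-dimensional case is $\dim X\le 3$ (not just surfaces) and is due to Ambro~\cite{Amb99}, who argues directly with a log resolution rather than with arc spaces; the references \cite{Ale93,Sho} you cite pertain to the ACC conjecture, not LSC. Second, your closing sentence about the generalized-pair analogue is not how the paper proceeds, and as stated it does not work: on a higher model $X'$ where $M'$ is nef one has $a_E(X,\Delta+M)=a_E(X',B')$, but the divisor $B'$ determined by $K_{X'}+B'+M'=\varphi^*(K_X+\Delta+M)$ is typically not effective, so $(X',B')$ is not a log pair and you cannot simply invoke the usual LSC on $X'$; moreover the relevant centres on $X'$ are constrained by $c_X(E)$, not $c_{X'}(E)$. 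The paper's actual reduction (Theorem~\ref{thm:LSC_reduction}) stays on $X$: it approximates $M$ by a sequence of effective divisors $B_m$ on $X$ (Lemma~\ref{lem:mld_limit}) so that $\operatorname{mld}_\xi(X,\Delta+B_m)\le\operatorname{mld}_\xi(X,\Delta+M)$ everywhere with equality in the limit at the chosen point, and combines this with constructibility of the mld function (Lemma~\ref{lem:const}) to transfer LSC from the usual pairs $(X,\Delta+B_m)$ to the generalized pair.
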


\noindent
The LSC conjecture is proposed by Ambro and proved when $d \le 3$ \cite{Amb99}. 
Ein, Musta{\c{t}}{\v{a}} and Yasuda in \cite{EMY03} prove the conjecture when $X$ is smooth using the jet scheme theory, and 
Ein and Musta{\c{t}}{\v{a}} in \cite{EM04} generalize the argument to the case where $X$ is a locally complete intersection variety. 
In \cite{Nak16a}, the third author proves the conjecture when $X$ has quotient singularities, 
more generally when $X$ has a crepant resolution in the category of the Deligne-Mumford stacks. 
In \cite{NS, NS2}, the third author and Shibata prove the conjecture when $X$ has hyperquotient singularities.

A usual log pair $(X, \Delta)$ consists of a normal variety $X$ and an $\mathbb{R}$-divisor $\Delta$. 
A generalized pair $(X, \Delta + M)$ additionally has a b-divisor $M$ over $X$. 
More precisely, $M$ is the push-forward of a nef Cartier $\mathbb{R}$-divisor on some birational model $X'$ (see Definition \ref{defi:gp}). 
Then we can extend the definition of the minimal log discrepancy to generalized pairs, and 
we can also expect the ACC conjecture and the LSC conjecture in this setting. 

\begin{conj}[ACC conjecture for generalized pair]\label{conj:ACCintro}
Let $d \in \mathbb{Z} _{>0}$ and let $I \subset [0, + \infty)$ be a subset that satisfies the DCC. 
Then the set $A_{\textup{gen}}(d,I)$ defined by
\[
A_{\textup{gen}}(d,I) := 
\left\{ \operatorname{mld}_x (X, B + M) \ \middle | 
\begin{array}{l}
\text{$(X, B +M)/Z$ is a generalized pair with}\\
\text{$\dim X = d$, $B \in I$, $M \in I$ and $x \in |X|_{0}$.}
\end{array}
\right \}
\]
satisfies the ACC. See Definition \ref{defi:defs} (1) for the meaning of $M \in I$
\end{conj}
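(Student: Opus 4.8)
Since Conjecture~\ref{conj:ACCintro} specializes to Conjecture~\ref{conj:ACCu} by taking $M=0$, and the latter is open for $d\ge 3$, the plan is to establish the conjecture completely for $d\le 2$ and to indicate how the higher-dimensional cases known for usual pairs should extend. Throughout one argues by contradiction: assume there is a strictly increasing sequence $\operatorname{mld}_{x_n}(X_n,\Delta_n+M_n)$, where $(X_n,\Delta_n+M_n)/Z_n$ is a $d$-dimensional generalized pair with $\Delta_n\in I$, $M_n\in I$ (in the sense of Definition~\ref{defi:defs}(1)), and $x_n$ a closed point. Shrinking $X_n$ around $x_n$ and discarding the terms equal to $-\infty$, we may assume each $(X_n,\Delta_n+M_n)$ is generalized log canonical near $x_n$; then the underlying usual pair $(X_n,\Delta_n)$ is also log canonical near $x_n$, and since $I\subset[0,+\infty)$ the trace of $M_n$ on the defining model is effective, so $\operatorname{mld}_{x_n}(X_n,\Delta_n+M_n)\le\operatorname{mld}_{x_n}(X_n,\Delta_n)\le d$ and the sequence is bounded above. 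The single combinatorial input used everywhere is that, since $I$ satisfies the DCC, so does the set $I^{+}$ of all finite sums $\sum_j c_j m_j$ with $c_j\in I$ and $m_j\in\mathbb{Z}_{\ge 0}$; using the structure of $M_n$ from Definition~\ref{defi:defs}(1) together with the fact that pulling back a Cartier divisor multiplies prime coefficients by nonnegative integers, every number $\operatorname{mult}_E M_n$, for $E$ a divisor over $X_n$, lies in $I^{+}$.

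For $d\le 2$ the first step is to replace $X_n$ by its minimal resolution $g_n\colon\widetilde X_n\to X_n$ and $\Delta_n$ by the crepant pull-back $\widetilde\Delta_n$ determined by $K_{\widetilde X_n}+\widetilde\Delta_n=g_n^{*}(K_{X_n}+\Delta_n)$. By the classical analysis of discrepancies of surface singularities (adjunction on the minimal resolution) the coefficients of $\widetilde\Delta_n$ lie in a DCC set $I'$ depending only on $I$, and by crepant birational invariance together with the lower semicontinuity of $\operatorname{mld}$ in dimension two \cite{Amb99} we have $\operatorname{mld}_{x_n}(X_n,\Delta_n+M_n)=\min_{\widetilde x}\operatorname{mld}_{\widetilde x}(\widetilde X_n,\widetilde\Delta_n+M_n)$ over the finitely many relevant closed points $\widetilde x\in g_n^{-1}(x_n)$. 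Hence we may assume $X_n$ is a smooth surface, $\Delta_n\in I'$, and $x_n$ is a closed point. The second step is to carry the two-dimensional ACC machinery of Alexeev and Shokurov \cite{Ale93, Sho} through in the generalized setting: one analyses the finitely many divisorial valuations computing $\operatorname{mld}_{x_n}(X_n,\Delta_n+M_n)$ via the dual graph of a suitable model carrying the nef part, writes each relevant log discrepancy as $a(E;X_n,\Delta_n)-\operatorname{mult}_E M_n$, and shows (this is the crux) that the possible values form a set of the shape $\mathcal A-I^{+}$ with $\mathcal A$ an ACC set bounded above coming from the usual-pair analysis. Since $\mathcal A-I^{+}$ is again an ACC set — an increasing sequence forces, after passing to a subsequence making the $\mathcal A$-part non-increasing, the $I^{+}$-part to be strictly decreasing, contradicting the DCC — this contradicts the assumed strictly increasing sequence.

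The main obstacle is precisely this second step, namely pushing the surface analysis through in the presence of the nef part. On a surface neither the boundary $\Delta_n$ nor the nef part $M_n$ has bounded complexity — both can require arbitrarily many blow-ups, so one cannot reduce to a finite list of models — and one must instead show that the interaction of $\widetilde\Delta_n$ with $M_n$ along the exceptional configuration over $x_n$ still produces coefficients confined to DCC sets depending only on $I$, the decisive new point being that the nef part contributes only coefficients from $I^{+}$. Once this is in place the conclusion is formal.

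For $d\ge 3$ the same two-step structure is the natural route, but both steps rely on inputs that are themselves open or substantially harder. One needs Conjecture~\ref{conj:ACCu} for usual pairs in dimension $d$ (for an enlarged DCC coefficient set), and, since in dimension $\ge 3$ the number of valuations computing $\operatorname{mld}_x$ is not bounded a priori, a finite resolution no longer suffices; the substitute is the circle of ideas of Hacon--McKernan--Xu on the ACC for log canonical thresholds and on boundedness of complements, now in the generalized framework of Birkar--Zhang \cite{BZ16} (see also \cite{HLS, HanLuo}). Keeping $\operatorname{mult}_E M$, over the relevant $E$, inside a fixed DCC set remains the principal difficulty, and is what forces the argument through boundedness results rather than a direct discrepancy computation. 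In the special case where $X_n$ is smooth of dimension three one expects instead to run the explicit valuation-theoretic arguments of Kawakita \cite{Kaw15, Kaw14}, again accounting for the nef part through $I^{+}$.
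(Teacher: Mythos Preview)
The statement is a conjecture that the paper does not prove in general; the paper establishes only the case $d\le 2$ (Theorem~\ref{thm:ACC_2dim}) and partial results for $d=3$. Your proposal acknowledges this, so the meaningful comparison is between your surface argument and Section~\ref{section:ACC2dim}.

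Your surface argument has a genuine gap at the reduction step. You define $\widetilde\Delta_n$ by $K_{\widetilde X_n}+\widetilde\Delta_n=g_n^{*}(K_{X_n}+\Delta_n)$, pulling back only the usual boundary, and then assert $\operatorname{mld}_{x_n}(X_n,\Delta_n+M_n)=\min_{\widetilde x}\operatorname{mld}_{\widetilde x}(\widetilde X_n,\widetilde\Delta_n+M_n)$. But the generalized pair $(\widetilde X_n,\widetilde\Delta_n+M_n)$ (same nef b-divisor, with trace $M_{\widetilde X_n}$ on $\widetilde X_n$) is \emph{not} crepant to $(X_n,\Delta_n+M_n)$: one has $g_n^{*}(K_{X_n}+\Delta_n+M_n)=K_{\widetilde X_n}+\widetilde\Delta_n+g_n^{*}M_n$, whereas the generalized log canonical divisor on $\widetilde X_n$ is $K_{\widetilde X_n}+\widetilde\Delta_n+M_{\widetilde X_n}$, and $g_n^{*}M_n\neq M_{\widetilde X_n}$ in general. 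The correct crepant boundary on $\widetilde X_n$ has exceptional coefficients $1-a_E(X_n,\Delta_n+M_n)$, and asserting that \emph{these} lie in a DCC set depending only on $I$ is circular. Your $I^{+}$ bookkeeping for $\operatorname{mult}_E M_n$ is likewise only valid when the push-forwards $M_{n,i}$ are Cartier on the ambient variety; on the singular $X_n$ they are merely $\mathbb{Q}$-Cartier, so the contributions $\operatorname{ord}_E(\varphi^{*}M_{n,i}-M'_{n,i})$ are only rational and the decomposition $a_E(X_n,\Delta_n+M_n)=a_E(X_n,\Delta_n)-\operatorname{mult}_E M_n$ with $\operatorname{mult}_E M_n\in I^{+}$ fails.

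The paper's proof in Section~\ref{section:ACC2dim} takes an entirely different route and never reduces to usual pairs or to a smooth ambient surface. It works directly with the generalized pair on a partial resolution $f\colon Y\to X$ (Lemma~\ref{lem:Y}) chosen so that all $f$-exceptional log discrepancies are at most $1$, and uses only the inequality $M_Y\cdot E\ge 0$ coming from the nefness of $M'$ to obtain convexity and self-intersection bounds along the dual graph (Lemma~\ref{lem:mono}). The range $[1,2]$ is handled by a direct multiplicity computation at smooth points (Lemmas~\ref{lem:smooth_surf}, \ref{lem:highmld}, \ref{lem:acc>1}). For $\epsilon\le\operatorname{mld}<1$ one first proves ACC when the number of exceptional curves over $x$ is bounded, via generalized adjunction to the extracted computing divisor (Lemma~\ref{lem:n_bounded}); the unbounded case is then eliminated by a dual-graph argument (Lemmas~\ref{lem:all-2} and \ref{lem:I}) which locates a long chain of $(-2)$-curves and derives a contradiction. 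The nef part is never separated from the boundary; its only role is through the intersection inequalities.
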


\begin{conj}[LSC conjecture for generalized pair]\label{conj:LSCintro}
Let $(X, B+M)/Z$ be a generalized pair. 
Then the function 
\[
m: |X|_{0} \to \mathbb{R} \cup \{ - \infty \}; \quad x \mapsto \operatorname{mld}_{x}(X, B+M)
\]
is lower semi-continuous.
\end{conj}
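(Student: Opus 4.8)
Since Conjecture~\ref{conj:LSCintro} contains the still-open Conjecture~\ref{conj:LSCu}, the realistic goal is to establish it under exactly those hypotheses on $X$ for which Conjecture~\ref{conj:LSCu} is presently known: $X$ smooth \cite{EMY03}, $X$ locally complete intersection \cite{EM04}, $X$ with quotient or hyperquotient singularities \cite{Nak16a,NS,NS2}, or (unconditionally) $\dim X\le 3$ \cite{Amb99}. In every case the plan is to reduce the generalized statement to the usual one by moving the nef part onto a single birational model and carrying it through the known argument as one extra piece of linear data.

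\emph{Step 1: pass to a model where $\mathbf{M}$ is an honest divisor.} Choose a projective birational morphism $\pi\colon X'\to X$ dominating the model on which $\mathbf{M}$ descends, so that $M':=\mathbf{M}_{X'}$ is a nef (over $Z$, hence over $X$) $\mathbb{R}$-Cartier $\mathbb{R}$-divisor with $\pi^*(K_X+\Delta+M)=K_{X'}+\Delta'+M'$; we may take $\pi$ to be in addition a log resolution of $(X,\Delta)$. Then for every divisorial valuation $v$ centred at $\overline{\{x\}}$ one has
\[
a_v(X,\Delta+M)=A_X(v)-v(\Delta)-v(M'),
\]
where $A_X(v)$ is the log discrepancy of $v$ over $X$ and $v(M')$ is read off on any model dominating $X'$. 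Thus $\operatorname{mld}_x(X,\Delta+M)$ is obtained from the corresponding usual-pair quantity attached to $(X,\Delta)$ by subtracting, valuation by valuation, the linear functional $v\mapsto v(M')$, and the coefficient hypothesis ``$M\in I$'' of Conjecture~\ref{conj:ACCintro} plays no role here (it is needed only for the ACC statement).

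\emph{Step 2: feed the extra term into the known proof.} In each of the settings above $\operatorname{mld}_x(X,\Delta)$ is computed as an infimum of affine-linear expressions in discrete parameters whose coefficients depend lower-semicontinuously on $x$: codimensions of contact loci in the jet schemes $J_m(X)$ intersected with order conditions for $\Delta$ (Ein--Musta{\c{t}}{\v{a}}--Yasuda and Ein--Musta{\c{t}}{\v{a}}), the same invariants on a crepant Deligne--Mumford stack or on a hyperquotient presentation (the third author, and the third author with Shibata), or Ambro's explicit bookkeeping on a fixed resolution when $\dim X\le 3$. Since $v(M')$, viewed on such a resolution, is again an affine-linear function of the very same parameters — orders of vanishing along the fixed exceptional divisors, or the weights of the weighted blow-ups that occur — adding $-v(M')$ does not alter the shape of the formula (an infimum of lower-semicontinuous functions), and lower semi-continuity of $x\mapsto\operatorname{mld}_x(X,\Delta+M)$ follows from that of the $\Delta$-only case.

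\emph{Step 3: the main obstacle.} The difficulty concentrates entirely in handling $M'$: it is \emph{subtracted}, it lives on $X'$ rather than on $X$, and a priori it is only nef — not semiample, and not necessarily effective — so one can neither transport it transparently through the birational change-of-variables formula for arc spaces nor replace it by a general effective member of a linear system. The technical heart of the proof is therefore to show that, on the cylinders (equivalently, the valuations) that actually compute the relevant infimum, the contribution $v\mapsto v(M')$ is bounded below and varies lower-semicontinuously with $x$; this is where nefness of $M'$ over $X$ together with negativity of the $\pi$-exceptional divisors enter, through a Negativity-Lemma / Zariski-decomposition type estimate controlling the order of a nef divisor along exceptional valuations. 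Once such an estimate is available on a resolution that simultaneously resolves $(X,\Delta)$ and carries $\mathbf{M}$, the rest is a routine transcription of the corresponding usual-pair argument.
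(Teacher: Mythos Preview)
Your Step~2 is the gap. The claim that ``$v(M')$ is again an affine-linear function of the very same parameters'' used in the jet-scheme or resolution arguments, so that adding $-v(M')$ preserves ``the shape of the formula'', is not justified and is in fact where the whole difficulty lies. In the Ein--Musta\c{t}\u{a}--Yasuda and Ein--Musta\c{t}\u{a} arguments, the boundary $\Delta$ enters through order conditions on arcs of $X$, i.e.\ through an ideal sheaf on $X$. The divisor $M'$ lives on $X'$, is not effective, and has no associated ideal on $X$; lifting arcs through $\pi$ via the change-of-variables formula brings in the full exceptional configuration, not just an extra linear term. You acknowledge exactly this in Step~3, but the ``Negativity-Lemma / Zariski-decomposition type estimate'' you invoke there is not supplied, and it is not clear such an estimate exists in a form that feeds into each of the separate existing proofs. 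As written, your proposal would require reopening and reworking each of \cite{Amb99,EMY03,EM04,Nak16a,NS,NS2} individually.

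The paper takes a completely different and much cleaner route, giving a \emph{uniform} reduction of Conjecture~\ref{conj:LSCintro} to Conjecture~\ref{conj:LSCu} that never touches the internal mechanics of those proofs. The two ingredients are: (i) a limit lemma (Lemma~\ref{lem:mld_limit}) which, using that $M'+\tfrac{1}{m}A'$ is ample$/Z$, produces effective divisors $B_m$ on $X$ with $\operatorname{mld}_\xi(X,\Delta+M)\ge\operatorname{mld}_\xi(X,\Delta+B_m)$ for every $\xi$ and $\operatorname{mld}_\eta(X,\Delta+M)=\lim_m\operatorname{mld}_\eta(X,\Delta+B_m)$ at any fixed $\eta$; and (ii) constructibility of the generalized mld function (Lemma~\ref{lem:const}), which follows from the generalized version of Ambro's lemma (Lemma~\ref{lem:ambro}). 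Given these, one argues by contradiction: if the sublevel set $F=\{\operatorname{mld}_x(X,\Delta+M)\le a\}$ is not closed, constructibility yields a curve $C\subset\overline{F}$ and a point $x\in C\setminus F$; by (i) each usual sublevel set $F_m=\{\operatorname{mld}_x(X,\Delta+B_m)\le a\}$ contains $F$, and by the assumed usual LSC each $F_m$ is closed, hence $x\in F_m$ for all $m$; passing to the limit contradicts $x\notin F$. This is precisely the missing idea in your outline: rather than threading a non-effective nef divisor through arc-space machinery, one trades $M$ for an honest effective boundary at the cost of an $\epsilon$ and takes a limit.
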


The main purpose of this article is to confirm that known results on these two conjectures for usual pairs are still valid for generalized pairs. 

First, we follow the argument in \cite{Sho} to prove Conjecture \ref{conj:ACCintro} in dimension two (see \cite{Mor} for a similar related result). 
Furthermore, we generalize the argument in \cite{Nak16b} to prove Conjecture \ref{conj:ACCintro} for varieties with fixed Gorenstein index, 
and prove partial results on generalized canonical pairs of dimension three. 

\begin{thm}[$={}$Theorem \ref{thm:ACC_2dim}]\label{thm:introACC_2dim}
For any DCC subset $I \subset [0,+ \infty)$, the set $A_{\textup{gen}}(2,I)$ satisfies the ACC. 
\end{thm}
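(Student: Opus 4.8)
The plan is to adapt Shokurov's proof of the two–dimensional ACC conjecture for usual pairs to the generalized setting. The key structural fact in dimension two is that a generalized pair $(X,\Delta+M)/Z$ can be replaced, after a log resolution, by a situation on a smooth surface where the nef part $M'$ of the b-divisor descends. So first I would reduce to the case where $X'$ is a smooth surface, $K_{X'}+\Delta'+M'$ is the pullback of $K_X+\Delta+M$, and $M'$ is an honest nef $\mathbb{R}$-divisor on $X'$; the minimal log discrepancy over a point $x$ is then computed from the coefficients of exceptional divisors together with the intersection numbers of $M'$ against those divisors. The coefficients $\Delta'$ lie in a DCC set derived from $I$ and $d=2$, and the coefficients of $M'$, together with the numbers $M'\cdot E$ for exceptional $E$, also lie in DCC-type sets controlled by $I$; this is where the convention ``$M\in I$'' from Definition \ref{defi:defs}(1) does the work.

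Next, the heart of Shokurov's argument is a careful analysis of the dual graph of the minimal resolution and of the chain of blow-ups extracting a divisor computing the mld. In the usual case one writes the log discrepancy $a(E;X,\Delta)$ in terms of the coefficients encountered along a sequence of point blow-ups, obtaining an expression that, viewed as a function of the input coefficients (which lie in a DCC set), can only take values in an ACC set provided the combinatorial complexity of the graph is bounded. I would mirror this: after extracting $M'$ to a smooth model, each blow-up $Y\to X'$ at a smooth point $p$ with exceptional curve $F$ contributes $a(F;X',\Delta'+M') = 2 - (\text{multiplicities of }\Delta' \text{ and of }M' \text{ at }p)$, and $M$ pulls back as $M'+(\text{its multiplicity at }p)F$ which is again nef-descended on $Y$. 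Thus the generalized structure is preserved under the blow-up process, and the extra term from $M$ only adds finitely many new DCC-controlled quantities (the multiplicities of $M'$ along the chain) into the same kind of linear expression that Shokurov analyses. The boundedness of the length of the extracting sequence and of the dual graph follows, as in the classical case, from the fact that an accumulation of mld near a limit forces the discrepancies to stay near fixed values, pinning down the combinatorics.

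The main obstacle I anticipate is controlling the contribution of $M$ along the resolution: unlike the boundary $\Delta$, whose transform has coefficients that only decrease (or stay controlled) under blow-up, the nef part $M$ can a priori interact with the blow-up sequence in a less transparent way, and one must verify that the multiplicities $\operatorname{mult}_p M'$ that appear are themselves confined to a DCC set once $\Delta\in I$, $M\in I$ and $\dim X=2$ are fixed — equivalently, that finitely many ``new'' coefficients enter. This is precisely what must be extracted from the definition of $M\in I$ together with the fact that on a surface the intersection matrix of the exceptional locus is negative definite, so the coefficients of $M'$ are determined by finitely many intersection numbers against a DCC set of possible values. Once that bookkeeping is in place, the remaining steps — bounding the number of exceptional divisors over $x$ with small log discrepancy, and concluding that $A_{\textup{gen}}(2,I)$ is an ACC set — go through essentially verbatim as in \cite{Sho}. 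I would organize the write-up as: (i) reduction to a smooth model on which $M'$ descends; (ii) a lemma that the DCC property of the relevant coefficients (including those coming from $M$) is preserved under the blow-up sequence; (iii) the combinatorial/numerical estimate bounding the dual graph; (iv) assembling these into the ACC statement.
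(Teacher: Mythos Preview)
Your plan has a genuine gap at step (i). After passing to a log resolution $\varphi:X'\to X$ on which $M'$ descends, you assert that ``the coefficients $\Delta'$ lie in a DCC set derived from $I$ and $d=2$''. This is false: the exceptional coefficients of $\Delta'$ are $1-a_E(X,\Delta+M)$, which can be arbitrary real numbers in $[0,1]$ and are not confined to any set determined by $I$. (Indeed, if they were, the theorem would be trivial.) Consequently your step (ii) never gets off the ground: once the boundary on the smooth model already fails DCC, tracking multiplicities through further blow-ups cannot recover it. Incidentally, your worry about $M$ is misplaced for the opposite reason: once $M'$ is nef on the smooth surface $X'$, its pullback under any further blow-up is still nef, so no new $M$-contribution is created and the difficulty really is entirely in $\Delta'$.

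The paper's proof is organized quite differently and avoids ever claiming DCC on a resolution. It first disposes of $\operatorname{mld}_x\ge 1$ via the blow-up/multiplicity computation (Lemmas \ref{lem:smooth_surf}, \ref{lem:highmld}, \ref{lem:acc>1}). For $\epsilon\le\operatorname{mld}_x<1$ it builds a specific resolution $f:Y\to X$ on which every exceptional divisor has $a_E\le 1$ (Lemma \ref{lem:Y}) and then splits on the number of $f$-exceptional curves over $x$. When that number is bounded, it \emph{extracts} the computing divisor $E$ on a model $W$ and applies generalized divisorial adjunction (Definition \ref{def:gadj}, Lemma \ref{lem:adjcoef}) to the curve $E$, obtaining $a_E(X,\Delta+M)\cdot(-E^2)=2-\deg(\Delta_{E^\nu}+M_{E^\nu})$ with the right-hand side in an ACC set and $-E^2$ in a finite set (Lemma \ref{lem:n_bounded}); this adjunction step is the key idea you are missing. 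When the number is unbounded, a detailed dual-graph analysis (convexity Lemma \ref{lem:mono}, the chain Lemma \ref{lem:I}, and Lemma \ref{lem:all-2}) produces long chains of $(-2)$-curves and forces a contradiction. If you want to repair your outline, the adjunction-to-$E$ argument is the piece to insert in place of your steps (i)--(ii).
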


\begin{thm}[$={}$Theorem \ref{thm:ACCfixed}]\label{thm:introACCfixed}
Let $d,r \in \mathbb{Z} _{>0}$ and let $I \subset [0, + \infty)$ be a finite set. 
Let $P(d,r,I)$ be the set of all generalized log canonical pairs $(X, B + M)/Z$ with the following conditions: 
\begin{itemize}
\item $\dim X = d$, 
\item $r K_X$ is Cartier, 
\item $B = \sum _{i} b_i B_i$ for some $b_i \in I$ and effective Cartier divisors $B_i$. 
\item $M = \sum _i m_i M_i$ for some $m_i \in I$ and Cartier divisors $M_i$ such that $M_i = f_* M_i '$ for some 
projective birational morphism $f:X' \to X$ and nef$/Z$ Cartier divisors $M'_i$. 
\end{itemize}
Then the following set
\[
A' _{\textup{gen}}(d,r,I) := \left\{ \operatorname{mld}_x (X, B + M) \ \middle | 
\begin{array}{l}
\text{$(X,B+M)/Z \in P(d,r,I)$, $x \in |X|_{0}$.}
\end{array}
\hspace{-1.5mm}
\right\}
\] 
is a discrete subset of $[0,+\infty)$. 
\end{thm}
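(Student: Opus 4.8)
The plan is to adapt the strategy of \cite{Nak16b}, where the discreteness of minimal log discrepancies for varieties of fixed Gorenstein index is obtained by reducing to a boundedness statement for divisorial valuations computing the mld. The key point is that a divisor $E$ over $X$ computing $\operatorname{mld}_x(X, B+M)$ can, after the reduction, be assumed to be extracted by a morphism whose complexity (e.g. the coefficients of the pullback of $rK_X$, or the number of blow-ups, or the $\log$ discrepancy bound) is controlled uniformly in terms of $d$, $r$ and $I$. First I would set up the generalized analogue of the numerical data: given $(X, B+M)/Z \in P(d,r,I)$ and $x \in |X|_{\mathrm{cl}}$, pass to a sufficiently high log resolution $g : Y \to X$ which also resolves the model $f : X' \to X$ carrying the nef divisors $M_i'$, so that $M$ descends to $Y$; then write $K_Y + B_Y + M_Y = g^*(K_X + B + M)$ and observe that the coefficients of $B_Y$ and the coefficient of each $g$-exceptional prime divisor in the expression are rational numbers whose denominators are bounded (they lie in $\frac1r \mathbb{Z} + \sum_i \mathbb{Z} b_i + \sum_i \mathbb{Z} m_i$, a finitely generated subgroup of $\mathbb{Q}$ since $I$ is finite). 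This rationality-with-bounded-denominators is what forces the target set to be a subset of a fixed discrete (in fact, lattice-like) subset of $[0, +\infty)$ once we also bound the valuation from above.

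Next I would establish the boundedness of the valuation $E$ computing the mld. Here I would invoke the generalized-pair version of the results used in \cite{Nak16b} — concretely, that among all divisors computing $\operatorname{mld}_x(X, B+M)$ there is one obtained by a sequence of blow-ups of length bounded in terms of $d$, the Gorenstein index $r$, and $I$; this uses that the generalized log discrepancy of $E$ is $\le \operatorname{mld}_x(X,B+M) \le d$ (the latter from the standard bound on mld in terms of dimension), together with the fact that the generalized pair structure only adds a nef contribution $M$ which does not increase discrepancies of exceptional divisors once $M$ has been made to descend. The upshot is a bounded family: $(X, B+M, x)$ together with the extracting morphism $h : W \to X$ of $E$ lives in a bounded collection of situations, on each of which the generalized log discrepancy $a_E(X, B+M) = 1 + \operatorname{coeff}_E(K_W - h^*(K_X+B+M) + h^{-1}_* B + M_W)$ takes values in a set whose accumulation is excluded by the bounded-denominator observation of the previous paragraph.

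The final step is to combine the two: the set $A'_{\mathrm{gen}}(d,r,I)$ is contained in a finite union of discrete sets (one for each member of the bounded family, or more efficiently, contained in $[0,d] \cap G$ where $G$ is a fixed finitely generated subgroup of $\mathbb{Q}$), hence is itself discrete in $[0,+\infty)$. I would be careful to treat the value $0$ (the generalized log canonical case) and values arising from non-exceptional valuations (divisors on $X$, whose log discrepancies are $1 - b_i \in 1 - I$, clearly discrete) uniformly with the exceptional ones.

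The main obstacle I anticipate is the boundedness of the extracting valuation in the generalized setting: in \cite{Nak16b} this rests on a precise control of how discrepancies change under blow-up for pairs of fixed Gorenstein index, and the presence of the b-divisor $M$ requires checking that descending $M$ to the relevant model does not destroy this control — in particular that the coefficients of $M_Y$ along the exceptional divisors of the blow-up tower remain in the fixed group $G$ and are nonnegative (nefness of $M_i'$ gives effectivity of the "difference" after descent, which is exactly what one needs). Once this is in place the rest is bookkeeping with finitely generated groups. I would also need the basic facts, presumably established earlier in the paper (Definition~\ref{defi:defs} and surrounding results), that $\operatorname{mld}_x$ for a generalized pair is well-defined and computed by some divisorial valuation, and that it is bounded above by $\dim X$ when it is nonnegative.
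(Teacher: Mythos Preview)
Your proposal has a fundamental gap: you assert that the log discrepancies lie in $G = \tfrac{1}{r}\mathbb{Z} + \sum_i \mathbb{Z}\, b_i + \sum_i \mathbb{Z}\, m_i$, ``a finitely generated subgroup of $\mathbb{Q}$''. But $I$ is only assumed to be a finite subset of $[0,+\infty)$, not of $\mathbb{Q}$. If $I$ contains an irrational number --- already $I = \{1, \sqrt{2}\}$ --- then $G$ is a finitely generated subgroup of $\mathbb{R}$ which is \emph{dense} in $\mathbb{R}$, so $G \cap [0,d]$ is far from discrete and the argument collapses. Your outline is essentially the correct proof of the base case $I \subset \mathbb{Q}$ (and there no boundedness of the computing divisor is needed: every log discrepancy already lies in the discrete group $G$), but it says nothing about the irrational case, which is the whole content of the theorem.

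The missing idea, which is the heart of both \cite{Nak16b} and the paper's proof, is a \emph{perturbation} argument reducing to the rational case by induction on $\dim_{\mathbb{Q}} \operatorname{Span}_{\mathbb{Q}}(I \cup \{1\})$. The key input is Theorem~\ref{thm:UP} (ultimately a consequence of the ACC for generalized lc thresholds): one may perturb the last irrational basis element $r_{c'}$ to nearby rationals $t^{\pm}$ while preserving generalized log canonicity \emph{uniformly} over all pairs in $P(d,r,I)$. Comparing $a_E(X,B+M)$ with the log discrepancies of the perturbed pairs expresses any element of $B_{\mathrm{gen}}(d,r,I) \cap [0,a]$ as an element of $B_{\mathrm{gen}}(d,r,I')$ (with $\dim_{\mathbb{Q}}\operatorname{Span}_{\mathbb{Q}}(I'\cup\{1\})$ strictly smaller, hence discrete by induction) plus an error ranging over a \emph{finite} set. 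No boundedness of the extracting valuation, and no bounded family of ambient varieties, is invoked; in fact the argument proves the stronger statement that the set $B_{\mathrm{gen}}(d,r,I)$ of \emph{all} generalized log discrepancies (not just the minimal ones) is discrete.
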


\begin{thm}[$={}$Corollary \ref{cor:ACC3dim}]\label{thm:introACC3dim}
Let $I \subset [0, + \infty)$ be a finite subset. 
Then the following set 
\[
A_{\textup{gen.can}}(3,I) := \left\{ \operatorname{mld}_x (X, B + M) \ \middle | 
\begin{array}{l}
\text{$(X,B+M)/Z$ is a generalized} \\
\text{canonical pair of $\dim X = 3$}\\
\text{with $B \in I$, $M \in I$, $x \in |X|_{0}$.}
\end{array}
\right \}
\]
satisfies the ACC. Furthermore, $1$ is the only accumulation point of this set. 
\end{thm}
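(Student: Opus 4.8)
The plan is to deduce the corollary from Theorem~\ref{thm:introACCfixed} after bounding the Gorenstein index of $X$ near $x$, treating the ranges $\operatorname{mld}_x<1$ and $\operatorname{mld}_x>1$ separately. Precisely, it suffices to prove that (a) the set $A_{\textup{gen.can}}(3,I)\cap[0,1)$ is finite, and (b) for every $\varepsilon>0$ the set $A_{\textup{gen.can}}(3,I)\cap[1+\varepsilon,+\infty)$ is finite. Granting this, the only possible accumulation point of $A_{\textup{gen.can}}(3,I)$ is $1$; it is indeed one, since the terminal cyclic quotient singularities $X_r=\tfrac1r(1,1,r-1)$ (with $B=M=0$) are generalized canonical pairs of dimension three with $\operatorname{mld}(X_r)=1+\tfrac1r\to 1$; and the ACC follows because a strictly increasing sequence in $A_{\textup{gen.can}}(3,I)$ would, by (a), eventually lie in $[1,+\infty)$, then by (b) eventually lie in $[1,1+\varepsilon)$ for every $\varepsilon$, hence converge to $1$ and be eventually constant, a contradiction.

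Part (a) is the easy half. If $(X,B+M)/Z$ is generalized canonical and $\operatorname{mld}_x(X,B+M)<1$, then since canonicity forces every exceptional divisor over $X$ to have nonnegative discrepancy, the minimal log discrepancy at $x$ is computed by a prime divisor $D$ of $\operatorname{Supp}(B+\mathbf M_X)$ through $x$, so $\operatorname{mld}_x(X,B+M)=1-\operatorname{coeff}_D(B+\mathbf M_X)$. Since $I$ is finite, this coefficient ranges over a finite set, and hence $A_{\textup{gen.can}}(3,I)\cap[0,1)$ is finite.

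For part (b), fix $\varepsilon>0$ and let $\operatorname{mld}_x(X,B+M)\ge 1+\varepsilon>1$. Because $B$ and, on a birational model where the nef part $\mathbf M$ descends, that nef part are effective, removing them cannot raise log discrepancies over $x$, so $\operatorname{mld}_x(X)\ge\operatorname{mld}_x(X,B+M)>1$ and $X$ is terminal at $x$. If $x$ is a smooth point of $X$, then $rK_X$ is Cartier with $r=1$; otherwise the classification of three-dimensional terminal singularities gives $\operatorname{mld}_x(X)=1+\tfrac1r$ for the Gorenstein index $r$ of $X$ at $x$, so $1+\varepsilon\le\operatorname{mld}_x(X,B+M)\le\operatorname{mld}_x(X)=1+\tfrac1r$ forces $r\le 1/\varepsilon$. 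Now shrink $X$ around $x$ and pass to a small $\mathbb Q$-factorialization, which is crepant and so does not change $\operatorname{mld}_x$; then $X$ is $\mathbb Q$-factorial terminal with $rK_X$ Cartier, and — after replacing $r$ by a bounded multiple, using that on such a singularity every Weil divisor is $r$-Cartier with $r$ controlled by the classification — the prime components of $B$ and of $M$ become $r$-Cartier, so $(X,B+M)/Z$ lies in the class $P(3,r,I)$ of Theorem~\ref{thm:introACCfixed}. Thus $\operatorname{mld}_x(X,B+M)\in A'_{\textup{gen}}(3,r,I)$, which by Theorem~\ref{thm:introACCfixed} is discrete, and being contained in $[0,3]$ it is finite. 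Since the relevant $r$ are bounded in terms of $\varepsilon$, only finitely many such sets occur, so $A_{\textup{gen.can}}(3,I)\cap[1+\varepsilon,+\infty)$ is finite.

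The step I expect to be the main obstacle is the index bound in part (b), and within it the interaction with the nef $b$-divisor $\mathbf M$. A nef $b$-divisor need not be represented by an effective divisor on high birational models, so one cannot in general simply discard it when comparing $\operatorname{mld}_x(X,B+M)$ with $\operatorname{mld}_x(X)$; one must control its contribution to the discrepancies over $x$ — for instance by working on a crepant $\mathbb Q$-factorial model on which $\mathbf M$ descends and invoking negativity of the relatively exceptional locus — and one must also verify that $X$ is $\mathbb Q$-Gorenstein near $x$ at all. Once this local analysis is carried out, the two external inputs, namely the classification of three-dimensional terminal singularities (giving $\operatorname{mld}_x(X)=1+\tfrac1r$ at an index-$r$ singular point, and $\operatorname{mld}_x(X)=3$ at a smooth point) and the discreteness statement of Theorem~\ref{thm:introACCfixed}, enter essentially as black boxes, and the reduction to (a) and (b) above is routine bookkeeping.
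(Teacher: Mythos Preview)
Your overall strategy—bound the local Gorenstein index and then invoke Theorem~\ref{thm:introACCfixed}—is the same as the paper's, but two steps in your execution do not go through as written.

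First, part (a) is both misargued and unnecessary. For a generalized canonical pair one has $\operatorname{mld}_x(X,B+M)\ge 1$ at every closed point, so $A_{\textup{gen.can}}(3,I)\cap[0,1)=\emptyset$ (the paper simply records $A_{\textup{gen.can}}(3,I)\subset[1,3]$). Your argument that the mld is computed by a prime divisor on $X$ passing through $x$ is incorrect: in dimension $\ge 2$, every divisor with center equal to the closed point $x$ is exceptional, so divisors on $X$ never contribute to $\operatorname{mld}_x$.

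Second, and more substantively, part (b) has a genuine gap. You write $\operatorname{mld}_x(X)\ge\operatorname{mld}_x(X,B+M)$, but $K_X$ need not be $\mathbb{Q}$-Cartier, so $\operatorname{mld}_x(X)$ may be undefined. Your fix—pass to a small $\mathbb{Q}$-factorialization $f:Y\to X$—does not rescue the argument as stated: the resulting $Y$ is only canonical, not terminal, and the fibre $f^{-1}(x)$ may be a curve, so ``$\operatorname{mld}_x$ is unchanged'' has no direct meaning on $Y$. What is missing is a case analysis on the dimension of $c_Y(E)$ for a computing divisor $E$. When $c_Y(E)=\{y\}$ is a point, one does get $\operatorname{mld}_y(Y,B_Y+M_Y)=\operatorname{mld}_x(X,B+M)\ge 1+\varepsilon$, hence $Y$ is terminal at $y$ with bounded index, and Theorem~\ref{thm:introACCfixed} applies. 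But when $c_Y(E)$ is a curve, one must instead use Lemma~\ref{lem:ambro} to pass to a general closed point $y\in c_Y(E)$, obtaining $\operatorname{mld}_y(Y,B_Y+M_Y)=1+\operatorname{mld}_x(X,B+M)>2$, which forces $Y$ smooth at $y$ and places $\operatorname{mld}_x$ in $-1+A'_{\textup{gen}}(3,1,I)$. The paper carries out exactly this case split (after taking a crepant $\mathbb{Q}$-factorial \emph{terminalization} rather than a mere $\mathbb{Q}$-factorialization, which also introduces a harmless $\dim c_Y(E)=2$ case). Without this split your index bound and your appeal to Theorem~\ref{thm:introACCfixed} are not justified.
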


For the LSC conjecture, we prove that Conjecture \ref{conj:LSCintro} can be reduced to Conjecture \ref{conj:LSCu}. 

\begin{thm}[$={}$Theorem \ref{thm:LSC_reduction}]\label{conj:LSCreductionintro}
Conjecture \ref{conj:LSCu} implies Conjecture \ref{conj:LSCintro}. 
\end{thm}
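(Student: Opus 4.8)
**Proof proposal for Theorem \ref{conj:LSCreductionintro} (Conjecture \ref{conj:LSCu} $\Rightarrow$ Conjecture \ref{conj:LSCintro}).**

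The plan is to reduce the lower semi-continuity of $x \mapsto \operatorname{mld}_x(X, \Delta + M)$ for a generalized pair to the lower semi-continuity of the minimal log discrepancy function of an \emph{ordinary} pair on a higher birational model, and then descend the latter along a proper morphism. Fix a generalized pair $(X, \Delta + M)/Z$ and choose a projective birational morphism $f \colon X' \to X$, with $X'$ smooth (or at least with $(X', \Delta' )$ log smooth after suitable further blow-ups), on which $M$ is represented by a nef $\mathbb{R}$-divisor $M'$, so that $K_{X'} + \Delta' + M' = f^*(K_X + \Delta + M)$ for the traced divisor $\Delta'$. The key observation I would use is the standard formula relating generalized mld on the base to ordinary mld upstairs: for a closed point $x \in X$,
\[
\operatorname{mld}_x(X, \Delta + M) = \min_{\substack{x' \in f^{-1}(x) \\ x' \in |X'|_{\rm cl}}} \Bigl( \operatorname{mld}_{x'}(X', \Delta') + \text{(contribution of $M'$ along $x'$)} \Bigr),
\]
or, more cleanly, by absorbing $M'$: since $M'$ is nef$/Z$ and hence $f^*(f_* \text{data})$-like, one reduces to comparing $\operatorname{mld}_x(X,\Delta+M)$ with $\min_{x' \in f^{-1}(x)} \operatorname{mld}_{x'}(X', \Delta' + M')$, and because $M'$ is an honest $\mathbb{R}$-Cartier divisor on $X'$, $(X', \Delta' + M')$ is an ordinary log pair to which Conjecture \ref{conj:LSCu} applies.

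The steps, in order, are: (1) choose the model $X'$ and verify the crepant identity $K_{X'} + \Delta' + M' = f^*(K_X+\Delta+M)$; (2) establish the pointwise equality $\operatorname{mld}_x(X, \Delta+M) = \min_{x' \in f^{-1}(x)} \operatorname{mld}_{x'}(X', \Delta'+M')$ for closed $x$, using that every divisor over $X$ is a divisor over $X'$ and that the discrepancy of such a divisor with respect to $(X,\Delta+M)$ equals its discrepancy with respect to $(X', \Delta'+M')$ by the crepant identity, together with the fact that $\operatorname{center}_X$ factors through $\operatorname{center}_{X'}$; (3) apply Conjecture \ref{conj:LSCu} to the ordinary pair $(X', \Delta'+M')$ to get that $x' \mapsto \operatorname{mld}_{x'}(X', \Delta'+M')$ is lower semi-continuous on $|X'|_{\rm cl}$; (4) push this down: since $f$ is proper (indeed projective), for any $c \in \mathbb{R}$ the set $\{ x' : \operatorname{mld}_{x'}(X',\Delta'+M') > c\}$ is open, and its complement is closed, and $f$ being closed implies $\{ x \in |X|_{\rm cl} : \operatorname{mld}_x(X,\Delta+M) > c \} = |X|_{\rm cl} \setminus f\bigl( \{ x' : \operatorname{mld}_{x'}(X',\Delta'+M') \le c \} \bigr)$ is open, which is exactly lower semi-continuity of $m$ on $X$.

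The main obstacle I expect is Step (2): one must be careful that the minimum over the fiber is actually attained and that no divisor over $X$ with small log discrepancy is "lost" when passing to $X'$ — this is where the nefness of $M'$ enters essentially, since after a further blow-up $g \colon X'' \to X'$ a divisor $E$ over $X$ computing $\operatorname{mld}_x$ may have center on $X''$, and one needs $g^* M'$ (still nef) to contribute the \emph{same} coefficient it would in the b-divisor $M$, so that $a_E(X,\Delta+M) = a_E(X'', \Delta'' + g^*M')$ with no correction term; this is precisely the defining compatibility of the nef b-divisor $M$ and must be invoked carefully, but it is not deep. A secondary point to handle cleanly is that $\operatorname{mld}_x = -\infty$ propagates correctly through the minimum and through $f$, but this is immediate from the same center-factoring argument. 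I would also remark that the reduction works verbatim in the relative setting over $Z$, since all the discrepancy computations are local over $X$ and insensitive to the map to $Z$.
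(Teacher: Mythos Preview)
Your Step (2) formula is false, and the strategy cannot be repaired along these lines. There are two issues. First, the crepant identity $K_{X'}+\Delta'+M'=f^*(K_X+\Delta+M)$ yields $a_E(X,\Delta+M)=a_E(X',\Delta')$ for the \emph{ordinary} pair $(X',\Delta')$ (this is Remark~\ref{rmk:rmks}(1)), not $a_E(X',\Delta'+M')$. In the generalized formalism the trace of the b-divisor on any higher model $g\colon X''\to X'$ is $g^*M'$, and it is stripped off before reading the coefficient of $E$; if instead you treat $M'$ as ordinary boundary, then $\operatorname{coeff}_E g^*M'$ stays in and the two log discrepancies differ by exactly that amount. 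Second, and fatally, even with the correct pair $(X',\Delta')$ the equality
\[
\operatorname{mld}_x(X,\Delta+M)\;=\;\min_{x'\in f^{-1}(x)\cap|X'|_{\rm cl}}\operatorname{mld}_{x'}(X',\Delta')
\]
fails whenever $f^{-1}(x)$ is positive-dimensional, which is precisely the situation at every point where passing to $X'$ is nontrivial. The right-hand side sees only divisors with zero-dimensional center on $X'$, whereas the correct identity is $\operatorname{mld}_x(X,\Delta+M)=\operatorname{mld}_{f^{-1}(x)}(X',\Delta')$, an infimum over all $E$ with $c_{X'}(E)\subset f^{-1}(x)$. Concretely: let $f$ be the blow-up of a smooth surface at $x$ with exceptional curve $E_1$ (and take $\Delta=M=0$, so $\Delta'=-E_1$). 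Then $\operatorname{mld}_x(X)=2=a_{E_1}$, computed by $E_1$ whose center on $X'$ is all of $E_1$; but $\operatorname{mld}_{x'}(X',-E_1)=3$ for every closed $x'\in E_1$. Thus your Step (4) set identity holds only as $\subset$, and properness of $f$ does not give openness of $\{x:\operatorname{mld}_x>c\}$. The obstacle you flagged (b-divisor compatibility) is not the real one; the real one is that divisors with positive-dimensional center in the fiber are lost when you restrict to closed points upstairs.

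The paper's proof avoids changing the variety altogether. It approximates the nef part by effective divisors on $X$ itself: a general member of $|M'+\tfrac{1}{m}A'|_{\mathbb{R}}$ (for $A'$ ample$/Z$) pushes down to an effective $B_m$ on $X$ with $\operatorname{mld}_\xi(X,\Delta+M)\ge\operatorname{mld}_\xi(X,\Delta+B_m)$ for every $\xi$ and with equality in the limit at any fixed point (Lemma~\ref{lem:mld_limit}). Combined with the constructibility of the mld function (Lemma~\ref{lem:const}), one argues by contradiction: if a sublevel set $F=\{y:\operatorname{mld}_y\le a\}$ were not closed, one extracts a curve $C\subset\overline{F}$ and a point $x\in C\setminus F$; the ordinary-pair sublevel sets $F_m\supset F$ are closed by Conjecture~\ref{conj:LSCu}, so $x\in F_m$ for all $m$, and the limit forces $x\in F$.
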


\noindent
By this theorem, we can confirm that Conjecture \ref{conj:LSCintro} is still true in dimension three or when $X$ is smooth for example. 
In the proof of Theorem \ref{conj:LSCreductionintro}, two lemmas play important roles. 
The first one is the constructibility of the minimal log discrepancy proved by Ambro (Lemma \ref{lem:const}). 
The second one is the limit lemma (Lemma \ref{lem:mld_limit}), which allows us to approximate the minimal log discrepancy of a generalized pair 
by those of usual pairs. 

As we mentioned, Shokurov proved that Conjecture \ref{conj:ACCu} and Conjecture \ref{conj:LSCu} 
imply the conjecture of termination of flips \cite{Sho04}. 
His argument can be easily extended to the generalized setting. 
\begin{thm}[$={}$Theorem \ref{thm:termination}]\label{thm:terminationintro}
Conjecture \ref{conj:LSCintro} and Conjecture \ref{conj:ACCintro} imply the termination 
of flips for generalized log canonical projective pairs. 
\end{thm}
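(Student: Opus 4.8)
The plan is to mimic Shokurov's original reduction \cite{Sho04} from the ACC and LSC conjectures to termination of flips, checking at each step that the arguments go through for generalized pairs. Recall the setup: let $(X, \Delta + M)/Z$ be a generalized log canonical projective pair and suppose we have an infinite sequence of flips
\[
(X_0, \Delta_0 + M_0) \dashrightarrow (X_1, \Delta_1 + M_1) \dashrightarrow \cdots,
\]
where each step is a $(K_{X_i} + \Delta_i + M_i)$-flip and $\Delta_i, M_i$ denote the strict/crepant transforms of $\Delta, M$. Since flips are crepant away from the flipping/flipped loci and the total log discrepancy picture is unchanged by the b-divisor part $M$ outside the relevant loci, the generalized log discrepancy $\operatorname{mld}$ behaves under flips exactly as in the usual case: it is non-decreasing at every point, and it strictly increases at some point lying in the flipped locus of each step. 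This uses only the negativity/cone considerations for flips together with the fact that $M$ is the pushforward of a nef divisor on a common resolution, which makes the crepant comparison of generalized log discrepancies across a flip identical to Shokurov's.

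Next I would introduce the function that Shokurov uses to detect termination. For each $i$ one considers the generalized mld taken at the generic points of (closures of) the centers involved, or more precisely one tracks the values $\operatorname{mld}_{\eta}(X_i, \Delta_i + M_i)$ as $\eta$ ranges over the (finitely many) strata of the flipping locus, and one extracts from the sequence of flips a point at which the mld strictly increases. The key structural input is that the set of coefficients appearing in $\Delta_i + M_i$ is fixed — the coefficients of $\Delta$ are fixed, and the coefficients $m_i$ of $M$ are fixed — so all the relevant $\operatorname{mld}$ values lie in a single set $A_{\textup{gen}}(d, I)$ with $I$ a finite (hence DCC) set of coefficients. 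Here one must be slightly careful: the crepant transform of the nef b-divisor part does not change coefficients, and passing to the flipped model only replaces $X$ by another variety of the same dimension with the same boundary/moduli coefficients, so Conjecture \ref{conj:ACCintro} applies uniformly along the whole sequence.

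The heart of the argument is then: by lower semi-continuity (Conjecture \ref{conj:LSCintro}) one shows that the mld, viewed appropriately along the sequence, behaves monotonically in a way compatible with specialization, so that the strict increases at flipped loci force a strictly increasing infinite chain of values inside $A_{\textup{gen}}(d, I)$; by the ACC (Conjecture \ref{conj:ACCintro}) no such infinite strictly increasing chain exists, a contradiction. Concretely, LSC is used to propagate the strict inequality from a special point to a general point of a center (so that one compares mlds of \emph{varieties} rather than just of pointed varieties, keeping the dimension and coefficient data under control), and it is also what guarantees that the relevant infimum defining the "difficulty" function is attained and drops at each flip. This is exactly Shokurov's bookkeeping; the only modification is that everywhere "log discrepancy of $(X, \Delta)$" is replaced by "generalized log discrepancy of $(X, \Delta + M)$", and one cites the generalized versions of LSC and ACC rather than the classical ones.

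I expect the main obstacle to be purely a matter of careful verification rather than a new idea: one has to confirm that generalized log discrepancies transform under a flip of generalized pairs precisely as classical log discrepancies do under a flip of usual pairs — in particular that the nef part $M$ can be realized on a model dominating both $X_i$ and $X_{i+1}$ so that the crepant comparison is literally an equality of generalized discrepancies outside the flipping locus, and that the flipped pair is again generalized log canonical with the same coefficient set. Once this compatibility is pinned down (it follows from the definition of generalized pair and the standard fact that nef-ness is preserved under pullback), Shokurov's argument applies verbatim. I would therefore organize the proof as: (i) recall Shokurov's difficulty function and its properties in the classical case; (ii) check the flip-compatibility of generalized mld; (iii) observe the coefficient set is fixed along the sequence so ACC applies; (iv) run Shokurov's argument with Conjecture \ref{conj:LSCintro} and Conjecture \ref{conj:ACCintro} in place of the classical statements to derive the contradiction.
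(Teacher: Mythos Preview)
Your high-level assessment is right: the adaptation to generalized pairs is routine, and the real work is checking that Lemma \ref{lem:mld_mono} (monotonicity of generalized log discrepancies under a flip) and the constructibility/finiteness of the mld function go through unchanged. On that point you are in line with the paper.

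The gap is in your description of the contradiction. You write that ``strict increases at flipped loci force a strictly increasing infinite chain of values inside $A_{\textup{gen}}(d,I)$,'' and that ACC then finishes. This is not how the argument works, and it cannot be made to work so directly: at each flip the mld strictly increases only at points of the flipped locus, and these points change from step to step, so there is no single point (or single valuation) along which you obtain an infinite strictly increasing sequence of mld values. LSC alone does not let you ``propagate the strict inequality from a special point to a general point'' in a way that produces such a chain.

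What the paper (following \cite{Sho04}) actually does is more delicate. First, one looks at $a_i := \operatorname{mld}_{Z_i}(X_i,\Delta_i+M_i)$ along the flipping loci $Z_i$ and sets $\alpha_i := \inf_{j\ge i} a_j$. Using finiteness of the mld values on a \emph{fixed} variety (Lemma \ref{lem:const_T}) together with monotonicity, one shows each $\alpha_i$ is realized as an mld on $X_i$; then ACC forces the non-decreasing sequence $(\alpha_i)$ to stabilize at some value $a$. After passing to a tail, $a_i\ge a$ for all $i$ and $a_i=a$ infinitely often. One then fixes the largest dimension $d$ at which $\operatorname{mld}_\eta=a$ occurs inside the flipping loci, and lets $W_i\subset X_i$ be the closure of the set $S_i$ of $d$-dimensional points with $\operatorname{mld}_\eta\le a$. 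LSC is used here, precisely to show that $S_i$ is closed in $|X_i|_d$, i.e.\ every $d$-dimensional point of $W_i$ lies in $S_i$. Monotonicity shows $f_i^{-1}$ never creates new $d$-dimensional points of $W_{i+1}$, while the choice of $d$ guarantees that infinitely many $f_i$ contract a $d$-dimensional component of $W_i$. The contradiction is then a \emph{cycle-theoretic} one: $\dim N_d(W_i)_{\mathbb{Q}}$ is finite and strictly decreases infinitely often (Lemma \ref{lem:cycle}). Your outline omits both the stabilization step (which needs the finiteness lemma, not just ACC) and the cycle-class endgame; the phrase ``difficulty function'' suggests you may have a different Shokurov-style invariant in mind, but you would need to specify it and verify it actually drops at every flip in this generality.
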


In the remaining part of this paper, we discuss the theory of complements for generalized pairs.
Theorem \ref{thm:perturbmld} states that it is possible to perturb irrational coefficients preserving 
$\epsilon$-log canonicity in some sense. 
This gives a slight generalization of the result by G.\ Chen and J.\ Han \cite[Lemma 3.4]{CH21}. 
As an application, we prove Corollaries \ref{cor1} and \ref{cor2}, 
which remove the rationality condition on the coefficient set from the results by Filipazzi and Moraga \cite{FM}. 
We note that Corollaries \ref{cor1} and \ref{cor2} overlap with \cite{C20} and \cite{CX20} 
(see the beginning of Subsection \ref{subsection:compl} and Remark \ref{rmk:overlap} for more details).

The paper is organized as follows. 
In Section \ref{section:pre}, we review some definitions and facts on generalized pairs. 
In Section \ref{section:LSC}, we prove some basic properties on the minimal log discrepancies 
for generalized pair and prove Theorem \ref{conj:LSCreductionintro} ($={}$Theorem \ref{thm:LSC_reduction}). 
In Section \ref{section:termination}, we prove Theorem \ref{thm:terminationintro} ($={}$Theorem \ref{thm:termination}). 
In Section \ref{section:ACC2dim}, we prove the ACC conjecture in dimension two (Theorem \ref{thm:introACC_2dim}${}={}$Theorem \ref{thm:ACC_2dim}). 
In Section \ref{section:ACCfixed}, we study the minimal log discrepancies on varieties of fixed Gorenstein index, 
and prove Theorem \ref{thm:introACCfixed} ($={}$Theorem \ref{thm:ACCfixed}) 
and Theorem \ref{thm:introACC3dim} ($={}$Corollary \ref{cor:ACC3dim}). 
In Subsections \ref{subsectionperturb} and \ref{subsection:compl}, we discuss the theory of complements for generalized pairs. 
Theorems \ref{thm:perturbmld} and \ref{thm:bdd-e-comple}, Corollaries \ref{cor1} and \ref{cor2} are the main results of these subsections. 

\begin{ack}The second and third authors started this work during their stay at Johns Hopkins University in 2019.
We would like to thank Professors V.V.\ Shokurov and J.\ Han for their hospitality. 
Moreover, we thank Professor J.\ Han for his comments on the earlier version of this preprint. 
We also thank Professor K.\ Shibata for his valuable comments on the draft. 
The first author was supported by Japan-Taiwan exchange association as a student of the University of Tokyo, by the University of Tokyo as a project researcher, and by
the Ministry of Science and Higher Education of the Russian Federation (agreement no. 075-15-2019-1614) when he was working at the Steklov International Mathematical Center.
The second author was partially supported by JSPS KAKENHI No.\ 15H03611, 16H02141, 17H02831, 18H01108, and JPJSBP120219935. 
He brushed up this paper when he stayed at Johns Hopkins University, National Taiwan University, and the University of Utah. 
He is grateful for their hospitality. 
The third author is partially supported by JSPS KAKENHI No.\ 18K13384, 16H02141, 17H02831, 18H01108, and JPJSBP120219935. 
\end{ack}

\section{Preliminary}\label{section:pre}

\subsection{Notation and Convention}
Throughout this paper, we work over the field $\mathbb{C}$ of complex numbers. 
We follow the notation and terminology in \cite{KM98} and \cite{Kol13}. 

\subsection{Generalized pairs}
We recall some notation and properties on generalized pairs. 
For more detail, we refer the reader to \cite{Bir19a} and \cite{Bir16b}. 
We also refer the reader to \cite{KM98} and \cite{Kol13} for the notation in the minimal model program. 

\begin{defi}\label{defi:gp}
A \textit{generalized pair} $(X,B+M)/Z$ consists of
\begin{itemize}
\item a normal variety $X$ and a quasi-projective variety $Z$ with a projective morphism $X \to Z$, 
\item an effective $\mathbb{R}$-divisor $B$ on $X$, and 
\item a b-$\mathbb{R}$-Cartier b-divisor over $X$ represented by some projective birational morphism $\varphi: X' \to X$ and 
a nef$/Z$ $\mathbb{R}$-Cartier divisor $M'$ on $X'$
\end{itemize}
such that $M = \varphi_* M'$ and $K_{X}+B+M$ is $\mathbb{R}$-Cartier. 

We say that $B$ is the \textit{boundary part} and $M$ is the \textit{nef part} of the generalized pair $(X,B+M)$. 
\end{defi}

\begin{defi}\label{defi:defs}
Let $(X,B+M)/Z$ be a generalized pair. 
\begin{enumerate}
\item Let $I \subset [0, + \infty)$ be a subset. 
Then we write $B \in I$ when all nonzero coefficients of $B$ belong to $I$. 
By abuse of notation, we also write $M \in I$ when $M' = \sum _{1 \le i \le \ell} r_i M'_{i}$ holds for some 
positive real numbers $r_1, \ldots, r_{\ell} \in I$ and nef$/Z$ Cartier divisors $M' _1, \ldots, M' _{\ell}$ possibly replacing $X'$. 
This definition is natural when considering an NQC generalized pair (see \cite{HL22}*{Section 2.6} for more detail). 

\item For a prime divisor $E$ over $X$, we define the \textit{generalized log discrepancy} $a_E (X, B + M)$ as follows. 
Possibly replacing $X'$ with a higher model, we may assume that $\varphi$ is a log resolution of $(X, B)$ and that $E$ is a divisor on $X'$. 
We define an $\mathbb{R}$-divisor $B'$ on $X'$ by 
\[
K_{X'} + B' + M' = \varphi ^* (K_X + B +M). 
\]
Then we define $a_E (X, B + M) := 1 - \operatorname{coeff}_E B'$. 
The image $\varphi (E)$ is called the \textit{center} of $E$ on $X$ and we denote it by $c_X(E)$. 

\item 
Let $W \subset X$ be a closed subset. Then we define the \textit{generalized minimal log discrepancy} $\operatorname{mld}_W (X, B +M)$ along $W$ as 
\[
\operatorname{mld}_W (X, B +M) := \inf _{c_X(E) \subset W} a_E(X, B+M)
\]
if $\dim X \ge 2$, where the infimum is taken over all prime divisors $E$ over $X$ with center $c_X(E) \subset W$. 
When $\dim X = 1$, we define $\operatorname{mld}_W (X, B +M) := \inf _{c_X(E) \subset W} a_E(X, B+M)$ if the infimum is non-negative and 
$\operatorname{mld}_W (X, B +M) := - \infty$ otherwise. 
For simplicity of notation, we write $\operatorname{mld} (X, B +M)$ instead of $\operatorname{mld}_X (X, B +M)$.

\item \label{item:mld}
Let $\eta$ be a scheme-theoretic point of $X$ of $\operatorname{codim} \operatorname{\eta} \ge 1$. Then we define the \textit{generalized minimal log discrepancy} 
$\operatorname{mld}_{\eta} (X, B +M)$ at $\eta$ as 
\[
\operatorname{mld}_{\eta} (X, B +M) := \inf _{c_X(E) = \overline{\{ \eta \}}} a_E(X, B+M)
\]
if $\operatorname{codim} \operatorname{\eta} \ge 2$, 
where the infimum is taken over all prime divisors $E$ over $X$ with center $c_X(E) = \overline{\{ \eta \}}$. 
When $\operatorname{codim} \operatorname{\eta} = 1$, we define 
$\operatorname{mld}_{\eta} (X, B +M) := \inf _{c_X(E) = \overline{\{ \eta \}}} a_E(X, B+M)$ if the infimum is non-negative and 
$\operatorname{mld}_{\eta} (X, B +M) := - \infty$ otherwise. 
We denote $\operatorname{mld}_{\eta} (X) = \operatorname{mld}_{\eta} (X, B +M)$ if $B=M'=0$. 

\item When $\operatorname{mld}_{\eta} (X, B +M) \ge 0$ in (\ref{item:mld}), 
we say that a divisor $E$ over $X$ \textit{computes} $\operatorname{mld}_{\eta} (X, B +M)$ 
if $c_X (E) = \overline{\{ \eta \}}$ and $\operatorname{mld}_{\eta} (X, B +M) = a_E (X, B+M)$ hold. 
When $\operatorname{mld}_{\eta} (X, B +M) = - \infty$, we say that $E$ \textit{computes} $\operatorname{mld}_{\eta} (X, B +M)$ 
if $c_X (E) = \overline{\{ \eta \}}$ and $a_E (X, B+M) < 0$ hold.

\item We say that $(X,B+M)$ is \textit{generalized log canonical} (\textit{generalized lc} for short) 
if $\operatorname{mld} (X, B + M) \ge 0$ holds. 
We say that $(X,B+M)$ is \textit{generalized Kawamata log terminal} (\textit{generalized klt} for short) 
if $\operatorname{mld} (X, B + M) > 0$ holds. 
Furthermore, for $\epsilon \in \mathbb{R}_{\ge 0}$ we say that $(X,B+M)$ is \textit{generalized $\epsilon$-lc} 
if $\operatorname{mld} (X, B + M) \ge \epsilon$ holds.

\item A \textit{generalized non-klt center} of $(X, B+M)$ is the center $c_X (E)$ of a divisor $E$ over $X$ with 
$a_E (X, B+M) \le 0$. 

\item We say that $(X,B+M)$ is \textit{generalized dlt} when the following two conditions hold: 
\begin{itemize}
\item $(X,B+M)$ is generalized lc. 
\item If $\eta$ is the generic point of a generalized non-klt center of $(X, B+M)$, 
then  $(X, B)$ is log smooth at $\eta$ and $M'= \varphi ^* M$ holds over a neighborhood of $\eta$. 
\end{itemize}

\item \label{item:dltmodel}
Let $f: Y \to X$ be a projective birational morphism. Possibly replacing $\varphi$, we may assume that $\varphi$ factors through $f$. 
Then we define $B_Y$ by 
\[
K_Y + B_Y + M_Y = f^* (K_X + B + M), 
\]
where $M_Y$ is the push-forward of $M'$ on $Y$. 
We say that $(Y, B_Y + M_Y)$ is a \textit{$\mathbb{Q}$-factorial generalized dlt model} of $(X, B + M)/Z$ if the following three conditions hold. 
\begin{itemize}
\item $Y$ is $\mathbb{Q}$-factorial. 
\item $(Y, B_Y + M_Y)$ is generalized dlt. 
\item $a_E (X, B+M) = 0$ holds for any $f$-exceptional divisor $E$. 
\end{itemize}
By \cite[Proposition 3.10]{HL22}, a $\mathbb{Q}$-factorial generalized dlt model always exists for every generalized lc pair $(X,B+M)/Z$. 

\item \label{item:lct}
Suppose that $(X, B+M)$ is generalized lc. 
Let $D$ be an effective $\mathbb{R}$-divisor on $X$, and let $N'$ be a nef$/Z$ $\mathbb{R}$-Cartier divisor on $X'$. 
We assume that $D + N$ is $\mathbb{R}$-Cartier, where $N:= \varphi _* N'$. 
Then $\bigl( X, (B+tD) + (M+tN) \bigr)/Z$ is a generalized pair for each $t \in \mathbb{R}_{\ge 0}$. 
We define the \textit{generalized lc threshold} of $D+N$ with respect to $(X, B+M)$ as 
\[
\sup \bigl\{ t \in \mathbb{R}_{\ge 0} \ \big| \  \text{$\bigl( X, (B+tD) + (M+tN) \bigr)$ is generalized lc} \bigr \}. 
\]
Here, we regard $B+tD$ as the boundary part and $M+tN$ as the nef part of $\bigl( X, (B+tD) + (M+tN) \bigr)$. 

\item \label{item:ord}
Let $E$ be a divisor over $X$. Suppose that $X$ is $\mathbb{Q}$-Gorenstein at the generic point of $c_X(E)$. 
Then we define the \textit{order} $\operatorname{ord}_E (B + M)$ at $E$ as follows. 
Possibly replacing $\varphi$, we may assume that $E$ is a divisor on $X'$. We define an $\mathbb{R}$-divisor $C$ on $X'$ by
\[
C + M' = \varphi ^* (B + M). 
\]
Then we define $\operatorname{ord}_E (B + M) := \operatorname{coeff}_E C$. 
Since $B \ge 0$ and $M'$ is nef$/Z$, we have $\operatorname{ord}_E (B + M) \ge 0$. 

\item \label{item:mult}
Let $\eta$ be a scheme-theoretic point of $X$ of $\operatorname{codim} \eta \ge 1$. 
Suppose that $X$ is smooth at $\eta$. 
Then we define the \textit{multiplicity} $\operatorname{mult}_{\eta} (B + M)$ as follows. 
If $\operatorname{codim} \eta =1$, then 
$\operatorname{mult}_{\eta} (B + M) := \operatorname{coeff} _E B$, where $E$ is the corresponding prime divisor to $\eta$. 
We assume $\operatorname{codim} \eta \ge 2$ in what follows. 
Let $Y \to X$ be the blow-up along $\overline{\{ \eta \}}$ and $E$ the divisor on $Y$ that dominates $\overline{\{ \eta \}}$. 
Then we define $\operatorname{mult}_{\eta} (B + M) := \operatorname{ord}_E (B + M)$. 
\end{enumerate}
\end{defi}

\begin{rmk}
\begin{enumerate}
\item
For a generalized pair $(X, B+M)/Z$ with $M' = 0$, the invariants defined in Definition \ref{defi:defs} coincide 
with those for a usual pair $(X, B)$. 
\item 
A referee kindly pointed out that the condition ``$M' = \sum _{1 \le i \le \ell} r_i M'_{i}$'' in Definition \ref{defi:defs}(1) can be 
weakened to ``$M' \equiv \sum _{1 \le i \le \ell} r_i M'_{i}$'' without loss of generality because the invariants defined in Definition \ref{defi:defs} 
remains unchanged even if $M'$ is replaced with a numerically equivalent one. 
\end{enumerate}
\end{rmk}

\noindent
We list basic facts on generalized minimal log discrepancies. 

\begin{rmk}\label{rmk:rmks}
Let $(X, B+M)/Z$ be a generalized pair and let $\eta$ be a scheme-theoretic point of $X$ of $\operatorname{codim} \eta \ge 1$. 

\begin{enumerate}
\item Let $\varphi: X' \to X$ be the birational morphism in Definition \ref{defi:gp}. 
We define an $\mathbb{R}$-divisor $B'$ by 
\[
K_{X'} + B' + M' = \varphi ^* (K_X + B + M). 
\]
Then we have
\[
a_E (X, B+M) = a_E (X', B')
\]
for any divisor $E$ over $X$. 
Hence we have 
\[
\operatorname{mld}_{\eta} (X, B +M) = \inf _{c_X(E) = \overline{\{ \eta \}}} a_E(X', B'), 
\]
where the infimum is taken over all prime divisors $E$ over $X$ with center $c_X(E) = \overline{\{ \eta \}}$. 
This observation allows us to prove the remarks below, which are well-known for usual pairs (that is, the case when $M' = 0$).

\item \label{item:resol1} 
Possibly replacing $\varphi$, we may assume that $\varphi$ is a log resolution of $(X, B)$. 
Then $\operatorname{mld}_{\eta} (X, B + M) = - \infty$ holds if and only if 
$a_E(X,B+M)<0$ and $\eta \in c_X(E)$ hold for some prime divisosr $E$ on $X'$. 
Moreover, such $E$ can be found in the divisors contained in the $\varphi$-exceptional locus or the strict transform of $B$. 

\item For any $\eta \in X$, possibly replacing $\varphi$, we may assume that $\varphi$ is a log resolution of 
the triple $(X, B, \overline{\{ \eta \}})$. 
If $\operatorname{mld}_{\eta} (X, B + M) \ge 0$, then some divisor on $X'$ computes $\operatorname{mld}_{\eta} (X, B + M)$. 

\item \label{item:-inftycl}
The set $\{ \eta \in X \mid  \operatorname{mld}_{\eta} (X, B + M) = - \infty\}$ is a closed subset of $X$. 
This follows from (\ref{item:resol1}). 

\item If $\operatorname{mld}_{\eta} (X, B + M) < 0$
, then $\operatorname{mld}_{\eta} (X, B + M) = - \infty$. 

\item If $\operatorname{mld}_{\eta} (X, B + M) \ge 0$, then the infimum in Definition \ref{defi:defs}(\ref{item:mld}) is in fact the minimum. 

\end{enumerate}
We note that the similar statements as above also hold for $\operatorname{mld} (X, B + M)$. 
\end{rmk}

\subsection{Generalized minimal model program}

In this subsection, we collect some facts related to the minimal model program for generalized pairs. 

We will start from the basics of the minimal model program for generalized pairs. 

\begin{defi}[Generalized minimal model program]\label{LMMP}
Let $\pi:X \to S$ be a projective morphism of quasi-projective varieties and 
$(X,B+M)/Z$ a $\mathbb{Q}$-factorial generalized lc pair. 
Suppose that $X \to Z$ in Definition \ref{defi:gp} factors through $\pi : X \to S$. 
Assume that there exists an extremal ray $R_1\subset \overline{NE}(X/S)$ such that $(K_X+B+M) \cdot R_1 <0$ 
and we have an extremal contraction with respect to $R_1$. 
Note that when $(X,B+M)$ is generalized klt, 
we always have such an extremal contraction. 
If the contraction is a divisorial contraction or a flipping contraction,
let
\[
(X,B+M) \dashrightarrow (X_1,B_1+M_1)
\]
be the divisorial contraction or its flip, respectively. 
Here $B_1+M_1$ is the strict transform of $B+M$ on $X_1$.
Next, if we find an extremal ray $R_2 \subset \overline{NE}(X_1/S)$ with the same conditions as above, 
we repeat this process. We call this process a \textit{generalized minimal model program} over $S$; 
\[
(X,B+M) = (X_0,B_0+M_0) \dashrightarrow (X_1, B_1+M_1) \dashrightarrow \cdots \dashrightarrow (X_i, B_i+M_i) \dashrightarrow \cdots. 
\]
Furthermore, we call it a {\it sequence of flips} when  every map 
$(X_{i-1}, B_{i-1}+M_{i-1}) \dashrightarrow (X_i, B_i+M_i)$ is a flip and 
we say that it {\it terminates} if the sequence of flips is not an infinite sequence.
\end{defi}

Next, we consider special minimal model programs with some extra data:

\begin{defi}[Generalized minimal model program with scaling]\label{LMMPS}
Let $\pi:X \to S$ and $(X, B + M)/Z$ be as in Definition \ref{LMMP}. 
Suppose that $(X, B + M)$ is generalized klt. 
Let $H$ be an effective $\mathbb{R}$-divisor such that 
$K_X + B + H +M$ is $\pi$-nef and $(X, B + H +M)$ is still generalized lc. 
We put
\[
\lambda_1 := \inf\{\alpha \in \mathbb{R}_{\geq0} \mid K_X + B + \alpha H +M\ \text{is\ $\pi$-nef} \}.
\]
If $K_X + B +M$ is not $\pi$-nef, then $\lambda_1>0$
and there exists an extremal ray $R_1 \subset \overline{NE}(X/S)$ such that $(K_X + B+M) \cdot R_1 <0$ 
and $(K_X + B + \lambda_1 H +M) \cdot R_1=0$ (cf.\ \cite[Section 4]{BZ16}). 
We consider the extremal contraction with respect to this $R_1$. 
If it is a divisorial contraction or a flipping contraction, let 
\[
(X, B+M) \dashrightarrow (X_1, B_1+M_1)
\]
 be the divisorial contraction or its flip. 
Note that $K_{X_1}+ B_1 + \lambda_1H_1 +M_1$ is $\pi$-nef, where $H_1$ is the strict transform of $H$ on $X_1$. 
We put 
\[
\lambda_2 := \inf\{\alpha \in \mathbb{R}_{\geq0} \mid K_{X_1}+ B_1+ \alpha H_1 +M_1 \ \text{is\ $\pi$-nef} \}. 
\]
If $K_{X_1} + B_1+M_1$ is not $\pi$-nef, 
we can find an extremal ray $R_2$ by the same way as above. 
We repeat this process and get a sequence 
\[
(X,B+M) = (X_0, B_0+M_0) \dashrightarrow (X_1, B_1+M_1) \dashrightarrow \cdots \dashrightarrow (X_i, B_i+M_i) \dashrightarrow \cdots, 
\]
with $\lambda_1 \geq \lambda_2 \geq \lambda_3 \ge \cdots$, where 
\[
\lambda_i := \inf\{\alpha \in \mathbb{R}_{\geq0} \mid K_{X_{i-1}} + B_{i-1} + \alpha H_{i-1} +M_{i-1} \ \text{is\ $\pi$-nef} \}
\] 
and $H_{i-1}$ is the strict transform of $H$ on $X_{i-1}$.
We call this process a \textit{generalized minimal model program for $(X, B+M)$ with scaling of} $H$ over $S$. 
\end{defi}

\noindent
In Definition \ref{LMMPS}, we have assumed that $(X, B+M)$ is generalized klt. 
We note that the existence of $R_1$ is a problem when the pair is generalized lc. 

\begin{thm}[{\cite[Lemma 4.4(1)]{BZ16}}]\label{thm:MFS}
Let $(X, B+M)/Z$ be a $\mathbb{Q}$-factorial generalized lc pair with $X$ klt.
Suppose that $K_X + B + M$ is not pseudo-effective over $Z$. 
Then, for a general ample $\mathbb{R}$-divisor $H$, 
a generalized minimal model program with scaling of $H$ for $(X, B+M)$ exists and terminates with a Mori fiber space.
\end{thm}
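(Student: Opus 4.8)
The plan is to reduce to the generalized klt case and then invoke the generalized analogue of the Birkar--Cascini--Hacon--McKernan program established in \cite[Section~4]{BZ16}. Since $X$ is klt and $\mathbb{Q}$-factorial, I would work on the birational model $\varphi\colon X'\to X$ of Definition \ref{defi:gp} and write $K_{X'}+B'_0=\varphi^*K_X$ and $K_{X'}+B'_1+M'=\varphi^*(K_X+\Delta+M)$. For $t\in[0,1)$ the datum $(X,t\Delta+tM)/Z$, with $M'$ replaced by $tM'$, is again a $\mathbb{Q}$-factorial generalized pair ($K_X+t\Delta+tM=t(K_X+\Delta+M)+(1-t)K_X$ is $\mathbb{R}$-Cartier and $tM'$ is nef$/Z$), and since $B'_t:=tB'_1+(1-t)B'_0$ satisfies $\operatorname{coeff}_E B'_1\le 1$ (generalized lc-ness) and $\operatorname{coeff}_E B'_0<1$ (klt-ness of $X$), one gets $\operatorname{coeff}_E B'_t<1$ for every $E$, so $(X,t\Delta+tM)$ is generalized klt. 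Because the pseudo-effective cone of $X/Z$ is closed in the finite-dimensional space $N^1(X/Z)_{\mathbb R}$ and the class of $K_X+\Delta+M$ lies outside it, the class of $K_X+t(\Delta+M)$, which depends continuously on $t$, is still not pseudo-effective over $Z$ for $t$ sufficiently close to $1$. Fixing such a $t$ and setting $\Delta_0:=t\Delta$, $M_0:=tM$, it then suffices to produce a Mori fibre space for the $\mathbb{Q}$-factorial generalized klt pair $(X,\Delta_0+M_0)/Z$, since $\Delta-\Delta_0$ and $M-M_0$ are respectively an effective divisor and the push-forward of the nef$/Z$ divisor $(1-t)M'$.

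Next I would run a generalized minimal model program with scaling of $H$ for $(X,\Delta_0+M_0)$ over $Z$ in the sense of Definition \ref{LMMPS} (with $S=Z$). For a general ample $\mathbb{R}$-divisor $H$ the pair $(X,\Delta_0+H+M_0)$ is generalized klt by Bertini, and taking $H$ sufficiently positive we may assume $K_X+\Delta_0+H+M_0$ is ample$/Z$, hence nef$/Z$, so the program is well posed. At each stage where $K_{X_i}+\Delta_{0,i}+M_{0,i}$ is not nef$/Z$ (here $\Delta_{0,i}$, $M_{0,i}$, $H_i$ denote the strict transforms on $X_i$), the cone and contraction theorems for $\mathbb{Q}$-factorial generalized klt pairs from \cite[Section~4]{BZ16} provide a $(K_{X_i}+\Delta_{0,i}+M_{0,i})$-negative extremal ray on which $K_{X_i}+\Delta_{0,i}+\lambda_{i+1}H_i+M_{0,i}$ vanishes, together with its contraction, which is divisorial, of flipping type, or of fibre type; in the flipping case the flip exists by the existence of flips for generalized klt pairs. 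Each divisorial contraction drops $\rho(X_i/Z)$ and flips preserve it, while the sequence of flips is finite by termination of the MMP with scaling of an ample divisor for generalized klt pairs; hence the program is finite and stops.

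It remains to see that it stops with a Mori fibre space rather than at a minimal model. If the last model $X_n$ had $K_{X_n}+\Delta_{0,n}+M_{0,n}$ nef$/Z$, then this class would be pseudo-effective over $Z$ (a nef$/Z$ class plus a positive multiple of an ample$/Z$ class is ample$/Z$, and a large multiple of an ample$/Z$ class is effective$/Z$), and pseudo-effectivity over $Z$ propagates backwards through the steps of the program — a divisorial contraction has positive discrepancy for $K+\Delta_0+M_0$, and a flip is an isomorphism in codimension one — forcing $K_X+\Delta_0+M_0$ to be pseudo-effective over $Z$, contrary to the choice of $t$. Therefore the final contraction is of fibre type, i.e. the program terminates with a Mori fibre space; transporting $\Delta+M$ by the strict transform gives the assertion for $(X,\Delta+M)/Z$. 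The substantial inputs — existence of flips for generalized klt pairs and termination of the scaled MMP — are exactly the generalized BCHM results of \cite[Section~4]{BZ16}; granting these, the only point that needs genuine care is the perturbation in the first paragraph, which must simultaneously preserve generalized klt-ness of the pair and non-pseudo-effectivity of $K_X+t(\Delta+M)$ over $Z$.
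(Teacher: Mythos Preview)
The paper does not prove this statement; it is quoted from \cite[Lemma~4.4(1)]{BZ16} without argument. Your proposal, however, has a real gap.

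You perturb to the generalized klt pair $(X,t\Delta+tM)$ and run an MMP with scaling of $H$ for that pair. But the theorem asserts that a $(K_X+\Delta+M)$-MMP with scaling of $H$ runs and terminates, and a $(K_X+t\Delta+tM)$-negative extremal ray need not be $(K_X+\Delta+M)$-negative: since $K_X+t\Delta+tM=t(K_X+\Delta+M)+(1-t)K_X$, a ray $R$ with $K_X\cdot R\ll 0$ can satisfy $(K_X+t\Delta+tM)\cdot R<0$ while $(K_X+\Delta+M)\cdot R\ge 0$. So the sequence of contractions you produce is in general not an MMP for $(X,\Delta+M)$ at all, and on the terminal model you only know that $-(K_{X_n}+t\Delta_n+tM_n)$ is relatively ample, not $-(K_{X_n}+\Delta_n+M_n)$. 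The remark that $\Delta-\Delta_0\ge 0$ and $M'-M_0'$ is nef$/Z$ does not help: those inequalities point the wrong way for transferring negativity of the log canonical class from the perturbed pair to the original one.

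The argument in \cite{BZ16} instead keeps the pair $(X,\Delta+M)$ and uses the scaling divisor $H$ itself to manufacture the auxiliary klt structure. One first checks that all scaling parameters satisfy $\lambda_i>\epsilon_0$ for a fixed $\epsilon_0>0$ (because $K_X+\Delta+M+\epsilon_0H$ remains non-pseudo-effective over $Z$ and each $H_i$ stays big), so every step of the $(K_X+\Delta+M)$-MMP with scaling of $H$ is simultaneously $(K_X+\Delta+M+\epsilon_0H)$-negative. Then, using that $X$ is klt and that $M'+\epsilon_0\varphi^*H$ is big and nef over $Z$, one produces an ordinary klt boundary $\Gamma$ on $X$ with $K_X+\Gamma\sim_{\mathbb{R},Z}K_X+\Delta+M+\epsilon_0H$; the same sequence of maps is then a $(K_X+\Gamma)$-MMP with scaling, and existence of rays, contractions, flips and termination all follow from \cite{BCHM}. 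The moral is that the auxiliary klt pair must be tied to the original MMP through $H$, not obtained by independently shrinking $\Delta$ and $M$.
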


\begin{thm}\label{thm:extraction}
Let $(X, B+M)/Z$ be a generalized klt pair and let $E$ be an exceptional divisor over $X$ with $a_E(X, B+M ) \leq 1$. 
Then there exists a projective birational morphism $f:W \to X$ such that $E$ is the only $f$-exceptional divisor. 
\end{thm}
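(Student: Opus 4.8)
The plan is to reduce this to the analogous statement for usual klt pairs, for which a divisorial extraction of a single exceptional divisor is standard (it follows from the existence of minimal models / the MMP, cf.\ \cite[Corollary 1.38]{Kol13} or the construction via running a suitable MMP with scaling). The key point is that a generalized klt pair can be replaced, after passing to a high enough birational model, by a usual klt pair with the same log discrepancy for the divisor $E$ we wish to extract, together with enough $\mathbb{Q}$-factoriality and ampleness room to run the relevant MMP.

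First I would fix a projective birational morphism $\varphi : X' \to X$ resolving the b-divisor $M$, so that $M = \varphi_* M'$ with $M'$ nef$/Z$, and which I may additionally take to be a log resolution of $(X,\Delta)$ on which $E$ appears as a genuine prime divisor; write $K_{X'} + \Delta' + M' = \varphi^*(K_X + \Delta + M)$ as in Remark \ref{rmk:rmks}. The hypothesis $a_E(X,\Delta+M) \le 1$ says $\operatorname{coeff}_E \Delta' \ge 0$, so $\Delta'$ is effective in a neighborhood of $E$ (over the locus where it matters); since $(X,\Delta+M)$ is generalized klt, all coefficients of $\Delta'$ are $< 1$. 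Next I would choose a general ample $\mathbb{R}$-divisor $A'$ on $X'$ and a small rational $\delta > 0$ so that, setting $\Delta'' := \Delta' + \delta A'$ with $0 \le \Delta'' $ and coefficients still $< 1$, the pair $(X', \Delta'')$ is klt (in the usual sense) and $\mathbb{Q}$-factorial, while $M' - \delta A'$ remains nef$/Z$ — or more cleanly, absorb $M'$ into the boundary by using that $M'$ nef allows one to write $M' \sim_{\mathbb{R}, Z} $ (an effective $\mathbb{R}$-divisor) $+ (\text{ample})$ after a small perturbation, so that $K_{X'} + \Delta' + M'$ becomes $K_{X'} + (\text{klt boundary})$ up to $\mathbb{R}$-linear equivalence over $X$.

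Then I would run a $(K_{X'} + \Gamma)$-MMP over $X$ for this usual klt pair $(X', \Gamma)$, where $\Gamma$ is chosen so that the only divisor with log discrepancy $\le 1$ over $X$ that is not contracted is $E$: concretely, increase the coefficients of all $\varphi$-exceptional divisors other than $E$ up to (just below) $1$ and increase $\operatorname{coeff}_E$ so that $K_{X'} + \Gamma \equiv$ (something $\varphi$-negative on exactly those divisors). Running this MMP over $X$ with scaling of an ample divisor terminates (we are over the base $X$ and the pair is klt), and it contracts precisely the divisors we made $\varphi$-exceptional-and-unwanted, leaving a model $W \to X$ on which $E$ is the unique exceptional divisor and which is $\mathbb{Q}$-factorial; discrepancy bookkeeping shows the MMP is $(K_{X'}+\Gamma)$-negative on all the divisors we want gone and nonnegative on $E$, so $E$ survives. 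The main obstacle is the bookkeeping in choosing $\Gamma$: one must pick the coefficient increases so that the resulting MMP contracts \emph{all} exceptional divisors except $E$ and no more — this is the standard but slightly delicate argument behind extraction of a single divisor, and one has to check that raising coefficients to values $<1$ keeps the pair klt while making $K_{X'}+\Gamma$ have the right sign on each exceptional divisor, using that $a_E \le 1$ to guarantee $E$ itself is not forced to be contracted.
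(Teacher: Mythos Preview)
Your overall extraction-via-MMP strategy is the same as the paper's: pass to a log resolution $Y \to X$ on which $E$ appears, bump up the coefficients of the unwanted exceptional divisors by a small $\epsilon$, run an MMP over $X$, and use the negativity lemma to conclude that exactly those divisors are contracted while $E$ survives. The paper writes $\psi^*(K_X+\Delta+M)+F = K_Y + B_Y + M_Y$ with $F, B_Y \ge 0$ having no common components, lets $G$ be the sum of the $\psi$-exceptional divisors other than $E$, and runs a \emph{generalized} MMP for $(Y, B_Y + \epsilon G + M_Y)$ over $X$ with scaling, invoking \cite[Lemma 4.4(2)]{BZ16} for termination.

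Where you diverge is in trying to first reduce to a \emph{usual} klt pair by absorbing $M'$ into the boundary, and this step as written does not work. Your first suggestion, that $M' - \delta A'$ remains nef$/Z$ for a small ample $A'$, is false: nef minus a positive multiple of ample need not be nef (it would require $M'$ to be ample, not merely nef). Your second suggestion, that nefness of $M'$ lets you write $M' \sim_{\mathbb{R},Z} (\text{effective}) + (\text{ample})$, is also false in general; a nef divisor need not be big. So the reduction to an ordinary klt pair, as you have sketched it, has a genuine gap. One can repair this (e.g.\ by pulling back an ample from $X$ as in the proof of Lemma~\ref{lem:ord_limit} and arguing more carefully), but the paper sidesteps the issue entirely by staying in the generalized framework and citing the generalized MMP with scaling from \cite{BZ16}. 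That is both shorter and avoids the delicate bookkeeping you flag at the end of your proposal.
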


\begin{proof}
Take a log resolution $\psi: Y \to X$ of $(X, B)$ such that $E$ is on $Y$. 
We may assume that $\varphi: X' \to X$ factors through $\psi:Y \to X$ and $M_Y$ is nef$/Z$, 
where $M_Y$ is the push-forward of $M'$ on $Y$. 
Write $\psi^*(K_X+ B+M)+F=K_Y+B_Y+M_Y$, where $F$ and $B_Y$ are effective and with no common components. 
Let $G$ be the sum of $\psi$-exceptional divisors except for $E$. 
Take a sufficiently small $\epsilon >0$ such that $(Y, B_Y+\epsilon G+M_Y)$ is generalized klt. 
Then we run a generalized MMP for $(Y, B_Y+\epsilon G+M_Y)/Z$ over $X$ with scaling of general ample divisor $H$. 
This MMP exists and terminates with a minimal model $W$ by \cite[Lemma 4.4(2)]{BZ16}.
By applying the negativity lemma to the push-forward of $\epsilon G + F$ on $W$, the minimal model $f: W \to X$ turns out to have only exceptional divisor $E$. 
\end{proof}

\subsection{Divisorial adjunction for generalized pairs}
We recall the construction of divisorial adjunction for generalized pairs. 
In the minimal model theory, the divisorial adjunction is a significant tool in arguments by induction on dimension. 

\begin{defi}[{cf.\ \cite[3.1]{Bir19a}}]\label{def:gadj}
Let $(X,B+M)/Z$ be a generalized pair and let $\varphi:X'\rightarrow X$ be the birational morphism in Definition \ref{defi:gp}. 
Replacing $X'$, we may assume that $\varphi$ is a log resolution of $(X,B)$.
Let $S$ be the normalization of a component of $B$ with coefficient $1$, 
and let $S'$ be the strict transform of $S$ on $X'$.
We define an $\mathbb{R}$-divisor $B'$ on $X'$ by 
\[
K_{X'}+B'+M' = \varphi^* (K_X+B+M). 
\]
Then we set $B_{S'}=(B'-S')|_{S'}$ and pick $M_{S'} \sim _{\mathbb{R}}M'|_{S'}$. 
By the adjunction, we have 
\[
K_{S'}+B_{S'}+M_{S'} \sim _{\mathbb{R}}(K_{X'}+B'+M')|_{S'}.
\]
We denote by $\varphi _S : S' \to S$ the induced morphism. 
We set $B_S = \varphi _{S*} B_{S'}$ and $M_S = \varphi _{S*}M_{S'}$.
Then, $(S, B_S + M_S)/Z$ is a generalized pair satisfying
\begin{itemize}
\item $K_S+B_S+M_S \sim _{\mathbb{R}}(K_{X}+B+M)|_{S}$ and 
\item $K_{S'}+B_{S'}+M_{S'}=\varphi_S ^*(K_S+B_S+M_S)$.
\end{itemize}
We call $(S, B_S + M_S)$ or $K_S+B_S+M_S$ the \textit{divisorial adjunction} for the generalized pair $(X,B+M)$ and $S$. 
We remark that if $(X,B+M)$ is generalized lc, 
then $(S, B_S + M_S)$ is also generalized lc by the construction (\cite[Remark 4.8]{BZ16}).
\end{defi}

\begin{rmk}\label{gadjm}
In Definition \ref{def:gadj}, the  divisor  $B_S$ is uniquely determined by $B$ and $M'$ as a divisor, 
while $M_{S'}$ is defined only up to $\mathbb{R}$-linear equivalence.
We can observe that if $M'=\sum m_iM'_i$ with $m_i>0$ and $M'_i$ nef$/Z$ Cartier,
then we can pick some Cartier divisors $M_{i,S'}\sim M'_i|_{S'}$ and take $M_{S'}=\sum m_iM_{i,S'}$. 
Note here that some of $M_{i,S'}$'s might be numerically trivial. 
\end{rmk}

The following lemma is from \cite[Lemma 3.3]{Bir19a}. 
We shall write down the coefficients of $M'$ more explicitly. 
\begin{lem}\label{lem:adjcoef}
Under the settings and the notation in Definition \ref{def:gadj}, assume that
\begin{itemize}
\item $(X, B + M)/Z$ is generalized lc,
\item $B=\sum b_iB_i$, where $B_i$'s are prime divisors on $X$, and
\item $M'=\sum m_iM'_i$ as in Remark \ref{gadjm}.
\end{itemize}
Then the coefficients of $B_S$ are of the form
\[
1-\frac{1}{\ell}+\frac{\sum \alpha_ib_i+\sum \beta_jm_j}{\ell},
\] where $\ell \in \mathbb{Z}_{> 0}$, $\alpha_i, \beta_j \in \mathbb{Z}_{\geq 0}$,
and $M_{S'}$ can be chosen to be of the form
\[
\sum m_iM_{i,S'},
\]
where $M_{i,S'}$ are nef$/Z$ Cartier divisors on $S'$.
\end{lem}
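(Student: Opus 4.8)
The plan is to combine the classical divisorial adjunction for usual log pairs with the observation in Remark \ref{gadjm} that governs the nef part. Since $(X,B+M)/Z$ is generalized lc, the construction in Definition \ref{def:gadj} already yields a generalized lc pair $(S,B_S+M_S)/Z$, so the content is purely the explicit shape of the coefficients. First I would pass to the log resolution $\varphi:X'\to X$ of $(X,B)$ so that $S'$, the strict transform of $S$, is smooth and meets the other components of $B'$ transversally, and write $K_{X'}+B'+M'=\varphi^*(K_X+B+M)$. By the definition of the generalized pair, $M'=\sum m_i M_i'$ with $M_i'$ nef$/Z$ Cartier, and we may further assume (blowing up more) that $M_i'$ restricts to a nef$/Z$ Cartier divisor on $S'$; this immediately gives the asserted form $M_{S'}=\sum m_i M_{i,S'}$ with $M_{i,S'}:=M_i'|_{S'}$ Cartier and nef$/Z$, which handles the second half of the statement.

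For the coefficient formula, I would invoke \cite[Lemma 3.3]{Bir19a} (equivalently the usual different computation) applied to the usual lc pair $(X',B')$ along $S'$: the coefficient of a prime divisor $P$ of $B_{S'}$ — and hence its push-forward to $B_S$ — has the shape $1-\frac{1}{\ell}+\frac{c}{\ell}$ for some $\ell\in\mathbb{Z}_{>0}$ and some $c\ge 0$, where $c$ records the contribution coming from the coefficients of $B'-S'$ along $P$. The key point to make explicit is that $\operatorname{coeff}_P(B'-S')$ is itself an integral nonnegative linear combination of the $b_i$'s and the $m_j$'s: tracing through $K_{X'}+B'+M'=\varphi^*(K_X+B+M)$ one sees that the coefficients of $B'$ are obtained by the usual log-pullback formula from the $b_i$ together with the discrepancy contribution of $M=\varphi_*M'$, and since $M'=\sum m_j M_j'$ is Cartier-combination, each coefficient appearing is $\sum_i \alpha_i b_i + \sum_j \beta_j m_j$ with $\alpha_i,\beta_j\in\mathbb{Z}_{\ge 0}$. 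Substituting $c=\sum_i\alpha_i b_i+\sum_j\beta_j m_j$ into $1-\frac1\ell+\frac c\ell$ gives exactly the claimed expression.

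The main obstacle I anticipate is the bookkeeping that guarantees the coefficients $\alpha_i,\beta_j$ are genuinely nonnegative \emph{integers} rather than merely nonnegative reals. For the $b_i$ part this is the standard fact that in the log resolution pullback the multiplicity of an exceptional (or strict-transform) divisor in $B'$ is $\sum_i \alpha_i b_i - a$ with integer $\alpha_i = \operatorname{mult}$ of the corresponding stratum and $a\in\mathbb{Z}$ the log discrepancy of $(X,0)$; the integrality is what lets one absorb the $-a$ into the $1-\tfrac1\ell$ term. The $m_j$ part is the new ingredient: one must check that the contribution of $M=\varphi_*M'$ to $\operatorname{coeff}_P(B')$ is $\operatorname{coeff}_P(\varphi^*M - M')$ restricted appropriately and that, because each $M_j'$ is \emph{Cartier}, this is an integral multiple of $m_j$ — this uses that pullback of a Cartier divisor has integral coefficients along every divisor over $X$. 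Once integrality on both fronts is in hand, the reduction to \cite[Lemma 3.3]{Bir19a} and Remark \ref{gadjm} is routine, and taking the push-forward $\varphi_{S*}$ does not change coefficients of the components of $B_S$ that survive (components contracted by $\varphi_S$ simply drop out), so the formula descends from $B_{S'}$ to $B_S$ verbatim.
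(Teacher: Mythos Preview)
Your argument has a genuine gap in the integrality claim. You assert that the coefficients of $B'$ along exceptional divisors are of the form $\sum_i\alpha_i b_i+\sum_j\beta_j m_j$ with $\alpha_i,\beta_j\in\mathbb{Z}_{\ge 0}$, but this is false in general. Writing $K_{X'}+B'+M'=\varphi^*(K_X+B+M)$, the coefficient of an exceptional divisor $E$ in $B'$ is $1-a_E(X,B+M)=\bigl(1-a_E(X)\bigr)+\sum_i b_i\operatorname{ord}_E B_i+\operatorname{ord}_E M$. The term $1-a_E(X)$ is a rational number depending on the singularity of $X$ and is \emph{not} of the form you claim; there is no way to ``absorb it into the $1-\tfrac1\ell$ term'' without knowing something about the singularity. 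Likewise, your argument that $\varphi^*M_j-M_j'$ has integer coefficients because ``pullback of a Cartier divisor has integral coefficients'' misfires: it is $M_j'$ that is Cartier on $X'$, whereas $M_j=\varphi_*M_j'$ is only a Weil divisor on $X$, so $\varphi^*M_j$ need not have integral coefficients. Working on the smooth $X'$ and pushing forward gives $\ell=1$ in the adjunction formula and simply does not produce the $1-\tfrac1\ell$ term; that term encodes the local Cartier index of $X$ along $S$, which your computation on $X'$ cannot see.

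The paper proceeds quite differently: it first cuts by general hypersurfaces to reduce to $\dim X=2$ (so $X$ is automatically $\mathbb{Q}$-factorial), compares $B_S$ with the usual different $\widetilde{B}_S$ of $(X,B)$ along $S$, and uses inversion of adjunction on surfaces to deduce that $(X,B)$ is plt at the point in question. This plt condition is exactly what forces the existence of a local index $\ell$ with $\ell D$ Cartier for every Weil divisor $D$; the classical different formula then gives $\mu_v\widetilde{B}_S=1-\tfrac1\ell+\sum\alpha_i b_i/\ell$, and since each $M_j$ is now $\ell$-Cartier at the point, the correction $B_S-\widetilde{B}_S=\sum m_j\varphi_{S*}(E_j|_{S'})$ (with $E_j=\varphi^*M_j-M_j'$) contributes the $\sum\beta_j m_j/\ell$ term with integral $\beta_j$. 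The reduction to surfaces and the plt step are not cosmetic: they are precisely what supplies the integer $\ell$ and the integrality of the $\alpha_i,\beta_j$ that your argument is missing.
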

\begin{proof}
We take over the notations in Definition \ref{def:gadj}. 
We have already seen the assertion for $M_{S'}$ in Remark \ref{gadjm}. 
In what follows, we shall show the assertion for $B_S$. 
By taking general hypersurface sections (see \cite[Remark 4.8]{BZ16}), we may assume that $\dim X=2$. 
Note that, in particular, $X$ is $\mathbb{Q}$-factorial (cf.\ \cite[Proposition 4.11(1)]{KM98}). 
For any fixed closed point $v \in S$, we shall show that $\mu_v B_S$ is of the form in the statement. 
Since $(S,B_S+M_S)$ is generalized lc, we have $\mu_v B_S\leq 1$.
We may assume that $\mu_v B_S< 1$.
We define $\widetilde{B}_S$ by the divisorial adjunction for usual pairs: 
\[
K_S + \widetilde{B}_S = (K_X+B)|_S.
\]
Then by the relative nefness of $M'$ and the negativity lemma, we have $\widetilde{B}_S \leq B_S$.
Therefore, by the inversion of adjunction for surfaces (cf.\ \cite[Corollary 3.12]{Sho93}), 
$(X,B)$ is plt at the image $v' \in X$ of $v$.

By the adjunction for usual pairs (cf.\ \cite[Proposition 3.9 and Corollary 3.10]{Sho93}), 
there exists $\ell \in \mathbb{Z}_{> 0}$ such that $\ell D$ is Cartier at $v'$ for any Weil divisor $D$ on $X$, and we have 
\[
\mu_v \widetilde{B}_S=1-\frac{1}{\ell}+\sum\frac{\alpha_ib_i}{\ell}
\]
for some $\alpha_i\in\mathbb{Z}_{\geq 0}$. 
We write $\varphi^* M_i = M'_i+E_i$. Then $E_i\geq 0$ holds by the relative nefness of $M'_i$ and the negativity lemma.
Since $M' _i$ is Cartier, $M_i$ is integral and hence $\ell M_i$ is Cartier at $v'$. 
Therefore $\ell E_i$ is also Cartier over $v'$.
By construction, it follows that $B_S = \widetilde{B}_S + \sum m_iE_{iS}$,
where $E_{iS} := \varphi_{S *}(E_i|_{S'})$.
Since $\ell E_{iS} = \varphi_{S*}(\ell E_i|_{S'})$ is integral, we have
\[
\mu_v B_S=1-\frac{1}{\ell}+\sum\frac{\alpha_ib_i}{\ell}+\sum \frac{\beta_jm_j}{\ell}
\]
for some $\beta _j \in \mathbb{Z}_{\geq 0}$, which completes the proof. 
\end{proof}

\section{Lower semi-continuity conjecture for generalized mld's}\label{section:LSC}

\subsection{Basic properties on generalized minimal log discrepancies}
In this subsection, following Ambro's paper \cite{Amb99}, 
we prove the finiteness and the constructibility of the mld function (Lemma \ref{lem:const}), 
which is weaker than the lower semi-continuity conjecture (Conjecture \ref{conj:LSC}). 
We also prove that the minimal log discrepancy of a generalized pair can be approximated 
by those of usual pairs (Lemma \ref{lem:mld_limit}). 
Both Lemmas \ref{lem:const} and \ref{lem:mld_limit} will be important in the next subsection, 
where we reduce the lower semi-continuity conjecture for generalized pairs to that for usual pairs. 

The following lemma is a generalization of \cite[Proposition 2.1]{Amb99} to the generalized setting. 
The same proof in \cite[Proposition 2.1]{Amb99} also works in this setting. 

\begin{lem}\label{lem:ambro}
Let $(X, B+M)/Z$ be a generalized pair, and 
let $\eta$ be a scheme-theoretic point of $X$ with $\operatorname{codim} \eta \ge 1$. 
Then there exists an open set $U \subset |X|_{0}$ with $\overline{\{ \eta \}} \cap U \not = \emptyset$ such that 
\[
\operatorname{mld}_x(X, B+M) = \operatorname{mld}_{\eta} (X, B+M) + \dim \eta
\]
holds for any $x \in \overline{\{ \eta \}} \cap U$.
\end{lem}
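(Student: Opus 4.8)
The plan is to mimic the proof of \cite[Proposition 2.1]{Amb99}, using the reduction observed in Remark \ref{rmk:rmks}(1) to pass from the generalized pair to a usual log pair on a birational model. Concretely, choose $\varphi\colon X'\to X$ as in Definition \ref{defi:gp} and, possibly replacing it by a higher model, assume $\varphi$ is a log resolution of $(X,\Delta)$ which also resolves $\overline{\{\eta\}}$. Write $K_{X'}+\Delta'+M'=\varphi^*(K_X+\Delta+M)$. By Remark \ref{rmk:rmks}(1), for every divisor $E$ over $X$ we have $a_E(X,\Delta+M)=a_E(X',\Delta')$, so all the minimal log discrepancies in the statement can be computed on $X'$ with the \emph{usual} boundary $\Delta'$ (the only subtlety is that $\Delta'$ need not be effective, but this does not affect the local-analytic arguments below, since the coefficients of $\Delta'$ are still $<1$ along the relevant centers when $\operatorname{mld}_\eta\ge 0$, and when $\operatorname{mld}_\eta=-\infty$ the statement is about an open set where $\operatorname{mld}$ is also $-\infty$, handled by Remark \ref{rmk:rmks}(4)).

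Granting that reduction, the core argument is the one in \cite{Amb99}. First I would treat the case $\operatorname{mld}_\eta(X,\Delta+M)\ge 0$. Pick a divisor $E_0$ over $X'$ computing $\operatorname{mld}_\eta$, which exists by Remark \ref{rmk:rmks}(3); we may take $E_0$ to be a divisor on $X'$ itself after a further blow-up. For a general closed point $x\in\overline{\{\eta\}}$, one obtains $\operatorname{mld}_x(X,\Delta+M)\le \operatorname{mld}_\eta(X,\Delta+M)+\dim\eta$ by the standard construction of a divisor over $x$: blow up $E_0$ (or rather $X'$) successively inside the fiber over a general point of the center, each blow-up of a codimension-one generic point of the stratum dropping the log discrepancy by exactly $1$ relative to $\dim\eta$; equivalently, restrict to a general complete intersection of dimension $\dim\eta$ through $x$ and apply the codimension bound. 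For the reverse inequality $\operatorname{mld}_x\ge\operatorname{mld}_\eta+\dim\eta$ on a suitable open set $U$, use that any divisor $F$ over $X$ with $c_X(F)\ni x$ has center either contained in $\overline{\{\eta\}}$ or meeting it properly; by generically choosing $x$ away from the (finitely many, after fixing the resolution) "bad" proper subvarieties, one reduces to centers lying over $\eta$, and then the inequality $a_F(X,\Delta+M)\ge\operatorname{mld}_\eta+\dim\eta$ follows from the behaviour of log discrepancies under the generic hyperplane/restriction, exactly as in Ambro's computation. The existence of such an open $U$ with $\overline{\{\eta\}}\cap U\neq\emptyset$ is where constructibility-type finiteness enters: only finitely many exceptional divisors on the fixed resolution are relevant.

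For the case $\operatorname{mld}_\eta(X,\Delta+M)=-\infty$, by Remark \ref{rmk:rmks}(2) there is a prime divisor $E$ on $X'$ with $a_E(X,\Delta+M)<0$ and $\eta\in c_X(E)$; then $c_X(E)\supsetneq$ contains a nonempty open subset of $\overline{\{\eta\}}$, and for $x$ in that open set $c_X(E)\ni x$ as well, so $\operatorname{mld}_x(X,\Delta+M)=-\infty=\operatorname{mld}_\eta(X,\Delta+M)+\dim\eta$ with the convention that $-\infty+\dim\eta=-\infty$. This case is immediate from the closedness statement in Remark \ref{rmk:rmks}(4).

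The main obstacle I anticipate is the reverse inequality $\operatorname{mld}_x\ge\operatorname{mld}_\eta+\dim\eta$ together with the precise choice of the open set $U$: one must rule out, on a dense open subset of $\overline{\{\eta\}}$, the existence of divisors $F$ over $X$ with $x\in c_X(F)$ whose log discrepancy is too small, including divisors whose center is not contained in $\overline{\{\eta\}}$; this is precisely the content of \cite[Proposition 2.1]{Amb99} and its proof (via cutting by general hyperplanes through $x$ and an induction on $\dim\eta$) carries over verbatim once the reduction of Remark \ref{rmk:rmks}(1) has put us in the setting of a usual pair on $X'$. No new idea beyond Ambro's is needed; the only thing to check carefully is that the non-effectivity of $\Delta'$ and the nef part $M'$ do not interfere, which they do not because $M'$ is $\varphi$-nef and $a_E(X,\Delta+M)=a_E(X',\Delta')$ is genuinely a statement about the usual pair $(X',\Delta')$.
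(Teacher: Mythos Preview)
Your overall strategy is exactly the paper's: reduce to the usual pair $(X',\Delta')$ via Remark~\ref{rmk:rmks}(1), handle the $-\infty$ case by Remark~\ref{rmk:rmks}(\ref{item:-inftycl}), and then run Ambro's argument verbatim. In that sense the proposal is correct and matches the paper.

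That said, your recollection of what Ambro's argument actually \emph{is} drifts from the paper's proof. You describe the inequality $\operatorname{mld}_x\ge\operatorname{mld}_\eta+\dim\eta$ as coming from ``cutting by general hyperplanes through $x$ and an induction on $\dim\eta$'', and the inequality $\operatorname{mld}_x\le\operatorname{mld}_\eta+\dim\eta$ as coming from successive blow-ups inside the fiber. The paper (following Ambro) does neither: once $\varphi$ is a log resolution of $(X,\Delta)$ and of $\overline{\{\eta\}}$, one works directly with the finitely many strata of $\sum D_i$ on $X'$ and chooses $U$ by generic smoothness so that (a) every stratum $C$ with $\varphi(C)=\overline{\{\eta\}}$ has fibers over $\overline{\{\eta\}}\cap U$ of the expected dimension, and (b) no stratum maps to a proper closed subset of $\overline{\{\eta\}}$ meeting $U$. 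With these two conditions, for any $\xi\in X'$ over $x\in\overline{\{\eta\}}\cap U$ one computes $\operatorname{mld}_\xi(X',\Delta')=\operatorname{codim}\xi-\sum_{\xi\in D_i}e_i$ explicitly and gets the lower bound by a direct codimension count; the upper bound comes by taking $\xi'$ the generic point of a component of $D_i\cap\varphi^{-1}(x)$ for $D_i$ a computing divisor of $\operatorname{mld}_\eta$. No hyperplane sections, no induction on $\dim\eta$, no iterated blow-ups are needed. Your outline would likely work as an alternative, but it is more roundabout, and since you are ultimately appealing to \cite{Amb99} anyway, you should make sure your sketch matches what is actually there.
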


\begin{proof}
We may assume that $(X, B+M)$ is generalized lc at $\eta$ (cf.\ Remark \ref{rmk:rmks}(\ref{item:-inftycl})). 
Let $\varphi: X' \to X$ be the projective birational morphism in Definition \ref{defi:gp}. 
Possibly replacing $\varphi$, we may assume that $\varphi$ is a log resolution of the triple $(X, B, \overline{\{ \eta \}})$. 

We define an $\mathbb{R}$-divisor $B'$ on $X'$ by 
\[
K_{X'} + B' + M' = \varphi ^* (K_{X} + B + M). 
\]
Let $\{ D_i \}_{i \in I}$ be the set of the $\varphi$-exceptional divisors, the components of 
$\varphi ^{-1}_* B$, and the computing divisors of $\operatorname{mld}_{\eta} (X, B+M)$ on $X'$. 
We may write 
\[
B' = \sum _{i \in I} e_i D_i
\]
with $e_i \le 1$. 
Let $I_{\textup{dom}} \subset I$ be the set of the indices $i$ satisfying $\varphi (D_i) = \overline{\{ \eta \}}$. 
Then we have 
\[
\operatorname{mld}_{\eta} (X, B+M) = \min _{i \in I_{\textup{dom}}} (1 - e_i). 
\]
Since $(X', B')$ is log smooth, we have 
\[
\operatorname{mld}_{\xi}(X', B'+M') = \operatorname{mld}_{\xi}(X', B') = \operatorname{codim} \xi - \sum _{\xi \in D_i} e_i
\]
for any scheme-theoretic point $\xi$ of $X'$ (cf.\ \cite[Corollary 2.32]{KM98}). 

By generic smoothness, we can take an open set $U \subset |X|_{0}$ 
such that $\overline{\{ \eta \}} \cap U \not = \emptyset$ with the following two conditions: 
\begin{itemize}
\item[(a)] 
For any $x \in \overline{\{ \eta \}} \cap U$ and any stratum $C$ of 
$\operatorname{Supp} \bigl( \sum _{i \in I} D_i \bigr)$ that satisfies $\varphi (C) = \overline{\{ \eta \}}$, 
it follows that $\dim F =  \dim C  - \dim \eta$ for any irreducible component $F$ of $C \cap \varphi^{-1}(x)$. 

\item[(b)]
For any stratum $C$ of $\operatorname{Supp} \bigl( \sum _{i \in I} D_i \bigr)$, 
if $\varphi (C) \subset \overline{\{ \eta \}}$ and $\varphi (C) \cap U \not = \emptyset$, then $\varphi (C) = \overline{\{ \eta \}}$ holds. 
\end{itemize}
In what follows, we shall prove that this $U$ satisfies the assertion. 

Let $x \in \overline{\{ \eta \}} \cap U$ and let $\xi \in X'$ be a scheme-theoretic point with $\varphi (\xi) = x$. 
Let $C$ be the minimum stratum of $\operatorname{Supp} \bigl( \sum _{i \in I} D_i \bigr)$ that contains $\xi$. 
Since $\varphi$ is a log resolution of $\overline{\{ \eta \}}$, 
$\xi \in D_j$ holds for some $j \in I_{\rm dom}$. 
Therefore, it follows that $\varphi (C) \subset \overline{\{ \eta \}}$, and hence $\varphi (C) = \overline{\{ \eta \}}$ by (b). 
Therefore, by (a), we have
\begin{itemize}
\item $\operatorname{codim} \xi - \operatorname{codim} C \ge \dim \eta$. 
\end{itemize}
Furthermore, since $j \in I_{\rm dom}$, we have 
\begin{itemize}
\item 
$1 - e_j \ge \min _{i \in I_{\textup{dom}}} (1 - e_i) = \operatorname{mld}_{\eta} (X, B +M)$. 
\end{itemize}
Therefore, we have 
\begin{align*}
\operatorname{mld}_{\xi}(X', B') 
& = \operatorname{codim} \xi - \sum _{\xi \in D_i} e_i \\
& = (1 - e_j) + (\operatorname{codim} \xi - 1) - \sum _{\xi \in D_i, \ i \not = j} e_i \\
& \ge \operatorname{mld}_{\eta} (X, B+M) +  (\operatorname{codim} \xi - 1) - (\operatorname{codim} C - 1) \\
& \ge \operatorname{mld}_{\eta} (X, B+M) + \dim \eta. 
\end{align*}
Hence we have
\[
\operatorname{mld}_x(X, B+M) \ge \operatorname{mld}_{\eta} (X, B+M) + \dim \eta. 
\]

We shall prove the opposite inequality. 
Let $D_i$ be a computing divisor on $X'$ of $\operatorname{mld}_{\eta} (X, B+M)$ (cf.\ Remark \ref{rmk:rmks}(3)). 
Let $\xi '$ be the scheme-theoretic point of an irreducible component of $D_i \cap \varphi ^{-1}(x)$. 
Then by (a) and (b), the $D_i$ is the unique divisor that contains $\xi'$. 
Hence we have 
\[
\operatorname{mld}_{\xi'}(X', B') = \operatorname{codim} \xi' - e_i = \operatorname{mld}_{\eta} (X, B +M) + \dim \eta, 
\]
which proves 
\[
\operatorname{mld}_x(X,B +M) \le \operatorname{mld}_{\xi'}(X',B') = \operatorname{mld}_{\eta} (X, B+M) + \dim \eta. 
\]
It completes the proof. 
\end{proof}

As a corollary of Lemma \ref{lem:ambro}, we can prove the finiteness and the constructibility of the mld function. 

\begin{lem}\label{lem:const}
Let $(X, B + M)/Z$ be a generalized pair. 
We denote by $m$ the function defined by the minimal log discrepancy: 
\[
m: |X|_{0} \to \mathbb{R} \cup \{ - \infty \}; \quad x \mapsto \operatorname{mld}_x (X,B + M), 
\]
where $|X|_{0}$ is the set of all closed points of $X$ with the Zariski topology. 
Then the following hold. 
\begin{itemize}
\item[(1)] The function $m$ takes finitely many values. 

\item[(2)] The function $m$ is constructible, that is, any fiber of $m$ is a constructible subset of $|X|_{0}$. 
\end{itemize}
\end{lem}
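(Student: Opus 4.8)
The plan is to deduce both statements from Lemma \ref{lem:ambro} by a Noetherian induction on closed subsets of $X$. First I would reduce to the locus where the pair is generalized lc: by Remark \ref{rmk:rmks}(\ref{item:-inftycl}), the set $Z_{-\infty} := \{\eta \in X \mid \operatorname{mld}_\eta(X,\Delta+M) = -\infty\}$ is closed, and on the closed points lying in it the function $m$ takes the value $-\infty$ (using Remark \ref{rmk:rmks}(5), which propagates negativity, together with Definition \ref{defi:defs}(\ref{item:mld}) relating $\operatorname{mld}$ at a point to $\operatorname{mld}$ along its closure). So one fiber of $m$ is the constructible — indeed closed — set $Z_{-\infty} \cap |X|_{\rm cl}$, and it remains to analyze $m$ on the open complement, where we may assume $(X,\Delta+M)$ is generalized lc everywhere.

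Next I would run the Noetherian induction. Let $\eta$ be the generic point of $X$ (or, in the inductive step, of an irreducible closed subset not yet handled). Applying Lemma \ref{lem:ambro} to $\eta$ produces a nonempty open subset $U \subset |X|_{\rm cl}$, meeting $\overline{\{\eta\}}$, on which $\operatorname{mld}_x(X,\Delta+M) = \operatorname{mld}_\eta(X,\Delta+M) + \dim\eta$ is a single constant value. Thus $m$ is constant on the nonempty (locally closed, hence constructible) set $\overline{\{\eta\}} \cap U$. Removing this set — equivalently, replacing $X$ by the finitely many irreducible components of the closed complement — strictly decreases the dimension or the number of maximal-dimensional components, so the process terminates after finitely many steps. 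This exhibits $|X|_{\rm cl}$ as a finite disjoint union of constructible pieces on each of which $m$ is constant; hence $m$ takes finitely many values, proving (1), and each fiber of $m$ is a finite union of such pieces, hence constructible, proving (2).

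The one point requiring a little care — the main obstacle — is making sure the induction is genuinely finite and that the pieces produced are constructible in the correct topological sense. Lemma \ref{lem:ambro} only guarantees that $\overline{\{\eta\}} \cap U$ is nonempty and open in $\overline{\{\eta\}}$; its complement in $\overline{\{\eta\}}$ is a proper closed subset, so it has strictly smaller dimension or (if $\eta$ was not of maximal dimension among remaining components) we keep the bookkeeping by induction on the finite poset of irreducible closed subsets appearing. Since $X$ is Noetherian this descending process stabilizes, and a finite union of locally closed subsets of $|X|_{\rm cl}$ is by definition constructible. Everything else is a direct bookkeeping of how the finitely many locally closed strata cover $|X|_{\rm cl}$, together with the observation that the value of $m$ on each stratum is determined by $\operatorname{mld}_\eta(X,\Delta+M) + \dim\eta$ for the relevant $\eta$, which takes only finitely many values because only finitely many $\eta$ arise.
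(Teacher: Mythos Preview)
Your approach is essentially the same as the paper's --- both deduce the lemma from Lemma \ref{lem:ambro} by Noetherian induction --- but there is one small technical slip at the start. Lemma \ref{lem:ambro} requires $\operatorname{codim}\eta \ge 1$, so it cannot be invoked with $\eta$ the generic point of $X$ itself (indeed $\operatorname{mld}_\eta$ is not even defined for $\operatorname{codim}\eta = 0$; see Definition \ref{defi:defs}(\ref{item:mld})). The paper supplies the missing base case directly: for $x \in X_{\rm sm} \setminus \varphi(\operatorname{Supp}\Delta')$, where $\Delta'$ is defined by $K_{X'}+\Delta'+M' = \varphi^*(K_X+\Delta+M)$, one has $\operatorname{mld}_x(X,\Delta+M) = \operatorname{mld}_x(X) = \dim X$, so $m$ is constant on a dense open subset of $|X|_{\rm cl}$, and the Noetherian induction then runs on the proper closed complement, whose irreducible components have generic points of codimension $\ge 1$. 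Once you insert such a base case your argument coincides with the paper's. Your separate treatment of the $-\infty$ locus is harmless but unnecessary, since Lemma \ref{lem:ambro} already covers that case (its proof begins by reducing to the generalized lc situation via Remark \ref{rmk:rmks}(\ref{item:-inftycl})).
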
 

\begin{proof}
Let $\varphi: X' \to X$ be the birational morphism in Definition \ref{defi:gp}. 
We define an $\mathbb{R}$-divisor $B '$ on $X'$ by 
\[
K_{X'} + B' + M' = \varphi ^* (K_X + B + M). 
\]
Then for $x \in X_{\textup{sm}} \setminus \varphi (\operatorname{Supp} B')$, we have
\[
\operatorname{mld}_x (X, B + M) = \operatorname{mld}_x (X) = \dim X. 
\]
Hence the function $m$ is constant on a dense open subset of $|X|_{0}$. 
Then the assertion follows from Lemma \ref{lem:ambro} by the Noetherian induction. 
\end{proof}

The following lemma will be used in Lemma \ref{lem:mult}. 

\begin{lem}\label{lem:ord_limit}
Let $(X, B+M)/Z$ be a $\mathbb{Q}$-Gorenstein generalized pair and let $E$ be a divisor over $X$. 
Then there exists a sequence of effective divisors $D_m$ on $X$ such that  $K_X+B+D_m$ is $\mathbb{R}$-Cartier and the following two conditions hold. 
\begin{itemize}
\item[(1)] $\operatorname{ord}_{F} (B + M) \le \operatorname{ord}_{F} (B + D_m)$ holds for any divisor $F$ over $X$. 
\item[(2)] $\operatorname{ord}_{E} (B + M) = \lim _{m \to \infty} \operatorname{ord}_{E}(B + D_m)$ holds. 
\end{itemize}
Here $\operatorname{ord}_{{\rm -}} (B + M)$ on the left-hand side is 
regarded as the order for a generalized pair defined in Definition \ref{defi:defs}\,(\ref{item:ord}). 
The order $\operatorname{ord}_{{\rm -}} (B + D_m)$ on the right-hand side is regarded as the order in the usual sense (for a usual pair). 
\end{lem}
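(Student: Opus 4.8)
The plan is to realize the nef b-divisor $M$ on a resolution, approximate it there by genuinely effective divisors, push these down to $X$, and keep track of the orders. First I would replace $\varphi\colon X'\to X$ by a log resolution of $(X,\Delta)$ on which $E$ is a prime divisor; then $X'$ is smooth and $M'$ is still nef$/Z$. Writing $C+M'=\varphi^*(\Delta+M)$ on $X'$, Definition~\ref{defi:defs}\,(\ref{item:ord}) gives $\operatorname{ord}_F(\Delta+M)=\operatorname{coeff}_F(\rho^*C)$ for every divisor $F$ over $X$, where $\rho\colon Y\to X'$ is any model carrying $F$; in particular $\varphi_*C=\Delta$.

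The device that makes the argument work is an ample divisor on $X'$ of a special shape: choose a $\varphi$-exceptional effective divisor $E^*$ with $-E^*$ $\varphi$-ample (such $E^*$ exists by the negativity lemma since $\varphi$ is projective birational), an ample divisor $H$ on $X$, and $m_0\gg0$ so that $H'_0:=\varphi^*(m_0H)-E^*$ is ample on $X'$. For each $m$ the divisor $M'+\tfrac1mH'_0$ is ample$/Z$; adding the pullback of an ample divisor from $Z$ makes it ample, so I may pick a general effective $\mathbb{R}$-divisor $G'_m$, not containing $E$, with $G'_m\sim_\mathbb{R}M'+\tfrac1mH'_0+\varphi^*D_{0,m}$ for some $\mathbb{R}$-Cartier divisor $D_{0,m}$ on $X$; here I use that a rational function on $X'$ is a rational function on $X$, so the principal part of the $\mathbb{R}$-linear equivalence descends to $X$. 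Set $B_m:=\varphi_*G'_m$, which is effective. A short computation gives $C+G'_m=\varphi^*(\Delta+M+D_{0,m}+\tfrac{m_0}{m}H)-\tfrac1mE^*$; pushing forward and using $\varphi_*E^*=0$ yields $\Delta+B_m=\Delta+M+D_{0,m}+\tfrac{m_0}{m}H$, so $K_X+\Delta+B_m$ is $\mathbb{R}$-Cartier because $X$ is $\mathbb{Q}$-Gorenstein.

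For the numerical claims I would use the resulting identity $\varphi^*(\Delta+B_m)=C+G'_m+\tfrac1mE^*$. Pulling it back to a model $Y\xrightarrow{\rho}X'$ carrying a given $F$ and reading off the coefficient of $F$ gives
\[
\operatorname{ord}_F(\Delta+B_m)=\operatorname{ord}_F(\Delta+M)+\operatorname{coeff}_F(\rho^*G'_m)+\tfrac1m\operatorname{coeff}_F(\rho^*E^*),
\]
and since $X'$ is smooth and $G'_m,E^*$ are effective, the last two terms are $\ge0$; this is (1). Taking $F=E$ and $Y=X'$, the term $\operatorname{coeff}_EG'_m$ vanishes by the choice of $G'_m$, and $\tfrac1m\operatorname{coeff}_E(E^*)\to0$, which gives (2).

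The step I expect to be the main obstacle is the construction of $G'_m$ with all three properties at once: effectivity, avoidance of $E$, and — crucially — that $G'_m$ differs from the fixed ample target by the pullback of an $\mathbb{R}$-Cartier divisor \emph{from $X$}, which is exactly what forces $\Delta+B_m$ to be $\mathbb{R}$-Cartier. This is where the relative-over-$Z$ bookkeeping lives, and the point of taking $H'_0$ of the form $\varphi^*(m_0H)-E^*$ with $E^*$ $\varphi$-exceptional is precisely to make the leftover correction $-\tfrac1mE^*$ simultaneously $\varphi_*$-trivial and of size $O(1/m)$.
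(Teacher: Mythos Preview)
Your argument is correct and follows essentially the same route as the paper's proof. Both constructions perturb $M'$ by a small ample of the form $\varphi^*(\text{ample on }X)-(\text{effective }\varphi\text{-exceptional})$, take a general effective member, push it down to obtain $B_m$, and read off the orders from the resulting identity $\varphi^*(\Delta+B_m)=C+(\text{effective})+\tfrac{1}{m}(\text{effective exceptional})$; your $(H'_0,E^*,G'_m)$ play exactly the roles of the paper's $(A',C,A'_m)$. The only cosmetic differences are that you spell out the ``add a pullback from $Z$'' step needed to guarantee global effective members (the paper writes $A'_m\in|M'+\tfrac{1}{m}A'|_{\mathbb{R}}$ without comment) and you verify explicitly that $\Delta+B_m$ is $\mathbb{R}$-Cartier; one small quibble is that the existence of an effective $\varphi$-exceptional $E^*$ with $-E^*$ $\varphi$-ample is a standard fact but is not literally the negativity lemma.
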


\begin{proof}
Let $\varphi:X' \to X$ be the birational morphism as in Definition \ref{defi:gp}. 
We may assume that $E$ is a divisor on $X'$. 

Let $A$ be an ample$/Z$ divisor on $X$. We may write
\[
\varphi^*A \sim_{\mathbb{R}} A'+C 
\]
for some ample$/Z$ $\mathbb{R}$-divisor $A'$ and some effective $\varphi$-exceptional divisor $C$ on $X'$. 
Let $A'_m \in \big| M'+\frac{1}{m}A' \big|_{\mathbb{R}}$ be a general effective divisor. 
We define $D_m := \varphi_* A'_m$. 
Note that $A'_m = \varphi ^{-1}_* D_m$ holds by the generality of $A'_m$. 
Then we have 
\begin{align*}
\varphi ^* (B + M) &= \varphi ^{-1}_* B + M' + \sum _G \operatorname{ord}_G (B + M) G, \\
\varphi ^* (B + D_m) &= \varphi ^{-1}_* B + A'_m + \sum _G \operatorname{ord}_G (B + D_m) G,
\end{align*}
where the sums are taken over all $\varphi$-exceptional divisors $G$. 
Since $M' \sim _{\mathbb{R}, \varphi} A'_m + \frac{1}{m}C$, it follows that 
\[
\operatorname{ord}_G (B + M) =  \operatorname{ord}_G (B + D_m) - \frac{1}{m} \operatorname{coeff}_G C.  
\]
Therefore we have 
\[
\operatorname{ord}_{G} (B + M) = \lim _{m \to \infty} \operatorname{ord}_{G}(B + D_m)
\]
for any $\varphi$-exceptional divisor $G$, in particular for $G = E$. We have proved (2). 

Let $F$ be a divisor over $X$. Let $f: Y \to X$ be a birational morphism such that $F$ is a divisor on $Y$. 
We may assume that $f$ factors through $\varphi$. 
We define divisors $G_1$ and $G_2$ on $X'$ by
\[
G_1 = \sum _G \operatorname{ord}_G (B + M) G, \quad G_2 = \sum _G \operatorname{ord}_G (B + D_m) G, 
\]
where the sums are taken over all $\varphi$-exceptional divisors $G$.
Then it follows that 
\[
\operatorname{ord}_F (B + M) = \operatorname{ord}_F (\varphi ^{-1}_* B + G_1), \quad 
\operatorname{ord}_{F} (B + D_m) = \operatorname{ord}_F (\varphi ^{-1}_* B + A'_m + G_2). 
\]
Since $G_2 - G_1$ and $A' _m$ are effective, we have 
$\operatorname{ord}_{F} (B + M) \le \operatorname{ord}_{F} (B + D_m)$, which proves (1). 
\end{proof}

\begin{lem}\label{lem:mld_limit}
Let $(X, B+M)/Z$ be a generalized pair and let $\eta \in X$ be a scheme-theoretic point 
of $\operatorname{codim} \eta \ge 1$. Suppose that $\operatorname{mld}_{\eta } (X, B + M) > 0$. 
Then there exists a sequence of effective divisors $D_m$ on $X$ such that the following two conditions hold. 
\begin{itemize}
\item[(1)] $\operatorname{mld}_{\xi} (X, B+M) \geq \operatorname{mld}_{\xi}(X, B+D_m)$ holds for any scheme-theoretic point $\xi \in X$ 
of $\operatorname{codim} \xi \ge 1$. 
\item[(2)] $\operatorname{mld}_{\eta } (X, B + M) = \lim _{m \to \infty} \operatorname{mld}_{\eta }(X,B+D_m)$ holds. 
\end{itemize}
Here, we regard $\operatorname{mld}_{{\rm -}}(X, B+D_m)$ on the right-hand side as the minimal log discrepancy for a usual pair, not as a generalized pair. 
\end{lem}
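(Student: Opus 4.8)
The plan is to imitate the proof of Lemma~\ref{lem:ord_limit} — take $B_m$ to be the push-forward of a small general perturbation of $M'$ — but with one change that is forced by the mld statement: the perturbing divisor should be pulled back from $X$ \emph{without} any exceptional correction term.

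\emph{Construction.} Since $\operatorname{mld}_\eta(X,\Delta+M)>0$, I may (Remark~\ref{rmk:rmks}) replace the morphism $\varphi\colon X'\to X$ of Definition~\ref{defi:gp} by a higher model so that $\varphi$ is a log resolution of $(X,\Delta)$ and of $\overline{\{\eta\}}$, $M'$ is nef$/Z$, and some prime divisor on $X'$ with center $\overline{\{\eta\}}$ computes $a:=\operatorname{mld}_\eta(X,\Delta+M)$. Fix an ample$/Z$ $\mathbb R$-Cartier divisor $A$ on $X$; as $\varphi$ is birational, $\varphi^{*}A$ is semiample$/Z$ on $X'$, so $M'+\tfrac1m\varphi^{*}A$ is nef$/Z$ and big$/Z$ and its stable base locus contains no divisorial component. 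Let $A'_m$ be a general member of $|M'+\tfrac1m\varphi^{*}A|_{\mathbb R}$ and put $B_m:=\varphi_{*}A'_m$. By generality $A'_m=\varphi^{-1}_{*}B_m$, no component of $A'_m$ lies among the finitely many $\varphi$-exceptional divisors or components of $\varphi^{-1}_{*}\Delta$ on $X'$, no component of $A'_m$ has center through $\eta$, and $\operatorname{ord}_E(A'_m)\to 0$ for every divisor $E$ over $X$ (because $M'$ is nef, the multiplicity of $A'_m$ along $E$ is the asymptotic vanishing order of $M'+\tfrac1m\varphi^{*}A$ along $E$, which tends to that of the nef divisor $M'$, namely $0$). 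From $A'_m\sim_{\mathbb R}M'+\tfrac1m\varphi^{*}A$ we get $B_m\sim_{\mathbb R}M+\tfrac1m A$, hence $K_X+\Delta+B_m\sim_{\mathbb R}(K_X+\Delta+M)+\tfrac1m A$ is $\mathbb R$-Cartier and $(X,\Delta+B_m)$ is an honest usual pair.

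\emph{Condition (1).} Write $K_{X'}+\Delta'+M'=\varphi^{*}(K_X+\Delta+M)$ and $K_{X'}+\Delta'_m=\varphi^{*}(K_X+\Delta+B_m)$; subtracting gives $\Delta'_m-\Delta'-M'=\varphi^{*}(B_m-M)$. The $\mathbb R$-divisor $P:=A'_m-M'-\varphi^{*}(B_m-M)$ is $\varphi$-exceptional — on any non-exceptional prime divisor its coefficient is $\operatorname{coeff}(A'_m)-\operatorname{coeff}(M')-\operatorname{coeff}(B_m-M)=0$ — and it is $\varphi$-numerically trivial, since both $A'_m-M'\sim_{\mathbb R}\tfrac1m\varphi^{*}A$ and $\varphi^{*}(B_m-M)$ are pull-backs from $X$ up to $\mathbb R$-linear equivalence; by the negativity lemma $P=0$, so $\Delta'_m-\Delta'=A'_m\ge 0$. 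As $\Delta'_m-\Delta'$ is $\mathbb R$-Cartier on $X'$, pulling back to any higher model yields $a_E(X,\Delta+M)=a_E(X',\Delta')\ge a_E(X',\Delta'_m)=a_E(X,\Delta+B_m)$ for every prime divisor $E$ over $X$; taking infima over divisors with prescribed center (the codimension-one $-\infty$ convention causing no problem) gives (1), and in particular $\operatorname{mld}_\eta(X,\Delta+M)\ge\operatorname{mld}_\eta(X,\Delta+B_m)$ for all $m$, which is half of (2).

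\emph{Condition (2), and the main obstacle.} For the reverse inequality, after a further log resolution (supported away from $\overline{\{\eta\}}$) one may assume $X'$ is also a log resolution of $(X,\Delta+B_m)$; then, whenever $\operatorname{mld}_\eta(X,\Delta+B_m)\ne-\infty$, it equals $\min_i a_{D_i}(X,\Delta+B_m)$ over the \emph{fixed} finite family $\{D_i\}$ of prime divisors of $X'$ with $c_X(D_i)=\overline{\{\eta\}}$, and $a_{D_i}(X,\Delta+B_m)=a_{D_i}(X,\Delta+M)-\operatorname{ord}_{D_i}(A'_m)\to a_{D_i}(X,\Delta+M)$, so $\min_i a_{D_i}(X,\Delta+B_m)\to a$. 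Thus it remains to rule out $\operatorname{mld}_\eta(X,\Delta+B_m)=-\infty$ for $m\gg0$, i.e.\ that no prime divisor $E$ on $X'$ with $\eta\in c_X(E)$ gets $a_E(X,\Delta+B_m)<0$; since such an $E$ is $\varphi$-exceptional or a component of $\varphi^{-1}_{*}\Delta$, it is not a component of $A'_m$, so $a_E(X,\Delta+B_m)=a_E(X,\Delta+M)$, which is $\ge 0$ because $\operatorname{mld}_\eta(X,\Delta+M)>0$ forbids (Remark~\ref{rmk:rmks}(2)) any divisor with $\eta$ in its center and negative log discrepancy. \textbf{The hard part} is exactly this last step: it is where the construction would fail if one copied Lemma~\ref{lem:ord_limit} verbatim, because an ample perturbation on $X'$ puts an $O(1/m)$ \emph{exceptional} term $\tfrac1m C$ into $\Delta'_m-\Delta'$, which can push the log discrepancy of a strictly log canonical place over $\eta$ below zero (e.g.\ when $X$ is singular along $\overline{\{\eta\}}$ with a crepant exceptional divisor there), forcing $\operatorname{mld}_\eta(X,\Delta+B_m)=-\infty$; choosing $A'=\varphi^{*}A$ (semiample, contributing no divisorial fixed part) together with the nefness of $M'$ (which makes the asymptotic vanishing orders tend to $0$) is precisely what keeps the corrections supported off the fixed divisors of $X'$ and vanishing as $m\to\infty$. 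Carrying this through while simultaneously controlling the non-divisorial base locus of $M'+\tfrac1m\varphi^{*}A$ near $\eta$ (which is why the statement is a limit rather than an equality) is the technical heart of the proof.
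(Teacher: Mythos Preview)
The decisive gap is your use of a ``general member'' $A'_m\in|M'+\tfrac1m\varphi^{*}A|_{\mathbb R}$. Unlike the paper, which perturbs by an \emph{ample} divisor $A'$ on $X'$ (writing $\varphi^{*}A\sim_{\mathbb R}A'+C$ with $C\ge0$ exceptional, exactly as in Lemma~\ref{lem:ord_limit}), you perturb by $\varphi^{*}A$, which is only big and semiample; hence $M'+\tfrac1m\varphi^{*}A$ is merely nef and big over $Z$, and Bertini does not apply. The three properties you need of $A'_m$---no component among the $\varphi$-exceptional divisors or the components of $\varphi^{-1}_{*}\Delta$, no component with center through $\eta$, and enough transversality that $X'$ (or a further blow-up ``away from $\overline{\{\eta\}}$'') is a log resolution of $(X,\Delta+B_m)$---are simply not available for a non-ample system. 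Your appeal to asymptotic vanishing orders is also off: nefness gives $\sigma_\Gamma\bigl(M'+\tfrac1m\varphi^{*}A\bigr)=0$, i.e.\ the \emph{infimum} of $\operatorname{mult}_\Gamma$ over all effective representatives is zero, but this neither produces a single effective $A'_m$ with $\operatorname{mult}_\Gamma A'_m=0$ for several prescribed $\Gamma$ simultaneously, nor one in general position. Concretely, the obvious effective member $A''_m+\tfrac1mC$, with $A''_m$ general in the ample system $|M'+\tfrac1mA'|_{\mathbb R}$, lies in your linear system and visibly contains every exceptional divisor in $\operatorname{Supp}C$; so the $\tfrac1mC$ correction you set out to eliminate is still there, just hidden inside $A'_m$.

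By contrast, the paper keeps the ample system $|M'+\tfrac1mA'|_{\mathbb R}$, so $A'_m$ is genuinely general: $X'$ remains a log resolution of $(X,\Delta+B_m)$, and for every $\varphi$-exceptional $G$ one has the exact identity $a_G(X,\Delta+B_m)=a_G(X,\Delta+M)-\tfrac1m\operatorname{coeff}_GC$, whence the limit over the \emph{fixed} finite family of divisors on $X'$ with center $\overline{\{\eta\}}$ yields $\operatorname{mld}_\eta(X,\Delta+M)$. Your negativity-lemma computation $\Delta'_m-\Delta'=A'_m$ is correct and gives~(1) cleanly, and your instinct that the exceptional correction $\tfrac1mC$ is the delicate point of the whole argument is sound; but the cure you propose trades that subtlety for an outright gap.
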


\begin{proof}
Let $\varphi:X' \to X$ be the birational morphism as in Definition \ref{defi:gp}. 
We may assume that $\varphi$ is a log resolution of the triple $(X, B, \overline{\{ \eta \}})$. 
Furthermore, we may assume that there exists a computing divisor of $\operatorname{mld}_{\eta } (X, B+M)$ on $X'$. 

We define $A$, $A'$, $C$, $A_m'$ and $D_m$ as in the proof of Lemma \ref{lem:ord_limit}. 
Then we have 
\begin{align*}
\varphi ^* (K_X + B + M) &= K_{X'} + \varphi ^{-1}_* B + M' + \sum _G \bigl( 1 - a_G (X, B + M) \bigr) G, \\
\varphi ^* (K_X + B + D_m) &= K_{X'} + \varphi ^{-1}_* B + A'_m + \sum _G \bigl(1 - a_G (X, B + D_m) \bigr) G,
\end{align*}
where the sums are taken over all $\varphi$-exceptional divisors $G$. By the same way as in the proof of Lemma \ref{lem:ord_limit}, we have 
\[
a_G (X, B + M) =  a_G (X, B + D_m) + \frac{1}{m} \operatorname{coeff}_G C. 
\]
Hence by passing to limit, we have
\[
\lim _{m \to \infty} a_F (X, B + D_m) = a_F (X, B + M)
\]
for each $\varphi$-exceptional divisor $F$. 
Note that $\varphi$ is also a log resolution of $(X, B + D_m)$ by the generality of $A'_m$. 
If $\operatorname{codim} \eta = 1$, then the equality in (2) is clear 
since we may take $A'_m$ such that its coefficients approach zero as $m$ goes to infinity. 
Suppose $\operatorname{codim} \eta > 1$. 
Then, for each $m$, some $\varphi$-exceptional divisor computes $\operatorname{mld}_{\eta }(X, B+D_m)$. 
Therefore we have
\[
\lim _{m \to \infty} \operatorname{mld}_{\eta }(X, B+D_m) = \lim _{m \to \infty} \min _{F} a_F (X, B + D_m) 
= \min _{F} a_F (X, B + M), 
\]
where the minimum is taken over all divisors $F$ on $X'$
 with center $c_X (F) = \overline{\{ \eta \}}$. 
Hence, we have the equality in (2). 

By the same argument as in the proof of Lemma \ref{lem:ord_limit}(1), we have
\[
a_F (X, B + M) \ge  a_F (X, B + D_m)
\]
for any divisor $F$ over $X$. Therefore we have the inequality in (1). 
\end{proof}

\subsection{Lower semi-continuity conjecture for generalized mld's}
In this subsection, we reduce the lower semi-continuity of minimal log discrepancies 
for generalized pairs to that for usual log pairs (Theorem \ref{thm:LSC_reduction}). First, we recall the lower semi-continuity conjecture. 
\begin{conj}[LSC conjecture]\label{conj:LSC}
Let $(X, B+M)/Z$ be a generalized pair. 
Then the function 
\[
|X|_{0} \to \mathbb{R} \cup \{ - \infty \}; \quad x \mapsto \operatorname{mld}_{x}(X, B+M)
\] 
is lower semi-continuous, where $|X|_{0}$ denotes the set of all closed points of $X$ with the Zariski topology.
\end{conj}

\noindent
We introduce a different but equivalent formulation of this conjecture.  
\begin{conj}\label{conj:LSC'}
Let $(X, B+M)/Z$ be a generalized pair. 
For any two scheme-theoretic points $\xi$ and $\eta$ of $X$ with $\eta \in \overline{\{\xi \}}$ and $\operatorname{codim} \xi \ge 1$, 
it follows that
\[
\operatorname{mld}_{\eta}(X,B+M) \leq 
\operatorname{mld}_{\xi}(X,B+M) + 
\dim \xi - \dim \eta. 
\]
\end{conj}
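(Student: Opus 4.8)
The plan is to show that Conjecture \ref{conj:LSC} and Conjecture \ref{conj:LSC'} are equivalent; that is what "different but equivalent formulation" must mean, and it is the natural next statement. I would prove both implications, using Lemma \ref{lem:ambro} as the bridge.

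\textbf{From Conjecture \ref{conj:LSC'} to Conjecture \ref{conj:LSC}.} Let $(X,\Delta+M)/Z$ be a generalized pair. Lower semi-continuity of $m\colon x\mapsto\operatorname{mld}_x(X,\Delta+M)$ means that for every $c\in\mathbb R$ the set $\{x\in|X|_{\textup{cl}}\mid m(x)>c\}$ is open, equivalently that $Z_c:=\{x\mid m(x)\le c\}$ is closed. By Lemma \ref{lem:const} the function $m$ is constructible, so $Z_c$ is a constructible subset of $|X|_{\textup{cl}}$; hence it suffices to show $Z_c$ is stable under specialization. So suppose $x\in Z_c$ and $x\in\overline{\{x_0\}}$ for a (closed-point) specialization $x\rightsquigarrow x$ of a point $x_0$; more precisely, take scheme-theoretic points $\xi,\eta$ with $\eta$ a specialization of $\xi$, $\eta$ closed, and apply Conjecture \ref{conj:LSC'}. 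Actually the clean way: if $x'\in\overline{\{x\}}$ with $x,x'$ closed points — no, closed points have no proper specializations. The correct reduction uses the generic point: for a closed point $x$ and any closed point $x'$ in the closure of a curve through $x'$... I would instead argue as follows. Let $\eta$ be a scheme-theoretic point and let $x$ be a closed point with $x\in\overline{\{\eta\}}$. By Lemma \ref{lem:ambro} there is a dense open $U$ of $\overline{\{\eta\}}$ on which $m(x)=\operatorname{mld}_\eta(X,\Delta+M)+\dim\eta$. Combined with Conjecture \ref{conj:LSC'} (applied with $\xi=\eta$) this gives, for the general point $x\in\overline{\{\eta\}}$,
\[
\operatorname{mld}_{\eta}(X,\Delta+M)=m(x)-\dim\eta\ \ge\ \operatorname{mld}_{x'}(X,\Delta+M)-\dim x' ,
\]
no — let me state the target: for any closed point $x\in\overline{\{\eta\}}$, I want $m(x)\ge\operatorname{mld}_\eta(X,\Delta+M)+\dim\eta$ with equality generically; that one inequality together with constructibility of $Z_c$ and Noetherian induction on $\overline{\{\eta\}}$ closes the specialization-stability check. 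Indeed, if $Z_c$ is constructible and contains the general point of $\overline{\{\eta\}}$, then it contains a dense open of $\overline{\{\eta\}}$, and by induction on dimension the "bad locus" where $m$ drops is a proper closed subset, handled by the same argument on its components.

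\textbf{From Conjecture \ref{conj:LSC} to Conjecture \ref{conj:LSC'}.} Conversely, assume $m$ is lower semi-continuous for every generalized pair (in every dimension, so that we may pass to closures of strata freely). Let $\eta\in\overline{\{\xi\}}$ with $\operatorname{codim}\xi\ge1$. Apply Lemma \ref{lem:ambro} twice: there is a dense open $U_\xi$ of $\overline{\{\xi\}}$ with $m(x)=\operatorname{mld}_\xi(X,\Delta+M)+\dim\xi$ for $x\in U_\xi$, and a dense open $U_\eta$ of $\overline{\{\eta\}}$ with $m(y)=\operatorname{mld}_\eta(X,\Delta+M)+\dim\eta$ for $y\in U_\eta$. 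Pick a closed point $y_0\in U_\eta\cap\overline{\{\eta\}}$; since $\overline{\{\eta\}}\subset\overline{\{\xi\}}$, $y_0$ lies in $\overline{\{\xi\}}$, and it is a specialization of the general closed points of $U_\xi$. Lower semi-continuity of $m$ on $|X|_{\textup{cl}}$ then gives $m(y_0)\le m(x)$ for $x\in U_\xi$ general, i.e.
\[
\operatorname{mld}_{\eta}(X,\Delta+M)+\dim\eta\ \le\ \operatorname{mld}_{\xi}(X,\Delta+M)+\dim\xi,
\]
which is exactly the desired inequality after rearranging.

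\textbf{Main obstacle.} The delicate point in both directions is the bookkeeping that turns the local statement of Lemma \ref{lem:ambro} — valid only on \emph{some} dense open subset of each stratum's closure — into a global statement about all closed points, and the passage between "generic" closed points of $\overline{\{\xi\}}$ and closed points of $\overline{\{\eta\}}$ that are specializations thereof. In the forward direction one must additionally invoke constructibility (Lemma \ref{lem:const}) to upgrade "$m\le c$ on a dense open of each stratum, by Noetherian induction" to "$Z_c$ closed"; without constructibility a constructible-looking stratified inequality need not give a closed set. I expect this combinatorial/topological argument, rather than any geometric input, to be the part requiring care.
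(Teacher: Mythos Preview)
Your direction Conjecture \ref{conj:LSC} $\Rightarrow$ Conjecture \ref{conj:LSC'} is essentially the paper's argument and is correct: apply Lemma \ref{lem:ambro} to both $\xi$ and $\eta$, fix a closed point $y_0$ in the open set for $\eta$, use lower semi-continuity to find a neighborhood of $y_0$ on which $m \ge m(y_0)$, and intersect with the open set for $\xi$.

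The other direction has a genuine sign error. You write: ``for any closed point $x\in\overline{\{\eta\}}$, I want $m(x)\ge\operatorname{mld}_\eta(X,\Delta+M)+\dim\eta$.'' But Conjecture \ref{conj:LSC'}, applied with $\xi=\eta$ and with the closed point $x$ playing the role of the specialization, gives the \emph{opposite} inequality
\[
m(x)=\operatorname{mld}_x(X,\Delta+M)\ \le\ \operatorname{mld}_\eta(X,\Delta+M)+\dim\eta.
\]
And it is this $\le$ that lower semi-continuity needs: the set $\{m\le c\}$ must be closed, so if a general closed point of $\overline{\{\eta\}}$ lies in it, every closed point must. The inequality you wrote would yield upper semi-continuity instead.

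Once the sign is fixed, the argument runs directly and does \emph{not} require Lemma \ref{lem:const} or Noetherian induction, contrary to your ``Main obstacle'' paragraph. The paper argues as follows: take $x\in\overline{F}$ where $F=\{m\le a\}$; let $\xi$ be the generic point of an irreducible component $\overline{F}'$ of $\overline{F}$ containing $x$; Conjecture \ref{conj:LSC'} gives $m(x)\le\operatorname{mld}_\xi+\dim\xi$; Lemma \ref{lem:ambro} says this equals $m(y)$ for $y$ in a non-empty open $U\cap\overline{\{\xi\}}$; since $F$ is dense in $\overline{F}'=\overline{\{\xi\}}$, one may take $y\in F$, whence $m(x)\le m(y)\le a$. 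The reduction to a single stratum via the generic point of the component replaces any need for constructibility.
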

  
\begin{thm}[{cf.\ \cite[Lemma 2.6]{Amb99}}]\label{thm:equivalence1}
The conjectures Conjecture \ref{conj:LSC} and Conjecture \ref{conj:LSC'} are equivalent.
\end{thm}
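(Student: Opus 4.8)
The plan is to show the two implications separately, exploiting Lemma \ref{lem:ambro} as the bridge, exactly as in Ambro's original argument for usual pairs. Throughout we may assume that the generalized pair is generalized lc at all relevant points, since by Remark \ref{rmk:rmks}(\ref{item:-inftycl}) the non-lc locus is closed, so the set $\{x \mid \operatorname{mld}_x = -\infty\}$ is closed and both lower semi-continuity and the inequality of Conjecture \ref{conj:LSC'} are automatic there or reduce to the lc part.

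\emph{Conjecture \ref{conj:LSC'} $\Rightarrow$ Conjecture \ref{conj:LSC}.} First I would fix a value $a \in \mathbb{R}$ and show that $V_a := \{x \in |X|_{\textup{cl}} \mid \operatorname{mld}_x(X, \Delta+M) \le a\}$ is closed. By Lemma \ref{lem:const}, $V_a$ is constructible, so it suffices to show it is stable under specialization: if $x \in V_a$ and $x'$ is a closed point in $\overline{\{x\}}$... but $x$ is already closed, so instead one argues that $V_a$ is a finite union of irreducible closed subsets by taking, for each non-closed point $\eta$ with $\operatorname{mld}_\eta(X,\Delta+M) + \dim\eta \le a$, the closure $\overline{\{\eta\}}$. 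Concretely: let $x \in V_a$ and let $Z$ be an irreducible component of $\overline{V_a}$ containing $x$, with generic point $\eta$. By constructibility $V_a$ contains a dense open subset of $Z$, so after shrinking we may apply Lemma \ref{lem:ambro} to get an open $U$ with $\operatorname{mld}_y(X,\Delta+M) = \operatorname{mld}_\eta(X,\Delta+M) + \dim\eta$ for all $y \in \overline{\{\eta\}} \cap U$; since such $y$ lie in $V_a$, we get $\operatorname{mld}_\eta(X,\Delta+M) + \dim\eta \le a$. Then for any closed point $y' \in \overline{\{\eta\}}$ we have $\dim \eta - \dim y' = \dim\eta$, and applying Conjecture \ref{conj:LSC'} to the pair $(\eta, y')$ gives $\operatorname{mld}_{y'}(X,\Delta+M) \le \operatorname{mld}_\eta(X,\Delta+M) + \dim\eta \le a$, so $\overline{\{\eta\}} \subset V_a$. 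Hence $\overline{V_a} = V_a$ and $m$ is lower semi-continuous.

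\emph{Conjecture \ref{conj:LSC} $\Rightarrow$ Conjecture \ref{conj:LSC'}.} Given scheme-theoretic points $\eta \in \overline{\{\xi\}}$ with $\operatorname{codim}\xi \ge 1$, I would apply Lemma \ref{lem:ambro} twice. First, to $\xi$: there is an open $U_\xi$ with $\overline{\{\xi\}} \cap U_\xi \ne \emptyset$ and $\operatorname{mld}_y(X,\Delta+M) = \operatorname{mld}_\xi(X,\Delta+M) + \dim\xi$ for all closed $y \in \overline{\{\xi\}} \cap U_\xi$. Second, to $\eta$: there is an open $U_\eta$ with $\operatorname{mld}_y(X,\Delta+M) = \operatorname{mld}_\eta(X,\Delta+M) + \dim\eta$ for all closed $y \in \overline{\{\eta\}} \cap U_\eta$. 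Now $\overline{\{\eta\}} \subset \overline{\{\xi\}}$, and since $\overline{\{\eta\}} \cap U_\xi$ is a nonempty (as $U_\xi$ meets the generic part and $\overline{\{\eta\}}$ — here one must be a little careful and instead intersect $U_\eta$ with $U_\xi$ and use that $\overline{\{\eta\}}$ meets $U_\eta \cap U_\xi$, shrinking so both openness statements apply along $\overline{\{\eta\}}$) open subset of $\overline{\{\eta\}}$, we can pick a closed point $y \in \overline{\{\eta\}} \cap U_\eta \cap U_\xi \subset \overline{\{\xi\}} \cap U_\xi$. By lower semi-continuity of $m$ at $y$ applied along a curve or by the constructibility-plus-LSC package, $\operatorname{mld}_y(X,\Delta+M) \le \operatorname{mld}_{y_0}(X,\Delta+M)$... more directly: $y$ is a specialization point inside $\overline{\{\xi\}}$, and by LSC the level set $\{x \mid \operatorname{mld}_x \le \operatorname{mld}_y\}$ is closed and contains $y$; choosing $y$ to also lie in the dense open of $\overline{\{\xi\}}$ where the $\xi$-formula holds, we equate and get $\operatorname{mld}_\eta(X,\Delta+M) + \dim\eta = \operatorname{mld}_y(X,\Delta+M) \le \operatorname{mld}_\xi(X,\Delta+M) + \dim\xi$, which is the desired inequality after rearranging.

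The main obstacle is purely bookkeeping: ensuring that the various open sets produced by Lemma \ref{lem:ambro} (which are only guaranteed to meet the relevant closures, not to be explicitly described) can be intersected so that there is a single closed point $y \in \overline{\{\eta\}}$ at which \emph{both} the $\eta$-formula and the $\xi$-formula hold simultaneously; this is where one uses that $\overline{\{\eta\}}$ is irreducible and that a finite intersection of dense opens in an irreducible scheme is still dense. The inequality direction ($\Leftarrow$) additionally needs the constructibility of $m$ from Lemma \ref{lem:const} to pass from "closed on a dense open of each stratum" to "closed," but once constructibility is in hand this is formal. No new input beyond Lemmas \ref{lem:ambro} and \ref{lem:const} and Remark \ref{rmk:rmks}(\ref{item:-inftycl}) should be required, since the generalized setting enters only through those lemmas, whose proofs already handle the b-divisor $M$.
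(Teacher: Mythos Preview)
Your argument for Conjecture \ref{conj:LSC'} $\Rightarrow$ Conjecture \ref{conj:LSC} is correct and essentially matches the paper's (the paper does not even invoke Lemma \ref{lem:const} there, but your use of it is harmless).

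However, your argument for Conjecture \ref{conj:LSC} $\Rightarrow$ Conjecture \ref{conj:LSC'} has a genuine gap. You attempt to locate a single closed point $y \in \overline{\{\eta\}}$ at which \emph{both} the $\eta$-formula and the $\xi$-formula from Lemma \ref{lem:ambro} hold. But Lemma \ref{lem:ambro} only guarantees that $U_\xi$ meets $\overline{\{\xi\}}$; it gives no reason for $U_\xi$ to meet the proper closed subvariety $\overline{\{\eta\}} \subsetneq \overline{\{\xi\}}$, and in general it does not. Your claim that ``a finite intersection of dense opens in an irreducible scheme is still dense'' is being applied to the wrong irreducible scheme: $U_\xi \cap \overline{\{\xi\}}$ is dense in $\overline{\{\xi\}}$, not in $\overline{\{\eta\}}$. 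For a concrete failure, take $X = \mathbb{A}^3$, $\Delta = \tfrac{1}{2}H_1 + \tfrac{1}{2}H_2$ with $H_i$ transverse planes, $\xi$ the generic point of $H_1$, $\eta$ the generic point of the line $L = H_1 \cap H_2$: the $\xi$-formula gives $\operatorname{mld}_y = \tfrac{5}{2}$ for general $y \in H_1$, but every closed point of $L$ has $\operatorname{mld} = 2$, so $U_\xi \cap L = \emptyset$. Note also that if such a common $y$ did exist, your argument would actually yield the \emph{equality} $\operatorname{mld}_\eta + \dim\eta = \operatorname{mld}_\xi + \dim\xi$, which is false in this example.

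The paper avoids this by not asking for both formulas at the same point. It fixes $x \in \overline{\{\eta\}} \cap U$ (so $\operatorname{mld}_x = \operatorname{mld}_\eta + \dim\eta$), then uses Conjecture \ref{conj:LSC} to shrink $U$ to an open neighborhood of $x$ on which $\operatorname{mld}_y \ge \operatorname{mld}_x$ for all $y$. This shrunk $U$ contains $x \in \overline{\{\xi\}}$, so $\overline{\{\xi\}} \cap U \cap V \ne \emptyset$ by irreducibility of $\overline{\{\xi\}}$; any $y$ there satisfies $\operatorname{mld}_\xi + \dim\xi = \operatorname{mld}_y \ge \operatorname{mld}_x = \operatorname{mld}_\eta + \dim\eta$. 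Thus the LSC hypothesis is used not merely to assert that a level set is closed, but to carry the bound from the special point $x$ to a nearby general point $y$ of $\overline{\{\xi\}}$.
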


\begin{proof}
First, we assume Conjecture \ref{conj:LSC}. 
Let $\xi, \eta \in X$ be two scheme-theoretic points with $\eta \in \overline{\{\xi \}}$ and $\operatorname{codim} \xi \ge 1$. 
By Lemma \ref{lem:ambro}, there exist open subsets $U, V \subset |X|_{0}$ such that 
$\overline{\{ \eta \}} \cap U \not = \emptyset$, $\overline{\{ \xi \}} \cap V \not = \emptyset$, and 
\begin{align*}
\operatorname{mld}_{\eta}(X, B+M) &= \operatorname{mld}_{x} (X, B+M) - \dim \eta,\\
\operatorname{mld}_{\xi}(X, B+M) &= \operatorname{mld}_{y} (X, B+M) - \dim \xi 
\end{align*}
hold for any $x \in \overline{\{ \eta \}} \cap U$ and $y \in \overline{\{ \xi \}} \cap V$. 
We fix $x \in \overline{\{ \eta \}} \cap U$. 
Then by Conjecture \ref{conj:LSC}, possibly replacing $U$ with a smaller open subset that still contains $x$, we may assume 
\[
\operatorname{mld}_{y} (X, B+M) \ge \operatorname{mld}_{x}(X, B+M)
\]
for any $y \in U$. Since $\overline{\{ \xi \}} \cap U \cap V \not = \emptyset$, we have the inequality in Conjecture \ref{conj:LSC'}.

Next, we assume Conjecture \ref{conj:LSC'}. Let $a \in \mathbb{R} \cup \{ - \infty \}$. 
We define $F \subset |X|_{0}$ as 
\[
F := \bigl\{ x \in |X|_{0} \ \big| \ \operatorname{mld}_x (X, B + M) \le a  \bigr \}. 
\]
Let $\overline{F}$ be its closure. It is sufficient to show that $\overline{F} = F$. 
Take $x \in \overline{F}$. 
Let $\overline{F}'$ be an irreducible component of $\overline{F}$ containing $x$, 
and let $\xi$ be the scheme-theoretic point of $X$ corresponding to $\overline{F}'$. 
Then by Conjecture \ref{conj:LSC'}, we have 
\[
\operatorname{mld}_{x}(X, B+M) \le \operatorname{mld}_{\xi} (X, B+M) - \dim \xi. 
\]
On the other hand, by Lemma \ref{lem:ambro}, there exists an open subset $U \subset |X|_{0}$ with
$\overline{\{ \xi \}} \cap U \not = \emptyset$ such that
\[
\operatorname{mld}_{y}(X, B+M) = \operatorname{mld}_{\xi} (X, B+M) - \dim \xi
\]
holds for any $y \in \overline{\{ \xi \}} \cap U$. 
Since $\overline{\{ \xi \}} \cap U \cap F = \overline{F}' \cap U \cap F \not = \emptyset$,  
we have $\operatorname{mld}_{x} (X, B+M) \le a$, which implies $x \in F$. 
The proof is complete. 
\end{proof}

In order to prove Conjecture \ref{conj:LSC}, it is sufficient to prove it when $M'=0$. 

\begin{thm}\label{thm:LSC_reduction}
Conjecture \ref{conj:LSC} for $M'=0$ implies Conjecture \ref{conj:LSC}.
\end{thm}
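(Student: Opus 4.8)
The plan is to combine the two key lemmas established just before this theorem: the constructibility of the generalized mld function (Lemma \ref{lem:const}) and the limit lemma (Lemma \ref{lem:mld_limit}), which approximates the generalized mld from below by mld's of usual pairs. By Theorem \ref{thm:equivalence1} it suffices to verify the pointwise inequality in Conjecture \ref{conj:LSC'} for a generalized pair $(X,\Delta+M)/Z$, namely $\operatorname{mld}_\eta(X,\Delta+M)\le \operatorname{mld}_\xi(X,\Delta+M)+\dim\xi-\dim\eta$ for scheme-theoretic points $\eta\in\overline{\{\xi\}}$ with $\operatorname{codim}\xi\ge 1$, assuming the corresponding inequality holds for all usual pairs.

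First I would dispose of the degenerate cases. If $\operatorname{mld}_\xi(X,\Delta+M)=-\infty$ there is nothing to prove; if $\operatorname{mld}_\eta(X,\Delta+M)=-\infty$ then, using Remark \ref{rmk:rmks}(\ref{item:resol1})–(\ref{item:-inftycl}), the locus where the generalized mld is $-\infty$ is closed and one checks it cannot contain $\eta$ in its interior relative to $\overline{\{\xi\}}$ unless it contains the generic point $\xi$ as well, so this case reduces to $\operatorname{mld}_\xi=-\infty$ again. Hence I may assume $\operatorname{mld}_\xi(X,\Delta+M)>0$ after further shrinking, or at least that both values are finite and $\ge 0$; if $\operatorname{mld}_\xi(X,\Delta+M)=0$ one argues directly since then $\dim\xi-\dim\eta\ge 0$ gives what is needed when $\operatorname{mld}_\eta\le 0$, and the case $\operatorname{mld}_\eta>0$ with $\operatorname{mld}_\xi=0$ would contradict monotonicity of mld on a neighborhood (via Lemma \ref{lem:ambro}). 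So the essential case is $\operatorname{mld}_\xi(X,\Delta+M)>0$, which is exactly the hypothesis under which Lemma \ref{lem:mld_limit} applies.

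The heart of the argument: apply Lemma \ref{lem:mld_limit} at the point $\xi$ to obtain effective divisors $B_m$ on $X$ with $K_X+\Delta+B_m$ $\mathbb{R}$-Cartier such that $\operatorname{mld}_{\zeta}(X,\Delta+M)\ge \operatorname{mld}_{\zeta}(X,\Delta+B_m)$ for every scheme-theoretic point $\zeta$ of codimension $\ge 1$, and $\operatorname{mld}_\xi(X,\Delta+M)=\lim_{m\to\infty}\operatorname{mld}_\xi(X,\Delta+B_m)$. For each $m$, $(X,\Delta+B_m)$ is a usual pair, so Conjecture \ref{conj:LSC} for $M=0$ — in its form Conjecture \ref{conj:LSC'} — gives
\[
\operatorname{mld}_\eta(X,\Delta+B_m)\le \operatorname{mld}_\xi(X,\Delta+B_m)+\dim\xi-\dim\eta.
\]
Combining with property (1) of Lemma \ref{lem:mld_limit} applied at $\eta$, and then letting $m\to\infty$ using property (2) at $\xi$:
\[
\operatorname{mld}_\eta(X,\Delta+M)\ \ge\ \limsup_{m\to\infty}\operatorname{mld}_\eta(X,\Delta+B_m)\ \le\ \lim_{m\to\infty}\operatorname{mld}_\xi(X,\Delta+B_m)+\dim\xi-\dim\eta,
\]
which is the wrong direction — so one must be slightly more careful: property (1) bounds $\operatorname{mld}_\eta(X,\Delta+M)\ge\operatorname{mld}_\eta(X,\Delta+B_m)$, and it is the latter that is controlled by the usual LSC inequality, so the chain actually reads $\operatorname{mld}_\eta(X,\Delta+B_m)\le\operatorname{mld}_\xi(X,\Delta+B_m)+\dim\xi-\dim\eta$, and taking $\limsup$ on the right one gets $\limsup_m\operatorname{mld}_\eta(X,\Delta+B_m)\le\operatorname{mld}_\xi(X,\Delta+M)+\dim\xi-\dim\eta$. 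The remaining gap is to pass from $\limsup_m\operatorname{mld}_\eta(X,\Delta+B_m)$ back up to $\operatorname{mld}_\eta(X,\Delta+M)$, which is precisely where constructibility enters: this is the main obstacle.

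To close that gap I would use Lemma \ref{lem:const} together with Lemma \ref{lem:ambro}. By Lemma \ref{lem:ambro} the generalized mld at $\eta$ is realized as $\operatorname{mld}_x(X,\Delta+M)-\dim\eta$ for $x$ in a dense open subset $U$ of $\overline{\{\eta\}}$; by constructibility (Lemma \ref{lem:const}) the function $x\mapsto\operatorname{mld}_x(X,\Delta+M)$ is constructible, so on the generic locus of $\overline{\{\eta\}}$ it is constant, and similarly each $\operatorname{mld}_{(-)}(X,\Delta+B_m)$ is constructible. The point is to run the whole inequality at a sufficiently general closed point $x\in\overline{\{\eta\}}$ rather than at $\eta$: there $\operatorname{mld}_x(X,\Delta+M)=\operatorname{mld}_\eta(X,\Delta+M)+\dim\eta$ and, for each fixed $m$, $\operatorname{mld}_x(X,\Delta+B_m)=\operatorname{mld}_\eta(X,\Delta+B_m)+\dim\eta$ as well (Lemma \ref{lem:ambro} applied to the usual pair, after shrinking $U$ and using that we only need countably many $m$, whose exceptional loci are a countable union of proper closed subsets hence avoidable by a general $x$ over $\mathbb{C}$). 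Likewise choose $x$ general enough that $\operatorname{mld}_x(X,\Delta+M)\ge\operatorname{mld}_x(X,\Delta+B_m)$ holds — this is just property (1) of Lemma \ref{lem:mld_limit} at the point $x$ — and that a general point $y$ of $\overline{\{\xi\}}$ specializing appropriately satisfies the analogous equalities. Then the finite-dimensional inequality $\operatorname{mld}_x(X,\Delta+B_m)\le\operatorname{mld}_y(X,\Delta+B_m)$ from usual-pair LSC, followed by $\operatorname{mld}_y(X,\Delta+B_m)\le\operatorname{mld}_y(X,\Delta+M)$ is false in that direction too — rather one uses usual LSC in the form of Conjecture \ref{conj:LSC'} at the usual pair level and passes to the limit in $m$ only in the $\operatorname{mld}_\xi$ term where convergence is guaranteed by property (2), while the $\operatorname{mld}_\eta$ term is handled by noting $\operatorname{mld}_\eta(X,\Delta+M)=\operatorname{mld}_\eta(X,\Delta+B_m)$ for $m\gg0$ because the generalized mld at $\eta$ is a minimum over finitely many computing divisors on the fixed log resolution $X'$ and, by the explicit comparison $a_G(X,\Delta+M)=a_G(X,\Delta+B_m)+\frac1m\operatorname{coeff}_G C$ from the proof of Lemma \ref{lem:mld_limit}, the divisor attaining the generalized minimum also attains the $B_m$-minimum once $\frac1m$ is small relative to the finitely many gaps. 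This last observation — that the limit in (2) is in fact attained, i.e. $\operatorname{mld}_\eta(X,\Delta+B_m)=\operatorname{mld}_\eta(X,\Delta+M)$ for all large $m$ — is the cleanest route and I expect it to be the technical crux; with it in hand the inequality for $(X,\Delta+B_m)$ with $m\gg0$ is literally the inequality for $(X,\Delta+M)$, and we are done.
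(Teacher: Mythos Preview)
There is a genuine gap. You apply Lemma \ref{lem:mld_limit} at $\xi$, and this is the wrong point: property (1) of that lemma gives $\operatorname{mld}_\eta(X,\Delta+M)\ge\operatorname{mld}_\eta(X,\Delta+B_m)$, which is an \emph{upper} bound on $\operatorname{mld}_\eta(X,\Delta+B_m)$, not the lower bound you need in order to conclude anything about $\operatorname{mld}_\eta(X,\Delta+M)$. You notice this yourself (``the wrong direction'') and then try to patch it by asserting that $\operatorname{mld}_\eta(X,\Delta+M)=\operatorname{mld}_\eta(X,\Delta+B_m)$ for $m\gg 0$. That claim is false. Inspecting the proof of Lemma \ref{lem:mld_limit}, one has $a_G(X,\Delta+B_m)=a_G(X,\Delta+M)-\tfrac{1}{m}\operatorname{coeff}_G C$ for every $\varphi$-exceptional $G$, with $C\ge 0$ an effective $\varphi$-exceptional divisor; in general $\operatorname{coeff}_G C>0$ for the relevant $G$, so the approximation is strictly from below and equality is never attained. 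Moreover, since you chose $X'$ to resolve $\overline{\{\xi\}}$ rather than $\overline{\{\eta\}}$, there is no reason a computing divisor for $\operatorname{mld}_\eta(X,\Delta+B_m)$ even lives on $X'$, so the ``finitely many gaps on the fixed log resolution'' picture does not apply at $\eta$ in the first place.

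The fix is one line: apply Lemma \ref{lem:mld_limit} at $\eta$ instead (the degenerate cases $\operatorname{mld}_\eta(X,\Delta+M)\le 0$ are trivial, so you may assume $\operatorname{mld}_\eta(X,\Delta+M)>0$, which is exactly the hypothesis of the lemma). Then property (2) gives $\operatorname{mld}_\eta(X,\Delta+M)=\lim_m\operatorname{mld}_\eta(X,\Delta+B_m)$, the usual-pair inequality of Conjecture \ref{conj:LSC'} gives $\operatorname{mld}_\eta(X,\Delta+B_m)\le\operatorname{mld}_\xi(X,\Delta+B_m)+\dim\xi-\dim\eta$, and property (1) at $\xi$ gives $\operatorname{mld}_\xi(X,\Delta+B_m)\le\operatorname{mld}_\xi(X,\Delta+M)$; chaining these and letting $m\to\infty$ yields the desired inequality. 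No constructibility is needed in this formulation. The paper's own proof follows exactly the same logic but stays at the level of closed points: it assumes the sublevel set $F=\{y:\operatorname{mld}_y(X,\Delta+M)\le a\}$ is not closed, uses constructibility (Lemma \ref{lem:const}) to find a curve $C\subset\overline{F}$ and a point $x\in C\setminus F$, and then applies Lemma \ref{lem:mld_limit} at $x$ --- again, at the point where the mld is allegedly too large --- so that the closed sets $F_m\supset F$ force $x\in F_m$ for all $m$, and the limit at $x$ gives $\operatorname{mld}_x(X,\Delta+M)\le a$, a contradiction.
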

\begin{proof}
Suppose that Conjecture \ref{conj:LSC} is true for $M'=0$. 
Suppose that Conjecture \ref{conj:LSC} does not hold for a generalized pair $(X, B+M)/Z$. 
Then there exists $a \in \mathbb{R} \cup \{ - \infty \}$ such that the set 
\[
F := \bigl \{ y \in |X|_{0} \ \big| \ \operatorname{mld}_y (X, B + M) \le a  \bigr \}.
\]
is not closed. 
We may assume that $a \ge 0$, otherwise $F$ is always closed (cf.\ Remark \ref{rmk:rmks}(\ref{item:-inftycl})). 
By the constructibility of $F$ which follows from Lemma \ref{lem:const}, and by the valuative criterion 
(cf.\ \cite{GW10}*{Corollary 15.10}) of properness, we can take an irreducible closed curve $C \subset \overline{F}$ such that 
$C \cap F$ is an open dense proper subset of $C$. 
We fix a closed point $x \in C \setminus F$. 

By Lemma \ref{lem:mld_limit}, there exists a sequence of effective divisors $D_m$ such that 
\begin{itemize}
\item[(1)] $\operatorname{mld}_{y} (X, B+M) \ge \operatorname{mld}_{y} (X, B+D_m)$ for each $m$ and $y \in |X|_{0}$, and 
\item[(2)] $\operatorname{mld}_{x} (X, B+M) = \lim _{m \to \infty} \operatorname{mld}_{x}(X, B+D_m)$ holds. 
\end{itemize}
We define 
\[
F_m := \bigl \{ y \in |X|_{0} \ \big| \ \operatorname{mld}_y (X, B + D_m) \le a  \bigr \}, 
\]
which is a closed subset by the assumption. 
Then $F \subset F_m$ holds for each $m$ by (1). Since $F_m$ is a closed subset of $X$ 
that contains an open dense subset of  $C$, 
it follows that $x \in F_m$, and hence $\operatorname{mld}_{x}(X,B+D_m) \le a$. 
By (2), we have $\operatorname{mld}_{x} (X, B+M) \le a$, which contradicts $x \not \in F$. 
\end{proof}

\begin{cor}\label{cor:3dimLSC}
Conjecture \ref{conj:LSC} is true for the following cases. 
\begin{itemize}
\item[(1)] $\dim X \le 3$. 
\item[(2)] $X$ is smooth. 
\item[(3)] More generally, $X$ has locally complete intersection singularities. 
\item[(4)] $X$ has quotient singularities. 
\end{itemize}
\end{cor}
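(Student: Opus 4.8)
The plan is to obtain the corollary as a formal consequence of Theorem \ref{thm:LSC_reduction}, which reduces Conjecture \ref{conj:LSC} for an arbitrary generalized pair to the special case $M=0$, i.e.\ to the lower semi-continuity of minimal log discrepancies for usual log pairs. First I would note that, inspecting the proof of Theorem \ref{thm:LSC_reduction}, the auxiliary usual pairs $(X,\Delta+B_m)$ produced via Lemma \ref{lem:mld_limit} all have the same underlying variety $X$ as the original generalized pair; hence it is enough to know the lower semi-continuity for usual pairs whose underlying variety belongs to one of the four classes listed in (1)--(4).

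Next I would invoke the known results on Conjecture \ref{conj:LSCu} for usual pairs: it is a theorem of Ambro \cite{Amb99} in dimension $\le 3$; of Ein--Musta\c{t}\v{a}--Yasuda \cite{EMY03} when $X$ is smooth (via jet schemes); of Ein--Musta\c{t}\v{a} \cite{EM04} when $X$ is a local complete intersection; and of the third author \cite{Nak16a} when $X$ has quotient singularities, indeed whenever $X$ admits a crepant resolution in the category of Deligne--Mumford stacks. In each case the cited theorem asserts the conclusion for \emph{all} usual log pairs with the prescribed underlying variety, which is precisely the input ``Conjecture \ref{conj:LSC} for $M=0$'' demanded by Theorem \ref{thm:LSC_reduction} in the corresponding range. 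Feeding each of these facts into that theorem yields Conjecture \ref{conj:LSC} for generalized pairs $(X,\Delta+M)/Z$ with $X$ as in (1), (2), (3), (4) respectively.

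I do not expect a genuine obstacle here: the statement is a bookkeeping combination of Theorem \ref{thm:LSC_reduction} with the literature. The only point deserving an explicit remark is the one already flagged in the first paragraph, namely that the reduction step does not enlarge the class of underlying varieties (since $B_m$ is a divisor on $X$ itself), so that cases (2)--(4), which are hypotheses on $X$ rather than on $\dim X$, survive the passage from generalized to usual pairs intact.
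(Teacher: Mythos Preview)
Your proposal is correct and follows exactly the paper's approach: invoke Theorem \ref{thm:LSC_reduction} to reduce to the usual case $M=0$, then cite \cite{Amb99}, \cite{EMY03}, \cite{EM04}, and \cite{Nak16a} for cases (1)--(4) respectively. Your extra remark that the underlying variety $X$ is unchanged in the reduction (so hypotheses on $X$ survive) is a valid and helpful clarification, though the paper leaves it implicit.
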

\begin{proof}
Conjecture \ref{conj:LSC} is known when $M' = 0$ for each cases (1)-(4) by \cite{Amb99}, \cite{EMY03}, \cite{EM04}, and \cite{Nak16a}, respectively. 
Hence the general case also holds by Theorem \ref{thm:LSC_reduction}. 
\end{proof}

\section{Termination of generalized MMP's due to Shokurov}\label{section:termination}

In this section, we discuss the relation to the conjecture of termination of flips. 

\begin{conj}[Termination of flips]\label{conj:termination}
There is no infinite sequence of flips for generalized lc pairs. 
\end{conj}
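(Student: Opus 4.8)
The plan is to attack the conjecture through Shokurov's reduction of termination to the ACC and LSC conjectures for generalized mld's. The first step is a reduction to a controlled class: given a hypothetical infinite sequence of flips for generalized lc pairs, one passes to a $\mathbb{Q}$-factorial generalized dlt model (Definition \ref{defi:defs}(\ref{item:dltmodel})) and, after shrinking the base, to an infinite sequence of flips for $\mathbb{Q}$-factorial generalized lc \emph{projective} pairs $(X_i,\Delta_i+M_i)/Z$. Since the starting pair is fixed, only finitely many coefficients occur in $\Delta$ and in the nef part $M$, and these are not increased along the $(K+\Delta+M)$-MMP (the later $\Delta_i$, $M_i$ being strict transforms), so the coefficient set is automatically a DCC — indeed finite — set and the relevant generalized mld's stay in $[0,1]$. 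This places us exactly in the setting of Theorem \ref{thm:terminationintro}: granting Conjecture \ref{conj:LSCintro} (LSC for generalized mld's) and Conjecture \ref{conj:ACCintro} (ACC for generalized mld's), no such infinite sequence can exist, which is the conjecture.

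Because the two inputs are theorems in low dimension, the same route gives unconditional statements there. By Corollary \ref{cor:3dimLSC} the LSC conjecture for generalized mld's holds in dimension $\le 3$ (and for lci or quotient singularities), and by Theorem \ref{thm:introACC_2dim} the ACC conjecture holds in dimension $\le 2$; combining these with the reduction above yields termination of flips for generalized lc pairs of dimension $\le 2$ unconditionally, and in dimension $3$ modulo the ACC conjecture. Thus the "proof" decomposes into (i) the general strategy — the reduction to the ACC and LSC conjectures for generalized mld's — and (ii) the verification of those conjectures, which is where all the dimension-dependent difficulty sits.

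For the heart of the matter, the implication behind Theorem \ref{thm:terminationintro}, I would carry out Shokurov's argument in the generalized setting. Along each flip $X_i \dashrightarrow X_{i+1}$ the generalized log discrepancy $a_E$ is non-decreasing at every valuation $E$, by the negativity lemma applied to a common resolution, exactly as for usual pairs. One then forms a monotone "difficulty"-type invariant out of the generalized mld's $\operatorname{mld}_{\eta}$ at the codimension-$\ge 2$ points $\eta$ lying on the flipping and flipped loci — for instance a suitably weighted count of divisors of generalized log discrepancy $<1$ whose center meets those loci — which is well defined and takes finitely many values by Lemma \ref{lem:const}. The ACC conjecture forces this invariant to stabilize after finitely many steps, while the LSC conjecture is what lets one transfer stabilization of $\operatorname{mld}$ at closed points to stabilization at the generic points of the flipping centers, via the comparison $\operatorname{mld}_x = \operatorname{mld}_{\eta}+\dim\eta$ of Lemma \ref{lem:ambro}; once the invariant is constant and the mld's at the flipping centers can no longer strictly increase, there is no room for a further flip, contradicting infiniteness. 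The hard part is unavoidable: the ACC conjecture for generalized mld's is open in dimension $\ge 3$ and the LSC conjecture in dimension $\ge 4$, so this route cannot settle the conjecture in full generality — it delivers the clean reduction and the low-dimensional cases, and the genuine obstruction is precisely those two open singularity conjectures. A secondary, purely technical point is making the first-step reduction fully rigorous: verifying that passing to a generalized dlt model and to a neighbourhood of $Z$ preserves infiniteness of the flip sequence, and that the nef b-divisor parts $M_i$ keep their coefficients in a fixed finite set along the program.
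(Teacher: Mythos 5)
There is a genuine gap, and it is the whole statement: Conjecture \ref{conj:termination} is an open conjecture, and neither the paper nor your argument proves it. What you outline is the conditional implication ``ACC for generalized mld's $+$ LSC for generalized mld's $\Rightarrow$ termination,'' which is exactly the content of the paper's Theorem \ref{thm:termination} (stated there for $Z$ projective), together with the honest admission that the two input conjectures are open in the relevant dimensions. That admission is fatal for the task at hand: Conjecture \ref{conj:ACCintro} is unknown for generalized pairs in dimension $\ge 3$ (the paper's Corollary \ref{cor:ACC3dim} only treats generalized \emph{canonical} threefolds with finite coefficient sets), and Conjecture \ref{conj:LSCintro} is unknown in dimension $\ge 4$, so even termination of generalized lc flips in dimension $3$ does not follow from your route; the unconditional ``dimension $\le 2$'' case you claim is vacuous, since flips only occur in dimension $\ge 3$. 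A proposal that reduces the statement to two open conjectures is not a proof of the statement.

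Even viewed only as a proof of the implication, your sketch diverges from the paper's argument at the decisive step and, as written, does not close. After the log discrepancies stabilize, the paper does not use a ``difficulty-type'' weighted count of divisors with generalized log discrepancy $<1$ --- for lc pairs that set of divisors is infinite, so such a count is not even well defined without substantial extra work. Instead, the paper fixes the stabilized minimal value $a$ and the maximal dimension $d$ of centers computing it on the flipping loci (using Lemma \ref{lem:const_T} and the ACC input, Lemma \ref{lem:ACC_T}), forms the closures $W_i$ of the sets of $d$-dimensional points $\eta$ with $\operatorname{mld}_\eta \le a$, uses the monotonicity Lemma \ref{lem:mld_mono} and the LSC input (Lemma \ref{lem:LSC_T}) to show the induced maps $W_i \dashrightarrow W_{i+1}$ contract no $d$-dimensional subvariety on the target side yet contract one on the source side infinitely often, and then derives the contradiction from the finite-dimensionality of $N_d(W_i)_{\mathbb{Q}}$ via Lemma \ref{lem:cycle}. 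That cycle-theoretic endgame is the missing idea in your outline. Finally, your preliminary reduction (passing to a $\mathbb{Q}$-factorial generalized dlt model and shrinking the base) is neither needed for the implication nor obviously harmless: lifting an infinite sequence of lc flips to a dlt model is itself a nontrivial problem and is not addressed.
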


\noindent
Shokurov proved that this conjecture for usual pairs is true if we assume the ACC conjecture and the LSC conjecture \cite{Sho04}. 
The purpose of this section is to explain his proof in the generalized setting. 
First, we prove some lemmas which will be used in the proof. 

The generalized minimal log discrepancies satisfy the monotonicity as in the case of usual pairs. 

\begin{lem}\label{lem:mld_mono}
Let $(X,B+M) \dashrightarrow (X^+, B^+ +M^+)$ be the flip of a generalized pair $(X, B +M)/Z$. 
Then the following hold. 
\begin{enumerate}
\item $a_E(X, B+M) \leq a_E(X^{+}, B^+ +M^+)$ holds for any divisor $E$ over $X$.

\item $a_E(X, B +M) < a_E (X^{+}, B^+ +M^+)$ holds 
if $E$ is a divisor over $X$ whose center $c_X (E) $ is contained in the flipping locus. 
\end{enumerate}
\end{lem}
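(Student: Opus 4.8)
The plan is to run the usual negativity-lemma argument for flips, being careful about which model the nef part lives on. Let $\phi\colon X\dashrightarrow X^+$ be the flip over some base, and let $p\colon W\to X$, $q\colon W\to X^+$ be a common resolution on which $\phi$ is resolved; we may assume $W$ dominates the model $X'$ carrying the nef divisor $M'$, so that the trace of the b-divisor $M$ on $W$ is the same nef$/Z$ divisor $M_W$ whether computed from $(X,\Delta+M)$ or from $(X^+,\Delta^++M^+)$. Write $p^*(K_X+\Delta+M)=K_W+\Delta_W+M_W$ and $q^*(K_{X^+}+\Delta^++M^+)=K_W+\Delta^+_W+M_W$, so that by definition $a_E(X,\Delta+M)=1-\operatorname{coeff}_E\Delta_W$ and $a_E(X^+,\Delta^++M^+)=1-\operatorname{coeff}_E\Delta^+_W$ for every prime divisor $E$ on $W$ (and this computes the log discrepancies over $X$, resp.\ $X^+$, for every divisor over $X$ by Remark \ref{rmk:rmks}(1)). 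Thus the assertions (1) and (2) are equivalent to $\Delta_W\ge\Delta^+_W$, with strict inequality in every component whose center on $X$ lies in the flipping locus.

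The key computation is to subtract the two displayed equalities:
\[
\Delta_W-\Delta^+_W = p^*(K_X+\Delta+M)-q^*(K_{X^+}+\Delta^++M^+).
\]
Since $(X,\Delta+M)\dashrightarrow(X^+,\Delta^++M^+)$ is a flip, $-(K_X+\Delta+M)$ is ample over the flipping base $T$ and $K_{X^+}+\Delta^++M^+$ is ample over $T$; hence the right-hand side is the pullback of an anti-ample$/T$ class, and it is $q$-exceptional (its push-forward to $X^+$ vanishes because $\Delta_W$ and $\Delta^+_W$ have the same strict transform on $X^+$ and $M_W$ pushes to $M^+$ on both sides). Applying the negativity lemma (\cite{KM98}*{Lemma 3.39}) to the morphism $q$: a $q$-exceptional $\mathbb{R}$-divisor whose negative is $q$-nef is effective. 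Here $-(\Delta_W-\Delta^+_W)=q^*(K_{X^+}+\Delta^++M^+)-p^*(K_X+\Delta+M)$, and the $p^*$ term is $p$-trivial hence the whole class is, up to a $q$-nef summand, $q$-nef and negative; one concludes $\Delta_W-\Delta^+_W\ge 0$, which is (1). For (2), the negativity lemma in its refined form says that a component of $\Delta_W-\Delta^+_W$ has strictly positive coefficient precisely when its center on $X^+$ (equivalently, the place where anti-ampleness is strict) meets the locus where $q^*(K_{X^+}+\Delta^++M^+)-p^*(K_X+\Delta+M)$ fails to be $q$-nef-trivial — i.e.\ when $c_X(E)$ lies in the flipping locus; this gives the strict inequality.

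The main obstacle I anticipate is purely bookkeeping: ensuring that the \emph{same} nef divisor $M_W$ appears in both pullback formulas, so that it cancels cleanly in the subtraction. This requires choosing $W$ to dominate $X'$ and noting that the b-divisor $M$ is, by construction, the same b-divisor for $(X,\Delta+M)$ and for its strict transform $(X^+,\Delta^++M^+)$ — which is exactly the content of $\Delta^++M^+$ being ``the strict transform of $\Delta+M$'' in Definition \ref{LMMP}. Once $M_W$ cancels, the argument is identical to the classical one for usual pairs. I would also spell out why the difference divisor is $q$-exceptional and why anti-ampleness over $T$ lets us invoke the strict form of the negativity lemma, but these are standard.
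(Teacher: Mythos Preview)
Your approach is correct and is precisely the paper's: invoke the negativity lemma as in \cite{KM98}*{Lemma 3.38}, after noting that the nef part $M_W$ is the same in both pullbacks and hence cancels. Two small phrasing issues to fix: the difference $p^*(K_X+\Delta+M)-q^*(K_{X^+}+\Delta^++M^+)$ is not literally ``the pullback of an anti-ample$/T$ class,'' and in applying the negativity lemma to $q$ it is the $q^*$-term (not the $p^*$-term) that is $q$-numerically trivial, so that $-e\equiv_q -p^*(K_X+\Delta+M)$, which is $q$-nef because $-(K_X+\Delta+M)$ is ample over the flipping base and every $q$-contracted curve is also contracted over that base.
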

\begin{proof}
The assertion follows from the negativity lemma. 
We refer the reader to \cite{KM98}*{Lemma 3.38} for the detailed proof for usual pairs. 
The same proof works for generalized pairs. 
\end{proof}

For our purpose, we generalize the ACC conjecture and the LSC conjecture to those for scheme-theoretic points. 

\begin{lem}\label{lem:ACC_T}
Assume Conjecture \ref{conj:ACCintro}. 
Let $n \in \mathbb{Z} _{>0}$ and let $I \subset [0, + \infty)$ be a DCC subset. 
Then the following set
\[
\left\{ \operatorname{mld}_{\eta} (X, B + M) \ \middle | 
\begin{array}{l}
\text{$(X, B+M)/Z$ is a generalized pair with $\dim X = n$, }\\
\text{$B \in I$, $M \in I$ and $\eta \in X$ with $\operatorname{codim} \eta \ge 1$.}
\end{array}
\right \}
\]
satisfies the ACC, where $\eta$ is a scheme-theoretic point of $X$. 
\end{lem}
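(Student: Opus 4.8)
The plan is to reduce the statement about scheme-theoretic points of codimension $\geq 1$ to the statement about closed points, which is exactly Conjecture \ref{conj:ACCintro}. The key tool is Lemma \ref{lem:ambro}: given a generalized pair $(X, B+M)/Z$ with $\dim X = n$, $B \in I$, $M \in I$, and a scheme-theoretic point $\eta \in X$ with $\operatorname{codim} \eta \geq 1$, there is an open set $U \subset |X|_{\textup{cl}}$ meeting $\overline{\{\eta\}}$ such that for every closed point $x \in \overline{\{\eta\}} \cap U$ we have
\[
\operatorname{mld}_x(X, B+M) = \operatorname{mld}_\eta(X, B+M) + \dim \eta.
\]
In particular the set $T_n(I)$ in the statement, after shifting each of its elements by $-\dim\eta$ (an integer in $\{0,1,\dots,n-1\}$), becomes a subset of the set $A_{\textup{gen}}(n, I)$ of Conjecture \ref{conj:ACCintro}: indeed, $\operatorname{mld}_\eta(X,B+M) = \operatorname{mld}_x(X,B+M) - \dim\eta$ with $x$ a genuine closed point of the same pair. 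Since $A_{\textup{gen}}(n,I)$ satisfies the ACC by assumption, and since $T_n(I) \subset \bigcup_{j=0}^{n-1}\bigl(A_{\textup{gen}}(n,I) - j\bigr)$, which is a finite union of ACC sets (each a translate of an ACC set by an integer), $T_n(I)$ itself satisfies the ACC. One should be slightly careful with the values $\operatorname{mld}_\eta(X,B+M) = -\infty$, but these contribute only the single element $-\infty$, which does not affect the ACC property of the set; alternatively one restricts attention to the pairs with $\operatorname{mld}_\eta \geq 0$ from the start.

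First I would fix notation, writing $T_n(I)$ for the set in the statement, and observe that we may discard the element $-\infty$ if it occurs, as a single element never violates the ACC. Then, for each element $\operatorname{mld}_\eta(X,B+M) \in T_n(I) \cap \mathbb{R}$, I would invoke Lemma \ref{lem:ambro} to produce a closed point $x$ with $\operatorname{mld}_x(X,B+M) = \operatorname{mld}_\eta(X,B+M) + \dim\eta$; note that $(X,B+M)/Z$ is the very same generalized pair, so $\operatorname{mld}_x(X,B+M) \in A_{\textup{gen}}(n,I)$ and $\dim\eta \in \{0,1,\dots,n-1\}$. This shows
\[
T_n(I) \cap \mathbb{R} \ \subset \ \bigcup_{j=0}^{n-1} \bigl( A_{\textup{gen}}(n,I) - j \bigr).
\]
Finally I would invoke the elementary fact that a finite union of ACC subsets of $\mathbb{R}$ is again ACC, and that translating an ACC set by a constant preserves the ACC property; combined with Conjecture \ref{conj:ACCintro} this gives the ACC for $T_n(I)$.

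I do not expect a serious obstacle here: the content is entirely in Lemma \ref{lem:ambro}, which is already established, and the rest is bookkeeping about ACC sets under finite unions and integer translation. The only point requiring a word of care is the interplay with the value $-\infty$ and with the convention in Definition \ref{defi:defs}(\ref{item:mld}) for $\operatorname{codim}\eta = 1$; but since Lemma \ref{lem:ambro} already incorporates the convention that $\operatorname{mld}_\eta = -\infty$ forces $\operatorname{mld}_x = -\infty$ (via Remark \ref{rmk:rmks}(\ref{item:-inftycl})), and the lemma's equality is applied only at points where $(X,B+M)$ is generalized lc at $\eta$, this causes no difficulty.
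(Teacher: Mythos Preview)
Your proposal is correct and follows exactly the same approach as the paper, which simply states that the assertion follows from Conjecture \ref{conj:ACCintro} and Lemma \ref{lem:ambro}. Your write-up merely spells out the details the paper leaves implicit: using Lemma \ref{lem:ambro} to write $\operatorname{mld}_\eta(X,B+M) = \operatorname{mld}_x(X,B+M) - \dim\eta$ for some closed point $x$, and then observing that $T_n(I)$ lies in a finite union of integer translates of the ACC set $A_{\textup{gen}}(n,I)$.
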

\begin{proof}
The assertion follows from Conjecture \ref{conj:ACCintro} and Lemma \ref{lem:ambro}. 
\end{proof}

\begin{lem}\label{lem:LSC_T}
Assume Conjecture \ref{conj:LSC}. 
Let $(X, B +M)/Z$ be a generalized pair, and $d$ a non-negative integer such that $d \le \dim X -1$. 
Then the function 
\[
m: |X|_{d} \to \mathbb{R} \cup \{ - \infty \}; \quad \eta \mapsto \operatorname{mld}_{\eta}(X, B +M)
\]
is lower semi-continuous, where $|X|_{d}$ is the set of all $d$-dimensional scheme-theoretic points of $X$ with the Zariski topology. 
\end{lem}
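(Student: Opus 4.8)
The plan is to deduce this statement for $d$-dimensional scheme-theoretic points from the already-assumed lower semi-continuity of $\operatorname{mld}_x(X,\Delta+M)$ over closed points (Conjecture \ref{conj:LSC}), using Lemma \ref{lem:ambro} as the bridge between the two. The key identity is that, for a scheme-theoretic point $\eta$ of dimension $d$, there is a dense open $U \subset |X|_{\textup{cl}}$ meeting $\overline{\{\eta\}}$ on which $\operatorname{mld}_x(X,\Delta+M) = \operatorname{mld}_\eta(X,\Delta+M) + d$; so passing between the two functions is just a shift by $d$ on a dense open locus of each stratum.

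Concretely, fix $a \in \mathbb{R}\cup\{-\infty\}$ and consider $G := \{\eta \in |X|_d \mid \operatorname{mld}_\eta(X,\Delta+M) \le a\}$; the goal is to show $G$ is closed in $|X|_d$. First I would observe that $|X|_d$ carries the topology whose closed sets are generated by $\overline{\{\xi\}} \cap |X|_d$, so it suffices to take an irreducible closed set $T = \overline{\{\xi\}}$ with $\dim \xi = d' \ge d$ and show that if the $d$-dimensional points of $T$ lying in $G$ are dense among the $d$-dimensional points of $T$, then \emph{all} $d$-dimensional points of $T$ lie in $G$ — equivalently, working stratum by stratum, that $G$ is closed under specialization within $|X|_d$. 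The cleanest route: suppose $\eta \in |X|_d$ is a specialization of a net of points $\eta_i \in G$; pick, via Lemma \ref{lem:ambro} applied to each relevant stratum, closed points $x_i$ in the dense open $U_i$ attached to $\eta_i$ with $\operatorname{mld}_{x_i} = \operatorname{mld}_{\eta_i} + d \le a + d$, arranged so that the $x_i$ specialize (or have $\eta$ in their closure) appropriately, together with a closed point $x \in U$ where $U$ is the open set from Lemma \ref{lem:ambro} for $\eta$, so that $\operatorname{mld}_x = \operatorname{mld}_\eta + d$. Then Conjecture \ref{conj:LSC} gives $\operatorname{mld}_x \le a + d$ (the set $\{x : \operatorname{mld}_x \le a+d\}$ is closed and contains a dense subset of the relevant curve/stratum through $x$), hence $\operatorname{mld}_\eta \le a$, i.e. $\eta \in G$.

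In practice it is most efficient to mimic the proof of Theorem \ref{thm:LSC_reduction}: use the constructibility of the $\operatorname{mld}$ function (Lemma \ref{lem:const}) to reduce closedness of $G$ in $|X|_d$ to a one-parameter statement, namely taking an irreducible closed subvariety $V \subset \overline{G}$ of the appropriate dimension such that $V \cap (\text{the locus of } d\text{-dimensional points in } G)$ is dense and proper, picking the generic point $\xi$ of $V$ and a bad point $\eta \in V \setminus G$, and then applying Lemma \ref{lem:ambro} to transport down to closed points where the already-known case (Conjecture \ref{conj:LSC} for closed points) forces a contradiction. The finiteness part of Lemma \ref{lem:const} guarantees there are only finitely many values $a$ to worry about, so no subtlety about limits of thresholds arises.

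The main obstacle I anticipate is purely bookkeeping: matching the open sets $U$ produced by Lemma \ref{lem:ambro} (which depend on $\eta$) across the family of points $\eta_i$ approaching $\eta$, so that one genuinely gets closed points $x_i \to x$ with the shift identity holding simultaneously at all of them. This is handled by shrinking to a single irreducible $V$ as above and invoking generic smoothness once for the generic point $\xi$ of $V$, so that one uniform open set works along $V$; then the only real input is the closed-point LSC, applied to a general closed point of $V$ lying over $\eta$. No genuinely new geometric idea beyond Lemma \ref{lem:ambro} and the assumed Conjecture \ref{conj:LSC} is needed.
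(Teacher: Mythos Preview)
Your approach is correct and matches the paper's: both reduce to the closed-point case via Lemma \ref{lem:ambro} and then invoke Conjecture \ref{conj:LSC}. The argument can be said more directly than you do, without constructibility or curve-selection: for $a \in \mathbb{R}$, Lemma \ref{lem:ambro} together with the closedness of $F_{a+d} := \{x \in |X|_{\rm cl} : \operatorname{mld}_x \le a+d\}$ (from Conjecture \ref{conj:LSC}) gives $\{\eta \in |X|_d : \operatorname{mld}_\eta \le a\} = F_{a+d} \cap |X|_d$, which is closed.
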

\begin{proof}
The assertion follows from Conjecture \ref{conj:LSC} and Lemma \ref{lem:ambro}. 
\end{proof}

\begin{lem}\label{lem:const_T}
Let $(X, B +M)/Z$ be a generalized pair. Then the following set 
\[
\bigl \{ \operatorname{mld}_{\eta} (X, B + M) \ \big| \ \text{$\eta \in X$ with $\operatorname{codim} \eta \ge 1$.} \bigr \}
\]
is a finite set. 
\end{lem}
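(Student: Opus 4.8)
The plan is to reduce the finiteness of $\{\operatorname{mld}_\eta(X,\Delta+M) \mid \eta \in X, \operatorname{codim}\eta \ge 1\}$ to the constructibility and finiteness of the closed-point mld function already established in Lemma \ref{lem:const}. The bridge between the two is Lemma \ref{lem:ambro}: for every scheme-theoretic point $\eta$ there is a nonempty open subset of $\overline{\{\eta\}}$ on which $\operatorname{mld}_x(X,\Delta+M) = \operatorname{mld}_\eta(X,\Delta+M) + \dim\eta$. Since $\dim\eta$ ranges over the finite set $\{0,1,\dots,\dim X-1\}$, it suffices to show that for each fixed $d$ the set of values $\operatorname{mld}_\eta(X,\Delta+M)$ over $d$-dimensional points $\eta$ is finite.

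First I would fix $d$ and consider a $d$-dimensional scheme-theoretic point $\eta$. By Lemma \ref{lem:ambro} there is a nonempty (hence infinite, since $d$-dimensional) open subset $U_\eta \subset \overline{\{\eta\}}$ on which the closed-point mld equals $\operatorname{mld}_\eta(X,\Delta+M) + d$. Thus $\operatorname{mld}_\eta(X,\Delta+M) + d$ lies in the image of the function $m$ of Lemma \ref{lem:const}, so $\operatorname{mld}_\eta(X,\Delta+M)$ takes only values in $(\operatorname{Im} m) - d$. Since $m$ takes finitely many values by Lemma \ref{lem:const}(1), the set $\{\operatorname{mld}_\eta(X,\Delta+M) \mid \dim\eta = d\}$ is finite, and taking the union over $d = 0,\dots,\dim X - 1$ gives the claim. (The case $\operatorname{mld}_\eta = -\infty$ is harmless: $-\infty$ is a single extra value.)

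Alternatively — and perhaps more in the spirit of a Noetherian-induction writeup consistent with the proof of Lemma \ref{lem:const} — I would argue directly: on a dense open $X^\circ \subseteq X$ one has $\operatorname{mld}_\eta(X,\Delta+M) = \operatorname{codim}\eta$ for all $\eta \in X^\circ$ (using $X^\circ \subseteq X_{\textup{sm}} \setminus \varphi(\operatorname{Supp}\Delta')$ with $\varphi$ as in Definition \ref{defi:gp}), which contributes only the finitely many values $1,\dots,\dim X$. The remaining points lie over the proper closed subset $X \setminus X^\circ$, whose irreducible components have strictly smaller dimension, so Noetherian induction on $\dim X$ (or on the partially ordered set of closed subsets) closes the argument, again after observing via Lemma \ref{lem:ambro} that the mld at the generic point of any component differs from a closed-point mld on a dense open by an integer shift.

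The argument is essentially routine given the earlier lemmas; the only point requiring a little care is bookkeeping the $\pm\infty$ value and the integer shifts $\dim\eta$, but neither is a genuine obstacle. No new geometric input beyond Lemmas \ref{lem:ambro} and \ref{lem:const} is needed.
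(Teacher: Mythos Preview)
Your proof is correct and follows exactly the approach the paper intends: the paper's own proof is the single sentence ``The assertion follows from Lemmas \ref{lem:const} and \ref{lem:ambro},'' and your first paragraph simply spells out the obvious way to combine them. The alternative Noetherian-induction sketch is unnecessary here, but harmless.
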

\begin{proof}
The assertion follows from Lemmas \ref{lem:const} and \ref{lem:ambro}. 
\end{proof}

In the proof of Theorem \ref{thm:termination}, 
the finiteness of the dimension of $N_k (X)_{\mathbb{Q}}$ plays an important role. 

\begin{defi}[{cf.\ \cite[Examples 19.1.3--19.1.6]{Ful98}, \cite[Ch.\ II.\ (4.1.5)]{Kol96}}]\label{defi:cycle}
Let $X$ be a reduced projective scheme and let $k$ be a non-negative integer. 
We denote by $Z_k (X) _{\mathbb{Q}}$ the group of $k$-dimensional algebraic cycles on $X$ with rational coefficients. 
All cycles which are numerically equivalent to zero form a subgroup of $Z_k(X) _{\mathbb{Q}}$, 
and we denote by $N_k (X)_{\mathbb{Q}}$ the quotient group. Then $N_k (X)_{\mathbb{Q}}$ is a finite-dimensional $\mathbb{Q}$-vector space 
(cf.\ \cite[Example 19.1.4]{Ful98}). 
\end{defi}

\begin{lem}\label{lem:cycle}
Let $f: X \dashrightarrow Y$ be a dominant rational map of reduced projective schemes. 
Suppose that $f$ induces a birational map on each irreducible component of $X$ and $Y$. 
Let $k$ be a positive integer. 
Suppose that $f^{-1}$ does not contract any $k$-dimensional subvariety of $Y$. 
Then the following hold. 
\begin{itemize}
\item[(1)] $\dim N_k(X)_{\mathbb{Q}} \ge \dim N_k(Y)_{\mathbb{Q}}$ holds. 
\item[(2)] $\dim N_k(X)_{\mathbb{Q}} > \dim N_k(Y)_{\mathbb{Q}}$ holds if $f$ contracts some $k$-dimensional subvariety of $X$.
\end{itemize}
\end{lem}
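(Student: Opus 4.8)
The plan is to realize the numerical push-forward of cycles along $f$ as a well-defined linear map on the appropriate quotient spaces, and then to analyze its kernel and image. First I would resolve the rational map: pick a reduced projective scheme $W$ together with proper birational morphisms $p: W \to X$ and $q: W \to Y$ with $q = f \circ p$ (take $W$ to be the closure of the graph of $f$, or a common resolution of the components). Since $f$ is birational on each component, both $p$ and $q$ are birational on each component, so the strict transform gives a bijection between $k$-dimensional subvarieties of $X$ not contracted by $p^{-1}$ and those of $W$, and similarly for $q$. I would then define the candidate map $f_*: Z_k(X)_{\mathbb{Q}} \to Z_k(Y)_{\mathbb{Q}}$ as the usual proper push-forward $q_* \circ (p^*)$, where $p^*$ sends a subvariety to its strict transform (this is legitimate on cycles even though $p$ is not flat, because $p$ is birational on components); equivalently $f_*$ sends a $k$-dimensional subvariety $V \subset X$ to $\overline{f(V)}$ with multiplicity, and to $0$ if $f$ contracts $V$.

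The key step is to check that $f_*$ descends to a map $N_k(X)_{\mathbb{Q}} \to N_k(Y)_{\mathbb{Q}}$, i.e. that it preserves numerical equivalence. This is the projection formula: for a $k$-cycle $\alpha$ on $X$ and a class $\beta$ of complementary dimension pulled back appropriately, $f_*\alpha$ numerically vanishes against all test classes on $Y$ whenever $\alpha$ numerically vanishes on $X$; the standard way is to pass to $W$, where proper push-forward and flat-or-l.c.i.\ pull-back both respect numerical equivalence (this is exactly the content of the cited references \cite{Ful98} and \cite{Kol96}). So we obtain a $\mathbb{Q}$-linear map $\bar f_*: N_k(X)_{\mathbb{Q}} \to N_k(Y)_{\mathbb{Q}}$.

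Next I would argue surjectivity of $\bar f_*$, which gives (1): since $f^{-1}$ contracts no $k$-dimensional subvariety of $Y$, every $k$-dimensional subvariety $V' \subset Y$ is the image $\overline{f(V)}$ of its strict transform $V \subset X$ under $f$ (use $p^*q^*$ on $W$ and push down), so $\bar f_*$ hits a set of generators of $Z_k(Y)_{\mathbb{Q}}$, hence is surjective; therefore $\dim N_k(X)_{\mathbb{Q}} \ge \dim N_k(Y)_{\mathbb{Q}}$. For (2), suppose $f$ contracts some $k$-dimensional subvariety $E \subset X$. I would produce a nonzero class $[E] \in N_k(X)_{\mathbb{Q}}$ lying in $\ker \bar f_*$: by construction $\bar f_*[E] = 0$ since $f(E)$ has dimension $< k$, so it suffices to show $[E] \neq 0$ numerically. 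Here I would intersect $E$ with a suitable very ample class on $X$ (or on $W$): since $\dim E = k \ge 1$, the degree of $E$ against $H^{k} $ for an ample $H$ is positive, so $[E]$ is not numerically trivial. Combined with surjectivity of $\bar f_*$ this forces $\dim N_k(X)_{\mathbb{Q}} > \dim N_k(Y)_{\mathbb{Q}}$.

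The main obstacle I anticipate is the bookkeeping around reducibility and the non-properness/non-flatness of $p$ and $q$: one must be careful that ``strict transform of cycles'' is additive and compatible with numerical equivalence on a reduced but possibly reducible scheme, and that the projection formula is available in the form needed. The clean route is to do everything on the common model $W$, where the morphisms $p$ and $q$ are honest projective birational morphisms, and to use that proper push-forward preserves numerical equivalence for projective schemes (the cited sources); then transport back by noting that $p_*$ and $q_*$ are isomorphisms on $N_k$ modulo the subspaces generated by contracted subvarieties, which is exactly what the hypotheses on contracted subvarieties control.
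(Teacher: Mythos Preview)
Your proposal is correct and follows essentially the same approach as the paper: define a push-forward $f_*$ on numerical classes, use surjectivity (coming from the hypothesis that $f^{-1}$ contracts no $k$-dimensional subvariety) to get (1), and for (2) observe that a contracted $k$-dimensional subvariety $E$ gives a nonzero class in $\ker f_*$. The paper's proof is just two sentences and simply asserts the existence and surjectivity of $f_*: N_k(X)_{\mathbb{Q}} \to N_k(Y)_{\mathbb{Q}}$ without further comment, whereas you spell out the resolution via the graph and the projection formula; your version is a faithful elaboration of what the paper leaves implicit.
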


\begin{proof}
We have (1) since $f_*: N_{k}(X)_{\mathbb{Q}} \to N_{k}(Y)_{\mathbb{Q}}$ is surjective. 
Suppose that a subvariety $W \subset X$ of dimension $d$ is contracted by $f$. 
Then the cycle $[W]$ satisfies $[W] \not \equiv 0$ in $Z_k(X)_{\mathbb{Q}}$ and $f_* [W] \equiv 0$ in $Z_k(Y)_{\mathbb{Q}}$. 
Hence we have (2). 
\end{proof}

Shokurov proved that the ACC conjecture and the LSC conjecture imply the conjecture of termination of flips for usual pairs in \cite{Sho04}. 
His proof can be easily extended to the generalized setting. We shall explain his argument below. 

\begin{thm}\label{thm:termination} 
Conjectures \ref{conj:ACCintro} and \ref{conj:LSC} imply Conjecture \ref{conj:termination} 
for gerenalized pairs $(X, B + M)/Z$ with $Z$ projective. 
\end{thm}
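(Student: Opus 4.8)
The plan is to follow Shokurov's strategy from \cite{Sho04}, adapted to the generalized setting by means of the lemmas established above. Suppose for contradiction that there is an infinite sequence of flips
\[
(X, \Delta+M) = (X_0, \Delta_0+M_0) \dashrightarrow (X_1, \Delta_1+M_1) \dashrightarrow \cdots
\]
for generalized lc pairs with $Z$ projective. The idea is to associate to this sequence a monotone invariant that, on the one hand, cannot strictly decrease infinitely often by a dimension-counting argument (using $\dim N_k(X)_{\mathbb{Q}} < \infty$ from Definition \ref{defi:cycle}), and, on the other hand, must strictly decrease infinitely often because of the ACC and LSC hypotheses. First I would replace each $X_i$ by a reduced projective scheme so that the flips induce birational maps on components, and consider the sequence of finite sets of generalized mld's over various-dimensional strata, using Lemma \ref{lem:const_T} to know each such set is finite and Lemma \ref{lem:mld_mono} for monotonicity: along a flip, $a_E$ weakly increases for all $E$ and strictly increases for those $E$ whose center meets the flipping locus.

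The core construction is as follows. For each $k$, consider the closed subset $W_{i,k} \subset X_i$ where the generalized mld along $k$-dimensional points drops below a threshold; more precisely, following Shokurov one introduces, for each value $a$ in the (finite, by Lemma \ref{lem:const_T}) set of possible mld's, the locus of $k$-dimensional points $\eta$ with $\operatorname{mld}_\eta(X_i,\Delta_i+M_i) \le a$. By Lemma \ref{lem:LSC_T} (which uses Conjecture \ref{conj:LSC}) these loci are closed, so they define a well-behaved stratification; by Lemma \ref{lem:ACC_T} (which uses Conjecture \ref{conj:ACCintro}) the mld values themselves, across the whole sequence, satisfy the ACC. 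Because each flip strictly raises the mld at points over the flipped locus (Lemma \ref{lem:mld_mono}(2)) while never lowering it elsewhere (Lemma \ref{lem:mld_mono}(1)), these strata can only shrink; combined with the ACC this forces the strata to eventually stabilize in the high-codimension range, and then all flipping centers are pushed into a fixed lower-dimensional closed subset for large $i$. One then runs the same argument one dimension down, so that after finitely many reductions one obtains that, for $i \gg 0$, the flipping locus has dimension $\le k$ for a fixed $k$ and the $(k)$-dimensional mld's have stabilized — meaning each flip contracts a $k$-dimensional subvariety but the maps $X_i \dashrightarrow X_{i+1}$ never contract anything of dimension $< k$ from that point on, and in fact $(X_i \dashrightarrow X_j)^{-1}$ contracts nothing in dimension $k$ for $j > i$.

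At that stage I would invoke Lemma \ref{lem:cycle}: the composed rational maps $X_i \dashrightarrow X_j$ for $i < j$ large satisfy its hypotheses (birational on components; the inverse contracts no $k$-dimensional subvariety), so $\dim N_k(X_i)_{\mathbb{Q}} \ge \dim N_k(X_j)_{\mathbb{Q}}$, with strict inequality whenever some $k$-dimensional subvariety is genuinely contracted — which happens at every flip in this tail by construction. This gives a strictly decreasing infinite sequence of non-negative integers $\dim N_k(X_i)_{\mathbb{Q}}$, the desired contradiction.

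The main obstacle, and the step requiring the most care, is organizing the inductive reduction on dimension: one must show that the strata where mld's are small stabilize and that the flipping centers are progressively confined to lower-dimensional closed subsets, so that the hypothesis of Lemma \ref{lem:cycle}(2) is eventually met at a single fixed dimension $k$. This is exactly the delicate bookkeeping in Shokurov's original argument, and here one has to check that Lemmas \ref{lem:ACC_T}, \ref{lem:LSC_T}, and \ref{lem:const_T} — the generalized-pair analogues — are strong enough to drive it; fortunately each of these has been reduced via Lemma \ref{lem:ambro} to statements about closed points, so the generalized case offers no new difficulty beyond the usual one, and Shokurov's proof transfers essentially verbatim.
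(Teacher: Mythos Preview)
Your strategy and ingredients match the paper's, but there is a real gap in the final step. You apply Lemma \ref{lem:cycle} to the rational maps $X_i \dashrightarrow X_j$ between the full varieties and assert that the inverse contracts no $k$-dimensional subvariety. This is unjustified: the flipped locus $Z_i^+ \subset X_{i+1}$ can certainly have $k$-dimensional components, and nothing you have said excludes this (your parenthetical that the maps ``never contract anything of dimension $< k$'' is also false --- a flip contracts its entire flipping locus). You even introduced the correct objects $W_{i,k}$ in your second paragraph but then abandoned them at the crucial moment. The paper applies Lemma \ref{lem:cycle} not to the $X_i$ but to the closed subschemes $W_i \subset X_i$, defined as the Zariski closures of the sets $S_i$ of $d$-dimensional points $\eta$ with $\operatorname{mld}_\eta(X_i,\Delta_i+M_i) \le a$. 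The point (the paper's STEP 5) is that any $\eta \in S_{i+1}$ must avoid $Z_i^+$: otherwise strict monotonicity (Lemma \ref{lem:mld_mono}(2)) gives $a_i = \operatorname{mld}_{Z_i}(X_i,\Delta_i+M_i) < \operatorname{mld}_\eta(X_{i+1},\Delta_{i+1}+M_{i+1}) \le a$, contradicting the choice of $a$. Hence $f_i^{-1}|_{W_{i+1}}$ contracts no $d$-dimensional subvariety, while infinitely many $f_i$ contract a $d$-dimensional point of $W_i$, and Lemma \ref{lem:cycle} applied to the $W_i$ yields the strictly decreasing sequence $\dim N_d(W_i)_{\mathbb{Q}}$.

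Your ``inductive reduction on dimension'' is also not how the paper organizes the argument, and as written it is too vague to verify. The paper isolates the correct threshold $a$ and dimension $d$ in a single pass: $a$ is the stabilized value of $\alpha_i := \inf_{j\ge i}\operatorname{mld}_{Z_j}(X_j,\Delta_j+M_j)$ (which exists by Lemma \ref{lem:const_T} together with the ACC via Lemma \ref{lem:ACC_T}), and $d$ is the maximal dimension at which $\operatorname{mld}_\eta = a$ is attained along some $Z_i$ for infinitely many $i$. No descent in dimension is needed; the contradiction is reached at this single $d$.
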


\begin{proof}
Let 
\[
X = X_0 \overset{f_0}{\dashrightarrow} X_1 \overset{f_1}{\dashrightarrow} \  \cdots \ 
\overset{f_{i-1}}{\dashrightarrow} X_i \overset{f_{i}}{\dashrightarrow} \ \cdots, 
\]
be an infinite sequence of $(K_X + B + M)$-flips. 
Let $B_i$ and $M_i$ be the strict transforms of $B$ and $M$ on $X_i$, respectively. 
Then each $(X_i, B_i + M_i)/Z$ is a generalized pair and each $f_i$ is a $(K_{X_i} + B_i + M_i)$-flip. 
We denote by $g_i: X_i \to Y_i$ the corresponding flipping contraction and by $g_i ^+: X_{i+1} \to Y_i$ its flip. 
\[
\xymatrix{
X_i \ar[rd]_{g_i} \ar@{.>}[rr]^{f_i}&&X_{i+1} \ar[ld]^{g_i^+}\\
&Y_i&
}
\]
We define $Z_i \subset X_i$ and $Z_i ^+ \subset X_{i+1}$ as follows: 
\begin{itemize}
\item $Z_i := \operatorname{excep} (g_i)$ is the flipping locus, that is, the exceptional locus of $g_i$, and 
\item $Z_i ^+ := \operatorname{excep} (g^+ _i)$ is the flipped locus, that is, the exceptional locus of $g^+_i$. 
\end{itemize}
For non-negative integers $i$ and $\ell$, we denote
\begin{itemize}
\item $a_i := \operatorname{mld} _{Z_i} (X_i, B_i + M_i)$, 
\item $\alpha _i := \inf \{ a_j \mid j \ge i \}$, and 
\item $\alpha _i ^{\ell} := \min \{ a_j \mid \ell \ge j \ge i \}$. 
\end{itemize}

\noindent
\underline{\textbf{STEP 1:}}\ \ 
This step shows that for any $i$ and $\ell$ with $i \le \ell$, 
there exists a scheme-theoretic point $\eta \in X_i$ such that 
$\alpha _i ^{\ell} = \operatorname{mld} _{\eta} (X_i, B_i + M_i)$.

Let $\ell '$ be the minimum $\ell '$ with $i \le \ell ' \le \ell$ such that $\alpha _i ^{\ell} = a_{\ell '}$. 
We prove by induction that for any $j$ with $i \le j \le \ell'$, there  exists $\eta _j \in X_j$ such that 
$\alpha _i ^{\ell} = \operatorname{mld} _{\eta _j} (X_j, B_j + M_j)$. 

Suppose that $\alpha _i ^{\ell} = \operatorname{mld} _{\eta _j} (X_j, B_j + M_j)$ holds for $i < j \le \ell '$ and some $\eta _j$. 
Then, since 
\[
\operatorname{mld} _{Z_{j-1}} (X_{j-1}, B_{j-1} + M_{j-1}) = a_{j-1} > \alpha _i ^{\ell} = \operatorname{mld} _{\eta _j} (X_j, B_j + M_j), 
\]
it follows that $f_{j-1}$ is isomorphic over $\eta _j$ by Lemma \ref{lem:mld_mono}. 
Hence $\eta _{j-1} := f_{j-1}^{-1} (\eta _j)$ satisfies 
\[
\operatorname{mld} _{\eta _{j-1}} (X_{j-1}, B_{j-1} + M_{j-1}) = \operatorname{mld} _{\eta _j} (X_j, B_j + M_j) = \alpha _i ^{\ell}, 
\]
which proves the claim of STEP 1. 

\vspace{2mm}

\noindent
\underline{\textbf{STEP 2:}}\ \ 
This step shows that we may assume the existence of a non-negative real number $a$ such that 
\begin{itemize}
\item $a_i \ge a$ holds for any $i \ge 0$, and 
\item $a_i = a$ holds for infinitely many $i$. 
\end{itemize}

By STEP 1 and Lemma \ref{lem:const_T}, the set $\{ \alpha _i ^{\ell} \mid \ell \ge i \}$ is a finite set for each $i$. 
Therefore $\alpha _i = \alpha_i ^{\ell}$ holds for some $\ell \ge i$, and hence 
\[
\alpha _i = \operatorname{mld} _{\eta _i} (X_i, B_i + M_i)
\]
for some $\eta_i \in X_i$. 
Since the sequence $\alpha _i$ is non-decreasing, by the ACC conjecture (Lemma \ref{lem:ACC_T}), 
there exists a positive integer $N$ such that $\alpha _i = \alpha _N$ holds for any $i \ge N$. 
Therefore, we may have the desired $a$, possibly passing to a tail of the sequence. 

\vspace{2mm}

\noindent
\underline{\textbf{STEP 3:}}\ \ 
This step shows that we may assume the existence of a non-negative integer $d$ such that
\begin{itemize}
\item For any $i$, any scheme-theoretic point $\eta \in Z_i$ with $\operatorname{mld}_{\eta} (X_i, B_i + M_i) = a$ satisfies $\dim \eta \le d$. 
\item For infinitely many $i$, there exists a $d$-dimensional scheme-theoretic point $\eta \in Z_i$ such that 
$\operatorname{mld}_{\eta} (X_i, B_i + M_i) = a$. 
\end{itemize}

For non-negative integer $i$, we denote 
\begin{align*}
d_i &:= \max \bigl \{ \dim \eta \ \big | \  
\text{$\eta \in Z_i$ such that $\operatorname{mld}_{\eta} (X_i, B _i + M_i) = a$} \bigr \}, \\ 
e_i &:= \max \{ d_j \mid j \ge i \}. 
\end{align*}
Since the sequence $e _i$ is non-increasing, 
there exists a positive integer $N$ such that $e _i = e _N$ holds for any $i \ge N$. 
Therefore, we may have the desired $d$, possibly passing to a tail of the sequence.

\vspace{2mm}

\noindent
\underline{\textbf{STEP 4:}}\ \ 
Let $S_i$ be the set of the $d$-dimensional scheme-theoretic points $\eta \in X_i$ with 
$\operatorname{mld}_{\eta} (X_i, B_i + M_i) \le a$. 
Let $W_i \subset X_i$ be the Zariski closure of $S_i$. 
Then by the LSC conjecture (Lemma \ref{lem:LSC_T}), the following condition holds. 
\begin{itemize}
\item Any $d$-dimensional scheme-theoretic point $\eta \in W_i$ belongs to $S_i$. 
\end{itemize}

\vspace{2mm}

\noindent
\underline{\textbf{STEP 5:}}\ \ 
In this step, we prove that $f_i$ induces 
\begin{itemize}
\item a bijective map $S_i \setminus Z_i \to S_{i+1}$, and 
\item a dominant morphism $f_i' : W_i \setminus Z_i \to W_{i+1}$. 
\end{itemize}

Let $\eta \in S_i \setminus Z_i$. 
Then $f_i(\eta) \in S_{i+1}$ holds because 
\[
\operatorname{mld}_{f_i(\eta)} (X_{i+1}, B_{i+1} + M_{i+1}) = \operatorname{mld}_{\eta} (X_i, B_i + M_i) \le a. 
\]
Let $\eta \in S_{i+1}$. 
Suppose $\eta \in Z_{i}^+$. Then by Lemma \ref{lem:mld_mono}, we have 
\[
\operatorname{mld}_{Z_i}(X_i, B_i + M_i) < \operatorname{mld}_{\eta}(X_{i+1}, B_{i+1} + M_{i+1}) \le a, 
\]
and it contradicts STEP 2. Therefore it follows that $\eta \not \in Z_{i}^+$ and 
it shows that $f_i$ induces a bijective map $S_i \setminus Z_i \to S_{i+1}$. 
The second assertion follows from the first one.

\vspace{2mm}

\noindent
\underline{\textbf{STEP 6:}}\ \ 
By STEP 5, the number of the irreducible components of $W_i$ is non-increasing. 
Hence, passing to a tail of the sequence, we may assume that 
$f_i$ induces a birational map on each irreducible component of $W_i$. 
Further by STEP 5 and STEP 4, $f_i ^{-1}$ does not contract any $d$-dimensional subvariety of $W_{i+1}$. 
On the other hand, by the choice of $d$ in STEP 3, there exist infinitely many $i$'s such that 
$\operatorname{mld}_{\eta _i}(X_i, B_i + M_i) = a$ holds for some $d$-dimensional point $\eta _i \in Z_i$. 
For such $i$ and $\eta _i$, it follows from STEP 5 that $\eta _i \in W_i$ is contracted by $f_i$. 
Hence it contradicts Lemma \ref{lem:cycle}. 
\end{proof}

\begin{rmk}
By the proof of Theorem \ref{thm:termination}, it can be seen that
for proving Conjecture \ref{conj:termination} in dimension $n$, 
it is sufficient to assume Conjectures \ref{conj:ACCintro} and \ref{conj:LSC} in the same dimension. 
\end{rmk}

\section{ACC conjecture for surfaces}\label{section:ACC2dim}

In this section, we prove the ACC conjecture for surfaces (Theorem \ref{thm:ACC_2dim}). 
We generalize the argument in \cite{Sho} to the generalized setting with more detail. 

First, we prove the following convexity property of generalized log discrepancies 
(cf.\ \cite{Sho}, \cite[Proposition 2.37]{Kol13}, \cite[Proposition 4.1]{MN18}). 

\begin{lem}\label{lem:mono} 
Let $(X, B +M)/Z$ be a generalized lc surface and let $f: Y \to X$ be a 
projective birational morphism from a smooth surface $Y$. 
Suppose that $a_E(X, B + M) \le 1$ holds for every $f$-exceptional divisor $E$. 
Then the following hold. 

\begin{enumerate}
\item[(1)] Let $E_1$, $E_2$ and $E_3$ be distinct $f$-exceptional divisors such that 
\begin{itemize} 
\item $E_2$ meets both $E_1$ and $E_3$, and 
\item $E^2_2 \leq -2$. 
\end{itemize}
Then it follows that
\[
a_2 \leq \frac{1}{2}(a_1+a_3),
\]
where $a_i=a_{E_i}(X, B+M)$.

\item[(2)] If $E$ is an $f$-exceptional divisor, then $a_E(X, B+M) \leq \frac{2}{-E^2}$ holds. 
In particular, if $(X, B+M)$ is generalized $\epsilon$-lc, then $-E^2 \leq \frac{2}{\epsilon}$ holds. 

\item[(3)] Let $E_1, E_2$ and $E_3$ be as in (1). Suppose that $(X, B+M)$ is $\epsilon$-lc and $E_2^2\leq -3$, then it follows that
\[
a_3-a_2\geq a_2-a_1 + \epsilon.
\]
\end{enumerate}
\end{lem}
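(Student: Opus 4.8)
The plan is to prove all three parts together by a single local computation on the dual graph near $E_2$. The key tool is the standard fact that for a birational morphism $f : Y \to X$ from a smooth surface with $(X, \Delta+M)$ generalized lc, if we write $f^*(K_X+\Delta+M) = K_Y + \Gamma + M_Y$ with $M_Y$ the pushforward of $M'$ (nef over $Z$), then for each $f$-exceptional curve $E$ one has the adjunction/intersection identity coming from $f^*(K_X+\Delta+M)\cdot E = 0$. Concretely, since $\operatorname{coeff}_E \Gamma = 1 - a_E(X,\Delta+M)$, writing $\Gamma = \sum_i (1-a_i) E_i + (\text{non-exceptional part})$ and using $(K_Y + E_2)\cdot E_2 = -2$ for the smooth rational curve $E_2$, one gets
\[
0 = f^*(K_X+\Delta+M)\cdot E_2 = (K_Y+\Gamma+M_Y)\cdot E_2.
\]
First I would expand this: the $K_Y\cdot E_2$ term combines with the $(1-a_2)E_2^2$ term, the neighbors $E_1, E_3$ contribute $(1-a_1)(E_1\cdot E_2) + (1-a_3)(E_3\cdot E_2) \ge (1-a_1) + (1-a_3)$ since $E_1, E_3$ meet $E_2$, and all remaining terms (other exceptional $E_i$ meeting $E_2$, the strict transform of $\Delta$, and $M_Y\cdot E_2$) are $\ge 0$ because $1-a_i \ge 0$ by the hypothesis $a_i \le 1$, because $\Delta \ge 0$, and because $M_Y$ is nef over $Z$ hence $M_Y \cdot E_2 \ge 0$ (here $E_2$ is contracted by $f$, so it is a curve in a fiber of $X \to Z$).

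Carrying out the arithmetic: from $0 \ge (K_Y+E_2)\cdot E_2 - a_2 E_2^2 + (1-a_1) + (1-a_3)$, i.e. $0 \ge -2 - a_2 E_2^2 + (1-a_1)+(1-a_3)$, one obtains $-a_2 E_2^2 \le a_1 + a_3$ after cancelling, which upon using $-E_2^2 \ge 2$ gives $2 a_2 \le -a_2 E_2^2 \le a_1 + a_3$, i.e. part (1); and dropping the neighbor terms entirely gives $-a_2 E_2^2 \le 2$ (using $a_1, a_3, \dots \ge 0$, which holds since generalized lc forces $a_i \ge 0$), i.e. $a_E(X,\Delta+M) \le \tfrac{2}{-E^2}$ for $E = E_2$; since every $f$-exceptional divisor meets at least one other curve in the exceptional locus after passing to a log resolution, or one reduces the general-$E$ case of (2) to this by noting the inequality only involves $E$ itself, part (2) follows. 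The "in particular" clause of (2) is immediate: $\epsilon \le a_E \le \tfrac{2}{-E^2}$ forces $-E^2 \le \tfrac{2}{\epsilon}$. For part (3), when $E_2^2 \le -3$ the same identity gives $-a_2 E_2^2 \le a_1 + a_3$ with $-E_2^2 \ge 3$, hence $3a_2 \le a_1 + a_3$, i.e. $a_3 - a_2 \ge 2a_2 - a_1 = (a_2 - a_1) + a_2 \ge (a_2 - a_1) + \epsilon$, using $a_2 \ge \epsilon$ from the $\epsilon$-lc hypothesis.

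The main obstacle I expect is bookkeeping the sign of every contribution to $(K_Y+\Gamma+M_Y)\cdot E_2$ carefully: one must be sure that $M_Y \cdot E_2 \ge 0$ (which needs $M_Y$ nef relative to $Z$ and $E_2$ contracted over $X$, hence over $Z$ — so $E_2$ lies in a fiber), that the coefficients $1 - a_i$ of all exceptional components of $\Gamma$ are nonnegative (from $a_i \le 1$, which is exactly the standing hypothesis), and that the non-exceptional part of $\Gamma$, namely the strict transform $f^{-1}_* \Delta$, is effective and so contributes $\ge 0$. Once these signs are pinned down, the three inequalities are elementary consequences of $(K_Y+E_2)\cdot E_2 = -2$ together with $-E_2^2 \ge 2$ (resp. $\ge 3$) and $a_2 \ge \epsilon$. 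A minor point to address for part (2) in the stated generality is that we may assume $f$ is a sequence of blow-ups so that each exceptional $E$ satisfies $E^2 \le -1$ and appears in the above configuration with at least the self-intersection term available; the inequality $a_E \le \tfrac{2}{-E^2}$ then drops out of the single relation $-a_E E^2 \le 2$ with no neighbors needed.
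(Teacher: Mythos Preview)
Your proof is correct and follows essentially the same approach as the paper: intersect $f^*(K_X+\Delta+M)\cdot E_2=0$, expand, and use the nonnegativity of the various terms together with $(K_Y+E_2)\cdot E_2 \ge -2$. Two small points: the paper only uses $(K_Y+E_2)\cdot E_2 \ge -2$ (valid for any curve, not just smooth rational), and in your derivation of (2) the reason you may drop the neighbor terms $(1-a_1)+(1-a_3)$ is that $a_i \le 1$ (your standing hypothesis), not that $a_i \ge 0$.
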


\begin{proof} 
First, we shall prove (1) and (3). Let $\{ E_i \}$ be the set of all $f$-exceptional divisors. 
Let $M_Y$ be the push-forward of $M'$ on $Y$. 
Then we have
\[
f^*(K_X + B+M) = K_Y + \Bigl ( f^{-1}_* B +  \sum_{i} (1-a_i) E_i \Bigr )  + M_Y 
\]
for $a_i = a_{E_i}(X, B+M)$. 
Note that $1 - a_i \ge 0$ holds for every $i$ by assumption. 
We have 
\begin{align*}
0= {} &  f^*(K_X + B+M) \cdot E_2\\
= {} &  (K_Y+E_2) \cdot E_2 - a_2 E^2_2 + (1-a_1) E_1 \cdot E_2 + (1-a_3)E_3 \cdot E_2 \\
{} & + f^{-1}_* B \cdot E_2 + M_Y \cdot E_2 + \sum_{i \not= 1,2,3}(1-a_i)E_i \cdot E_2.
\end{align*}

\noindent
Note that $M_Y \cdot E_2 \ge 0$ holds by the projection formula and the nefness of $M'$. 
Furthermore, it is clear that 
\[
\quad (K_Y+E_2) \cdot E_2 \geq -2, \quad f^{-1}_* B \cdot E_2 \ge 0, \quad 
\]
and the assumptions give
\[
-a_2 E_2 ^2 \ge 2 a_2, \quad (1-a_1) E_1 \cdot E_2 \ge 1 - a_1, \quad (1-a_3)E_3 \cdot E_2 \ge 1 - a_3. 
\]
By combining them, we obtain the desired inequality $2\geq2a_2+(1-a_1)+(1-a_3)$, which proves (1). 

If $E_2 ^2 \le -3$ and $a_2 \ge \epsilon$, we have 
\[
a_3-a_2 \geq (a_2-a_1)+a_2 \geq  (a_2-a_1)+\epsilon, 
\]
which proves (3). 

(2) follows from the same calculation of $f^*(K_X + B+M) \cdot E$. 
\end{proof}

\begin{lem}\label{lem:smooth_surf} 
Let $(X, B+M)/Z$ be a generalized lc surface and let $x \in X$ be a closed point. 
If $\operatorname{mld}_x (X, B+M) >1$, then $X$ is smooth at $x$.
\end{lem}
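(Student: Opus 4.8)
The plan is to argue by contradiction: suppose $X$ is singular at $x$ but $\operatorname{mld}_x(X,\Delta+M)>1$. First I would take a minimal resolution $f\colon Y\to X$ over a neighborhood of $x$, so that every $f$-exceptional divisor $E$ satisfies $K_Y\cdot E\ge 0$ (minimality) and, since $X$ is singular at $x$, the exceptional fiber over $x$ is nonempty. The key point to extract first is that $\operatorname{mld}_x(X,\Delta+M)>1$ forces, in particular, $a_E(X,\Delta+M)>1$ for every exceptional divisor $E$ with $c_X(E)=x$; combined with the fact that on a smooth surface the usual mld at a point is computed by exceptional divisors of the resolution, this also gives $\Delta=0$ near $x$ and $M'=f^*M$ near the fiber (otherwise some $a_E\le 1$). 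So we are reduced to the statement that a nontrivial resolution of a surface singularity always has an exceptional curve $E$ with $a_E(X,0+M)\le 1$.

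Next I would run the standard negativity/intersection computation on a single exceptional curve $E$ in the minimal resolution. Writing $f^*(K_X+M)=K_Y+\sum_i(1-a_i)E_i+M_Y$ and intersecting with a fixed exceptional $E=E_2$, one gets
\[
0=(K_Y+E_2)\cdot E_2-a_2E_2^2+\sum_{i\ne 2}(1-a_i)E_i\cdot E_2+M_Y\cdot E_2.
\]
Since $Y$ is the minimal resolution, $(K_Y+E_2)\cdot E_2=2p_a(E_2)-2\ge -2$ and in fact $K_Y\cdot E_2\ge 0$, so $(K_Y+E_2)\cdot E_2=E_2^2+K_Y\cdot E_2\ge E_2^2$ with $E_2^2\le -2$ when $E_2$ is a rational $(-n)$-curve; also $M_Y\cdot E_2\ge 0$ by nefness of $M'$, and each $(1-a_i)E_i\cdot E_2\ge 0$ if all $a_i\le 1$ — but here I want the reverse, so I should instead pick $E_2$ to be an \emph{end} of the exceptional configuration or argue on the whole discrepancy vector. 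The clean way: if all $a_i>1$, then all the terms $(K_Y+E_2)\cdot E_2$ (which is $\le -2$ for a minimal resolution exceptional curve, using $E_2^2\le -2$ and $K_Y\cdot E_2\ge 0$ forcing $E_2\cong\mathbb P^1$), $-a_2E_2^2>E_2^2\cdot(-1)\ge 2$... and the cross terms and $M_Y\cdot E_2$ are $\ge 0$, and I need to see the sum cannot be zero. Concretely, summing the identity against $\sum_i E_i$ or choosing $E_2$ with $E_2^2\le -2$ gives $0\ge -2+2a_2+(\text{nonneg})$, i.e. $a_2\le 1$, contradicting $a_2>1$.

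The cleanest execution is therefore: pick any $f$-exceptional curve $E_2$ (there is one since $x$ is singular); in the minimal resolution $E_2\cong\mathbb P^1$ and $E_2^2\le -2$, so $(K_Y+E_2)\cdot E_2=-2$; all other intersection contributions $(1-a_i)E_i\cdot E_2$ would need sign control, so instead I intersect with an effective exceptional $\mathbb Z$-divisor supported on the full fiber that is $f$-nef, or simply use that among exceptional curves meeting the rest there is one where the local computation closes. The main obstacle I anticipate is handling the cross-terms $(1-a_i)E_i\cdot E_2$ cleanly without assuming $a_i\le 1$: the honest fix is to note that if $\operatorname{mld}_x>1$ then in particular $a_i>1$ so $(1-a_i)<0$, which seems to go the wrong way, so the right framing is to take $E_2$ to be a curve in the minimal resolution with $E_2^2\le -2$ and use the inequality obtained by intersecting $f^*(K_X+M)$ with $E_2$ to directly bound $a_2\le \tfrac{2}{-E_2^2}+(\text{correction from cross terms})$, showing $a_2\le 1$. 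I expect to invoke Lemma \ref{lem:mono}(2) — which gives exactly $a_E(X,\Delta+M)\le \tfrac{2}{-E^2}\le 1$ once $-E^2\ge 2$, the latter holding on a minimal resolution — as the key input, reducing the whole lemma to: a singular surface point has, on its minimal resolution, an exceptional curve with self-intersection $\le -2$, which is classical.
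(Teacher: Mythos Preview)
Your intended invocation of Lemma~\ref{lem:mono}(2) is circular: that lemma is stated under the standing hypothesis that $a_E(X,\Delta+M)\le 1$ for \emph{every} $f$-exceptional divisor $E$, which is exactly the conclusion you are trying to reach. You correctly diagnose the obstruction yourself: in the single-curve identity
\[
0=(K_Y+E)\cdot E - a_E\,E^2 + f^{-1}_*\Delta\cdot E + M_Y\cdot E + \sum_{i\neq E}(1-a_i)\,E_i\cdot E,
\]
the cross terms $(1-a_i)E_i\cdot E$ are $\le 0$ when all $a_i>1$, so the inequality $a_E\le 2/(-E^2)$ does not follow from this curve alone, and neither choosing an end curve nor summing naively closes the gap. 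The digression in your first paragraph claiming $\Delta=0$ and $M'=f^*M$ near $x$ is unjustified at that stage and unnecessary.

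The clean fix is a global negativity argument rather than a curve-by-curve one. On the minimal resolution $f\colon Y\to X$ each exceptional $E_i$ satisfies $K_Y\cdot E_i\ge 0$, $f^{-1}_*\Delta\cdot E_i\ge 0$, and $M_Y\cdot E_i\ge 0$; hence $K_Y+f^{-1}_*\Delta+M_Y$ is $f$-nef. Since $\sum_i(1-a_i)E_i\equiv_f -(K_Y+f^{-1}_*\Delta+M_Y)$, the negativity lemma gives $1-a_i\ge 0$ for all $i$, i.e.\ $a_{E_i}(X,\Delta+M)\le 1$, contradicting $\operatorname{mld}_x(X,\Delta+M)>1$ whenever the fiber is nonempty. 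This is essentially the paper's proof unpacked: the paper simply observes $\operatorname{mld}_x(X)\ge \operatorname{mld}_x(X,\Delta+M)>1$ (using $\operatorname{ord}_E(\Delta+M)\ge 0$ and that $X$ is $\mathbb{Q}$-Gorenstein, being numerically lc), so $X$ is terminal at $x$, and a terminal surface is smooth---the last step being precisely the classical fact that discrepancies on a minimal resolution are $\le 0$.
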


\begin{proof}
Since $X$ is numerically lc, $X$ is $\mathbb{Q}$-Gorenstein. 
Then
\[
\operatorname{mld}_x(X) \ge \operatorname{mld}_x(X, B+M) >1
\] 
implies that $X$ is terminal at $x$. Hence the surface $X$ is smooth at $x$. 
\end{proof}

If the minimal log discrepancy at a smooth closed point $x \in X$ is at least $\dim X - 1$, 
then the minimal log discrepancy is computed by the exceptional divisor obtained by the blow-up at $x$. 
See Definition \ref{defi:defs}(\ref{item:mult}) for the definition of the multiplicity in the generalized setting. 

\begin{lem}[{cf.\ \cite[Example]{Sho}}]\label{lem:highmld}
Let $(X, B+M)/Z$ be a generalized pair and let $\eta$ be a scheme-theoretic point of $X$ of $\operatorname{codim} \eta \ge 1$. 
Suppose that $X$ is smooth at $\eta$. 
Then $\operatorname{mld}_{\eta}(X, B+M) \ge \operatorname{codim} \eta - 1$ holds if and only if 
\[
\operatorname{mult}_{\eta} (B+M) \leq 1
\]
holds. 
Moreover, in this case, $\operatorname{mld}_{\eta} (X, B+M) = a_E (X, B+M)$ holds 
for the divisor $E$ obtained by the blow-up of $X$ along $\overline{\{ \eta \}}$. 
\end{lem}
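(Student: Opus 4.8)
The plan is to reduce to the case $\operatorname{codim}\eta\ge 2$ (the divisorial case being immediate from the definition of multiplicity as a coefficient) and then pass to the blow-up $\pi\colon Y\to X$ along $\overline{\{\eta\}}$, with exceptional divisor $E$ dominating $\overline{\{\eta\}}$. Since $X$ is smooth at $\eta$, a standard computation in a neighborhood of the generic point of $\overline{\{\eta\}}$ gives $K_Y = \pi^*K_X + (\operatorname{codim}\eta - 1)E$, so $a_E(X,0) = \operatorname{codim}\eta$. Using Definition \ref{defi:defs}(\ref{item:ord}) and (\ref{item:mult}), I would write $C + M' = \varphi^*(\Delta+M)$ on a common model and push forward to obtain $\pi^*(\Delta+M) = (\text{strict transform}) + \operatorname{mult}_\eta(\Delta+M)\cdot E + (\text{other exceptional terms not through the generic point of }E)$, hence $a_E(X,\Delta+M) = \operatorname{codim}\eta - \operatorname{mult}_\eta(\Delta+M)$. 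This already gives the ``only if'' direction and the ``moreover'' claim: if $\operatorname{mld}_\eta(X,\Delta+M)\ge \operatorname{codim}\eta - 1$ then in particular $a_E(X,\Delta+M)\ge \operatorname{codim}\eta-1$, i.e.\ $\operatorname{mult}_\eta(\Delta+M)\le 1$, and conversely $\operatorname{mult}_\eta(\Delta+M)\le 1$ forces $a_E(X,\Delta+M)\ge\operatorname{codim}\eta-1$.

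The substantive direction is the ``if'': assuming $\operatorname{mult}_\eta(\Delta+M)\le 1$, I must show that no divisor $F$ over $X$ with center $\overline{\{\eta\}}$ has $a_F(X,\Delta+M) < a_E(X,\Delta+M)$, equivalently that $E$ actually computes the mld. The plan is to work on $Y$ near the generic point of $\overline{\{\eta\}}$: by Remark \ref{rmk:rmks}(1) we may replace $(X,\Delta+M)$ by $(Y,\Delta_Y+M_Y)$ where $\Delta_Y$ is the log pullback and $M_Y$ the pushforward of $M'$, and the coefficient of $E$ in $\Delta_Y$ is $\operatorname{mult}_\eta(\Delta+M)\le 1$ while $M_Y$ is still nef over $Z$. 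Generically over $\overline{\{\eta\}}$ the variety $Y$ is smooth and $E$ is a smooth divisor, so any further divisor $F$ with center inside $\overline{\{\eta\}}$ has center contained in $E$; one then invokes the usual-pair fact (e.g.\ \cite[Corollary 2.32 / Lemma 2.29]{KM98}) that for a log-smooth pair with all boundary coefficients $\le 1$ the mld at a point is computed by taking the codimension minus the sum of the coefficients of the components through that point, which is minimized (over centers in $\overline{\{\eta\}}$) exactly by $E$ itself. The contribution of $M_Y$ only increases discrepancies by nefness and the negativity lemma, so it cannot produce a smaller value.

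The main obstacle I anticipate is making the localization at the generic point of $\overline{\{\eta\}}$ rigorous for a \emph{generalized} pair: one needs that after passing to a high enough model the nef part $M'$ contributes a nonnegative (in fact, generically zero along $E$) coefficient to $E$ and to all divisors $F$ over $X$ with center in $\overline{\{\eta\}}$, so that the comparison with the usual-pair computation goes through without the b-divisor spoiling the inequality. This is exactly the kind of ``nefness $+$ negativity lemma'' estimate already used in Lemma \ref{lem:adjcoef} and Lemma \ref{lem:ord_limit}, so I expect it to be routine but it is the step that must be written carefully. Once that is in place, the log-smooth computation on $Y$ finishes the argument, and the ``moreover'' statement is read off from the identity $a_E(X,\Delta+M)=\operatorname{codim}\eta-\operatorname{mult}_\eta(\Delta+M)$ established above.
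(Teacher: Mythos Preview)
Your reduction to $\operatorname{codim}\eta\ge 2$ and the computation $a_E(X,\Delta+M)=\operatorname{codim}\eta-\operatorname{mult}_\eta(\Delta+M)$ are fine, and indeed give the ``only if'' direction immediately. The gap is in the ``if'' direction.

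After a single blow-up $\pi\colon Y\to X$ along $\overline{\{\eta\}}$, the pair $(Y,\Delta_Y)$ is \emph{not} log smooth in general: the strict transform $\pi_*^{-1}\Delta$ need not be smooth or transverse to $E$ (think of $\Delta$ a cuspidal curve through the origin in $\mathbb{A}^2$ with small coefficient). Thus the formula from \cite[Corollary~2.32]{KM98} that you invoke---$\operatorname{mld}_\xi$ equals $\operatorname{codim}\xi$ minus the sum of the boundary coefficients through $\xi$---is simply not available at the center $\xi=c_Y(F)$ of an arbitrary divisor $F$ over $X$ with $c_X(F)=\overline{\{\eta\}}$. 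The obstacle you flagged (the contribution of $M_Y$) is not the issue; the issue is already present for the usual pair $(Y,\pi_*^{-1}\Delta+\operatorname{mult}_\eta(\Delta+M)\cdot E)$.

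The paper's proof avoids this by never asserting log smoothness after one blow-up. Instead it fixes a computing divisor $F$ of $\operatorname{mld}_\eta(X,\Delta+M)$, realizes $F$ via a Zariski tower of blow-ups $X_\ell\to\cdots\to X_1\to X_0=X$ along the successive centers $C_i=c_{X_i}(F)$ (cf.\ \cite[Lemma~2.45]{KM98}), and shows by induction that $a_{E_i}(X,\Delta+M)\ge a_{E_1}(X,\Delta+M)=a_E(X,\Delta+M)$ for every intermediate exceptional divisor $E_i$. The inductive step uses two ingredients: first, that once $a_i\ge 1$ one has $a_{c+1}\ge a_{E_{c+1}}\bigl(X_c,\Delta_c-(a_c-1)E_c+M_c\bigr)$; second, that the multiplicity of the strict transform does not increase, i.e.\ $\operatorname{mult}_{\eta_{C_c}}(\Delta_c+M_c)\le\operatorname{mult}_\eta(\Delta+M)$. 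This last fact is the content of Lemma~\ref{lem:mult}, which in turn is reduced to the classical (ordinary-pair) statement via the approximation Lemma~\ref{lem:ord_limit}. You will need this multiplicity inequality, or something equivalent to it, to make any version of the inductive argument go through; a single blow-up and an appeal to log-smooth combinatorics is not enough.
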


\begin{proof}
If $\operatorname{codim} \eta = 1$, then the assertion is trivial. We assume that $\operatorname{codim} \eta \ge 2$. 

Let $f:Y \to X$ be the blow-up of $X$ along $\overline{\{ \eta \}}$, 
and let $E$ be the exceptional divisor dominating $\overline {\{\eta\}}$. 
Then we have 
\[
\operatorname{mld}_{\eta} (X, B+M) \leq a_E(X, B + M) = \operatorname{codim} \eta - \operatorname{mult}_{\eta}(B+M). 
\]
Therefore, the condition $\operatorname{mld}_{\eta} (X, B+M) \ge \operatorname{codim} \eta -1$ 
implies $\operatorname{mult}_{\eta}(B+M) \leq 1$. 

Next, we assume that $\operatorname{mult}_{\eta}(B+M) \leq 1$. 
Let $F$ be a computing divisor of $\operatorname{mld}_{\eta} (X, B+M)$. 
Then by Zariski's lemma (cf.\ \cite[Lemma 2.45]{KM98}), $F$ can be obtained by the sequence of blow-ups along its center. 
Therefore there exists a sequence of blow-ups
\[
X_{\ell} \to X_{\ell -1} \to \cdots \to X_1 \to X_0 = X
\]
with the following conditions: 
\begin{itemize}
\item $X_{i+1} \to X_i$ is the normalization of the blow-up along $C_i := c_{X_{i}}(F)$. 
\item $E_{i+1} \subset X_{i+1}$ is the exceptional divisor of $X_{i+1} \to X_i$ dominating $C_i$. 
\item $E_{\ell} = F$ holds, and $\operatorname{codim} C_i \ge 2$ holds for each $0 \le i \le \ell -1$. 
\end{itemize}
Then, we note that the following hold. 
\begin{itemize}
\item $C_0 = \overline{\{ \eta \}}$. 
\item For each $0 \le i \le \ell -1$, 
$E_{i+1}$ is the only exceptional divisor of $X_{i+1} \to X_{i}$ satisfying $C_{i+1} \subset E_{i+1}$. 
\item For each $0 \le i \le \ell -1$, $X_i$ is smooth at the generic point of $C_i$. 
\end{itemize}
We set $a_i := a_{E_i} (X, B+ M)$ for $1 \le i \le \ell$. 
To prove that $E_1$ is also a computing divisor, it is sufficient to show $a_{\ell} \ge a_1$. 
We shall prove $a_i \ge a_1$ for each $1 \le i \le \ell$ by induction on $i$. 
Let $1 \le c \le \ell - 1$. Suppose that $a_i \ge a_1$ holds for each $1 \le i \le c$. 
Since we have
\[
a_i \ge a_1 = \operatorname{codim} \eta - \operatorname{mult} _{\eta}(B+M) \ge 2-1 = 1
\]
for each $1 \le i \le c$, it follows that 
\[
a_{c+1} = a_{E_{c+1}}(X, B + M) \ge a_{E_{c+1}} \bigl( X_c, B_c - (a_c-1)E_c  + M_c \bigr), 
\]
where $B_c$ is the strict transform of $B$ on $X_c$, and $M_c$ is the push-forward of $M'$ on $X_c$ 
(possilby replacing $X'$ with a higher model). 
Then by Lemma \ref{lem:mult} below, we have 
\begin{align*}
a_{E_{c+1}} \bigl( X_c, B_c - (a_c-1)E_c  + M_c \bigr) 
& = \operatorname{codim} C_c - \operatorname{mult}_{\eta _{C_c}} (B_c + M_c) + (a_c - 1) \\
&\ge \operatorname{codim} C_c - \operatorname{mult}_{\eta} (B + M) + (a_c-1) \\
&\ge 2 - 1 + (a_c - 1) = a_c \ge a_1, 
\end{align*}
where $\eta _{C_c}$ is the generic point of $C_c$. 
The proof is complete. 
\end{proof}

\begin{lem}\label{lem:mult}
Let $(X, B+ M)/Z$ be a generalized pair and let $\eta$ be a scheme-theoretic point of $\operatorname{codim} \eta \ge 2$. 
Suppose that $X$ is smooth at $\eta$. 
Let $f: Y \to X$ be the blow-up of $X$ along $\overline{ \{ \eta \} }$. 
We may assume that $\varphi$ in Definition \ref{defi:gp} factors through $f$. 
Then for every scheme-theoretic point $\xi$ of $Y$ such that $f(\xi) = \eta $, we have 
\[
\operatorname{mult}_{\xi} (\widetilde{B} + M_Y) \le \operatorname{mult}_{\eta} (B + M), 
\]
where $\widetilde{B}$ is the strict transform of $B$ on $Y$, and $M_Y$ is the push-forward of $M'$ on $Y$. 
\end{lem}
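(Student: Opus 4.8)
The plan is to reduce the statement about the generalized multiplicity to a computation on the blow-up $f\colon Y\to X$, using the fact (Remark~\ref{rmk:rmks}(1), specialized to the order $\operatorname{ord}$ as in Definition~\ref{defi:defs}(\ref{item:ord})) that the orders along divisors over $X$ are unchanged when we pass to the representation on a fixed high model. Write $E\subset Y$ for the exceptional divisor of $f$ dominating $\overline{\{\eta\}}$, set $n=\operatorname{codim}\eta$, and recall that by definition $\operatorname{mult}_\eta(\Delta+M)=\operatorname{ord}_E(\Delta+M)$. Since $\varphi\colon X'\to X$ factors through $f$, say $\varphi=f\circ g$ with $g\colon X'\to Y$, we may compute everything on $X'$: writing $C+M'=\varphi^*(B+M)$ with $B=\Delta$, the coefficient of (the strict transform of) $E$ in $C$ is precisely $\operatorname{mult}_\eta(\Delta+M)$.

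First I would pull $\Delta+M$ back to $Y$: write $f^*(\Delta+M)=\widetilde\Delta+M_Y+c\,E$ on $Y$ (this makes sense at the generic points of divisors, since $X$ is smooth along $\eta$ so $f^*$ of a Weil divisor is defined near $E$), where $c=\operatorname{mult}_\eta(\Delta+M)$ by the previous paragraph. Now take any scheme-theoretic point $\xi\in Y$ with $f(\xi)=\eta$; such $\xi$ necessarily lies on $E$. I want $\operatorname{mult}_\xi(\widetilde\Delta+M_Y)\le c$. The point is that $\operatorname{mult}_\xi$ of an effective divisor only decreases when we subtract an effective divisor: since $f^*(\Delta+M)-cE=\widetilde\Delta+M_Y\ge 0$ and $\widetilde\Delta+M_Y$ is obtained from $f^*(\Delta+M)$ by removing the component $cE$, and since $\xi\in E$, one gets
\[
\operatorname{mult}_\xi(\widetilde\Delta+M_Y)\le \operatorname{mult}_\xi\bigl(f^*(\Delta+M)\bigr).
\]
Then it remains to bound $\operatorname{mult}_\xi\bigl(f^*(\Delta+M)\bigr)$ by $c=\operatorname{mult}_\eta(\Delta+M)$, which is exactly the statement that pulling back a divisor along $f$ does not raise the multiplicity at a point mapping to the blown-up center: indeed $\operatorname{mult}_\xi(f^*D)=\operatorname{mult}_\xi(f^*D)$ and for a prime divisor $D$ through $\eta$ with $\operatorname{mult}_\eta D=m$ one has $f^*D=\widetilde D+mE$, and $\operatorname{mult}_\xi(\widetilde D+mE)$ is visibly at most $\operatorname{mult}_\eta D$ when $\xi\in E$ (separately for $\xi=E$ the generic point, where it equals $m$, and for $\xi$ of higher codimension on $E$, where a local computation in the blow-up of a smooth center gives $\le m$); extend additively over the components of $\Delta$ and add the contribution of $M_Y$, bounding $\operatorname{ord}_\xi M_Y\le\operatorname{ord}_E M'\cdot(\dots)$ via the representation $M'=\sum m_iM'_i$ and the negativity lemma as in the proof of Lemma~\ref{lem:adjcoef} (writing $\varphi^*M_i=M'_i+E_i$ with $E_i\ge 0$).

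The main obstacle I anticipate is the bookkeeping around the nef part $M$: unlike $\Delta$, it is not a genuine divisor on $X$, so "$\operatorname{mult}_\eta(\Delta+M)$" is defined through the order $\operatorname{ord}_E$ on the model $X'$, and I must check that passing to $Y$ is compatible — i.e.\ that $M_Y$ (the push-forward of $M'$ to $Y$) satisfies $f^*M=M_Y+(\text{the }E\text{-part of }C)$ at the level of generic points of divisors, and that the residual effective exceptional divisor $E_i$ appearing in $\varphi^*M_i=M'_i+E_i$ does not increase multiplicity at $\xi$ beyond what is already accounted for. This is precisely the kind of negativity-lemma argument used in Lemma~\ref{lem:adjcoef}, so I expect it to go through, but it is the step that requires care rather than the elementary "pullback does not raise multiplicity" fact for the $\Delta$-part.
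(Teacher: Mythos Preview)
Your decomposition into the two steps
\[
\operatorname{mult}_\xi(\widetilde\Delta+M_Y)\le \operatorname{mult}_\xi\bigl(f^*(\Delta+M)\bigr)
\quad\text{and}\quad
\operatorname{mult}_\xi\bigl(f^*(\Delta+M)\bigr)\le \operatorname{mult}_\eta(\Delta+M)
\]
does not work: the second inequality is false already for $M=0$ and a single prime divisor. Take $X=\mathbb{A}^2$, $\eta$ the origin, and $D=\{x=0\}$, so $\operatorname{mult}_\eta D=1$. After the blow-up, $f^*D=\widetilde D+E$, and at the point $\xi=\widetilde D\cap E$ (where both are smooth and transverse) one has $\operatorname{mult}_\xi(f^*D)=\operatorname{mult}_\xi\widetilde D+\operatorname{mult}_\xi E=1+1=2>1$. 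In general $\operatorname{mult}_\xi(\widetilde D+mE)=\operatorname{mult}_\xi\widetilde D+m$ since $E$ is smooth at $\xi$, so your ``local computation in the blow-up of a smooth center gives $\le m$'' cannot hold unless $\operatorname{mult}_\xi\widetilde D=0$. The correct classical fact is $\operatorname{mult}_\xi\widetilde D\le\operatorname{mult}_\eta D$, which is exactly the statement of the lemma for $M=0$ applied to one component; your step~2 throws away too little (it removes $cE$ but the strict transform $\widetilde D$ still passes through $\xi$), and step~3 then asks for an inequality that is simply not true.

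The paper's proof avoids the bookkeeping you anticipate for the $M$-part altogether: it invokes Lemma~\ref{lem:ord_limit} to replace $M$ by a sequence of effective divisors $B_m$ on $X$ with $\operatorname{mult}_\eta(\Delta+M)=\lim_m\operatorname{mult}_\eta(\Delta+B_m)$ and simultaneously $\operatorname{mult}_\xi(\widetilde\Delta+M_Y)=\lim_m\operatorname{mult}_\xi(\widetilde\Delta+\widetilde{B}_m)$, and then appeals to the classical case $M=0$. If you want to salvage a direct argument, you should not pass through $\operatorname{mult}_\xi(f^*(\Delta+M))$ at all; instead split $\operatorname{ord}_{E'}(\widetilde\Delta+M_Y)$ into the $\widetilde\Delta$-piece (handled by the genuine classical inequality $\operatorname{mult}_\xi\widetilde\Delta\le\operatorname{mult}_\eta\Delta$) and an $M$-piece $\operatorname{coeff}_{E'}(g^*M_Y-M')$, and bound the latter by $\operatorname{coeff}_E(\varphi^*M-M')$ via a careful negativity/pushforward comparison. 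That is doable, but is more work than the paper's two-line approximation argument.
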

\begin{proof}
If $M' = 0$, then the assertion is well-known. We shall prove the general case from the case when $M' = 0$. 

By Lemma \ref{lem:ord_limit}, there exists a sequence of effective divisors $D_m$ such that 
\[
\operatorname{mult}_{\eta} (B + M) = \lim _{m \to \infty} \operatorname{mult}_{\eta} (B + D_m). 
\]
By its proof, we may also assume
\[
\operatorname{mult}_{\xi} (\widetilde{B} + M_Y) = 
\lim _{m \to \infty} \operatorname{mult}_{\xi} (\widetilde{B} + \widetilde{D}_m), 
\]
where $\widetilde{D}_m$ is the strict transform of $D_m$ on $Y$. 
Hence the desired inequality follows from the case when $M' =0$. 
\end{proof}

First, we prove the ACC property on the interval $[1, 2]$. 

\begin{lem}\label{lem:acc>1}
For any DCC subset $I \subset [0, + \infty)$, the set $A_{\textup{gen}}(2,I) \cap [1, + \infty)$ satisfies the ACC. 
\end{lem}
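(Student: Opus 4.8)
The goal is to show that if $I\subset[0,+\infty)$ satisfies the DCC, then $A_{\textup{gen}}(2,I)\cap[1,+\infty)$ satisfies the ACC. Suppose for contradiction that we have a strictly increasing sequence $a^{(n)}\to a_\infty$ with $a^{(n)}=\operatorname{mld}_{x_n}(X_n,\Delta_n+M_n)\ge 1$ for generalized lc surface pairs $(X_n,\Delta_n+M_n)/Z_n$ with $\Delta_n\in I$, $M_n\in I$, and $x_n$ a closed point. By Lemma~\ref{lem:smooth_surf}, if $\operatorname{mld}_{x_n}(X_n,\Delta_n+M_n)>1$ then $X_n$ is smooth at $x_n$; so after discarding the (at most one) term equal to $1$ we may assume every $X_n$ is smooth at $x_n$. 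Now I want to invoke Lemma~\ref{lem:highmld}: since $\operatorname{codim} x_n-1=2-1=1$ and $a^{(n)}\ge 1$, we get $\operatorname{mult}_{x_n}(\Delta_n+M_n)\le 1$, and moreover the mld is computed by the divisor $E_n$ obtained from a single blow-up at $x_n$. Hence
\[
a^{(n)}=a_{E_n}(X_n,\Delta_n+M_n)=2-\operatorname{mult}_{x_n}(\Delta_n+M_n)=2-\operatorname{ord}_{E_n}(\Delta_n+M_n).
\]

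**Reducing to a DCC statement about the multiplicity.** The sequence $a^{(n)}$ is strictly increasing, so $\operatorname{mult}_{x_n}(\Delta_n+M_n)=2-a^{(n)}$ is strictly decreasing toward $2-a_\infty\ge 0$. Thus it suffices to show that the set of values $\operatorname{mult}_{x}(\Delta+M)$, ranging over generalized surface pairs with $X$ smooth at the closed point $x$, $\Delta\in I$, $M\in I$, satisfies the DCC. Write $\Delta=\sum b_iB_i$ with $b_i\in I$ and $M'=\sum m_jM_j'$ with $m_j\in I$, $M_j'$ nef$/Z$ Cartier. Unwinding Definition~\ref{defi:defs}(\ref{item:ord}) and (\ref{item:mult}): with $f\colon Y\to X$ the blow-up at $x$ and $E$ its exceptional curve, and $\varphi\colon X'\to X$ factoring through $f$, we have $\operatorname{mult}_x(\Delta+M)=\operatorname{ord}_E(\Delta+M)=\sum_i b_i\operatorname{mult}_x B_i+\sum_j m_j\operatorname{ord}_E(M_j)$ where $M_j=\varphi_*M_j'$. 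The first sum is a DCC sum because each $\operatorname{mult}_x B_i$ is a positive integer (as $B_i$ is an effective Weil, hence Cartier at the smooth point $x$, divisor) and $b_i\in I$ with $I$ DCC; so the number of $i$ with $b_i\operatorname{mult}_xB_i$ below any bound is controlled. For the second sum, the key point is that $\operatorname{ord}_E(M_j)\ge 0$ is a nonnegative rational (indeed integer, once we note $M_j'$ is Cartier) number: writing $\varphi^*M_j=M_j'+F_j$ with $F_j\ge 0$ by the negativity lemma and relative nefness, $\operatorname{ord}_E(M_j)=\operatorname{coeff}_E(\varphi^*(M_j)-M_j')$ pushed appropriately, and since $M_j$ is Cartier at a smooth point... wait, $M_j$ need not be Cartier at $x$, but it is an integral Weil divisor, hence Cartier at the smooth point $x$, so $\operatorname{mult}_x M_j=\operatorname{ord}_E M_j\in\mathbb Z_{\ge 0}$. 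Hence each summand $m_j\operatorname{ord}_E(M_j)$ lies in $I\cdot\mathbb Z_{\ge 0}\cup\{0\}$, a DCC set, and the whole expression is a DCC sum.

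**Finishing and the main obstacle.** With $\operatorname{mult}_x(\Delta+M)$ ranging in a DCC set, a strictly decreasing sequence of such values is impossible, contradicting the strict monotonicity of $2-a^{(n)}$. This contradiction proves that $A_{\textup{gen}}(2,I)\cap[1,+\infty)$ satisfies the ACC. The step I expect to be the genuine (though modest) obstacle is the bookkeeping around the $M$-contribution: one must be careful that the nef Cartier divisors $M_j'$ only appear with a fixed finite set of coefficients in $I$ per pair but the number $\ell$ of them is unbounded across the family, so the argument must produce a DCC bound that does not depend on $\ell$ — this is exactly why it is essential that each $\operatorname{ord}_E(M_j)$ is a \emph{nonnegative integer} rather than an arbitrary rational, so that a small total value forces only finitely many nonzero terms each bounded away from $0$. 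A secondary care point is the reduction via Lemma~\ref{lem:highmld}, which requires $X$ to be smooth at $x$; Lemma~\ref{lem:smooth_surf} supplies this for every term with $\operatorname{mld}>1$, and the single possible value equal to $1$ is harmless for an ACC statement.
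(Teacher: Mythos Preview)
Your argument is correct and follows essentially the same route as the paper: reduce to the smooth case via Lemma~\ref{lem:smooth_surf}, use Lemma~\ref{lem:highmld} to write $\operatorname{mld}_x(X,\Delta+M)=2-\operatorname{mult}_x(\Delta+M)$, and then observe that $\operatorname{mult}_x(\Delta+M)$ lies in the DCC set $\bigl\{\sum_{i=1}^{\ell} f_i \ \big|\ \ell\in\mathbb{Z}_{\ge 0},\ f_i\in I\bigr\}\cap[0,1]$. One small notational cleanup: the quantity you call ``$\operatorname{ord}_E(M_j)$'' is $\operatorname{coeff}_E(\varphi^*M_j-M_j')$, not the ordinary $\operatorname{mult}_x M_j$; your integrality and nonnegativity claims are still valid for the former since $\varphi^*M_j$ and $M_j'$ are both Cartier and their difference is effective by the negativity lemma.
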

\begin{proof}
Let $(X, B + M)/Z$ be a generalized lc surface with $B \in I$ and $M \in I$. 
Suppose that $\operatorname{mld}_x(X, B + M) > 1$. Then by Lemmas \ref{lem:smooth_surf} and \ref{lem:highmld}, we have 
\[
\operatorname{mld}_x(X, B + M) = 2 - \operatorname{mult}_x (B + M). 
\]
Note that $\operatorname{mult}_x (B + M)$ is contained in the set 
\[
\Bigl \{ \sum _{i=1} ^ {\ell} f_i \ \Big | \ \ell \in \mathbb{Z}_{\ge 0}, f_i \in I \Bigr \} \cap [0,1], 
\]
which satisfies the DCC. 
Therefore, the set $A_{\textup{gen}}(2,I) \cap [1, + \infty)$ satisfies the ACC. 
\end{proof}

\begin{lem}\label{lem:Y}
Let $(X, B + M)/Z$ be a generalized lc surface and $x \in X$ a closed point with 
$\operatorname{mld}_x (X, B + M) < 1$. 
Then there exists a projective birational morphism $f : Y \to X$ from a smooth surface $Y$ with the following conditions: 
\begin{itemize}
\item $a_F(X, B + M) \le 1$ holds for any $f$-exceptional divisor $F$ over $x$, and 
\item $a_E (X, B + M) = \operatorname{mld}_x (X, B + M)$ holds for some $f$-exceptional divisor $E$ over $x$. 
\end{itemize}
Moreover, we can take such $Y$ with either of the following conditions. 
\begin{enumerate}
\item $f$ is the minimal resolution of $X$. 
\item $E$ is the unique computing divisor of $\operatorname{mld}_x (X, B + M)$ on $Y$, 
and $E$ is the unique $(-1)$-curve among the $f$-exceptional divisors over $x$. 
\end{enumerate}
\end{lem}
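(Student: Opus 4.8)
The plan is to establish the two displayed bullet conditions together with whichever of (1) or (2) is desired, handling both by the same mechanism: pass to a smooth model, use the negativity lemma to bound the log discrepancies of exceptional curves from above by $1$, and use the convexity estimate of Lemma \ref{lem:mono} together with the tree structure of the exceptional graph of a surface resolution to pin down where $\operatorname{mld}_x(X,\Delta+M)$ is attained. (If $X$ happens to be smooth at $x$, there is no exceptional divisor over $x$ on the minimal resolution, so case (1) is empty in that situation and one only proves the bullets together with (2), starting the construction below directly from $X$ via Lemma \ref{lem:highmld}.)

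For (1) I would take $f=g\colon Y\to X$ the minimal resolution of $X$. The first bullet is the generalized analogue of the classical fact that the minimal resolution has non-positive discrepancies: since $g$ has no $(-1)$-curve in its exceptional locus we have $K_Y\cdot F\ge 0$ for every $g$-exceptional curve $F$, and as the intersection form on the exceptional locus is negative definite with non-negative off-diagonal entries this forces $a_F(X,0)\le 1$; writing $g^*(\Delta+M)=g^{-1}_*\Delta+M_Y+\sum_F\mu_F F$ and intersecting with $F_i$, the nefness of $M'$ gives $\bigl(\sum_F\mu_F F\bigr)\cdot F_i=-g^{-1}_*\Delta\cdot F_i-M_Y\cdot F_i\le 0$, so $\mu_F\ge 0$ for the $F$ over $x$ by the negativity lemma, whence $a_F(X,\Delta+M)\le a_F(X,0)\le 1$. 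For the second bullet I would take a log resolution $h=g\circ\pi\colon W\to X$ factoring through $g$, and perform $\pi$ one blow-up at a time, each time blowing up only a point whose associated minimal log discrepancy is $<1$; by Lemma \ref{lem:highmld} (and Lemma \ref{lem:mult} to handle the transform of $M$) such a point carries a transform of $\Delta+M$ of multiplicity $>1$, so the new exceptional curve again has generalized log discrepancy $\le 1$. After finitely many such steps one reaches a model computing $\operatorname{mld}_x$ on which every exceptional divisor over $x$ has $a\le 1$, so Lemma \ref{lem:mono}(1) applies; using the blow-up formulas $a_{E_p}=a_{G_1}+a_{G_2}$ at a node and $a_{E_p}=1+a_G$ at a smooth point of a single boundary curve, an induction on the number of blow-ups (all $a$-values being $\ge 0$ by generalized lc-ness) shows every such $a$ is $\ge\min_i a_{F_i}(X,\Delta+M)$, so the minimum is attained already on $Y$.

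For (2) I would start from any smooth model on which $\operatorname{mld}_x$ is computed, choose a computing divisor $E$, and realize $E$ as the last exceptional curve of the Zariski sequence of blow-ups $Y_\ell\to\cdots\to Y_0=X$, exactly as in the proof of Lemma \ref{lem:highmld}. Then I would run an appropriate relative MMP over $X$ (in the spirit of the proof of Theorem \ref{thm:extraction}) to contract the superfluous $(-1)$-curves among the exceptional divisors over $x$: each such contraction keeps the surface smooth and, being crepant, changes no $a_F(X,\Delta+M)$ of a surviving divisor, while $E$ is arranged to survive as a $(-1)$-curve. On the resulting $f\colon Y\to X$ the curve $E$ is the unique $(-1)$-curve among the $f$-exceptional divisors over $x$; the first bullet then follows from Lemma \ref{lem:mono}(2), since every other such $F$ has $F^2\le -2$ and hence $a_F\le \tfrac{2}{-F^2}\le 1$, while $a_E\le 2$; and the uniqueness of $E$ as a computing divisor follows from a short convexity argument, since by the blow-up formulas any divisor centered at a point of the exceptional locus other than $E$ has strictly larger generalized log discrepancy.

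The main obstacle is the middle step of (1) and the MMP step of (2): one must control the transform of the b-divisor $M$ under point blow-ups and contractions, and guarantee that the model can be arranged so that every exceptional divisor over $x$ has generalized log discrepancy $\le 1$ without discarding a divisor computing $\operatorname{mld}_x$. This is precisely where Lemmas \ref{lem:mult} and \ref{lem:highmld} (which reduce the multiplicity questions to the usual-pair case) and the already-generalized convexity Lemma \ref{lem:mono} are indispensable; once every exceptional divisor over $x$ has $a\le 1$, the tree shape of the exceptional graph of a surface resolution forces the minimum of the log-discrepancy function onto the minimal resolution (for (1)) or onto the single extracted $(-1)$-curve (for (2)).
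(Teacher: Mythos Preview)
Your proposal misreads the lemma: the sentence ``we can take such $Y$ with either of the following conditions'' means that the constructed $Y$ satisfies the two bullets \emph{together with one of} (1) or (2), not that (1) and (2) can each be achieved separately. In particular you attempt to show that, when $X$ is singular, the minimal resolution always carries a divisor computing $\operatorname{mld}_x(X,\Delta+M)$. This is false. For instance, take $X$ an $A_1$ singularity with exceptional $(-2)$-curve $F$, and $\Delta=cD$ with $c\in(\tfrac12,\tfrac23)$ where the strict transform $\tilde D$ has an ordinary node at a point $p\in F$ and meets $F$ with local intersection number $2$; then $a_F(X,\Delta)=1-c$, while the exceptional of the blow-up at $p$ has log discrepancy $2-3c<1-c$. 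The source of the error is your ``blow-up formulas'' $a_{E_p}=a_{G_1}+a_{G_2}$ and $a_{E_p}=1+a_G$: these hold only when the strict transform of $\Delta$ and the $M$-part contribute nothing at $p$. In general one only has $a_{E_p}\le a_{G_1}+a_{G_2}$, so your induction that $a_{E_p}\ge\min_i a_{F_i}$ fails.

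Your argument for (2) also has a gap: you invoke Lemma~\ref{lem:mono}(2) to conclude $a_F\le\tfrac{2}{-F^2}\le 1$, but that lemma has $a_F\le 1$ for all exceptional $F$ as a \emph{hypothesis}, so this is circular. Moreover, starting the Zariski tower at $Y_0=X$ makes no sense when $X$ is singular, and the MMP step you describe to leave $E$ as the unique $(-1)$-curve does not by itself guarantee $E$ is the unique computing divisor.

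The paper proceeds by a single construction that yields (1) or (2) automatically. One starts from the minimal resolution $Y_0\to X$ (where $a_F\le 1$ for all exceptional $F$ by the negativity lemma, as you correctly note). If some exceptional divisor on $Y_0$ computes $\operatorname{mld}_x$, take $Y=Y_0$ and one is in case~(1). Otherwise, fix any computing divisor $E'$ on a higher model and blow up successively at the centres $y_i:=c_{Y_i}(E')$ to obtain $Y_0\leftarrow Y_1\leftarrow\cdots$; set $Y=Y_j$ for the \emph{smallest} $j$ at which a computing divisor appears. The key point is that for every $i\le j-1$ one has $\operatorname{mld}_{y_i}(Y_i,\Delta_i+M_{Y_i})\le a_{E'}<1$, hence $\operatorname{mult}_{y_i}(\Delta_i+M_{Y_i})>1$ by Lemma~\ref{lem:highmld}, so the new exceptional curve acquires log discrepancy $<1$ and $\Delta_i\ge0$ persists to $\Delta_{i+1}\ge 0$. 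Minimality of $j$ forces the unique new divisor $E_j$ to be the only computing divisor on $Y_j$, and it is visibly the unique $(-1)$-curve there. This is case~(2).
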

\begin{proof}
Let $Y_0 \to X$ be the minimal resolution of $X$. 
If there exists a computing divisor of $\operatorname{mld}_x (X, B + M)$ on $Y_0$, 
then $Y := Y_0$ satisfies the conditions. 
Otherwise, we fix a computing divisor $E'$ of $\operatorname{mld}_x (X, B + M)$ on some higher model. 
We take the blow-up $Y_1 \to Y_0$ of $Y_0$ at the center $c_{Y_0}(E')$. 
We continue this process 
\[
Y_{\ell} \to Y_{\ell -1} \to \cdots \to Y_1 \to Y_0
\]
until $E'$ is a divisor on $Y_{\ell}$ (cf.\ \cite{KM98}*{Lemma 2.45}). 
Let $1 \le j \le \ell$ be the minimum $j$ such that there exists a computing divisor $E$ on $Y_j$. 
Then we set $Y := Y_j$ and $f: Y \to X$. 

Then it is sufficient to show that 
\begin{itemize}
\item $a_F(X, B + M) \le 1$ holds for any $f$-exceptional divisor $F$ over $x$. 
\end{itemize}
For each $0 \le i \le j$, we define $B_i$ on $Y_i$ by $(K_X + B + M)|_{Y_i} = K_{Y_i} + B_i + M_{Y_i}$, 
where $M_{Y_i}$ is the push-forward of $M'$ on $Y_i$ (possilby replacing $X'$ with a higher model). 
Then it is sufficient to show $B_{j-1} \ge 0$. 
Note that $B_0 \ge 0$ holds by the negativity lemma. 
Then we obtain $B_i \ge 0$ for $0 \le i \le j-1$ by induction on $i$ since 
$\operatorname{mult}_{y_i} (B_i + M_{Y_i}) > 1$ for $y_i := c_{Y_i}(E')$ and $i \le j-2$ by Lemma \ref{lem:highmld}. 
\end{proof}

Next, we prove the ACC property when the number of the $f$-exceptional divisors in Lemma \ref{lem:Y} is bounded from above. 

\begin{lem}\label{lem:n_bounded}
Let $\epsilon$ be a positive real number, $\ell$ a positive integer, and $I \subset [0,+ \infty)$ a DCC subset. 
Then there exists an ACC subset $J = J(\epsilon, \ell, I) \subset [0,1]$ 
depending only on $\epsilon$, $\ell$ and $I$ such that 
$\operatorname{mld}_x (X, B+M) \in J$ holds for any generalized lc surface 
$(X, B+M)/Z$ and a closed point $x \in X$ with the following three conditions: 
\begin{itemize}
\item $B \in I$, $M \in I$, 
\item $\epsilon \le \operatorname{mld}_x (X, B + M) < 1$, and
\item there exists a birational morphism $f : Y \to X$ as in Lemma \ref{lem:Y} such that 
the number of the $f$-exceptional divisors over $x$ is at most $\ell$. 
\end{itemize}
\end{lem}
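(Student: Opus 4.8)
The plan is to use the bound $n\le\ell$ on the number of exceptional divisors over $x$ to reduce the computation of $\operatorname{mld}_x(X,\Delta+M)$ to a fixed piece of linear algebra attached to one of finitely many weighted dual graphs, in which the mld becomes an explicit affine function of data that ranges over controlled DCC sets.

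\emph{Setting up the linear system.} Fix $(X,\Delta+M)/Z$ and $x$ as in the statement and let $f\colon Y\to X$ be as in Lemma \ref{lem:Y}, with $E_1,\dots,E_n$ ($n\le\ell$) the $f$-exceptional prime divisors satisfying $c_X(E_i)=x$; these form a connected configuration inside the fibre $f^{-1}(x)$, and every $f$-exceptional prime divisor whose centre on $X$ is not $x$ is disjoint from them. We may assume that $\varphi\colon X'\to X$ factors through $f$. Put $a_i:=a_{E_i}(X,\Delta+M)$ and $b_i:=1-a_i$; by Lemma \ref{lem:Y} we have $\epsilon\le a_i\le1$, hence $0\le b_i\le1-\epsilon$, and $\operatorname{mld}_x(X,\Delta+M)=\min_i a_i=1-\max_i b_i$. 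Intersecting the identity
\[
f^*(K_X+\Delta+M)=K_Y+f^{-1}_*\Delta+\sum_i b_iE_i+M_Y
\]
with each $E_j$ and using adjunction on the smooth surface $Y$ gives, for $1\le j\le n$,
\[
\sum_i (E_i\cdot E_j)\,b_i=2-2p_a(E_j)+E_j^2-d_j-m_j,
\]
where $d_j:=f^{-1}_*\Delta\cdot E_j\ge0$ and $m_j:=M_Y\cdot E_j$. Applying the negativity lemma to $X'\to Y$ and writing $M'=\sum_k r_kM'_k$ with $r_k\in I$ and $M'_k$ nef$/Z$ Cartier, one sees exactly as in the proof of Lemma \ref{lem:adjcoef} that $m_j=\sum_k r_kn_{jk}$ for some $n_{jk}\in\mathbb{Z}_{\ge0}$; in particular $m_j\ge0$.

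\emph{Bounding the data.} Rewriting the $j$-th equation as $2p_a(E_j)=2+a_jE_j^2-d_j-m_j-\sum_{i\ne j}(E_i\cdot E_j)b_i$ and discarding the non-positive terms, we read off $a_j(-E_j^2)\le2$, hence $-E_j^2\le2/\epsilon$ (this is Lemma \ref{lem:mono}(2) for divisors over $x$); then $2p_a(E_j)\le2-\epsilon$, so $p_a(E_j)=0$; and $d_j+m_j\le2$. Negative-definiteness of the intersection matrix of $f^{-1}(x)$ forces $(E_i\cdot E_j)^2<E_i^2E_j^2\le(2/\epsilon)^2$ for $i\ne j$, so every off-diagonal entry lies in a finite subset of $\mathbb{Z}_{\ge0}$ depending only on $\epsilon$. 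Writing $\Delta=\sum_a\delta_a\Delta_a$ with $\delta_a\in I$, we have $d_j=\sum_a\delta_a(f^{-1}_*\Delta_a\cdot E_j)$ with non-negative integer intersection numbers; hence $d_j$, $m_j$, and $t_j:=d_j+m_j$ all lie in the set
\[
\Gamma:=\bigl\{\,\textstyle\sum_s\gamma_sn_s\ \big|\ \gamma_s\in I,\ n_s\in\mathbb{Z}_{\ge0}\,\bigr\}\cap[0,2],
\]
which satisfies the DCC because $I$ does (the positive elements of a DCC set are bounded away from $0$, so a bounded element of $\Gamma$ is a sum of boundedly many elements of $I$, and finite sums of a DCC set form a DCC set).

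\emph{Conclusion.} With $p_a(E_j)=0$ the system reads $A\mathbf b=\mathbf c$, where $A:=(E_i\cdot E_j)_{1\le i,j\le n}$ and $c_j=(2+E_j^2)-t_j$. Since $n\le\ell$ and the entries of $A$ range over a finite set of integers, there are only finitely many possibilities for the pair $(n,A)$, each depending only on $\epsilon$ and $\ell$; fix one. The matrix $-A$ is a symmetric M-matrix, so $A^{-1}$ has non-positive entries, and therefore
\[
b_i=(A^{-1}\mathbf c)_i=\Bigl(\sum_j(A^{-1})_{ij}(2+E_j^2)\Bigr)+\sum_j|(A^{-1})_{ij}|\,t_j
\]
lies in a fixed translate of the Minkowski sum $\sum_j|(A^{-1})_{ij}|\,\Gamma$, which is a DCC set, being a finite sum of positive rational multiples of the DCC set $\Gamma$. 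Consequently $\max_i b_i$ lies in the union of these finitely many DCC sets, which is again DCC, and $\operatorname{mld}_x(X,\Delta+M)=1-\max_i b_i$ lies in $J:=\{\,1-t\mid t\ \text{in that union}\,\}\cap[0,1]$, an ACC set depending only on $\epsilon$, $\ell$, and $I$. The step carrying the argument is the non-positivity of the entries of $A^{-1}$: it is what transports the DCC property of the coefficient data through the linear system to the discrepancies. The remaining bounds come from Lemma \ref{lem:mono}(2) and the intersection form, and the manipulations with DCC sets are routine.
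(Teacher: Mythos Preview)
Your proof is correct and takes a genuinely different route from the paper's. The paper extracts only the computing divisor $E$ to a model $W\to X$ via Theorem \ref{thm:extraction}, applies generalized divisorial adjunction (Lemma \ref{lem:adjcoef}) to $E\subset W$ to write $a_E(X,\Delta+M)\cdot(-E^2)_W=2-\deg(\Delta_{E^\nu}+M_{E^\nu})$, and then observes that $(-E^2)_W$ takes finitely many values because the weighted dual graph of $Y/X$ does; thus the mld lands in an ACC set. You instead keep all $n$ exceptional curves over $x$, bound the intersection matrix entries by Lemma \ref{lem:mono}(2) and negative-definiteness, and solve the linear system $A\mathbf b=\mathbf c$ directly; the key observation that $-A$ is a Stieltjes matrix (so $A^{-1}\le0$ entrywise) is exactly what carries the DCC property of the boundary and nef data $(t_j)$ through to the coefficients $b_i$. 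Your argument is more elementary in that it avoids both the extraction theorem (which rests on the generalized MMP) and the generalized adjunction formula, replacing them by a single piece of linear algebra over the finitely many admissible intersection matrices. The paper's approach, on the other hand, isolates the computing divisor and packages the combinatorics into a single self-intersection number on the extraction, which is conceptually cleaner and connects directly to the adjunction framework used elsewhere in the paper.
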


\begin{proof}
Let $E$ be a computing divisor on $Y$ of $\operatorname{mld}_x (X, B + M)$. 
Since $a_E(X, B + M) <1$, we can construct the extraction $W \to X$ of $E$ (cf.\ Theorem \ref{thm:extraction}).  
Then we have
\[
\operatorname{deg} (K_{W} + B_W + M_{W} + E)|_{E} = a_E (X, B + M)E^2 \leq 0, 
\]
where $B_W$ is the strict transform of $B$ on $W$, and 
$M_{W}$ is the push-forward of $M'$ on $W$ (possilby replacing $X'$ with a higher model). 

Let $\nu: E^{\nu} \to E$ be the normalization. 
Then by the generalized adjunction (see Definition \ref{def:gadj} and Lemma \ref{lem:adjcoef}), 
there exists a generalized boundary $B_{E^{\nu}}+M_{E^{\nu}}$ on $E^{\nu}$ such that 
\begin{itemize}
\item $K_{E^{\nu}} + B_{E^{\nu}} + M_{E^{\nu}} \sim _{\mathbb{R}} (K_{W} + B_W + M_{W} + E)|_{E^{\nu}}$, 
\item $B_{E^{\nu}} \in I'$ and $M_{E^{\nu}} \in I'$ hold for some DCC subset $I' \subset [0, + \infty)$ which is determined by $I$. 
\end{itemize}
Since $\operatorname{deg} (K_{E^{\nu}}) = -2$ and 
\[
\operatorname{deg} (B_{E^{\nu}} + M_{E^{\nu}}) 
\in \Bigl \{ \sum _{i=1} ^ {k} f_i \ \Big | \ k \in \mathbb{Z}_{\ge 0}, f_i \in I' \Bigr \}, 
\]
the non-negative real number
\[
a_E (X, B + M) \cdot (- E^2) = 2 - \operatorname{deg} (B_{E^{\nu}} + M_{E^{\nu}})
\] 
is contained in an ACC set determined by $I$. 

Thus it is sufficient to show that $- E^2$ has finitely many possibilities depending only on $\epsilon$ and $\ell$. 
By Lemma \ref{lem:mono}(2) and the assumption on $\ell$ and $\epsilon$, 
the weighted dual graph of the $f$-exceptional divisors over $x$ has finitely many possibilities 
depending only on $\epsilon$ and $\ell$. 
Since $W$ is obtained by contracting the $f$-exceptional divisors except for $E$, 
the self-intersection number $-E^2$ on $W$ also has finitely many possibilities depending only on $\epsilon$ and $\ell$. 
The proof is complete. 
\end{proof}

We study minimal log discrepancies which are close to $1$. 
\begin{lem}\label{lem:all-2}
Let $\delta$ be a positive real number. 
Then there are no generalized lc surface $(X, B + M)/Z$ and a closed point $x \in X$ 
with the following three conditions. 
\begin{itemize}
\item $\max \bigl \{\frac{2}{3}, 1- \frac{1}{2} \delta  \bigr \} < \operatorname{mld}_x(X, B + M) < 1$. 
\item $B \ge \delta$ and $M \ge \delta$. 
\item There exists a computing divisor of $\operatorname{mld}_x(X, B + M)$ on the minimal resolution $Y$ of $X$. 
\end{itemize}
\end{lem}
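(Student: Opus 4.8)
The plan is to argue by contradiction on the minimal resolution; the crucial point is that $\operatorname{mld}_x>\frac{2}{3}$ forces every exceptional curve over $x$ to be a $(-2)$-curve, after which a short intersection computation does the rest. Suppose such a generalized lc surface $(X,\Delta+M)/Z$ and closed point $x$ exist; then $X$ is $\mathbb{Q}$-Gorenstein (being numerically lc, cf.\ the proof of Lemma~\ref{lem:smooth_surf}). Let $f\colon Y\to X$ be the minimal resolution, let $E$ be a computing divisor of $a:=\operatorname{mld}_x(X,\Delta+M)$ on $Y$, and let $E_1,\dots,E_n$ be the $f$-exceptional prime divisors over $x$; put $a_i:=a_{E_i}(X,\Delta+M)$, so $0\le 1-a_i\le 1-a$ for all $i$ (since $f$ is a minimal resolution and each $E_i$ has center $\{x\}$), with $a_{i_0}=a$ for some $i_0$. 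First I would note that each $E_i^2=-2$: as $a_i\ge a>\frac{2}{3}$ and the hypothesis of Lemma~\ref{lem:mono} holds, Lemma~\ref{lem:mono}(2) gives $-E_i^2\le 2/a_i<3$, while $-E_i^2\ge 2$ by minimality.

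Next I would set up the numerical identity. Replacing $X'$ so that $\varphi$ factors through $f$ and $M'=\sum_j m_jM'_j$ with $m_j\ge\delta$ and $M'_j$ nef$/Z$ Cartier, write $M_Y$ for the push-forward of $M'$ on $Y$ and $C:=\sum_i(1-a_i)E_i$, so that $f^*(K_X+\Delta+M)=K_Y+f^{-1}_*\Delta+M_Y+C$. Intersecting with $E_i$ and expanding $C\cdot E_i=-2(1-a_i)+\sum_{j\ne i}(1-a_j)(E_j\cdot E_i)$ gives
\[
K_Y\cdot E_i=2(1-a_i)-f^{-1}_*\Delta\cdot E_i-M_Y\cdot E_i-\sum_{j\ne i}(1-a_j)(E_j\cdot E_i).
\]
The three subtracted terms are $\ge 0$: this is clear for $f^{-1}_*\Delta\cdot E_i$ and $\sum_{j\ne i}(1-a_j)(E_j\cdot E_i)$, while $M_Y\cdot E_i\ge 0$ follows --- exactly as in the proof of Lemma~\ref{lem:mono} --- from the projection formula and the nefness of $M'$ on curves over $x$. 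Hence $K_Y\cdot E_i\le 2(1-a_i)<1$, and since $K_Y\cdot E_i=2p_a(E_i)$ by adjunction (as $E_i^2=-2$), we get $p_a(E_i)=0$ and $K_Y\cdot E_i=0$, so that
\[
C\cdot E_i=-(f^{-1}_*\Delta+M_Y)\cdot E_i\le 0\qquad(i=1,\dots,n).
\]
Moreover, writing $f^{-1}_*\Delta=\sum_k b_kB_{k,Y}$ with $b_k\ge\delta$ ($B_{k,Y}$ the strict transform of a component of $\Delta$), each $B_{k,Y}\cdot E_i$ is a non-negative integer, and each $M_{j,Y}\cdot E_i$ (with $M_{j,Y}$ the push-forward of $M'_j$, an integral divisor since $Y$ is smooth, meeting $E_i$ non-negatively by the same argument) is a non-negative integer; hence $(f^{-1}_*\Delta+M_Y)\cdot E_i\in\{0\}\cup[\delta,+\infty)$.

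Finally I would derive the contradiction. The $E_i$ form a connected configuration (fibres of $f$ are connected) and $C\ne 0$ since $1-a_{i_0}=1-a>0$; combined with $C\cdot E_i\le 0$ this forces $1-a_i>0$ for every $i$, since an $E_i$ with $1-a_i=0$ meeting $\operatorname{Supp}C$ would give $C\cdot E_i>0$. As the intersection form of $E_1,\dots,E_n$ is negative definite and $C\ne 0$, we have $C\cdot E_{i_1}<0$ for some $i_1$, hence $(f^{-1}_*\Delta+M_Y)\cdot E_{i_1}=-C\cdot E_{i_1}>0$, i.e.\ $\ge\delta$. Then, using $E_{i_1}^2=-2$, $1-a_j>0$, and $E_j\cdot E_{i_1}\ge 0$ for $j\ne i_1$,
\[
-\delta\ge C\cdot E_{i_1}=-2(1-a_{i_1})+\sum_{j\ne i_1}(1-a_j)(E_j\cdot E_{i_1})\ge-2(1-a_{i_1}),
\]
so $1-a_{i_1}\ge\frac{\delta}{2}$; but $1-a_{i_1}\le 1-a<1-\bigl(1-\frac{\delta}{2}\bigr)=\frac{\delta}{2}$, a contradiction.

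The heart of the argument --- and the only step using the precise hypotheses on $a$ --- is the reduction to all $E_i^2=-2$ and $K_Y\cdot E_i=0$; once that holds, the identity $C\cdot E_i=-(f^{-1}_*\Delta+M_Y)\cdot E_i$ is essentially linear and the conclusion is forced. The only mildly delicate verifications are the non-negativity and integrality of the numbers $M_{j,Y}\cdot E_i$, which are already implicit in the proof of Lemma~\ref{lem:mono}, and the full-support property of $C$, which follows from connectedness of the exceptional fibre.
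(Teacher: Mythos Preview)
Your proof is correct and follows essentially the same approach as the paper: both reduce to the case where every exceptional curve over $x$ is a $(-2)$-curve via Lemma~\ref{lem:mono}(2), then exploit the identity coming from $f^*(K_X+\Delta+M)\cdot E_i=0$ together with the gap $(f^{-1}_*\Delta+M_Y)\cdot E_i\in\{0\}\cup[\delta,\infty)$. The only difference is in the endgame: the paper observes directly that $2(1-a_{E})<\delta$ for \emph{every} exceptional $E$, forcing all the intersections $(f^{-1}_*\Delta+M_Y)\cdot E$ to vanish, whence $\Delta=0$ and $M_Y=f^*M$ near $x$ and $\operatorname{mld}_x(X,\Delta+M)=\operatorname{mld}_x(X)=1$; you instead use negative definiteness to locate a single $E_{i_1}$ with $C\cdot E_{i_1}<0$ and derive the same inequality $2(1-a_{i_1})\ge\delta$ for that curve---an unnecessary detour, since the hypothesis already gives the bound uniformly, but not a wrong one.
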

\begin{proof}
Suppose the contrary that there exist such $(X,B+M)/Z$ and $x \in X$.
Let $f: Y \to X$ be the minimal resolution of $X$. 
Since $\operatorname{mld}_x(X, B+ M) > \frac{2}{3}$, it follows from Lemma \ref{lem:mono}(2) that 
any $f$-exceptional divisor $E$ over $x$ is a $(-2)$-curve. 
Then, by adjunction of $(X, B+ M)$ to an  $f$-exceptional divisor $E$ over $x$, 
we have
\[
2 - 2 a_{E}(X, B + M) \ge (f^{-1}_* B + M_Y) \cdot E
\]
(see the proof of Lemma \ref{lem:mono}), where $M_Y$ is the push-forward of $M'$ on $Y$ (possilby replacing $X'$ with a higher model). 
Since 
\[
2 - 2 a_{E}(X, B + M) \le 2 - 2 \operatorname{mld}_x(X, B + M) < \delta, 
\]
we have $(f^{-1}_* B+ M_Y) \cdot E = 0$ for any $f$-exceptional divisor $E$ over $x$, 
which implies $B = 0$ and $M_Y = f^* M$ near $x$. 
Therefore we have 
\[
\operatorname{mld}_x(X, B + M) = \operatorname{mld}_x(X) = 1, 
\]
which contradicts the assumption $\operatorname{mld}_x(X, B + M) < 1$. 
Note that the second equality $\operatorname{mld}_x(X) = 1$ follows from the fact 
that any $f$-exceptional divisor $E$ over $x$ is a $(-2)$-curve. 
\end{proof}

We introduce a notation that will be used in Lemma \ref{lem:I}. 

\begin{defi}
Let $X$ be a klt surface and $x \in X$ a closed point, 
and let $f: Y \to X$ be a projective birational morphism from a smooth surface $Y$. 
Let $\Gamma$ be the dual graph of the $f$-exceptional divisors over $x$. 
For adjacent vertices $E$ and $F$ in $\Gamma$, 
we denote by $\Gamma'_{E,F}$ the subgraph of $\Gamma$ that is obtained by removing the edge connecting $E$ and $F$ 
from $\Gamma$. Since $\Gamma$ is a tree (cf.\ \cite[Theorem 4.7]{KM98}), $\Gamma'_{E,F}$ has exactly two connected components. 
We denote by $\Gamma_{E, F}$ the connected component of $\Gamma' _{E,F}$ that contains $E$ as a vertex. 
\end{defi}

We state a graph-theoretic lemma, which will be used in the proof of Lemma \ref{lem:I}. 
\begin{lem}\label{NM-lem}
Let $\ell$ be a positive integer and $G$ a connected graph of order $n > \frac{1}{2}(3^{\ell}-1)$. 
Let $v$ be its vertex with degree at most three. 
If every vertex of $G$ has degree at most four, 
then the graph $G$ contains a chain of length $\ell$ containing $v$ with degree one.
\end{lem}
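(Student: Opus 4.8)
\textit{Proof proposal.} The plan is to argue by contradiction using a breadth-first layering of $G$ from $v$. Suppose $G$ contains no chain of length $\ell$ having $v$ as an endpoint. For each integer $i \ge 0$ put $S_i := \{ w \in V(G) \mid d_G(v,w) = i \}$, where $d_G$ denotes the graph distance; since $G$ is connected, $V(G)$ is the disjoint union of the sets $S_i$.

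The first step is the layer estimate $|S_i| \le 3^{i}$ for all $i \ge 0$. This is clear for $i=0$, and for the inductive step I would count the edges of $G$ joining $S_{i-1}$ and $S_i$. Every vertex of $S_i$ has at least one neighbour in $S_{i-1}$, so this edge set has size at least $|S_i|$; on the other hand every $u \in S_{i-1}$ has at most $3$ neighbours in $S_i$ --- for $i=1$ because $\deg v \le 3$, and for $i \ge 2$ because $u$ already has a neighbour in $S_{i-2}$ while $\deg u \le 4$ --- so this edge set has size at most $3|S_{i-1}|$. Hence $|S_i| \le 3 |S_{i-1}| \le 3^{i}$.

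The second step is to observe that the assumption forces $S_i = \emptyset$ for every $i \ge \ell$. Indeed, if some $w$ satisfied $d_G(v,w) \ge \ell$, then the initial segment $v = u_0, u_1, \dots, u_\ell$ of a shortest $v$--$w$ path would be a chain of length $\ell$ with $v$ as an endpoint: a shortest path has no chords, since a chord would produce a strictly shorter path, so this segment is an induced path. Combining the two steps gives $n = \sum_{i=0}^{\ell-1} |S_i| \le \sum_{i=0}^{\ell-1} 3^{i} = \tfrac{1}{2}(3^\ell - 1)$, contradicting the hypothesis $n > \tfrac{1}{2}(3^\ell-1)$, and the lemma follows.

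I do not anticipate a serious obstacle; the argument is an edge-counting induction. The only delicate points are treating the first layer separately so that the bound $\deg v \le 3$ (rather than $\le 4$) is used at $v$, which is exactly what makes the geometric series $\sum_{i=0}^{\ell-1}3^i = \tfrac12(3^\ell-1)$ come out right, and the elementary remark that an initial segment of a shortest path is a chain and not merely a walk. As a sanity check, the rooted complete ternary tree of depth $\ell-1$ (root of degree $3$, every other vertex of degree $4$) has exactly $\tfrac12(3^\ell-1)$ vertices and no such chain through its root, so the bound is sharp.
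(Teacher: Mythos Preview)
Your proof is correct, but the route differs from the paper's. The paper argues by induction on $\ell$: delete $v$ to obtain at most three connected components (since $\deg v \le 3$); the largest component $G''$ then has order exceeding $\tfrac12(3^{\ell-1}-1)$ and contains a neighbour $v'$ of $v$, which has degree at most three in $G''$ since it has lost the edge to $v$; by the inductive hypothesis $G''$ contains a chain of length $\ell-1$ with endpoint $v'$, and prepending the edge $vv'$ gives the required chain of length $\ell$ from $v$. Your BFS layering unpacks the same growth estimate non-recursively: the bound $|S_i|\le 3|S_{i-1}|$ is exactly what drives the paper's pigeonhole step, and your geometric-series summation replaces the induction. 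Your approach has the merit of making the extremal example (the complete ternary tree rooted at $v$) visible, and of avoiding the small bookkeeping needed to check that the inductively produced chain in $G''$ does not revisit $v$; the paper's version is a couple of lines shorter. The remark that the initial segment of a shortest path is induced is correct but more than you need---having no repeated vertices already makes it a chain.
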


\begin{proof}
Let $G'$ be the subgraph of $G$ obtained by removing the vertex $v$ and all edges incident to $v$. 
Then $G'$ has at most three connected components, 
and hence some component $G''$ of $G'$ has order at least $\frac{n-1}{3} > \frac{1}{2}(3^{\ell - 1} -1)$. 
Let $v'$ be a vertex of $G''$ that is incident to $v$ in $G$. 
Then $v'$ has degree at most three in $G''$. 
Therefore the assertion follows from the induction on $\ell$. 
\end{proof}

The following lemma is a key of the proof of Theorem \ref{thm:ACC_2dim} and 
gives combinatorial information on the computing divisors. 

\begin{lem}\label{lem:I}
Let $n$ be an integer larger than one. Set $n' := \log _3 (2n+1) - 1$. 
Let $(X, B +M)/Z$ be a generalized lc surface and let $x \in X$ be a closed point. 
Suppose that 
\begin{itemize}
\item $0 < \operatorname{mld}_x (X, B + M) < 1$, and 
\item there exists a birational morphism $f : Y \to X$ as in Lemma \ref{lem:Y} such that 
the number of the $f$-exceptional divisors over $x$ is at least $n$. 
\end{itemize}
Let $\Gamma$ be the dual graph of $f$-exceptional divisors over $x$. 
Then there exists an edge path 
\[
E_0 {\text \ - \ } E_1 {\text \ - \ } \cdots {\text \ - \ } E_{m}
\]
of length $m \ge \frac{n'}{2}$ with the following three conditions.  
\begin{enumerate}
\item[(1)] $E_0$ is a computing divisor of $\operatorname{mld}_x (X, B+ M)$. 
\item[(2)] Either of the following two conditions holds: 
\begin{enumerate}
\item[(2-1)] $E_1$ is not a computing divisor. 
\item[(2-2)] 
\begin{itemize}
\item $E_1, \cdots , E_m$ are computing divisors, 
\item $\Gamma_{E_0, E_1}$ contains no computing divisor except for $E_0$, and 
\item $\Gamma_{E_0, E_1}$ has no fork. 
\end{itemize}
\end{enumerate}
\item[(3)] $0 \le a_{E_1}(X, B+M) - a_{E_0}(X, B+M) \le \frac{1}{m}$. 
\end{enumerate}
Moreover, if $\epsilon$ and $\delta$ are positive real numbers such that 
\begin{itemize}
\item $\min \{ \epsilon, \delta \} > \frac{16}{n'}$, 
\item $\operatorname{mld}_x (X, B+ M) \ge \epsilon$ and
\item $B \ge \delta$ and $M \ge \delta$, 
\end{itemize}
then the following additional three conditions hold.  
\begin{enumerate}
\item[(4)] $E_i$ is a $(-2)$-curve for each $1 \le i \le \frac{m}{2}$. 
\item[(5)] If $\delta \le \frac{2}{3}$, 
then the number of the vertices of $\Gamma_{E_0, E_1}$ is at most $\frac{16}{\min \{ \epsilon, \delta \}} + 1$. 
\item[(5')] If $E_0$ is a $(-1)$-curve, then $E_0$ is the unique vertex of $\Gamma_{E_0, E_1}$. 
\end{enumerate}
\end{lem}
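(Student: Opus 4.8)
The plan is to extract a long edge path from $\Gamma$ by combining the monotonicity/convexity estimates of Lemma \ref{lem:mono} with the graph-theoretic Lemma \ref{NM-lem}, then to bootstrap the extra hypotheses on $\epsilon$ and $\delta$ to control the self-intersection numbers along the path. First I would set $E_0$ to be any computing divisor of $\operatorname{mld}_x(X,\Delta+M)$ lying on $Y$, which exists by the choice of $f$ in Lemma \ref{lem:Y}. Since $\Gamma$ is a tree with at least $n$ vertices, and (by Lemma \ref{lem:mono}(2) together with $\operatorname{mld}_x(X,\Delta+M)<1$, so no $(-1)$-curves other than possibly $E_0$) every vertex has bounded degree — in fact degree at most four once one rules out forks adjacent to high-self-intersection vertices — Lemma \ref{NM-lem} applied to a suitable vertex of small degree produces a chain of length $\ge n' = \log_3(2n+1)-1$ emanating into $\Gamma$. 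One then walks along this chain starting from $E_0$: by Lemma \ref{lem:mono}(1) the sequence $a_{E_i}$ is convex along any chain of $(-2)$-or-worse curves, so the increments $a_{E_{i+1}}-a_{E_i}$ are monotone; since $a_{E_0}$ is the minimum (it computes the mld) and all $a_{E_i}\le 1$, telescoping over a subchain of length $m\ge n'/2$ forces the first increment to satisfy $0\le a_{E_1}-a_{E_0}\le \frac{1-a_{E_0}}{m}\le \frac1m$, giving (3). Conditions (1) and (2) are then a bookkeeping matter: if $E_1$ happens to be a computing divisor one reroutes the path so that $\Gamma_{E_0,E_1}$ is the "small side" containing only $E_0$ and no forks, using that a tree has a leaf and that computing divisors propagate under Lemma \ref{lem:mono}(1).

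For the additional conclusions, suppose $\min\{\epsilon,\delta\}>\frac{16}{n'}$, $\operatorname{mld}_x\ge\epsilon$, and $\Delta\ge\delta$, $M\ge\delta$. Condition (4): along the first half of the chain $a_{E_i}$ stays close to $a_{E_0}\ge\epsilon$, in particular $a_{E_i}>\frac23$ for $i\le m/2$ (using $m\ge n'/2$ and the increment bound), so Lemma \ref{lem:mono}(2) forces $-E_i^2\le\frac{2}{a_{E_i}}<3$, i.e. $E_i^2=-2$. Condition (5): if $\delta\le\frac23$, one uses the adjunction/convexity computation in the proof of Lemma \ref{lem:mono} — each $(-2)$-curve $E$ over $x$ contributes $(f_*^{-1}\Delta+M_Y)\cdot E\le 2-2a_E$, and the coefficient lower bound $\delta$ together with the strict-convexity refinement Lemma \ref{lem:mono}(3) (applicable once some $E_i^2\le-3$) bounds the length of any chain in $\Gamma_{E_0,E_1}$ by $\frac{16}{\min\{\epsilon,\delta\}}$; since $\Gamma_{E_0,E_1}$ has no fork by (2-2), it is itself a chain, so its vertex count is at most $\frac{16}{\min\{\epsilon,\delta\}}+1$. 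Condition (5'): if $E_0$ is a $(-1)$-curve, then by Lemma \ref{lem:Y}(2) it is the unique $(-1)$-curve and the unique computing divisor, and contracting it would only decrease self-intersections on the other side; the no-fork, no-computing-divisor conditions on $\Gamma_{E_0,E_1}$ then force $\Gamma_{E_0,E_1}=\{E_0\}$, for otherwise an adjacent $(-2)$-curve would propagate to another computing divisor via Lemma \ref{lem:mono}(1), a contradiction.

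The main obstacle I expect is the careful rerouting in step (2): one must arrange simultaneously that $E_0$ is at the "end" of the path, that the component $\Gamma_{E_0,E_1}$ on the $E_0$-side is chain-like with no other computing divisor, and that the path on the other side is still long enough ($m\ge n'/2$) to run the telescoping argument for (3). This requires a somewhat delicate case analysis on where the computing divisors sit relative to the chain produced by Lemma \ref{NM-lem}, and on how forks of $\Gamma$ interact with the convexity inequalities — in particular showing that a fork cannot carry too many computing divisors without violating Lemma \ref{lem:mono}(1)–(2). The self-intersection bounds in (4)–(5) are then comparatively routine once the combinatorial skeleton is pinned down, since they follow by feeding the chain structure back into the adjunction computation of Lemma \ref{lem:mono}.
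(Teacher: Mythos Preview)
Your plan for (1)--(3) is broadly along the lines of the paper's argument, though the paper makes the ``rerouting'' precise via a clean dichotomy: let $j$ be the largest index with $F_j$ a computing divisor along the initial chain produced by Lemma~\ref{NM-lem}; if $j < n'/2$ one takes the tail $F_j,F_{j+1},\ldots$ (giving case (2-1)), while if $j \ge n'/2$ convexity (Lemma~\ref{lem:mono}(1)) forces $F_0,\ldots,F_j$ all to be computing, and since the dual graph of a klt surface singularity has at most one fork, one side can be arranged to satisfy (2-2). You should also justify explicitly why a computing divisor has degree at most three in $\Gamma$, since that is the hypothesis Lemma~\ref{NM-lem} needs on the starting vertex.

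There is, however, a genuine gap in your argument for (4). You claim $a_{E_i} > \tfrac{2}{3}$ for $i \le m/2$ and then invoke Lemma~\ref{lem:mono}(2). But nothing forces $a_{E_i}$ to be large: the hypothesis is only $a_{E_0} = \operatorname{mld}_x \ge \epsilon$ with $\epsilon > 16/n'$, and $\epsilon$ may be far below $2/3$. The paper argues instead by contradiction via Lemma~\ref{lem:mono}(3): if some $E_i$ with $i \le m/2$ had $-E_i^2 \ge 3$, the convexity inequality gains an extra $+\epsilon$, so $a_m - a_i \ge (m-i)\bigl(a_i - a_{i-1} + \epsilon\bigr) \ge \tfrac{m}{2}\epsilon > 4$, contradicting $a_m \le 1$. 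Your sketches for (5') and (5) also do not go through. For (5'), the mechanism is not that an adjacent curve would become a computing divisor; rather, contracting the chain $E_0,E_1,\ldots,E_{m'}$ (a $(-1)$-curve followed by $(-2)$-curves, by (4)) forces $-E_{-1}^2 \ge m'+2 > 4/\epsilon$, contradicting Lemma~\ref{lem:mono}(2). For (5), the actual proof is considerably more delicate than you indicate: after bounding $(a_{-1}-a_0)+(a_1-a_0)$ and deducing $-E_0^2 = 2$ by adjunction, one splits according to whether $E_0$ has two or three neighbors, and in the three-branch case must invoke Lemma~\ref{lem:all-2} to finish; Lemma~\ref{lem:mono}(3) alone does not suffice.
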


\begin{proof}
First, we shall find an edge path $E_0, \ldots, E_m$ satisfying (1) and (2). 

Since $X$ is klt at $x$, the dual graph of the minimal resolution of $X$ over $x$ is a tree with at most one fork. 
Furthermore, if it has a fork, then the fork has exactly three branches (cf.\ \cite[Theorem 4.7]{KM98}). 
Therefore, by the construction of $Y$ in Lemma \ref{lem:Y}, the dual graph $\Gamma$ satisfies the following condition. 
\begin{itemize}
\item Each vertex of $\Gamma$ has at most four branches. 
Furthermore, the vertices corresponding to the computing divisors of $\operatorname{mld}_x (X, B + M)$ have at most three branches. 
\end{itemize}

Let $F_0$ be a computing divisor of $\operatorname{mld}_x (X, B + M)$. 
Then by the condition on $\Gamma$ above, we can take an edge path
\[
F_0 {\text \ - \ } F_1 {\text \ - \ } \cdots {\text \ - \ } F_{m'}
\]
of length $m'$ with $m' \ge n'$ by Lemma \ref{NM-lem}. 
Let $j$ be the maximum $j$ such that $F_j$ is a computing divisor. 
If $j < \frac{n'}{2}$, the chain 
\[
F_j {\text \ - \ } F_{j+1} {\text \ - \ } \cdots {\text \ - \ } F_{m'}
\]
satisfies (1) and (2-1), and it is sufficient to set $E_i := F_{j+i}$ for $0 \le i \le m' - j$. 

Suppose $j \ge \frac{n'}{2}$. 
Since $F_0$ and $F_j$ are computing divisors, 
all vertices of the chain 
\[
F_0 {\text \ - \ } F_{1} {\text \ - \ } \cdots {\text \ - \ } F_j
\]
are computing divisors by Lemma \ref{lem:mono}(1). 
Note that $f:Y \to X$ is the minimal resolution of $X$ in this case since there are at least two computing divisors $F_0$ and $F_1$. 
Since $\Gamma$ is a tree with at most one fork, 
one of the following conditions holds: 
\begin{itemize}
\item $\Gamma_{F_0, F_1}$ is a chain and $F_0$ has at most one branch in $\Gamma_{F_0, F_1}$. 
\item $\Gamma_{F_j, F_{j-1}}$ is a chain and $F_j$ has at most one branch in $\Gamma_{F_j, F_{j-1}}$.
\end{itemize}
Hence we may assume the former case, i.e.\ the graph $\Gamma_{F_0, F_1}$ is a chain: 
\[
F_{-m''+1} {\text \ - \ } F_{-m''+2} {\text \ - \ } \cdots {\text \ - \ } F_{0}, 
\]
where $m''$ is the number of the vertices of $\Gamma_{F_0, F_1}$. 
Let $k$ be the minimum $k \le 0$ such that $F_{k}$ is a computing divisor. 
Then $F_{k+1}, \ldots , F_{-1}$ are also computing divisors by Lemma \ref{lem:mono}(1). 
Hence the chain
\[
F_k {\text \ - \ } F_{k+1} {\text \ - \ } \cdots {\text \ - \ } F_j
\]
satisfies (1) and the three conditions in (2-2). 
Therefore, it is sufficient to set $E_i := F_{k+i}$ for $0 \le i \le j - k$. 

We have constructed an edge path $E_0, \ldots, E_m$ satisfying (1) and (2). 
In what follows, we shall prove that this path satisfies the other conditions (3)-(5)'. 

Set $a_i := a_{E_i} (X, B + M)$ for each $0 \le i \le m$. 
Note that 
\[
0 \le a_0 \le a_1 \le \cdots \le a_m
\] 
holds by Lemma \ref{lem:mono}(1). 
Furthermore, we have 
\[
a_m - a_0 \ge m (a_1 - a_0)
\]
by Lemma \ref{lem:mono}(1). 
Then (3) follows from the log canonicity $a_0 \ge 0$ and $a_m \le 1$ (cf.\ Lemma \ref{lem:Y}). 

We shall prove (4). Suppose the contrary that $- E_i^2 \ge 3$ for some $1 \le i \le \frac{m}{2}$. 
Then we have 
\[
a_{i+1} - a_i \ge a_i - a_{i-1} + \epsilon
\]
by Lemma \ref{lem:mono}(3), and hence
\[
a_m - a_i \ge (m -i) (a_i - a_{i-1} + \epsilon)
\]
by Lemma \ref{lem:mono}(1). 
Then it contradicts the following inequalities
\[
1 \ge a_m, \quad a_i \ge 0 , \quad m - i \ge \frac{m}{2} , 
\quad a_i - a_{i-1} + \epsilon \ge \epsilon > \frac{16}{n'} \ge \frac{8}{m}. 
\]

Next, we shall prove (5'). Suppose the contrary that $E_{-1}$ is an adjacent vertex of $E_0$ in $\Gamma_{E_0, E_1}$. 
Then by (4), there exists the following chain
\[
E_{-1} {\text \ - \ } E_0 {\text \ - \ } E_1 {\text \ - \ } \cdots {\text \ - \ } E_{m'}
\]
in $\Gamma$ with $m' > \frac{m}{2} - 1$ such that $E_1, \ldots, E_{m'}$ are $(-2)$-curves. 
Since $Y \to X$ factors through the blow-downs of the curves $E_0, \ldots, E_{m'}$, 
it follows that $- E_{-1}^2 \ge m' + 2$. 
On the other hand, we have 
$- E_{-1}^2 \le \frac{2}{\epsilon}$ by Lemma \ref{lem:mono}(2). It contradicts the following inequalities
\[
\frac{2}{\epsilon} \ge -E_{-1}^2 \ge m' + 2 > \frac{m}{2} \ge \frac{n'}{4} > \frac{4}{ \epsilon}. 
\]

We shall prove (5). By (5'), which has been already proved, we may assume that $- E_0 ^2 \ge 2$. 
Note that $f:Y \to X$ is the minimal resolution of $X$ in this case 
because a computing divisor $E_0$ is not a $(-1)$-curve. 
Therefore,  $\Gamma_{E_0, E_1}$ has at most one fork as discussed in the second paragraph. 
Suppose the contrary that $\ell > \frac{16}{\min \{ \epsilon, \delta \}}$ and $\Gamma_{E_0, E_1}$ consists of $\ell + 1$ vertices. 
Since $\Gamma_{E_0, E_1}$ has at most one fork, there exists a chain
\[
E_0 {\text \ - \ } E_{-1} {\text \ - \ } \cdots {\text \ - \ } E_{-m'}
\]
in $\Gamma_{E_0, E_1}$ with $m' \ge \frac{\ell}{2}$. 
Then the same argument as in (3) gives
\[
0 \le a_{-1} - a_{0} \le \frac{1}{m'}. 
\]
Note that we have the inequality
\begin{itemize}
\item[($\star$)] $(a_{-1} - a_0) + (a_1 - a_0) < \frac{1}{4}\min \{ \epsilon, \delta \}$
\end{itemize}
because 
\[
\frac{1}{m} + \frac{1}{m'} \le \frac{2}{n'} + \frac{2}{\ell} < 
\frac{1}{8}\min \{ \epsilon, \delta \} + \frac{1}{8}\min \{ \epsilon, \delta \} 
= \frac{1}{4}\min \{ \epsilon, \delta \}. 
\]

First, we claim that $- E_0 ^2 = -2$. By adjunction of $(X, B + M)$ to $E_0$, we have 
\[
2 + E_0 ^2 a_0 \ge  (1 - a_{-1}) + (1 - a_1). 
\]
Therefore it follows that
\[
(a_{-1} - a_0) + (a_1 - a_0) \ge (- E_0 ^2 - 2) a_0 \ge (- E_0 ^2 - 2) \epsilon, 
\]
and we have $- E^2 _0 = 2$ by ($\star$). 

We have the following two cases: 
\begin{itemize}
\item[(A)] $E_{-1}$ and $E_1$ are the only $f$-exceptional divisors which meet $E_0$. 
\item[(B)] $E_0$ meets three $f$-exceptional divisors $E_{-1}, E_1$ and $E'$. 
\end{itemize}

Suppose (A). Then by adjunction of $(X, B + M)$ to $E_0$, we have 
\[
(a_{-1} - a_0) + (a_1 - a_0) = (f^{-1}_* B + M_Y) \cdot E_0, 
\]
where $M_Y$ is the push-forward of $M'$ on $Y$ (possilby replacing $X'$ with a higher model). 
Here, we note that at least one of $E_{-1}$ and $E_1$ is not a computing divisor by (2). 
Therefore, the left-hand side is positive, and hence 
\[
(a_{-1} - a_0) + (a_1 - a_0) \ge \delta 
\]
by the assumptions $B \ge \delta$ and $M \ge \delta$. 
However, it contradicts ($\star$). 

Suppose (B). Set $a' := a_{E'}(X, B + M)$. 
Then by adjunction of $(X, B + M)$ to $E_0$, we have 
\[
(a_{-1} - a_0) + (a_1 - a_0) = (f^{-1}_* B + M_Y) \cdot E_0 + (1 - a'). 
\]
Hence we have 
\[
1 - a' \le (a_{-1} - a_0) + (a_1 - a_0) < \frac{1}{4}\min \{ \epsilon, \delta \}.
\] 
By adjunction to $E'$, we have
\[
2 - (- E^{'2}) a' \ge 1 - a_0 \ge 0. 
\]
Hence, if $- E^{'2} \ge 3$, we get a contradiction: 
\[
0 \le 2 - (- E^{'2}) a' \le 2(1-a') - a' < \frac{1}{2} \min \{ \epsilon, \delta \} - \epsilon < 0. 
\]
Since $- E^{'2} = 2$, we get 
\[
1 - a_0 \le 2 (1 - a') < \frac{1}{2} \min \{ \epsilon, \delta \} \le \frac{1}{2} \delta \le \frac{1}{3},  
\]
which contradicts Lemma \ref{lem:all-2}. 
The proof of (5) is complete. 
\end{proof}

\begin{thm}\label{thm:ACC_2dim}
For any DCC subset $I \subset [0,+ \infty)$, the set $A_{\textup{gen}}(2,I)$ satisfies the ACC. 
\end{thm}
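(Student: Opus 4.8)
The plan is to argue by contradiction: suppose there is a strictly increasing sequence $m_k \in A_{\textup{gen}}(2,I)$ with $m_k = \operatorname{mld}_{x_k}(X_k, \Delta_k + M_k)$ for generalized lc surfaces $(X_k, \Delta_k + M_k)/Z_k$ with $\Delta_k, M_k \in I$. By Lemma \ref{lem:acc>1} (the ACC on $[1,+\infty)$), we may assume $m_k < 1$ for all $k$, and after passing to a subsequence we may assume $m_k \to \epsilon$ for some $\epsilon \in [0,1]$. If $\epsilon = 0$ this already contradicts strict monotonicity into $[0,1)$ unless infinitely many $m_k$ exceed any given positive bound, so the genuinely hard case is $\epsilon > 0$; fix $\delta > 0$ with $\delta \le \tfrac 23$ and (shrinking $I$'s role) note that since $I$ satisfies the DCC there is $\delta_0 > 0$ with every nonzero element of $I$ at least $\delta_0$, so we may take $\delta = \min\{\delta_0, \tfrac 23\}$; then $\Delta_k \ge \delta$ and $M_k \ge \delta$ wherever they are nonzero. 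Also $\epsilon \le m_k < 1$ eventually, so we are exactly in the regime where Lemmas \ref{lem:Y}, \ref{lem:n_bounded}, \ref{lem:all-2}, \ref{lem:I} all apply.

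Next I would use Lemma \ref{lem:n_bounded} to handle the ``bounded resolution'' case. For each $k$ pick $f_k : Y_k \to X_k$ as in Lemma \ref{lem:Y}, and let $N_k$ be the number of $f_k$-exceptional divisors over $x_k$. If $N_k$ stays bounded along a subsequence, say $N_k \le \ell$, then by Lemma \ref{lem:n_bounded} all the $m_k$ (in that subsequence) lie in a fixed ACC subset $J(\epsilon, \ell, I)$ of $[0,1]$, contradicting strict monotonicity of $m_k \nearrow$. Hence $N_k \to \infty$, and we may assume $N_k \ge n_k$ with $n_k \to \infty$, so that the associated quantity $n_k' = \log_3(2n_k + 1) - 1 \to \infty$ as well. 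In particular, for $k$ large we have $\min\{\epsilon, \delta\} > 16/n_k'$, so the ``moreover'' part of Lemma \ref{lem:I} applies.

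Now apply Lemma \ref{lem:I}: for each large $k$ we obtain an edge path $E_0^{(k)} - E_1^{(k)} - \cdots - E_{m_k}^{(k)}$ in the dual graph $\Gamma_k$ of length $m_k \ge n_k'/2 \to \infty$, with $E_0^{(k)}$ computing $\operatorname{mld}_{x_k}(X_k, \Delta_k + M_k)$, with $0 \le a_{E_1^{(k)}} - a_{E_0^{(k)}} \le 1/m_k$, and with conclusions (4), (5), (5$'$). By conclusion (5) the subgraph $\Gamma_{E_0^{(k)}, E_1^{(k)}}$ has at most $16/\min\{\epsilon,\delta\} + 1$ vertices — a bound independent of $k$; and by (5$'$) if $E_0^{(k)}$ is a $(-1)$-curve it is the only vertex. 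The point is that the generalized adjunction of $(X_k, \Delta_k + M_k)$ to the curve $E_0^{(k)}$ — performed on the extraction $W_k \to X_k$ of $E_0^{(k)}$ as in the proof of Lemma \ref{lem:n_bounded}, using Definition \ref{def:gadj} and Lemma \ref{lem:adjcoef} — expresses $m_k \cdot (-(E_0^{(k)})^2_{W_k}) = 2 - \deg(\Delta_{E_k^\nu} + M_{E_k^\nu})$ as the difference of $2$ and a sum of elements of a DCC set $I'$ determined by $I$, hence lies in an ACC set determined by $I$; simultaneously, the bounded combinatorics of $\Gamma_{E_0^{(k)}, E_1^{(k)}}$ (together with conclusion (4), forcing a long string of $(-2)$-curves on the $E_i$ side) force the self-intersection $-(E_0^{(k)})^2$ on $W_k$ into a finite set of possibilities. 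Dividing, $m_k$ lies in an ACC set independent of $k$, contradicting $m_k \nearrow$. The main obstacle is to assemble these pieces cleanly — in particular to check that the adjunction computation of Lemma \ref{lem:n_bounded} goes through verbatim here (it does, since $E_0^{(k)}$ is a computing divisor with $a_{E_0^{(k)}} < 1$, so the extraction exists by Theorem \ref{thm:extraction}) and that the vertex bound from Lemma \ref{lem:I}(5),(5$'$) genuinely pins down $-(E_0^{(k)})^2$ after contracting all $f_k$-exceptional curves other than $E_0^{(k)}$; once that bookkeeping is done the contradiction is immediate.
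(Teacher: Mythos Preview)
Your overall strategy matches the paper's up through the point where Lemma~\ref{lem:I} is invoked, but the final step contains a genuine gap. You claim that the bounded combinatorics of $\Gamma_{E_0^{(k)},E_1^{(k)}}$, together with Lemma~\ref{lem:I}(4), force the self-intersection $-(E_0^{(k)})^2$ on the extraction $W_k$ into a finite set. This is not correct: the extraction $W_k\to X_k$ is obtained by contracting \emph{all} $f_k$-exceptional curves other than $E_0^{(k)}$, and the value $-(E_0^{(k)})^2$ on $W_k$ depends on the intersection matrix of the \emph{entire} exceptional configuration, including the unbounded side $\Gamma_{E_1^{(k)},E_0^{(k)}}$. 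For instance, if $E_0$ sits at one end of a chain of $m$ $(-2)$-curves $E_1,\ldots,E_m$ (all with $E_0\cdot E_1=1$, $E_0\cdot E_i=0$ for $i\ge 2$) and $E_0^2=-2$ on $Y_k$, then after contracting $E_1,\ldots,E_m$ one computes $-(E_0)^2=(m+2)/(m+1)$, which runs through infinitely many distinct rationals as $m\to\infty$. Thus the argument of Lemma~\ref{lem:n_bounded} does \emph{not} go through verbatim: there the finiteness of $-(E)^2$ on $W$ came precisely from the global bound $\ell$ on the number of exceptional curves, which you have just arranged to go to infinity.

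The paper handles this by contracting only the curves on the bounded side $\Gamma_{E_0^{(i)},E_1^{(i)}}$, obtaining an intermediate surface $\widetilde Y_i$ on which $E_1^{(i)}$ survives. This yields the identity
\[
a_0^{(i)} \;=\; a'^{(i)} - c^{(i)}\bigl(1-a_1^{(i)}\bigr),
\]
where $c^{(i)}=\operatorname{coeff}_{E_0^{(i)}}(g_i^*E_1^{(i)})$ and $a'^{(i)}=a_{E_0^{(i)}}(\widetilde Y_i,h_{i*}^{-1}\Delta_i+M_{i,\widetilde Y_i})$. Now the boundedness of $\Gamma_{E_0^{(i)},E_1^{(i)}}$ \emph{does} give finiteness of $c^{(i)}$ and places $a'^{(i)}$ in an ACC set. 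But one must still control the factor $1-a_1^{(i)}$, and this is where the estimate $a_1^{(i)}-a_0^{(i)}\to 0$ from Lemma~\ref{lem:I}(3) enters. The remaining argument is delicate: one shows (using Lemma~\ref{lem:all-2}) that after passing to a subsequence $c^{(i)}=c\le 1$, that $(a'^{(i)})$ is non-increasing and $(a_1^{(i)})$ is increasing, and finally derives a contradiction from the relation $(1-a_1^{(i)})(1-c)+(a_1^{(i)}-a_0^{(i)})=1-a'^{(i)}$. None of this is captured by the single adjunction step you propose.
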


\begin{proof} 
Let $I \subset [0,+ \infty)$ be a DCC subset. 
We may assume $\frac{2}{3} \in I$. 
Let $\delta$ be the minimum element of $I \cap (0,+ \infty)$. 
Then we have $\delta \le \frac{2}{3}$, which will be used when applying Lemma \ref{lem:I}(5). 
By Lemma \ref{lem:acc>1}, it is sufficient to show that 
$A_{\textup{gen}}(2,I) \cap [\epsilon, 1)$ satisfies the ACC for any $0 < \epsilon < 1$. 
Suppose the contrary that there exist a sequence of generalized lc surfaces 
$\bigl \{ (X_i, B_i + M_i)/Z_i \bigr \} _{i \ge 1}$ and 
closed points $x_i \in X_i$ with the following conditions. 
\begin{itemize}
\item $\epsilon \le a_0^{(i)} < 1$ holds for each $a_0^{(i)} := \operatorname{mld} _{x_i} (X_i, B_i + M_i)$. 
\item $\bigl( a_0^{(i)} \bigr)_{i \ge 1}$ is a strictly increasing sequence. 
\end{itemize}
We fix a resolution $f_i : Y_i \to X_i$ that satisfies the conditions in Lemma \ref{lem:Y}. 
Let $\Gamma^{(i)}$ be the dual graph of the $f_i$-exceptional divisors over $x_i$. 
Let $n^{(i)}$ be the number of the $f_i$-exceptional divisors over $x_i$. 
Then by Lemma \ref{lem:n_bounded}, we may assume 
\begin{itemize}
\item the sequence $\bigl( n^{(i)} \bigr)_{i \ge 1}$ is strictly increasing. 
\end{itemize}
We set $n^{\prime (i)} := \log _3 (2n^{(i)} + 1) - 1$. Then, 
\begin{itemize}
\item the sequence $\bigl( n^{\prime (i)} \bigr)_{i \ge 1}$ is also strictly increasing.
\end{itemize}
Hence, by passing to a subsequence, we may assume that $\min \{ \epsilon, \delta \} > \frac{16}{n^{\prime (i)}}$ for each $i \ge 1$.
Therefore by Lemma \ref{lem:I}, 
for each $i \ge 1$, there exist $f_i$-exceptional divisors $E_0^{(i)}$ and $E_1 ^{(i)}$ over $x_i$ with the following conditions: 
\begin{itemize}
\item $E_0^{(i)}$ is a computing divisor of $\operatorname{mld} _{x_i} (X_i, B_i + M_i)$. 
\item $E_0^{(i)}$ meets $E_1^{(i)}$. 
\item $0 \le a_1 ^{(i)} - a_0 ^{(i)} \le \frac{2}{n^{\prime (i)}}$, where we set $a_1^{(i)} := a_{E_1 ^{(i)}}(X_i, B_i + M_i)$. 
\item If $a_1 ^{(i)} - a_0 ^{(i)} = 0$, then the graph $\Gamma ^{(i)}_{E_0^{(i)}, E_1^{(i)}}$ is a chain. 
\item The number of the vertices of $\Gamma ^{(i)}_{E_0^{(i)}, E_1^{(i)}}$ is at most 
$\frac{16}{\min \{\epsilon, \delta\}} + 1$. 
\item If $E_0^{(i)}$ is a $(-1)$-curve, then $E_0^{(i)}$ is the unique vertex of $\Gamma ^{(i)}_{E_0^{(i)}, E_1^{(i)}}$. 
\end{itemize}
In particular, by the fifth condition and Lemma \ref{lem:mono}(2), 
the weighted dual graph $\Gamma ^{(i)}_{E_0^{(i)}, E_1^{(i)}}$ has finitely many possibilities depending only on $\epsilon$ and $I$. 

Let $Y_i \to \widetilde{Y}_i$ be the contraction of all the curves contained in $\Gamma ^{(i)}_{E_0^{(i)}, E_1^{(i)}}$. 
We denote by $g_i$ and $h_i$ the following induced morphisms:
\[
\xymatrix{
Y_i \ar[r]^{g_i} & \widetilde{Y}_i \ar[r]^{h_i} & X_i. 
}
\]
Then, since $E_1^{(i)}$ is the unique $h_i$-exceptional divisor that contains the center of $E_0^{(i)}$ 
on $\widetilde{Y}_i$, 
we have 
\begin{align*}
a_0^{(i)} 
&= a_{E_0^{(i)}}(X_i, B_i + M_i) \\
&= a_{E_0^{(i)}} \Bigl ( \widetilde{Y}_i, h_{i*} ^{-1} B_i + \bigl( 1 - a_1^{(i)} \bigr) E_1^{(i)} + M_{i, \widetilde{Y}_i} \Bigr ) \\
&= {a'}^{(i)} - c^{(i)} \bigl( 1 - a_1^{(i)} \bigr), 
\end{align*}
where $M_{i, \widetilde{Y}_i}$ is the push-forward of $M'_i$ on $\widetilde{Y}_i$ (possilby replacing $X'_i$ with a higher model), and we set 
\[
{a'}^{(i)} := a_{E_0^{(i)}} \bigl ( \widetilde{Y}_i, h_{i*} ^{-1} B_i + M_{i, \widetilde{Y}_i} \bigr ), 
\qquad c^{(i)} := \operatorname{coeff} _{E_0^{(i)}} \bigl( g_i ^* E_1^{(i)} \bigr). 
\]
Since the weighted dual graph $\Gamma ^{(i)}_{E_0^{(i)}, E_1^{(i)}}$ 
has finitely many possibilities depending only on $\epsilon$ and $I$, 
the number $c^{(i)}$ has also finitely many possibilities. 
Therefore, by passing to a subsequence, 
we may assume the existence of a positive real number $c$ such that $c^{(i)} = c$ for any $i \ge 1$. 

We claim that we may assume $c \le 1$. 
First, we have $c^{(i)} = 1$ when $E_0 ^{(i)}$ is a $(-1)$-curve. 
Indeed, $E_0^{(i)}$ is the unique vertex of $\Gamma ^{(i)}_{E_0^{(i)}, E_1^{(i)}}$ in this case. 
In what follows, we assume that $E_0 ^{(i)}$ is not a $(-1)$-curve, and hence $Y_i$ is the minimal resolution of $X_i$ (cf.\ Lemma \ref{lem:Y}). 
Since $Y_i$ is also the minimal resolution of $\widetilde{Y}_i$, we have ${a'}^{(i)} \le 1$. 
Hence we have 
\[
\frac{2}{n^{\prime (i)}} \ge a_1 ^{(i)} - a_0 ^{(i)} = a_1 ^{(i)} - {a'} ^{(i)} + c \bigl( 1 - a_1 ^{(i)} \bigr) 
\ge (c-1) \bigl( 1 - a_1 ^{(i)} \bigr). 
\]
Suppose that $c > 1$. Then we have 
\[
1- a_0 ^{(i)} = \bigl( 1 - a_1 ^{(i)} \bigr) + \bigl( a_1 ^{(i)} - a_0 ^{(i)} \bigr) \le \frac{c}{c-1} \cdot \frac{2}{n^{\prime (i)}}. 
\]
It contradicts Lemma \ref{lem:all-2} for $i$ sufficiently large. Therefore we may assume $c \le 1$ 
possibly passing to a tail of the sequence. 

Next, we claim that we may assume the following three conditions. 
\begin{itemize}
\item $({a'}^{(i)})_i$ is a non-increasing sequence.
\item $(a_1^{(i)})_i$ is an increasing sequence. 
\item The sequence $\bigl ( a_1 ^{(i)} - a_0 ^{(i)} \bigr )_{i}$ is a non-increasing sequence which converges to $0$. 
\end{itemize} 
First, we may assume the third condition by passing to a subsequence because we have
\[
0 \le a_1 ^{(i)} - a_0 ^{(i)} \le \frac{2}{n^{\prime (i)}}. 
\]
Next, we shall see the first condition. We have 
\[
{a'}^{(i)} = 1 + \operatorname{coeff}_{E_0^{(i)}} \bigl( K_{Y_i/ \widetilde{Y}_{i}} \bigr) 
- \operatorname{ord}_{E_0^{(i)}} \bigl ( h_{i*} ^{-1} B_i + M_{i, \widetilde{Y}_i} \bigr ). 
\]
Since the weighted dual graph $\Gamma ^{(i)}_{E_0^{(i)}, E_1^{(i)}}$ 
has finitely many possibilities depending only on $\epsilon$ and $I$, 
the following hold: 
\begin{itemize}
\item $\operatorname{coeff}_{E_0^{(i)}} \bigl( K_{Y_i/ \widetilde{Y}_{i}} \bigr)$ also has finitely many possibilities, and
\item $\operatorname{ord}_{E_0^{(i)}} \bigl ( h_{i*} ^{-1} B_i + M_{i, \widetilde{Y}_i} \bigr )$ 
is contained in a DCC set depending on $\epsilon$ and $I$. 
\end{itemize}
Therefore ${a'}^{(i)}$ is contained in an ACC set depending only on $\epsilon$ and $I$. 
Hence, by passing to a subsequence, we may assume that $\bigl( {a'}^{(i)} \bigr)_i$ is a non-increasing sequence. 
Furthermore, since we have 
\[
c \bigl( 1- a_1^{(i)} \bigr) =  {a'}^{(i)} - a_0^{(i)} 
\]
and $\bigl( a_0^{(i)} \bigr) _i$ is an increasing sequence, it follows that $\bigl( a_1^{(i)} \bigr)_i$ is also an increasing sequence. 

Recall that 
\[
\bigl( 1 - a_1 ^{(i)} \bigr) (1 - c) + \bigl( a_1 ^{(i)} - a_0 ^{(i)} \bigr) = 1 - {a'}^{(i)}. 
\]
Therefore by the three claims above, 
we have
\[
c = 1, \quad a_1 ^{(i)} - a_0 ^{(i)} = 0, \quad {a'}^{(i)} = 1
\]
for some $i \ge 1$. 
Since $a_1 ^{(i)} - a_0 ^{(i)} = 0$, the graph $\Gamma ^{(i)}_{E_0^{(i)}, E_1^{(i)}}$ is a chain. 
Furthermore, by ${a'}^{(i)} = 1$, all vertices of $\Gamma ^{(i)}_{E_0^{(i)}, E_1^{(i)}}$ are $(-2)$-curves. 
In this case, we have $c^{(i)} = \frac{\ell}{\ell + 1}$ if $\ell$ is the number of the vertices of 
$\Gamma ^{(i)}_{E_0^{(i)}, E_1^{(i)}}$. It contradicts $c=1$. 
\end{proof}

\section{ACC conjecture for varieties with bounded Gorenstein index}\label{section:ACCfixed}
\subsection{ACC conjecture for varieties with bounded Gorenstein index}

Hacon, McKernan, and Xu in \cite{HMX14} prove the ACC for log canonical thresholds. 
Birkar and Zhang in \cite{BZ16} generalize this result to generalized pairs.

\begin{thm}[{\cite[Theorem 1.5]{BZ16}}]\label{thm:LACC}
Let $d$ be a positive integer and let $I \subset [0, + \infty)$ be a DCC subset. 
Then there exists an ACC set $J \subset [0, + \infty)$ with the following condition: 
If $X, B, M, D$ and $N$ satisfy 
\begin{itemize}
\item $(X, B+M)/Z$ is a generalized pair with $\dim X = d$, 
\item $D$ and $N$ are $\mathbb{R}$-divisors on $X$ with the conditions in Definition \ref{defi:defs}(\ref{item:lct}), and 
\item $B, D \in I$ and $M, N \in I$ hold, 
\end{itemize}
then the generalized lc threshold of $D + N$ with respect to $(X, B+M)$ belongs to $J \cup \{ + \infty \}$. 
\end{thm}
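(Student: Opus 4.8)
The plan is to argue by contradiction along the lines of Hacon--McKernan--Xu \cite{HMX14}, adapted to the generalized setting as in \cite{BZ16}: a divisor computing the threshold is brought onto a dlt model, and divisorial adjunction along it reduces the problem by one dimension.

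Suppose the statement fails. Then there are data $(X_i, B_i+M_i)/Z_i$, $D_i$, $N_i$ satisfying the hypotheses whose generalized lc thresholds $t_i$ form a strictly increasing sequence; passing to a subsequence (and disposing of unbounded or eventually constant sequences, which cause no violation), we may assume $t_i \to t_\infty$ with $0 < t_\infty < +\infty$. After replacing $t_i$ by a slightly smaller number in the value group generated by the coefficients, we may further assume $t_i < t_\infty$ and that $\bigl(X_i,(B_i+t_iD_i)+(M_i+t_iN_i)\bigr)/Z_i$ is generalized lc but ceases to be so upon increasing $t_i$ by any positive amount (``maximally generalized lc''). Applying the existence of $\mathbb{Q}$-factorial generalized dlt models (Definition \ref{defi:defs}(\ref{item:dltmodel}), by \cite{HanLi}) to this pair, we may assume after replacing $X_i$ that $B_i+t_iD_i$ has a prime component $S_i$ of coefficient $1$ whose image is contained in the locus where the threshold jumps; $S_i$ is the divisor we shall restrict to. Write $\Delta_i := B_i+t_iD_i$.

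The core step is divisorial adjunction along (the normalization of) $S_i$ as in Definition \ref{def:gadj}. This produces a generalized lc pair $(S_i, B_{S_i}+M_{S_i})/Z_i$ with $\dim S_i = d-1$ and $K_{S_i}+B_{S_i}+M_{S_i} \sim_{\mathbb{R}} (K_{X_i}+\Delta_i+M_i+t_iN_i)|_{S_i}$. By Lemma \ref{lem:adjcoef}, the coefficients of $B_{S_i}$ have the shape $1-\tfrac{1}{\ell}+\tfrac{1}{\ell}\bigl(\sum\alpha_k b_k+\sum\beta_j m_j\bigr)$ with $\ell\in\mathbb{Z}_{>0}$, $\alpha_k,\beta_j\in\mathbb{Z}_{\ge 0}$, the $b_k$ being coefficients of $B_i+t_iD_i$ and the $m_j$ coefficients of the nef part $M'_i+t_iN'_i$; since $I$ is DCC and $(t_i)$ converges, all these numbers lie in a fixed DCC subset $I'\subset[0,1]$, and similarly $M_{S_i}\in I'$. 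Restricting $D_i$ and $N_i$ to $S_i$ through the same formula gives an $\mathbb{R}$-divisor $D_{S_i}$ and a nef b-divisor with trace $N_{S_i}$ whose coefficients again lie in a fixed DCC set, and the failure of generalized log canonicity on $X_i$ when $t_i$ increases forces the same failure on $S_i$ at the corresponding parameter. Hence the generalized lc thresholds of $D_{S_i}+N_{S_i}$ with respect to $(S_i,B_{S_i}+M_{S_i})$ again form a strictly increasing sequence, contradicting the inductive hypothesis in dimension $d-1$; the base case $d=1$ is immediate, since on a curve the threshold is $\min_P (1-\operatorname{mult}_P(B+M))/\operatorname{mult}_P(D+N)$ and the DCC hypotheses on the data give ACC directly.

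I expect two points to carry the real difficulty. The first is the coefficient bookkeeping under adjunction with a varying parameter: one must verify, via Lemma \ref{lem:adjcoef} and the convergence $t_i\to t_\infty$, that the coefficients of $B_{S_i}, M_{S_i}, D_{S_i}, N_{S_i}$ stay in fixed DCC sets, with particular care for the nef-part terms $\beta_j m_j$ and for separating the ``$D$'' and ``$N$'' directions on $S_i$ from the ``$B$'' and ``$M$'' directions. The second, and more substantial, is that (as in \cite{HMX14}) the induction must be run in tandem with a ``global ACC'' for generalized pairs --- coefficients of a projective generalized lc pair with $K+B+M\equiv 0$ form a finite set --- which is needed to control the lower-dimensional pair produced after adjunction on a Mori fibre space obtained by running a generalized MMP with scaling (Theorem \ref{thm:MFS}, \cite[Lemma~4.4]{BZ16}); setting up and closing this joint induction, together with the boundedness inputs it requires, is the main work. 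The remaining reductions (approximation of $t_i$, dlt models, the base case) are routine.
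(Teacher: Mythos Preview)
The paper does not prove this statement at all: Theorem~\ref{thm:LACC} is quoted verbatim from \cite[Theorem~1.5]{BZ16} and used as a black box (alongside Theorem~\ref{thm:GACC}, quoted from \cite[Theorem~1.6]{BZ16}) to feed into Theorem~\ref{thm:UP} and the results of Section~\ref{section:ACCfixed}. So there is no ``paper's own proof'' to compare against.

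Your sketch is a fair high-level summary of the strategy actually used in \cite{BZ16}: pass to a dlt model, extract a component $S$ of coefficient~$1$ computing the threshold, run generalized divisorial adjunction (the mechanism the present paper records as Definition~\ref{def:gadj} and Lemma~\ref{lem:adjcoef}), and close a joint induction with the global ACC statement (your second ``real difficulty'', which is exactly Theorem~\ref{thm:GACC}). You are also right that the delicate part is the coefficient bookkeeping with the varying parameter $t_i$ and the interaction with the nef part. One caution: your sentence ``the failure of generalized log canonicity on $X_i$ when $t_i$ increases forces the same failure on $S_i$'' hides a nontrivial step --- this is where, in \cite{BZ16}, one runs an MMP and passes to a Mori fibre space so that the restricted pair is itself numerically trivial, allowing the global ACC to bite; simply restricting to $S_i$ on the dlt model does not by itself produce a strictly increasing sequence of thresholds in dimension $d-1$. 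If you intend to reproduce the proof rather than cite it, that reduction is where most of the work lies.
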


\begin{thm}[{\cite[Theorem 1.6]{BZ16}}]\label{thm:GACC}
Let $d$ be a positive integer and let $I \subset [0, + \infty)$ be a DCC subset. 
Then there exists a finite subset $J \subset I$ with the following condition: 
If 
\begin{itemize}
\item $(X, B+M)/Z$ is a generalized lc pair with $\dim Z=0$ and $\dim X = d$,
\item $K_X + B + M \equiv 0$, 
\item $B \in I$, and 
\item $M'$ on $X'$ in Definition \ref{defi:gp} satisfies 
$M' = \sum_k m_k M'_k$ for some nef Cartier divisors $M'_k \not \equiv 0$ and some real numbers $m_k \in I$, 
\end{itemize}
then $B \in J$ and $M \in J$ hold. 
\end{thm}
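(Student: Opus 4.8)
This is \cite[Theorem 1.6]{BZ16}, recorded here as an input for what follows; I only indicate the strategy, which is the generalized analogue of the global ACC of Hacon--McKernan--Xu \cite{HMX14} and is the reason Theorem~\ref{thm:MFS}, Theorem~\ref{thm:LACC}, and Lemma~\ref{lem:adjcoef} were set up above. The plan is to argue by induction on $d$ and by contradiction. If the statement failed for some $d$ and some DCC set $I$, there would be a sequence of generalized lc pairs $(X_i,B_i+M_i)/Z_i$ with $\dim Z_i=0$, $\dim X_i=d$, $K_{X_i}+B_i+M_i\equiv 0$, $B_i\in I$, and $M_i'=\sum_k m_{i,k}M'_{i,k}$ with $m_{i,k}\in I$ and $M'_{i,k}\not\equiv 0$ nef Cartier, for which the set of coefficients occurring --- the nonzero coefficients of the $B_i$ together with the $m_{i,k}$ --- is infinite. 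After passing to a subsequence and relabelling, I would arrange a strictly monotone sequence of such coefficients with limit $c\in(0,1]$, and, replacing each $(X_i,B_i+M_i)$ by a $\mathbb{Q}$-factorial generalized dlt model (Definition~\ref{defi:defs}(\ref{item:dltmodel})), that $X_i$ is $\mathbb{Q}$-factorial.

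If the unbounded coefficient can be realized on a prime component $S_i$ of $B_i$ with coefficient $1$, I would pass to the divisorial adjunction $(S_i,B_{S_i}+M_{S_i})$ of Definition~\ref{def:gadj}, so that $K_{S_i}+B_{S_i}+M_{S_i}\equiv 0$. By Lemma~\ref{lem:adjcoef} the coefficients of $B_{S_i}$ lie in a DCC set $I'$ depending only on $I$, and $M_{S_i}$ is again a nonnegative combination of nef Cartier divisors with the same coefficients $m_{i,k}$; the inductive hypothesis in dimension $d-1$ then places all of these data in a finite set. Inverting adjunction finishes this case: each coefficient of $B_{S_i}$ equals $1-\tfrac1\ell+\tfrac{\sum\alpha_ib_i+\sum\beta_jm_j}{\ell}$ with $\ell\in\mathbb{Z}_{>0}$, $\alpha_i,\beta_j\in\mathbb{Z}_{\ge 0}$, so once this value lies in a finite set bounded away from $1$ the integer $\ell$ is bounded, and then the DCC of $I$ and the ACC of the finitely many occurring values force the relevant $b_i$ and $m_{i,k}$ into a finite set --- contradicting strict monotonicity.

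The remaining case --- when no such coefficient-$1$ component is available, e.g.\ the unbounded coefficient is that of a component of $B_i$ whose coefficient stays bounded away from $1$, or it is one of the $m_{i,k}$ --- is the one I expect to be the main obstacle, and I would treat it by a global-to-local reduction. Perturbing by slightly decreasing the coefficient of the offending component (or, in the $m$-case, by subtracting a small multiple of $M'_{i,k}$) destroys pseudo-effectivity of the log canonical divisor, since the class removed is a nonzero effective one; after the usual modifications one runs a generalized MMP with scaling, which by Theorem~\ref{thm:MFS} terminates with a Mori fibre space $Y_i\to T_i$ on which this perturbation is relatively negative. One then either descends to the base $T_i$ of smaller dimension via adjunction along the fibration and applies the inductive hypothesis, or lands in relative Picard number one, where the offending coefficient is read off as a generalized lc threshold on $Y_i$ and Theorem~\ref{thm:LACC} applies; in both cases the ACC of lc thresholds contradicts strict monotonicity. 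Beyond organizing this reduction --- in particular guaranteeing, via the MMP, that the offending coefficient really is computed by a threshold to which Theorem~\ref{thm:LACC} applies, which is where boundedness inputs beyond Theorem~\ref{thm:LACC} and Lemma~\ref{lem:adjcoef} enter --- the extra work the generalized setting demands is to check that every divisorial contraction, flip, and adjunction keeps $M'$ a nonnegative combination of nef Cartier divisors, so that Lemma~\ref{lem:adjcoef} and Theorem~\ref{thm:LACC} remain applicable throughout.
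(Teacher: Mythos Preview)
The paper does not prove this theorem; it is simply quoted from \cite[Theorem 1.6]{BZ16} as an input and no proof is given. Your proposal correctly identifies this at the outset (``This is \cite[Theorem 1.6]{BZ16}, recorded here as an input for what follows''), so there is nothing to compare against. The strategy sketch you give is a reasonable outline of how the result is obtained in \cite{BZ16}, but for the purposes of this paper no proof is expected here.
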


The following theorem asserts that it is possible to perturb irrational coefficients preserving the log canonicity. 
The assertion for the usual pairs is proved in \cite[Theorem 1.6]{Nak16b} and \cite[Corollary 5.5]{HLS}
as an application of the rationality of
accumulation points of log canonical thresholds \cite[Theorem 1.11]{HMX14}. 
For generalized pairs, the same proof also works due to Theorems \ref{thm:LACC} and \ref{thm:GACC} 
(see \cite{C20} for a detailed discussion). 

\begin{thm}[{\cite[Theorem 3.15]{C20}}]\label{thm:UP}
Fix $d \in \mathbb{Z} _{>0}$. 
Let $r_1, \ldots, r_{\ell}$ be positive real numbers and let $r_0 = 1$. 
Assume that $r_0, r_1,  \ldots, r_{\ell}$ are $\mathbb{Q}$-linearly independent. 
Let $s^{B} _1, \ldots, s^{B} _{c^{B}} : \mathbb{R}^{\ell +1} \to \mathbb{R}$ (resp.\ 
$s^{M} _1, \ldots, s^{M} _{c^{M}} : \mathbb{R}^{\ell +1} \to \mathbb{R}$) be $\mathbb{Q}$-linear functions 
(that is, the extensions of $\mathbb{Q}$-linear functions from $\mathbb{Q} ^{\ell + 1}$ to $\mathbb{Q}$ 
by taking the tensor product $\otimes _{\mathbb{Q}} \mathbb{R}$). 
Assume that $s^{B}_i (r _0, \ldots , r_{\ell}) \ge 0$ and 
$s^{M}_j (r_0, \ldots , r_{\ell}) \ge 0$ for each $i$ and $j$. 
Then there exists a positive real number $\epsilon >0$ such that the following holds:
For any normal variety $X$ of dimension $d$,
a projective morphism $X\to Z$,
a projective birational morphism  $f:X' \to X$,
effective Weil divisors $B_1, \ldots, B_{c^{B}}$ on $X$, and 
nef$/Z$ Cartier divisors $M'_1, \ldots, M'_{c^{M}}$ on $X'$, 
if the generalized pair
\[
\Bigl ( X, \sum_{1 \le i \le c^{B}} s^B _i(r_0, \ldots, r _{\ell}) B_i + 
\sum_{1 \le i \le c^{M}} s^M _i(r_0, \ldots, r_{\ell}) M_i \Bigr )/Z
\] 
is generalized lc, then the pair 
\[
\Bigl ( X, \sum_{1 \le i \le c^{B}} s^B _i(r_0,  \ldots, r_{\ell -1}, t) B_i + 
\sum_{1 \le i \le c^{M}} s^M _i(r_0,  \ldots, r_{\ell -1}, t) M_i \Bigr )/Z
\] 
is also generalized lc for any 
$t \in [r_{\ell} - \epsilon, r_{\ell} + \epsilon]$. 
\end{thm}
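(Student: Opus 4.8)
The plan is to follow the proof of the corresponding statement for usual pairs, \cite[Theorem 1.6]{Nak16b} (see also \cite[Corollary 5.5]{HLS} and \cite[Theorem 3.15]{C20}), replacing the inputs \cite[Theorems 1.1 and 1.5]{HMX14} by their generalized counterparts, Theorems \ref{thm:LACC} and \ref{thm:GACC}. It is a proof by contradiction: assuming no such $\epsilon$ exists, one produces a sequence of generalized lc pairs whose ``log canonical thresholds in the fixed rational direction given by the last variable'' tend to $0$, and then derives a contradiction from an ACC/global-ACC argument of Hacon--McKernan--Xu type.

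I would first record the affine structure. Since the $s^\bullet_i$ are $\mathbb{Q}$-linear and $r_0=1,r_1,\dots,r_\ell$ are $\mathbb{Q}$-linearly independent, any one of them vanishing at $(r_0,\dots,r_\ell)$ vanishes identically; discarding those indices, all the values $s^B_i(r_0,\dots,r_\ell)$, $s^M_j(r_0,\dots,r_\ell)$ are positive, and they stay positive for $t$ near $r_\ell$. Writing $s^\bullet_i=\sum_k q^\bullet_{ik}x_k$ with $q^\bullet_{ik}\in\mathbb{Q}$, and setting $\widetilde D_0=K_X+\sum_i q^B_{i0}B_i+\sum_i q^M_{i0}M_i$ and $\widetilde D_k=\sum_i q^B_{ik}B_i+\sum_i q^M_{ik}M_i$ for $k\ge1$, the $\mathbb{R}$-divisor $K_X+\sum_i s^B_i(r_0,\dots,r_{\ell-1},t)B_i+\sum_i s^M_i(r_0,\dots,r_{\ell-1},t)M_i$ equals $\widetilde D_0+\sum_{k=1}^{\ell-1}r_k\widetilde D_k+t\,\widetilde D_\ell$. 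Using the standard fact that, under $\mathbb{Q}$-linear independence of $r_0,\dots,r_\ell$, $\mathbb{R}$-Cartierness of $\widetilde D_0+\sum_{k\ge1}r_k\widetilde D_k$ forces each $\widetilde D_k$ to be $\mathbb{Q}$-Cartier, this divisor is $\mathbb{R}$-Cartier for all $t$ (and the corresponding b-part stays nef$/Z$, being a non-negative $\mathbb{R}$-combination of nef$/Z$ Cartier divisors), so we get a generalized pair $\Theta(t)$ for every $t$, and on a log resolution $\mu\colon Y\to X$ of $(X,\sum_i B_i)$ through which $\varphi$ factors, the coefficient $1-a_E(\Theta(t))$ of each prime divisor $E$ of $Y$ in the induced boundary is an affine function of $t$ with rational slope $\sigma_E$; writing $\theta_E$ for its value at $t=r_\ell$, one has $\theta_E\in\mathbb{Q}+\sum_{k=1}^{\ell-1}\mathbb{Q}\,r_k+\mathbb{Q}\,r_\ell$ with $r_\ell$-coordinate exactly $\sigma_E$. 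The point of the $\mathbb{Q}$-linear independence hypothesis is that $\theta_E=1$ forces $\sigma_E=0$: the lc places of $\Theta(r_\ell)$ are insensitive to the perturbation, so only the prime divisors $E$ with $a_E(\Theta(r_\ell))>0$ obstruct generalized lc-ness of $\Theta(t)$ for $t$ near $r_\ell$. Thus it suffices to bound below, uniformly over all generalized lc $\Theta(r_\ell)$ with the prescribed data, the largest $u_0\ge0$ such that $\theta_E+\sigma_E u\le1$ for all $E$ and all $|u|\le u_0$.

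For the uniform lower bound I would argue by contradiction: suppose there are data $X_n,Z_n,X'_n,\{B_{i,n}\},\{M'_{j,n}\}$ for which $\Theta_n:=\Theta_n(r_\ell)$ is generalized lc but the above quantity $u_0^{(n)}$ tends to $0$; passing to a subsequence, a single direction, say $+\widetilde D_\ell$, is responsible, and $u_0^{(n)}$ is strictly decreasing. For each $n$ there is a prime divisor $E_n$ over $X_n$ which is an lc place of the generalized lc pair obtained from $\Theta_n$ by perturbing with $u_0^{(n)}\widetilde D_\ell$, with $a_{E_n}(\Theta_n)=1-\theta_{E_n}=\sigma_{E_n}u_0^{(n)}\to0$ and $\sigma_{E_n}>0$, and $E_n$ arises as a divisor computing the generalized lc threshold of an effective $\mathbb{Q}$-divisor built from the positive-coefficient part of $\widetilde D_\ell$ (together with a nef$/Z$ part from the $q^M_{j\ell}>0$), whose boundary coefficients lie in the \emph{fixed finite} set $\{s^B_i(r_0,\dots,r_\ell)\}\cup\{s^M_j(r_0,\dots,r_\ell)\}$. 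Now one runs the generalized version of the Hacon--McKernan--Xu argument: using Theorem \ref{thm:extraction} (or a $\mathbb{Q}$-factorial generalized dlt model, Definition \ref{defi:defs}) we may take $E_n$ to be a divisor on $X_n$; cutting $X_n$ by general hyperplane sections through a general point of $c_{X_n}(E_n)$ --- which lowers $\dim X_n$ but keeps it $\le d$ and, by Bertini and adjunction arguments as in the proof of Lemma \ref{lem:adjcoef}, preserves the relevant data and the equality $a_{E_n}=0$ --- we reduce to $c_{X_n}(E_n)$ a closed point; then generalized divisorial adjunction (Definition \ref{def:gadj}, Remark \ref{gadjm}, Lemma \ref{lem:adjcoef}) to $E_n$ yields a generalized pair of dimension $<d$, numerically trivial over a point, whose boundary coefficients lie in a DCC set and whose b-part is a non-negative combination of nef Cartier divisors, with coefficients governed by the $s^M_j$'s and $u_0^{(n)}$. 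By Theorem \ref{thm:GACC} these coefficients lie in a finite set; by Lemma \ref{lem:adjcoef} each of them is a $\mathbb{Q}$-affine function of $u_0^{(n)}$ whose coefficients range over a finite set, and since $u_0^{(n)}\to0$ they converge to their values at $u_0^{(n)}=0$, hence are eventually equal to them; for a suitable such coefficient the dependence on $u_0^{(n)}$ is genuine, so this forces $u_0^{(n)}=0$ for $n\gg0$, contradicting $u_0^{(n)}>0$. (Theorem \ref{thm:LACC} is used along the way to ensure that the relevant sets of thresholds satisfy the ACC, exactly as \cite[Theorem 1.1]{HMX14} is used in the proof of \cite[Theorem 1.11]{HMX14}.)

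The step I expect to be the main obstacle is this last one: adapting \cite[Theorem 1.11]{HMX14} to the generalized setting. Concretely one must (i) reduce to a computing divisor with $0$-dimensional center so that adjunction produces a numerically trivial generalized pair of smaller dimension to which Theorem \ref{thm:GACC} applies; (ii) control, via the coefficient formula of Lemma \ref{lem:adjcoef}, exactly how the adjoined coefficients depend on the threshold, and choose the center so that this dependence is non-trivial; and (iii) keep the ambient coefficient set DCC, which forces one to replace the positive-slope components of $\widetilde D_\ell$ by suitable effective majorants in the style of \cite{HMX14}. The remaining ingredients --- the reductions, the $\mathbb{R}$-Cartierness and affine-linearity in $t$, the observation that lc places have zero slope, and the extraction of the sequence --- are routine.
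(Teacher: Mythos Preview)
Your proposal is correct and matches the paper's approach. The paper does not give a detailed proof of this theorem; it cites \cite[Theorem 3.15]{C20} and remarks that the argument for usual pairs in \cite[Theorem 1.6]{Nak16b} and \cite[Corollary 5.5]{HLS} --- which rests on the rationality of accumulation points of log canonical thresholds \cite[Theorem 1.11]{HMX14} --- carries over to the generalized setting once one replaces the Hacon--McKernan--Xu inputs by Theorems \ref{thm:LACC} and \ref{thm:GACC}, which is exactly the plan you outline.
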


The following notation will be used in Theorems \ref{thm:ACCfixed} and \ref{thm:perturbmld}. 
\begin{defi}\label{defi:P}
For $d,r \in \mathbb{Z} _{>0}$ and a finite set $I \subset [0, + \infty)$,  
we define $P(d,r,I)$ as the set of all generalized lc pairs $(X, B + M)/Z$ with the following four conditions: 
\begin{itemize}
\item $\dim X = d$. 
\item $r K_X$ is Cartier. 
\item $B = \sum _{i} b_i B_i$ for some $b_i \in I$ and effective Cartier divisors $B_i$. 
\item $M = \sum _i m_i M_i$ for some $m_i \in I$ and Cartier divisors $M_i$ such that $M_i = f_* M_i '$ for some 
projective birational morphism $f:X' \to X$ and nef$/Z$ Cartier divisors $M'_i$. 
\end{itemize}
\end{defi}

Theorem \ref{thm:ACCfixed} below can be proved by the induction on $\dim _{\mathbb{Q}} \operatorname{Span}_{\mathbb{Q}} (I \cup \{ 1 \})$. 
This idea is originally from Kawakita \cite{Kaw14}. 
We emphasize here that in Definition \ref{defi:P}, we assume a Cartier condition on $B_i$ and $M_i$. 
Because of it, we shall use the different notation $A'_{\textup{gen}}$ for such a special set of mld's. 
\begin{thm}\label{thm:ACCfixed}
Let $d,r \in \mathbb{Z} _{>0}$ and let $I \subset [0, + \infty)$ be a finite set. 
Let $P(d,r,I)$ be the set of generalized lc pairs defined in Definition \ref{defi:P}. 
Then the following set 
\[
B_{\textup{gen}}(d,r,I) := \left\{ a_E (X, B + M) \ \middle | 
\begin{array}{l}
\text{$(X,B+M)/Z \in P(d,r,I)$,} \\
\text{$E$ is a divisor over $X$.}
\end{array}
\right \}
\]
is a discrete subset of $[0,+\infty)$. 

In particular, the set
\[
A' _{\textup{gen}}(d,r,I) := \left\{ \operatorname{mld}_x (X, B + M) \ \middle | 
\begin{array}{l}
\text{$(X,B+M)/Z \in P(d,r,I)$, $x \in X$.}
\end{array}
\hspace{-1.5mm}
\right\}
\] 
is a discrete subset of $[0,+\infty)$. 
\end{thm}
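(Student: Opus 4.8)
The plan is to prove the first, stronger assertion — that $B_{\textup{gen}}(d,r,I)$ is discrete — by induction on $m:=\dim_{\mathbb{Q}}\operatorname{Span}_{\mathbb{Q}}(I\cup\{1\})$, following the strategy of Kawakita recalled above. The statement about $A'_{\textup{gen}}(d,r,I)$ then follows at once: for a generalized lc pair and a point $x$, the minimal log discrepancy $\operatorname{mld}_x$ is attained by the log discrepancy of some divisor over $X$ (Remark~\ref{rmk:rmks}), so $A'_{\textup{gen}}(d,r,I)\subseteq B_{\textup{gen}}(d,r,I)$, and a subset of a discrete set is discrete.

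The computational input is a formula for $a_E$. Write $I=\{c_1,\dots,c_p\}$ with all $c_t>0$ (the coefficient $0$, if present, contributes nothing). For $(X,B+M)/Z\in P(d,r,I)$, group the components of $B$ and of $M$ according to their coefficients, so that $B=\sum_{t=1}^{p}c_t\widetilde{B}_t$ and $M=\sum_{t=1}^{p}c_t\widetilde{M}_t$ with each $\widetilde{B}_t$ effective and Cartier and each $\widetilde{M}_t=f_*\widetilde{M}'_t$ for some nef$/Z$ Cartier divisor $\widetilde{M}'_t$. Choose a log resolution $\varphi\colon X'\to X$ of $(X,B)$ that factors $f$, with $E$ a divisor on $X'$ and the $\widetilde{M}'_t$ nef$/Z$ on $X'$. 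Then, writing $K_{X'}+B'+M'=\varphi^*(K_X+B+M)$ with $M'=\sum_t c_t\widetilde{M}'_t$, one computes
\[
a_E(X,B+M)=1-\gamma-\sum_{t=1}^{p}(N_t+N'_t)\,c_t,
\]
where $\gamma:=\operatorname{coeff}_E(\varphi^*K_X-K_{X'})\in\tfrac{1}{r}\mathbb{Z}$ (since $rK_X$ and $K_{X'}$ are Cartier), $N_t:=\operatorname{coeff}_E\varphi^*\widetilde{B}_t\in\mathbb{Z}_{\ge 0}$, and $N'_t:=\operatorname{coeff}_E(\varphi^*\widetilde{M}_t-\widetilde{M}'_t)\in\mathbb{Z}_{\ge 0}$, the last being effective by the negativity lemma because $\widetilde{M}'_t$ is nef$/Z$, hence nef$/X$. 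When $m=1$ we have $I\subset\tfrac{1}{N}\mathbb{Z}$ for some $N$, so the formula forces $a_E\in\tfrac{1}{rN}\mathbb{Z}$ and $B_{\textup{gen}}(d,r,I)\subseteq\tfrac{1}{rN}\mathbb{Z}\cap[0,+\infty)$, which is discrete; this settles the base case.

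For the inductive step suppose $m\ge 2$ and, aiming for a contradiction, choose pairwise distinct $a_n=a_{E_n}(X_n,B_n+M_n)\to a_\infty$ with $(X_n,B_n+M_n)/Z_n\in P(d,r,I)$. Fix a $\mathbb{Q}$-basis $r_0=1,r_1,\dots,r_\ell$ of $\operatorname{Span}_{\mathbb{Q}}(I\cup\{1\})$ (so $\ell=m-1\ge 1$) and $\mathbb{Q}$-linear functions $s_1,\dots,s_p$ with $c_t=s_t(r_0,\dots,r_\ell)>0$. By the formula, $a_n=G_n(r_0,\dots,r_\ell)$, where $G_n(x_0,\dots,x_\ell):=x_0-\gamma_n x_0-\sum_t(N_{n,t}+N'_{n,t})\,s_t(x_0,\dots,x_\ell)$ is $\mathbb{Q}$-linear; and for any real $t$, replacing the coefficient $c_t$ throughout by $s_t(r_0,\dots,r_{\ell-1},t)$ turns $(X_n,B_n+M_n)$ into a generalized pair $(X_n,B_n^{(t)}+M_n^{(t)})$ — lying again in some class $P(d,r,I')$ with the same $d$ and $r$ — whose log discrepancy at $E_n$ is exactly $h_n(t):=G_n(r_0,\dots,r_{\ell-1},t)$, the structure constants $\gamma_n,N_{n,t},N'_{n,t}$ being unaffected. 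Here $h_n$ is affine in $t$ with slope $q_n:=-\sum_t(N_{n,t}+N'_{n,t})\,s_t^{(\ell)}$ lying in a fixed lattice $\tfrac{1}{D}\mathbb{Z}$ ($s_t^{(\ell)}$ denoting the coefficient of $x_\ell$ in $s_t$). Applying Theorem~\ref{thm:UP} to the functions $s_1,\dots,s_p$ (used for both the $B$- and $M$-parts) yields $\epsilon>0$, independent of $n$, such that $(X_n,B_n^{(t)}+M_n^{(t)})$ is generalized lc, hence $h_n(t)\ge 0$, for every $t\in[r_\ell-\epsilon,r_\ell+\epsilon]$; after shrinking $\epsilon$ we may also assume all perturbed coefficients stay positive. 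From $0\le h_n(r_\ell\pm\epsilon)=a_n\pm q_n\epsilon$ we obtain $|q_n|\le a_n/\epsilon$, so $(q_n)$ is bounded; being contained in $\tfrac{1}{D}\mathbb{Z}$ it takes finitely many values, and after passing to a subsequence $q_n\equiv q$. Now pick a rational $t_0\in(r_\ell-\epsilon,r_\ell+\epsilon)$. Then $I'_{t_0}:=\{\,s_t(r_0,\dots,r_{\ell-1},t_0)\mid 1\le t\le p\,\}$ is a finite subset of $(0,+\infty)$ with $\dim_{\mathbb{Q}}\operatorname{Span}_{\mathbb{Q}}(I'_{t_0}\cup\{1\})\le\ell<m$, and $(X_n,B_n^{(t_0)}+M_n^{(t_0)})\in P(d,r,I'_{t_0})$, so $h_n(t_0)\in B_{\textup{gen}}(d,r,I'_{t_0})$, a discrete set by the inductive hypothesis. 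But $h_n(t_0)=a_n+q(t_0-r_\ell)$ converges, hence is eventually constant, and then so is $a_n=h_n(t_0)-q(t_0-r_\ell)$ — contradicting the distinctness of the $a_n$.

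The key point, which I expect to be the main obstacle, is the uniformity of the perturbation radius $\epsilon$ in Theorem~\ref{thm:UP}: it does not depend on the individual pair $(X_n,B_n+M_n)$, and it is precisely this that lets the single inequality $|q_n|\le a_n/\epsilon$ bound all the slopes $q_n$ simultaneously (Theorem~\ref{thm:UP} itself resting on the ACC for generalized lc thresholds, Theorems~\ref{thm:LACC} and~\ref{thm:GACC}). The remaining difficulties are bookkeeping: tracking in the displayed formula which quantities are integral and which have denominator dividing $r$, and confirming that every perturbed datum $(X_n,B_n^{(t)}+M_n^{(t)})$ really lies in a class $P(d,r,I')$ with $d$ and $r$ unchanged, so that the induction on $m$ genuinely closes.
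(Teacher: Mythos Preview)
Your proposal is correct and follows essentially the same route as the paper: induction on $\dim_{\mathbb{Q}}\operatorname{Span}_{\mathbb{Q}}(I\cup\{1\})$, the explicit formula $a_E=1-\gamma-\sum(N_t+N'_t)c_t$ with $\gamma\in\tfrac{1}{r}\mathbb{Z}$ and $N_t,N'_t\in\mathbb{Z}_{\ge 0}$, and Theorem~\ref{thm:UP} to bound the ``slope'' uniformly before perturbing $r_\ell$ to a rational value and invoking the inductive hypothesis. The only cosmetic difference is that the paper argues directly---showing $B_{\textup{gen}}(d,r,I)\cap[0,a]$ lands in a finite union of translates of $B_{\textup{gen}}(d,r,I')$ indexed by the finitely many possible slope values---whereas you argue by contradiction via a convergent sequence and pass to a subsequence with constant slope; these are equivalent packagings of the same idea.
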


\begin{proof}
The same proof of \cite[Theorem 1.2]{Nak16b} works due to Theorem \ref{thm:UP}. 
For readers' convenience, we give a proof below. 

We may assume $1 \in I$. 
Let $r_0 = 1, r_1 , \ldots , r_c$ be all elements of $I$. 
Set $c' + 1  := \dim _{\mathbb{Q}} \operatorname{Span}_{\mathbb{Q}} \{ 1, r_1, \ldots , r_c \}$. 
Possibly rearranging the indices, 
we may assume that $r_0 , \ldots , r_{c'}$ are $\mathbb{Q}$-linearly independent. 
For each $0 \le i \le c$, we may uniquely write $r_i = \sum_{0 \le j \le c'} q_{ij}r_j$ with $q_{ij} \in \mathbb{Q}$. 

We prove the discreteness of $B_{\textup{gen}}(d,r,I)$ by induction on $c'$. 
If $c' = 0$, we can take $n \in \mathbb{Z} _{>0}$ such that 
$I \subset \frac{1}{n} \mathbb{Z}$ and $\frac{1}{r} \in \frac{1}{n} \mathbb{Z}$. 
Then we have $B_{\textup{gen}}(d,r,I) \subset \frac{1}{n} \mathbb{Z}$ and $B_{\textup{gen}}(d,r,I)$ turns out to be discrete. 

Set $\mathbb{Q}$-linear functions $s_0, \ldots , s_c$ as follows: 
\[s_i : \mathbb{R} ^{c'+1} \to \mathbb{R}; \quad s_i (x_0, \ldots , x_{c'}) = \sum _{0 \le j \le c'} q_{ij}x_j. \]
Take $\epsilon > 0$ as in Theorem \ref{thm:UP}. 
We fix $t^+, t^- \in \mathbb{Q}$ satisfying 
\[
t^+ \in (r_{c'}, r_{c'} + \epsilon] \cap \mathbb{Q}, \quad 
t^- \in [r_{c'} - \epsilon, r_{c'}) \cap \mathbb{Q}. 
\]
We define $r^+ _0, \ldots, r^+ _c$ and $r^- _0, \ldots, r^- _c$ as
\[
r_i ^+ = s_i (r_0, \ldots, r_{c' -1}, t^+), \quad 
r_i ^- = s_i (r_0, \ldots, r_{c' -1}, t^-). 
\]
Furthermore, we set $I' := \{ r^+ _0, \ldots , r^+ _c, r^- _0, \ldots , r^- _c \}$. 
Then we have $\dim _{\mathbb{Q}} \operatorname{Span} _{\mathbb{Q}} (I') = c'$, 
and hence $B_{\textup{gen}}(d,r,I')$ is discrete by induction.

Let $\Bigl ( X, \sum _{0 \le i \le c} r_i B_i + \sum _{0 \le i \le c} r_i M_i \Bigr )/Z \in P(d,r,I)$ 
and let $E$ be a divisor over $X$. 
Since $\Bigl ( X, \sum _{0 \le i \le c} r_i B_i + \sum _{0 \le i \le c} r_i M_i \Bigr )$ is generalized lc, 
$\Bigl ( X, \sum _{0 \le i \le c} r_i ^* B_i  + \sum _{0 \le i \le c} r_i^*  M_i \Bigr )$ is also generalized lc for each $* \in \{ +, - \}$. 
Hence we have 
\begin{align*}
0 &\le a_E \Bigl ( X, \sum _{0 \le i \le c} r_i ^* B_i + \sum _{0 \le i \le c} r_i^*  M_i \Bigr ) \\ 
&= a_E \Bigl (X, \sum _{0 \le i \le c} r_i B_i  + \sum _{0 \le i \le c} r_i  M_i \Bigr ) - 
(t^* - r_{c'}) \sum _{0 \le i \le c} q_{ic'}\operatorname{ord} _E (B_i + M_i). 
\end{align*}
Therefore, either of the following holds:
\begin{itemize}
\item $0 \le \sum _{0 \le i \le c} q_{ic'}\operatorname{ord} _E (B_i + M_i) \le 
\epsilon _+ ^{-1} a_E \Bigl ( X, \sum _{0 \le i \le c} r_i B_i  + \sum _{0 \le i \le c} r_i  M_i \Bigr )$, or
\item $- \epsilon _- ^{-1} a_E \Bigl ( X, \sum _{0 \le i \le c} r_i B_i + \sum _{0 \le i \le c} r_i  M_i \Bigr )
\le \sum _{0 \le i \le c} q_{ic'}\operatorname{ord} _E (B_i + M_i) \le 0$, 
\end{itemize}
where we set $\epsilon _+ := t^+ - r_{c'}$ and $\epsilon _- := r_{c'} - t^-$.

It is sufficient to show the discreteness of $B_{\textup{gen}}(d,r,I) \cap [0,a]$ for any $a \in \mathbb{R} _{>0}$. 
Take $n \in \mathbb{Z} _{>0}$ such that $q_{ic'} \in \frac{1}{n} \mathbb{Z}$ for any $i$. 
Then, it is sufficient to prove that $B_{\textup{gen}}(d,r,I) \cap [0,a]$ is a subset of the following set
\begin{align*}
\Bigl \{ b + \epsilon_+ e  \ & \Big | \  b \in B_{\textup{gen}}(d,r,I'), 
e \in \frac{1}{n}\mathbb{Z} \cap[0, \epsilon _+ ^{-1} a]  \Bigr \} \\
&\cup 
\Bigl \{ b - \epsilon_- e  \ \Big | \  b \in B_{\textup{gen}}(d,r,I'), 
e \in \frac{1}{n}\mathbb{Z} \cap[- \epsilon _- ^{-1} a, 0]  \Bigr \}. 
\end{align*}
In fact, this set is discrete because $B_{\textup{gen}}(d,r,I')$ is discrete, and both
$\frac{1}{n}\mathbb{Z} \cap[0, \epsilon _+ ^{-1} a]$ and $\frac{1}{n}\mathbb{Z} \cap[- \epsilon _- ^{-1} a, 0]$ 
are finite. 

Let $\Bigl ( X, \sum _{0 \le i \le c} r_i B_i + \sum _{0 \le i \le c} r_i  M_i \Bigr )/Z \in P(d,r,I)$ 
and let $E$ be a divisor over $X$. 
Assume $a_E \Bigl ( X, \sum _{0 \le i \le c} r_i B_i  +  \sum _{0 \le i \le c} r_i  M_i \Bigr ) \in [0,a]$. 
Furthermore, suppose $\sum _{0 \le i \le c} q_{ic'}\operatorname{ord} _E (B_i + M_i) \ge 0$ 
(the same proof also works in the other case). 
Then, we have 
\begin{align*}
&a_E \Bigl ( X, \sum _{0 \le i \le c} r_i B_i + \sum _{0 \le i \le c} r_i  M_i \Bigr ) \\
&=
a_E \Bigl ( X, \sum _{0 \le i \le c} r_i^+ B_i  + \sum _{0 \le i \le c} r_i^+  M_i \Bigr )
+
(t^+ -r_{c'}) \sum _{0 \le i \le c} q_{ic'}\operatorname{ord} _E (B_i + M_i). 
\end{align*}
Here, we have 
\begin{itemize}
\item $a_E \Bigl ( X, \sum _{0 \le i \le c} r^+_i B_i  + \sum _{0 \le i \le c} r^+_i  M_i \Bigr ) \in B(d,r,I')$, 
\item $t^+ -r_{c'} = \epsilon _+$, and 
\item $\sum _{0 \le i \le c} q_{ic'}\operatorname{ord} _E (B_i + M_i)\in \frac{1}{n}\mathbb{Z} 
\cap[0, \epsilon _+ ^{-1} a]$. 
\end{itemize}
The proof is complete. 
\end{proof}

\begin{cor}\label{cor:ACC3dim}
Let $I \subset [0, + \infty)$ be a finite subset. 
Then the following set 
\[
A_{\textup{gen.can}}(3,I) := \left\{ \operatorname{mld}_x (X, B + M) \ \middle | 
\begin{array}{l}
\text{$(X,B+M)/Z$ is a generalized } \\
\text{canonical pair of $\dim X = 3$}\\
\text{with $B \in I$, $M \in I$, $x \in |X|_{0}$.}
\end{array}
\right \}
\]
satisfies the ACC. Furthermore, $1$ is the only accumulation point of this set. 
\end{cor}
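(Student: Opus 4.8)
The plan is to deduce the corollary from Theorem~\ref{thm:ACCfixed} by bounding the local Gorenstein index of $X$ at $x$ in terms of how far $\operatorname{mld}_x(X,B+M)$ lies from $1$. Every element of $A_{\textup{gen.can}}(3,I)$ is $\ge 1$, since $\operatorname{mld}_x(X,B+M)\ge\operatorname{mld}(X,B+M)\ge 1$. Hence it suffices to prove: \emph{for every $\epsilon\in(0,1)$ the set $A_{\textup{gen.can}}(3,I)\cap[1+\epsilon,+\infty)$ is finite}. Granting this, $A_{\textup{gen.can}}(3,I)$ is bounded above (otherwise it would contain infinitely many elements larger than $\tfrac32$), so an infinite strictly increasing sequence in it would converge to some $L>1$ and eventually lie in $[1+\eta,+\infty)$ for $\eta:=\min\{\tfrac{L-1}{2},\tfrac12\}\in(0,1)$, contradicting finiteness; the same finiteness statement shows that no point other than $1$ can be an accumulation point. (Terminal quotient singularities of arbitrary index show that $r$ genuinely cannot be bounded, and that $1$ is indeed approached.)

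Fix $\epsilon\in(0,1)$, let $(X,B+M)/Z$ be a generalized canonical threefold pair with $B\in I$, $M\in I$, and let $x\in X$ be a closed point with $\operatorname{mld}_x(X,B+M)\ge 1+\epsilon$. Since $B$ is effective and $M'$ is nef, discarding $B+M$ does not lower log discrepancies, so $\operatorname{mld}_x(X)\ge\operatorname{mld}_x(X,B+M)\ge 1+\epsilon>1$; hence $X$ is $\mathbb{Q}$-Gorenstein and terminal at $x$ (a canonical, non-terminal point has $\operatorname{mld}_x(X)=1$). Let $r$ be the local Gorenstein index of $X$ at $x$, with $r=1$ when $X$ is smooth at $x$. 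When $X$ is singular at $x$, the classification of three-dimensional terminal singularities and the computation of their minimal log discrepancies (cf.\ \cite{KM98}) give $\operatorname{mld}_x(X)=1+\tfrac1r$ and that the analytic local class group of $X$ at $x$ is annihilated by $r$. In every case $1+\epsilon\le\operatorname{mld}_x(X)\le 1+\tfrac1r$, so $r\le\lfloor 1/\epsilon\rfloor$, and hence $r$ divides $r_\epsilon:=\operatorname{lcm}\{1,2,\dots,\lfloor 1/\epsilon\rfloor\}$, a number depending only on $\epsilon$. Consequently $r_\epsilon K_X$ and each $r_\epsilon B_i$, $r_\epsilon M_i$ are Cartier in a neighbourhood of $x$.

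Next I would localize at $x$: replace $X$ by an affine open neighbourhood $U\ni x$ on which $r_\epsilon K_X$, $r_\epsilon B_i$ and $r_\epsilon M_i$ are all Cartier. This changes neither $\operatorname{mld}_x(X,B+M)$ nor the nefness of the $M_i'$ over $U$ after restricting the relevant birational model to the preimage of $U$. Writing $B|_U=\sum_i\tfrac{b_i}{r_\epsilon}(r_\epsilon B_i|_U)$ and $M|_U=\sum_i\tfrac{m_i}{r_\epsilon}(r_\epsilon M_i|_U)$ — where $r_\epsilon B_i|_U$ is effective Cartier, $r_\epsilon M_i|_U$ is the pushforward of the nef$/U$ Cartier divisor $r_\epsilon M_i'$, and all coefficients lie in the finite set $I':=\tfrac1{r_\epsilon}I$ — we obtain $(U,B|_U+M|_U)/U\in P(3,r_\epsilon,I')$. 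Therefore $\operatorname{mld}_x(X,B+M)\in A'_{\textup{gen}}(3,r_\epsilon,I')$. By Theorem~\ref{thm:ACCfixed} this set is discrete, and since it is contained in $[0,3]$ (as $\operatorname{mld}_x\le\dim X$) it is in fact finite; hence $A_{\textup{gen.can}}(3,I)\cap[1+\epsilon,+\infty)\subseteq A'_{\textup{gen}}(3,r_\epsilon,I')$ is finite, which is the displayed claim.

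I expect the main obstacle to be the second paragraph. The bound $r\le 1/\epsilon$ and the fact that $r$ annihilates the local class group (so that Weil divisors become Cartier after scaling by $r$) are not formal: they rest on the structure theory of three-dimensional terminal singularities, which is precisely why the argument is specific to dimension three. One must also take care to justify, in the range $\operatorname{mld}_x(X,B+M)>1$, that $X$ is $\mathbb{Q}$-Gorenstein at $x$, so that $\operatorname{mld}_x(X)$ is defined and dominates $\operatorname{mld}_x(X,B+M)$. By contrast, the reduction to $\epsilon$-slices, the localization, and the rescaling of the coefficients into the finite set $I'$ are routine; the whole argument is the generalized-pair counterpart of the ACC for canonical threefolds with finite coefficient set obtained in \cite{Nak16b}.
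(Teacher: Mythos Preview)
Your strategy --- bound the local index via the classification of three-dimensional terminal singularities, then feed the pair into Theorem~\ref{thm:ACCfixed} --- matches the paper's, and your slicing, localisation, and coefficient-rescaling steps are all fine. But the gap you flag in your last paragraph is real and not a technicality: nothing in the hypotheses forces $K_X$ to be $\mathbb{Q}$-Cartier at $x$. The definition of a generalized pair only requires $K_X+B+M$ to be $\mathbb{R}$-Cartier, so $\operatorname{mld}_x(X)$ need not be defined, the inequality $\operatorname{mld}_x(X)\ge\operatorname{mld}_x(X,B+M)$ is unavailable, and you cannot invoke the terminal classification or Kawamata's index--class-group result on $X$ directly. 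There is no shortcut here that stays on $X$.

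The paper resolves this by first passing to a crepant $\mathbb{Q}$-factorial terminal model $f\colon Y\to X$ of $(X,B+M)$ (it exists since the pair is generalized klt; the paper cites Theorem~\ref{thm:extraction}), so that $K_Y$ is automatically $\mathbb{Q}$-Cartier and $B_Y=f^{-1}_*B$. One then picks a computing divisor $E$ of $\operatorname{mld}_x(X,B+M)$ and splits on $\dim c_Y(E)$. When $c_Y(E)=\{y\}$ is a closed point, your argument runs verbatim on $Y$: $\operatorname{mld}_y(Y)\ge\operatorname{mld}_y(Y,B_Y+M_Y)=\operatorname{mld}_x(X,B+M)\ge a$ bounds the index $\ell$ of $Y$ at $y$, and by \cite[Corollary~5.2]{Kaw88} every Weil divisor becomes Cartier at $y$ after multiplication by $\ell$, landing the value in $A'_{\textup{gen}}\bigl(3,\ell,\tfrac{1}{\ell}I\bigr)$. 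But two further cases arise that your direct approach never encounters: if $\dim c_Y(E)=1$, Lemma~\ref{lem:ambro} produces a closed point $y\in c_Y(E)$ with $\operatorname{mld}_y(Y,B_Y+M_Y)=1+\operatorname{mld}_x(X,B+M)>2$, forcing $Y$ smooth at $y$ and placing $\operatorname{mld}_x$ in $-1+A'_{\textup{gen}}(3,1,I)$; and $\dim c_Y(E)=2$ forces $\operatorname{mld}_x=1<a$, a contradiction. The passage to $Y$ together with this case analysis is exactly the missing ingredient.
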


\begin{proof}
The same proof of \cite[Corollary 1.5]{Nak16b} works due to Theorem \ref{thm:ACCfixed}. 
For readers' convenience, we give a proof below. 

Note that $A_{\textup{gen.can}}(3,I) \subset [1,3]$ holds (cf.\ \cite{Kaw92}, \cite{Mar96}). 
We shall prove that for any $a > 1$, the set
\[
A_{\textup{gen.can}}(3,I) \cap [a, + \infty)
\]
is a finite set. 

By the classification of three-dimensional $\mathbb{Q}$-factorial terminal singularities 
(see \cite{Kaw92} or \cite{Mar96}), 
the minimal log discrepancy of a three-dimensional terminal singularity is equal to $3$ or $1+1/r$ for some $r \in \mathbb{Z}_{>0}$. 
Furthermore, in the case when $\operatorname{mld} _x (X)=3$, the $X$ is known to be smooth at $x$. 
If $\operatorname{mld} _x (X)=1+1/r$, the Gorenstein index of $X$ at $x$ is known to be $r$. 
Furthermore, by \cite[Corollary 5.2]{Kaw88}, if $X$ has the Gorenstein index $r$ at $x \in X$, then 
$rD$ is Cartier at $x$ for any Weil divisor $D$. 

Let $(X, B + M)/Z$ be a three-dimensional generalized canonical pair satisfying 
$B \in I$, $M \in I$ and $\operatorname{mld} _x(X, B + M) \ge a$. 
By Theorem \ref{thm:extraction}, 
there exists a projective morphism $f : Y \to X$ with the following properties: 
\begin{itemize}
\item $Y$ is a $\mathbb{Q}$-factorial terminal variety. 
\item $f^* (K_X + B + M) = K_Y + B_Y + M_Y$ holds, where $B _Y$ is the strict transform of $B$ on $Y$, 
and $M_Y$ is the push-forward of $M'$ (possilby replacing $X'$ with a higher model). 
\end{itemize}
Take a computing divisor $E$ of $\operatorname{mld} _x(X, B + M)$. 

Suppose $\dim c_Y (E) = 0$. 
Then $\operatorname{mld} _x(X, B + M) = \operatorname{mld} _y (Y, B _Y + M_Y)$ holds, 
where $\{ y \} := c_Y (E)$. 
Since $\operatorname{mld} _y (Y) \ge \operatorname{mld} _y (Y, B _Y + M_Y) \ge a$ holds, 
the Gorenstein index of $Y$ at $y$ is at most $\bigl \lfloor \frac{1}{a-1} \bigr \rfloor$. 
Let $\ell$ be the Gorenstein index of $Y$ at $y$. 
Since $\ell D$ is Cartier at $y$ for any Weil divisor $D$ on $Y$, 
it follows that 
\[
\operatorname{mld} _y (Y, B _Y + M_Y) \in A' _{\textup{gen}} \Bigl ( 3, \ell, \frac{1}{\ell}I \Bigr ), 
\]
where we set $\frac{1}{\ell}I := \{ f \ell^{-1} \mid f \in I \}$. 
Therefore we have 
\[
\operatorname{mld} _x (X, B + M) \in 
\bigcup _{\ell \le \lfloor \frac{1}{a-1} \rfloor} A' _{\textup{gen}} \Bigl ( 3, \ell, \frac{1}{\ell}I \Bigr ), 
\]
and the right-hand side is a finite set by Theorem \ref{thm:ACCfixed}.

Suppose $\dim c_Y (E) = 1$. 
Then by Lemma \ref{lem:ambro}, we have
\[
\operatorname{mld} _y (Y, B _Y + M_Y) = 1 + \operatorname{mld} _x(X, B + M)
\]
for some closed point $y \in c_Y (E)$. 
Since $\operatorname{mld} _y (Y) \ge 1 + a > 2$, it follows that $Y$ is smooth at $y$. 
Hence, 
\[
\operatorname{mld} _y (Y, B _Y + M_Y) \in A' _{\textup{gen}}(3,1,I). 
\]
Therefore, we have
\[
\operatorname{mld} _x (X, B + M) \in -1 + A' _{\textup{gen}}(3,1,I), 
\]
and the right-hand side is a finite set by Theorem \ref{thm:ACCfixed}. 

Suppose $\dim c_Y (E) = 2$. 
Then $E$ is a divisor on $Y$, and we have
\[
\operatorname{mld} _x (X, B + M) = 1 - \operatorname{coeff} _E B _Y = 1. 
\]
It contradicts $\operatorname{mld} _x (X, B + M) \ge a > 1$. 
The proof is complete. 
\end{proof}

\subsection{Uniform perturbation of the $\epsilon$-log canonicity}\label{subsectionperturb}
In this subsection, we prove Theorem \ref{thm:perturbmld} as an application of Theorem \ref{thm:UP}. 

\begin{defi}
Let $S \subset \mathbb{R}$ be a $\mathbb{Q}$-linear subspace and 
let $f : S \to  \mathbb{R}$ be a $\mathbb{Q}$-linear function. 
Let $B$ be an $\mathbb{R}$-divisor on a variety $X$. 
When $B \in S$, we define an $\mathbb{R}$-divisor $f(B)$ on $X$ as follows: 
\[
f(B) := \sum _{D} f(\operatorname{coeff}_D B) D, 
\]
where $D$ is taken over all prime divisors $D$ on $X$.

\end{defi}

\begin{lem}\label{lem:f}
Let $S \subset \mathbb{R}$ be a $\mathbb{Q}$-linear subspace and 
let $f : S \to  \mathbb{R}$ be a $\mathbb{Q}$-linear function. 
Let $B$ be an $\mathbb{R}$-divisor on a variety $X$. 
\begin{enumerate}
\item Suppose that $B = \sum _{i = 1} ^{\ell} a_i D_i$ holds for some $a_i \in S$ and $\mathbb{Q}$-divisors $D_i$. 
Then it follows that $f(B) = \sum _{i = 1} ^{\ell} f(a_i) D_i$. 

\item Suppose that $B \in S$ holds and $B$ is $\mathbb{R}$-Cartier. 
Then $f(B)$ is also $\mathbb{R}$-Cartier. 

\item Suppose that $B \in S$ holds and $B$ is $\mathbb{R}$-Cartier. 
Then $\varphi ^* B \in S$ holds for any morphism $\varphi : Y \to X$. 
Furthermore, it follows that $\varphi^* (f(B)) = f( \varphi ^* B)$. 
\end{enumerate}
\end{lem}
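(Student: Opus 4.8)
The plan is to prove the three assertions in order, using that a $\mathbb{Q}$-linear function $f: S \to \mathbb{R}$ is additive and commutes with multiplication by rationals. For (1), I would simply compare coefficients prime divisor by prime divisor. Fix a prime divisor $D$ on $X$; then $\operatorname{coeff}_D B = \sum_{i=1}^{\ell} a_i \operatorname{coeff}_D D_i$, and since each $\operatorname{coeff}_D D_i \in \mathbb{Q}$ and each $a_i \in S$, the sum lies in $S$ (because $S$ is a $\mathbb{Q}$-linear subspace), so $f$ may be applied; using $\mathbb{Q}$-linearity of $f$ we get $f(\operatorname{coeff}_D B) = \sum_i \operatorname{coeff}_D D_i \cdot f(a_i)$, which is exactly the coefficient of $D$ in $\sum_i f(a_i) D_i$. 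Summing over all $D$ gives the identity $f(B) = \sum_i f(a_i) D_i$.

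For (2), since $B$ is $\mathbb{R}$-Cartier I can write $B = \sum_{j=1}^m \lambda_j B_j$ where $\lambda_j \in \mathbb{R}$ and $B_j$ are Cartier (integral) divisors; the point is that one may arrange $\lambda_j \in S$. This takes a small argument: the $\mathbb{R}$-vector space of $\mathbb{R}$-Cartier $\mathbb{R}$-divisors with fixed integral support is spanned over $\mathbb{R}$ by Cartier divisors, and since $B$ has all coefficients in $S$, one can choose a $\mathbb{Q}$-basis expression and then regroup so that the real scalars appearing are $S$-valued $\mathbb{Q}$-linear combinations of the coefficients of $B$ — concretely, writing $B$ on a chart where it is $\mathbb{R}$-Cartier forces the coefficients to satisfy $\mathbb{Q}$-linear relations, and solving these inside $S$ keeps the scalars in $S$. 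Granting such an expression $B = \sum_j \lambda_j B_j$ with $\lambda_j \in S$, part (1) gives $f(B) = \sum_j f(\lambda_j) B_j$, which is a finite real combination of Cartier divisors, hence $\mathbb{R}$-Cartier.

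For (3), write again $B = \sum_j \lambda_j B_j$ with $\lambda_j \in S$ and $B_j$ Cartier. Then $\varphi^* B = \sum_j \lambda_j \varphi^* B_j$, and each $\varphi^* B_j$ is an integral (Cartier) divisor on $Y$ with integer coefficients, so every coefficient of $\varphi^* B$ is a $\mathbb{Q}$-linear combination of the $\lambda_j$, hence lies in $S$; this gives $\varphi^* B \in S$. Now apply (1) to the expression $\varphi^* B = \sum_j \lambda_j (\varphi^* B_j)$ to get $f(\varphi^* B) = \sum_j f(\lambda_j) \varphi^* B_j$. On the other hand, by (2) and its proof $f(B) = \sum_j f(\lambda_j) B_j$, so $\varphi^*(f(B)) = \sum_j f(\lambda_j) \varphi^* B_j$ as well, and the two sides agree.

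The main obstacle is the one hidden step in (2): producing an expression $B = \sum_j \lambda_j B_j$ with $\lambda_j \in S$ (not merely $\lambda_j \in \mathbb{R}$) and $B_j$ integral Cartier. The cleanest way to handle this is to work locally: over an affine open $U$ where $B|_U$ is $\mathbb{R}$-Cartier, the group of Cartier divisors spans, the relations among the prime components are defined over $\mathbb{Q}$, and choosing a solution with coordinates in the $\mathbb{Q}$-span of the coefficients of $B$ (all of which lie in $S$) keeps the scalars in $S$; then patch. Once this lemma-within-the-lemma is in place, parts (1) and (3) are pure coefficient bookkeeping.
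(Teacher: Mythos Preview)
Your approach is correct and essentially the same as the paper's for (1) and (3). For the key step in (2), the paper packages the argument slightly differently: rather than starting from a Cartier decomposition $B = \sum_j \mu_j C_j$ and arranging the scalars to lie in $S$, the paper starts from the natural decomposition $B = \sum_i a_i D_i$ with $a_i \in S$ $\mathbb{Q}$-linearly independent and $D_i$ $\mathbb{Q}$-divisors (which always exists since $B \in S$), and then proves as a separate lemma (Lemma~\ref{lem:RtoQ}) that the $\mathbb{R}$-Cartier condition forces each $D_i$ to be $\mathbb{Q}$-Cartier. The content is the same linear-algebra-over-$\mathbb{Q}$ observation you are gesturing at, but isolating it this way makes the argument global from the start and removes the need for the local patching you flag as the main obstacle: once each $D_i$ is globally $\mathbb{Q}$-Cartier, $f(B) = \sum_i f(a_i) D_i$ is visibly $\mathbb{R}$-Cartier and (3) follows by pulling back each $D_i$. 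Your version can also be made global directly---the system $\sum_j (\operatorname{coeff}_{P_k} C_j)\lambda_j = \operatorname{coeff}_{P_k} B$ has integer coefficient matrix and right-hand side in $S$, so Gaussian elimination over $\mathbb{Q}$ produces a solution in $S^m$---so the local patching is unnecessary in either formulation.
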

\begin{proof}
(1) follows from the $\mathbb{Q}$-linearlity of $f$. 

Suppose that $B$ is an $\mathbb{R}$-Cartier $\mathbb{R}$-divisor with $B \in S$. 
Then we may write $B = \sum _{i=1} ^{\ell} a_i D_i$
with some $a_i \in S$ and $\mathbb{Q}$-divisors $D_i$. 
Moreover we may assume that $a_1, \ldots , a_{\ell}$ are $\mathbb{Q}$-linearly independent. 
Then by Lemma \ref{lem:RtoQ} below, each $D_i$ turns out to be $\mathbb{Q}$-Cartier. 
Therefore (2) and (3) follow from (1). 
\end{proof}

\begin{lem}\label{lem:RtoQ}
Let $D$ be an $\mathbb{R}$-divisor of the form 
$D = \sum _{i=1} ^{\ell} a_i D_i$ with 
$a_i \in \mathbb{R}$ and $\mathbb{Q}$-divisors $D_i$ on a variety $X$. 
If $D$ is $\mathbb{R}$-Cartier and $a_1, \ldots , a_{\ell}$ are $\mathbb{Q}$-linearly independent, 
then each $D_i$ is $\mathbb{Q}$-Cartier.
\end{lem}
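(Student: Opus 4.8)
The statement is local, so the plan is to fix a point $x \in X$ and show that each $D_i$ is $\mathbb{Q}$-Cartier on $\operatorname{Spec}\mathcal{O}_{X,x}$; since a $\mathbb{Q}$-divisor is $\mathbb{Q}$-Cartier iff it is so at every point, this suffices. First I would unwind the hypothesis: $D$ being $\mathbb{R}$-Cartier means $D$ is an $\mathbb{R}$-linear combination of Cartier divisors, and on the local ring $\mathcal{O}_{X,x}$ every Cartier divisor is principal, so restricting to $\operatorname{Spec}\mathcal{O}_{X,x}$ gives an identity of $\mathbb{R}$-divisors
\[
\sum_{i=1}^{\ell} a_i D_i \;=\; \sum_{j} b_j \operatorname{div}(g_j)
\]
for some $b_j \in \mathbb{R}$ and rational functions $g_j$.

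Next I would pass to divisor classes. Let $\mathcal{D}$ be the group of Weil divisors on $\operatorname{Spec}\mathcal{O}_{X,x}$ and $\mathcal{C} \subset \mathcal{D}$ its subgroup of principal divisors, so that a $\mathbb{Q}$-divisor is $\mathbb{Q}$-Cartier at $x$ precisely when it lies in $\mathcal{C}_{\mathbb{Q}} = \mathcal{C} \otimes_{\mathbb{Z}} \mathbb{Q}$. Because $\mathbb{R}$ is flat over $\mathbb{Q}$, applying $- \otimes_{\mathbb{Q}} \mathbb{R}$ to $0 \to \mathcal{C}_{\mathbb{Q}} \to \mathcal{D}_{\mathbb{Q}} \to V \to 0$, where $V := \mathcal{D}_{\mathbb{Q}}/\mathcal{C}_{\mathbb{Q}}$, yields an exact sequence $0 \to \mathcal{C}_{\mathbb{R}} \to \mathcal{D}_{\mathbb{R}} \to V \otimes_{\mathbb{Q}} \mathbb{R} \to 0$; the key consequence is that $\mathcal{C}_{\mathbb{R}} \cap \mathcal{D}_{\mathbb{Q}} = \mathcal{C}_{\mathbb{Q}}$ inside $\mathcal{D}_{\mathbb{R}}$. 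Each $D_i$ is a $\mathbb{Q}$-divisor, hence has a class $\overline{D_i} \in V$, and by the displayed identity the image of $D$ in $V \otimes_{\mathbb{Q}} \mathbb{R}$ vanishes, i.e.
\[
\sum_{i=1}^{\ell} a_i \overline{D_i} = 0 \quad\text{in }\ V \otimes_{\mathbb{Q}} \mathbb{R}.
\]

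The final step is pure linear algebra: if $\overline{D_1}, \dots, \overline{D_\ell}$ lie in a $\mathbb{Q}$-vector space and $a_1, \dots, a_\ell \in \mathbb{R}$ are $\mathbb{Q}$-linearly independent, then the relation above forces $\overline{D_i} = 0$ for every $i$. I would prove this by choosing a $\mathbb{Q}$-basis of the (finite-dimensional) $\mathbb{Q}$-span of the $\overline{D_i}$, which remains an $\mathbb{R}$-basis after $\otimes_{\mathbb{Q}}\mathbb{R}$, and reading off coordinates: each coordinate of the relation is an equation $\sum_i a_i c_i = 0$ with $c_i \in \mathbb{Q}$, so $\mathbb{Q}$-linear independence of the $a_i$ gives all $c_i = 0$. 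Hence $\overline{D_i} = 0$, i.e.\ $D_i \in \mathcal{C}_{\mathbb{Q}}$, so $D_i$ is $\mathbb{Q}$-Cartier at $x$; letting $x$ vary finishes the proof.

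Overall the argument is soft and I expect no serious obstacle; the one place to be careful is the exactness after $- \otimes_{\mathbb{Q}} \mathbb{R}$, which is exactly what guarantees that an $\mathbb{R}$-Cartier $\mathbb{Q}$-divisor is already $\mathbb{Q}$-Cartier and thereby lets the $\mathbb{Q}$-linear independence of the coefficients be used. If one does not wish to assume $X$ normal, one simply replaces $\mathcal{D}$ by the free abelian group on the finitely many prime divisors occurring in the $D_i$ and the $\operatorname{div}(g_j)$, and nothing else changes.
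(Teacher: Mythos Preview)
Your proof is correct. Both your argument and the paper's rest on the same linear-algebra fact: if $\mathbb{Q}$-divisors satisfy an $\mathbb{R}$-linear relation with $\mathbb{Q}$-linearly independent coefficients, then comparing coordinates forces each piece to vanish. The packaging differs, though. The paper works globally: it writes $D = \sum b_i E_i$ with $\mathbb{Q}$-Cartier $E_i$, extends $\{a_1,\ldots,a_\ell\}$ to a $\mathbb{Q}$-linearly independent set containing the $b_i$ (absorbing the rewriting into new $E_i$'s, some possibly zero), and then reads off $D_i = E_i$ prime-by-prime. You instead localize, pass to the class group $V = \mathcal{D}_{\mathbb{Q}}/\mathcal{C}_{\mathbb{Q}}$, and invoke flatness of $\mathbb{R}$ over $\mathbb{Q}$ to ensure $V \hookrightarrow V \otimes_{\mathbb{Q}} \mathbb{R}$, so that $\sum a_i \overline{D_i} = 0$ forces each $\overline{D_i} = 0$. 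Your version is slightly more conceptual and avoids the basis-extension bookkeeping; the paper's is more hands-on and avoids any mention of tensor products or exact sequences. Either way the content is the same.
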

\begin{proof}
Suppse that $D$ is $\mathbb{R}$-Cartier. Then we may write $D = \sum _{i=1} ^{s} b_i E_i$ 
with some $b_i \in \mathbb{R}$ and some $\mathbb{Q}$-Cartier $\mathbb{Q}$-divisors $E_i$. 
We may assume that $s \ge \ell$ and $a_i = b_i$ for $1 \le i \le \ell$ and 
further that $b_1 , \ldots , b_s$ are $\mathbb{Q}$-linear independent. 
Here, we allow $E_i = 0$ in this expression. 
Since $\sum _{i=1} ^{\ell} a_i D_i = \sum _{i=1} ^{s} b_i E_i$, 
it follows from the $\mathbb{Q}$-linear independence of $b_i$'s that $D_i = E_i$ holds for each $1 \le i \le \ell$. 
Therefore $D_i$'s turn out to be $\mathbb{Q}$-Cartier. 
\end{proof}

\begin{rmk}
Let $S \subset \mathbb{R}$ be a $\mathbb{Q}$-linear subspace and 
let $f : S \to  \mathbb{R}$ be a $\mathbb{Q}$-linear function. 
Let $(X, B + M)/Z$ be a generalized pair with $M \in S$. 
In this remark, we see that $f(M)$ is well-defined. 
Let $\varphi : X' \to X$ be the birational morphism 
and $M'$ the $\mathbb{R}$-Cartier divisor such that $\varphi _* M' = M$ in Definition \ref{defi:gp}. 
By definition of ``$M \in S$" (Definition \ref{defi:defs}(1)), we may write 
$M' = \sum _i r_i M'_i$ with $r_i \in S$ and Cartier divisors $M'_i$. 
Then we can define $f(M')$ and define $f(M) := \varphi _* \bigl( f(M') \bigr)$. 
By Lemma \ref{lem:f}, this definition does not depend on the choice of $X'$ and the expression $M' = \sum _i r_i M'_i$. 
\end{rmk}

\begin{lem}\label{lem:f(gen)}
Let $S \subset \mathbb{R}$ be a $\mathbb{Q}$-linear subspace containing $\mathbb{Q}$ and 
let $f : S \to  \mathbb{R}$ be a $\mathbb{Q}$-linear function fixing $\mathbb{Q}$. 
For a generalized pair $(X, B+M)/Z$ with $B, M \in S$, the following hold. 
\begin{enumerate}
\item $\big( X, f(B) + f(M) \big)$ is also a generalized pair if $f(B), f(M) \in \mathbb{R}_{\ge 0}$. 
\item $a_E (X, B + M) \in S$ holds for any divisor $E$ over $X$. 
\item $f\big( a_E (X, B + M) \big) = a_E \big( X,f(B) + f(M) \big)$ holds for any divisor $E$ over $X$. 
\end{enumerate}
\end{lem}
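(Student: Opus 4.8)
The plan is to carry out everything on a single birational model. First I would fix the morphism $\varphi\colon X'\to X$ and the nef$/Z$ $\mathbb{R}$-Cartier divisor $M'$ with $\varphi_*M'=M$ from Definition \ref{defi:gp}, together with a decomposition $M'=\sum_i r_iM_i'$ with $r_i\in S$ positive and $M_i'$ nef$/Z$ Cartier (possibly after replacing $X'$) as in Definition \ref{defi:defs}(1); then $f(M')=\sum_i f(r_i)M_i'$ and $f(M)=\varphi_*(f(M'))$ as in the Remark preceding the lemma. Two elementary observations will be used repeatedly: a chosen Weil divisor $K_X$ (resp.\ $K_{X'}$) has integral coefficients, hence lies in $\mathbb{Q}\subseteq S$ and is fixed by $f$; and, $f$ being applied coefficientwise, both $\varphi^*$ and $\varphi_*$ commute with $f$ on divisors in $S$ — for $\varphi^*$ this is exactly Lemma \ref{lem:f}(3), and for $\varphi_*$ it follows by writing $M'=\sum r_iM_i'$ and applying Lemma \ref{lem:f}(1) together with the fact that the pushforward of a $\mathbb{Q}$-divisor is a $\mathbb{Q}$-divisor. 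In particular $f(K_X+B+M)=K_X+f(B)+f(M)$.

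For part (1): $f(B)$ is effective by hypothesis; the divisor $K_X+B+M$ is $\mathbb{R}$-Cartier and lies in $S$, so $K_X+f(B)+f(M)=f(K_X+B+M)$ is $\mathbb{R}$-Cartier by Lemma \ref{lem:f}(2); and the b-divisor of $(X,f(B)+f(M))$ is to be represented by $\varphi$ and $f(M')=\sum_i f(r_i)M_i'$, which is $\mathbb{R}$-Cartier and satisfies $\varphi_*(f(M'))=f(M)$. The delicate point is to confirm that $f(M')$ is nef$/Z$: since each $M_i'$ is nef$/Z$, this holds once the coefficients $f(r_i)$ are non-negative, which is what must be arranged — and is automatic when $f$ is a sufficiently small perturbation of the identity, the setting in which the lemma is applied (Theorem \ref{thm:perturbmld}). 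Granting this, $(X,f(B)+f(M))/Z$ is a generalized pair.

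For part (2): replace $X'$ so that $\varphi$ is a log resolution of $(X,B)$ on which $E$ is a divisor, and define $B'$ by $K_{X'}+B'+M'=\varphi^*(K_X+B+M)$, so $a_E(X,B+M)=1-\operatorname{coeff}_EB'$. Solving, $B'=\varphi^*(K_X+B+M)-K_{X'}-M'$, where $\varphi^*(K_X+B+M)\in S$ by Lemma \ref{lem:f}(3), $K_{X'}\in\mathbb{Q}\subseteq S$, and $M'=\sum r_iM_i'\in S$; hence $B'\in S$ and $a_E(X,B+M)=1-\operatorname{coeff}_EB'\in S$. For part (3), work on the same model, represent $(X,f(B)+f(M))$ by $\varphi$ and $f(M')$, and define $B''$ by $K_{X'}+B''+f(M')=\varphi^*(K_X+f(B)+f(M))$, so $a_E(X,f(B)+f(M))=1-\operatorname{coeff}_EB''$. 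Applying $f$ coefficientwise to $K_{X'}+B'+M'=\varphi^*(K_X+B+M)$ and using $f(K_{X'})=K_{X'}$ and $f(\varphi^*(K_X+B+M))=\varphi^*(f(K_X+B+M))=\varphi^*(K_X+f(B)+f(M))$ from Lemma \ref{lem:f}(3) gives $K_{X'}+f(B')+f(M')=\varphi^*(K_X+f(B)+f(M))$; comparing with the definition of $B''$ yields $B''=f(B')$, whence $a_E(X,f(B)+f(M))=1-f(\operatorname{coeff}_EB')=f(1-\operatorname{coeff}_EB')=f(a_E(X,B+M))$ by $f(1)=1$ and $\mathbb{Q}$-linearity.

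The hard part is not the algebra: parts (2) and (3) are a formal manipulation of the pullback identity on a fixed log resolution, and the $\mathbb{R}$-Cartier and effectivity requirements in (1) are immediate from Lemma \ref{lem:f}. The genuine subtlety is the nef$/Z$ property of $f(M')$ in (1): unlike the other conditions, the b-divisor structure is sensitive to the signs of the $f(r_i)$, so part (1) — and hence the very meaning of the right-hand side in part (3) — relies on some positivity input for $f$ beyond $f(M)\in\mathbb{R}_{\ge0}$. I would make this requirement explicit (or appeal to the perturbative regime in which the lemma is used) rather than attempt to deduce nefness of $f(M')$ from effectivity of $f(M)$, which fails in general.
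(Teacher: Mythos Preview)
Your argument is correct and is essentially the same as the paper's (terse) proof: verify $\mathbb{R}$-Cartierness of $K_X+f(B)+f(M)$ via Lemma~\ref{lem:f}(2), and for (2)--(3) unwind the defining equation $K_{X'}+B'+M'=\varphi^*(K_X+B+M)$ using Lemma~\ref{lem:f}(3) and $f(K_{X'})=K_{X'}$.

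One clarification on your ``genuine subtlety'': you are reading the hypothesis $f(M)\in\mathbb{R}_{\ge 0}$ as effectivity of the divisor $f(M)$ on $X$, but in this paper's convention (Definition~\ref{defi:defs}(1), ``by abuse of notation'') the statement $M\in I$ refers to the coefficients $r_i$ in a decomposition $M'=\sum r_iM_i'$ with $M_i'$ nef$/Z$ Cartier. So $f(M)\in\mathbb{R}_{\ge 0}$ is exactly the assertion that each $f(r_i)\ge 0$, and nefness of $f(M')=\sum f(r_i)M_i'$ is immediate from the hypothesis --- no extra positivity input is needed. With that reading there is no gap in~(1), and your remark that~(3) implicitly presupposes~(1) (so that $a_E(X,f(B)+f(M))$ is defined) is fair but harmless in the applications.
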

\begin{proof}
We have $f(K_X+B+M) = K_X + f(B) + f(M)$ since $f$ fixes $\mathbb{Q}$. 
Furthermore, it is $\mathbb{R}$-Cartier by Lemma \ref{lem:f}(2). We have proved (1). 

(2) follows from a similar argument as in the proof of Lemma \ref{lem:f}. 
(3) follows from Lemma \ref{lem:f}(3). 
\end{proof}

The following theorem is an immediate consequence of Theorem \ref{thm:UP}. 

\begin{thm}\label{thm:perturblc}
Let $d$ be a positive integer and let $I \subset [0,+\infty)$ be a finite subset. 
Then, there exists a positive real number $\delta$ depending only on $d$ and $I$ such that the following holds: 
If 
\begin{itemize}
\item $(X,B+M)/Z$ is a generalized lc pair with $\dim X = d$ and $B , M \in I$, and 
\item $g : \operatorname{Span}_{\mathbb{Q}} (I\cup \{1\}) \rightarrow \mathbb{R}$ is a $\mathbb{Q}$-linear function 
fixing $\mathbb{Q}$ and satisfies $|g(a)-a|\leq\delta$ for every $a \in I$, 
\end{itemize}
then $\bigl( X, g(B) +g(M) \bigr)$ is also generalized lc.
\end{thm}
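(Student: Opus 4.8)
The plan is to deduce Theorem \ref{thm:perturblc} from Theorem \ref{thm:UP} by a convexity argument. Set $S := \operatorname{Span}_{\mathbb{Q}}(I \cup \{1\})$ and $k := \dim_{\mathbb{Q}} S$. If $k = 1$ then $I \subseteq \mathbb{Q}$, the only $\mathbb{Q}$-linear function $g : S \to \mathbb{R}$ fixing $\mathbb{Q}$ is the inclusion, and there is nothing to prove; so assume $k \ge 2$ and fix a $\mathbb{Q}$-basis $r_0 = 1, r_1, \dots, r_{k-1}$ of $S$ with $r_1, \dots, r_{k-1} \in I$. Since $r_1, \dots, r_{k-1}$ are $\mathbb{Q}$-linearly independent from $r_0 = 1$ they are irrational; being elements of $I \subseteq [0,+\infty)$, they are positive. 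A $\mathbb{Q}$-linear function $g : S \to \mathbb{R}$ fixing $\mathbb{Q}$ is the same datum as the tuple $\vec t := (g(r_1), \dots, g(r_{k-1})) \in \mathbb{R}^{k-1}$; write $g_{\vec t}$ for this function, $\vec r := (r_1, \dots, r_{k-1})$, and $e_1, \dots, e_{k-1}$ for the standard basis of $\mathbb{R}^{k-1}$. As $r_1, \dots, r_{k-1} \in I$, the hypothesis $|g(a) - a| \le \delta$ for $a \in I$ forces $\|\vec t - \vec r\|_\infty \le \delta$. Let $G \subseteq \mathbb{R}^{k-1}$ be the set of all $\vec t$ such that for every generalized lc pair $(X, B+M)/Z$ with $\dim X = d$ and $B, M \in I$ one has $g_{\vec t}(B) \ge 0$ and $(X, g_{\vec t}(B) + g_{\vec t}(M))$ is generalized lc. It then suffices to find $\epsilon > 0$, depending only on $d$ and $I$, with $G \supseteq \{\vec t : \|\vec t - \vec r\|_\infty \le \epsilon/(k-1)\}$; indeed $\delta := \epsilon/(k-1)$ will then work.

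First I would check that $G$ is convex and contains $\vec r$. Fix a generalized lc pair $(X, B+M)/Z$ with $\dim X = d$, $B, M \in I$, and a divisor $E$ over $X$. For any $\vec t$ with $g_{\vec t}(B) \ge 0$, Lemma \ref{lem:f(gen)}(1) makes $(X, g_{\vec t}(B) + g_{\vec t}(M))$ a generalized pair, and Lemma \ref{lem:f(gen)}(3) gives $a_E(X, g_{\vec t}(B) + g_{\vec t}(M)) = g_{\vec t}\bigl(a_E(X, B+M)\bigr)$. Expressing $a_E(X, B+M) \in S$ (Lemma \ref{lem:f(gen)}(2)) in the basis $1, r_1, \dots, r_{k-1}$ shows that $g_{\vec t}\bigl(a_E(X, B+M)\bigr)$ is an affine-linear function of $\vec t$ equal to $a_E(X, B+M) \ge 0$ at $\vec t = \vec r$; likewise each coefficient of $g_{\vec t}(B)$ is affine-linear in $\vec t$ and nonnegative at $\vec r$. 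Hence $G$ is an intersection of closed half-spaces (and copies of $\mathbb{R}^{k-1}$), each containing $\vec r$, so $G$ is convex, closed, and contains $\vec r$.

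Next I would show, for each $j \in \{1, \dots, k-1\}$, that $\vec r + [-\epsilon_j, \epsilon_j]\, e_j \subseteq G$ for some $\epsilon_j > 0$ depending only on $d$ and $I$. Reorder the basis so that $r_j$ is its last element, and for $a \in I$ let $s_a : \mathbb{R}^k \to \mathbb{R}$ be the $\mathbb{Q}$-linear function recording the coordinates of $a$ in this ordered basis, so that $s_a$ at the basis tuple equals $a \ge 0$. Given $(X, B+M)/Z$ as above, write $B = \sum_{a \in I} a B_a$ with $B_a$ the effective Weil divisor formed by the components of $B$ of coefficient $a$, and, grouping the defining expression $M' = \sum_i r_i M'_i$ of $M \in I$, write $M' = \sum_{a \in I} a N'_a$ with the $N'_a$ nef$/Z$ Cartier. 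Applying Theorem \ref{thm:UP} to this data (with $c^B = c^M = |I|$, $s^B_a = s^M_a = s_a$, and $B_a$, $N'_a$ in the roles of $B_i$, $M'_i$) produces $\epsilon_j > 0$ such that perturbing $r_j$ to any value in $[r_j - \epsilon_j, r_j + \epsilon_j]$ while keeping the other basis elements fixed preserves the generalized lc property, uniformly over all such $(X, B+M)/Z$; this says exactly $\vec r + [-\epsilon_j, \epsilon_j]\, e_j \subseteq G$. Put $\epsilon := \min_{1 \le j \le k-1} \epsilon_j > 0$.

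Finally, since $G$ is convex and contains $\vec r$ and $\vec r \pm \epsilon\, e_j$ for all $j$, it contains their convex hull, the cross-polytope $\{\vec t : \|\vec t - \vec r\|_1 \le \epsilon\}$, which in turn contains the cube $\{\vec t : \|\vec t - \vec r\|_\infty \le \epsilon/(k-1)\}$; thus $\delta := \epsilon/(k-1)$ works. The only substantial work is the bookkeeping that turns a generalized pair with $B, M \in I$ into the divisors $B_a, N'_a$ and the linear functions $s_a$ feeding Theorem \ref{thm:UP}, together with keeping the base point $\vec r$ and the successive reorderings of the basis straight across the $k-1$ applications of that theorem; this is the step I expect to require the most care. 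Once the coordinate segments are in hand, the convexity of $G$, which is a formal consequence of the affine-linearity in Lemma \ref{lem:f(gen)}(3), finishes the proof.
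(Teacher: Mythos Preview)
Your proof is correct, but takes a different route from the paper's. Both arguments rest on Theorem~\ref{thm:UP} and on the convexity (affine-linearity in the parameters) furnished by Lemma~\ref{lem:f(gen)}(3), but they organize these ingredients differently. The paper argues by induction on $\dim_{\mathbb{Q}}\operatorname{Span}_{\mathbb{Q}}(I\cup\{1\})$: at each step it applies Theorem~\ref{thm:UP} to perturb only the last basis element $r_\ell$ to a \emph{rational} value $t_1$ or $t_2$, lands in a coefficient set $I'$ with $\dim_{\mathbb{Q}}\operatorname{Span}_{\mathbb{Q}}(I'\cup\{1\})$ one smaller, invokes the inductive hypothesis for $I'$, and then writes an arbitrary $g$ as a convex combination $u_1(g\circ g_1)+u_2(g\circ g_2)$ of the two rational specializations. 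You instead define the ``good'' set $G\subset\mathbb{R}^{k-1}$ globally, prove it convex once and for all, apply Theorem~\ref{thm:UP} $k-1$ times (once per coordinate direction) to get coordinate segments through $\vec r$ inside $G$, and take the convex hull to obtain a cross-polytope and then an inscribed cube. Your approach is more direct and avoids the induction entirely; the paper's inductive scheme, on the other hand, mirrors the structure used elsewhere (e.g.\ in Theorem~\ref{thm:ACCfixed}) and keeps a slightly tighter $\delta$ at each step. One small point: your definition of $G$ should also record $g_{\vec t}(a)\ge 0$ for the coefficients appearing in the nef decomposition of $M'$ (so that $(X,g_{\vec t}(B)+g_{\vec t}(M))$ is a generalized pair in the sense of Definition~\ref{defi:gp}); this is another family of affine-linear constraints and does not affect the convexity argument.
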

\begin{proof}
Set $S = \operatorname{Span}_{\mathbb{Q}} (I\cup \{1\})$. 
We fix a $\mathbb{Q}$-linear basis $r_0 = 1, r_1, \ldots , r_{\ell}$ of $S$ that satisfies $r_1, \ldots, r_{\ell} \in I$.
We shall prove the stronger statement that is obtained by replacing the assumption on $g$ with the following weaker condition. 
\begin{itemize}
\item
$g : \operatorname{Span}_{\mathbb{Q}} (I\cup \{1\}) \rightarrow \mathbb{R}$ is a $\mathbb{Q}$-linear function 
fixing $\mathbb{Q}$ and satisfies $|g(r_i)-r_i| \leq \delta$ for each $r_i$. 
\end{itemize}
We prove the assertion by induction on $\dim_{\mathbb{Q}}S$. 
If $\dim_{\mathbb{Q}} S = 1$, then the function $g$ in the statement should be the identity map 
and there is nothing to show in this case. 

Suppose $\dim_{\mathbb{Q}}S >1$. 
For each element $a_j \in I$, we may write $a_j= s _j (r_0, \ldots, r _{\ell})$ 
for some $\mathbb{Q}$-linear function $s_j:\mathbb{R}^{\ell+1}\to\mathbb{R}$. 
By Theorem \ref{thm:UP}, there exists a positive real number $\epsilon$ such that the following holds: 
\begin{itemize}
\item[($\heartsuit$)] 
For any generalized lc pair of the form 
\[
\Bigl( X, \sum_j s _j(r_0, \ldots,r _{\ell})B_j + \sum_j s _j(r_0, \ldots, r _{\ell}) M_j \Bigr)/Z, 
\]
where $X$ is a normal variety of dimension $d$, 
$B_j$'s are effective Weil divisors and $M_j '$'s are nef$/Z$ Cartier divisors on $X'$, 
the pair 
\[
\Bigl( X, \sum_j s _j(r_0, \ldots,r _{\ell -1}, t)B_j + \sum_j s _j(r_0, \ldots, r _{\ell -1}, t) M_j \Bigr)
\]
is also generalized lc for any $t \in [r_{\ell} - \epsilon, r_{\ell} + \epsilon]$. 
\end{itemize}

We fix $t_1 \in [r _{\ell}-\epsilon, r _{\ell}) \cap \mathbb{Q}$
and $t_2 \in (r _{\ell}, r _{\ell} + \epsilon] \cap \mathbb{Q}$.  
We set
\[
I' := \{ s_j(r_0, \ldots, r_{\ell -1}, t_i) \mid i,j \}, \quad
S' := \operatorname{Span}_{\mathbb{Q}} (I' \cup \{ 1 \}) =  \operatorname{Span}_{\mathbb{Q}} \{ r_0, \ldots, r_{\ell -1} \}. 
\]
Then by the induction hypothesis, there exists a positive real number $\delta '$ satisfying the assertion for $I'$. 

In what follows, we shall prove that $\delta := \min \{ \delta', r_{\ell} - t_1, t_2 - r_{\ell} \}$ satisfies the assertion for $I$. 
Let $(X, B+M)/Z$ be a generalized pair and $g$ a function satisfying the conditions in the statement. 
We denote by $g_1$ and $g_2$ the $\mathbb{Q}$-linear functions $S \to S'$ defined by 
\[
g_i(r_j)=
\begin{cases}
r_j & \text{($0 \le j \le \ell -1$)}, \\
t_i & \text{($j = \ell$)}.
\end{cases}
\]
Then by ($\heartsuit$), it follows that $\bigl( X, g_1(B) + g_1(M) \bigr)$ and $\bigl( X, g_2(B) + g_2(M) \bigr)$ are generalized lc. 
Note that $g_i(B), g_i(M) \in I'$ holds for each $i = 1, 2$. 
Then by the choice of $\delta'$, it follows that 
$\bigl( X, g \circ g_1(B) + g \circ g_1(M) \bigr)$ and $\bigl( X, g \circ g_2(B) + g \circ g_2(M) \bigr)$ are generalized lc. 

Note that the real numbers $u_1$ and $u_2$ defined by
\[
u_1 := \frac{t_2-g(r_{\ell})}{t_2-t_1}, \qquad u_2 := \frac{g(r_{\ell})-t_1}{t_2-t_1}
\]
satisfy $u_1 + u_2 = 1$ and $g = u_1 (g \circ g_1) +u_2 (g \circ g_2)$. 
Indeed, it is easy to see that 
the function $u_1 (g \circ g_1) +u_2 (g \circ g_2)$ is a $\mathbb{Q}$-linear function satisfying 
$\bigl( u_1 (g \circ g_1) +u_2 (g \circ g_2) \bigr) (r_i) = g(r_i)$ for each $i$. 
Furthermore, $u_1, u_2 \ge 0$ follows from the inequality 
$\delta \le \min \{ r_{\ell} - t_1, t_2 - r_{\ell} \}$ and the assumption on $g$. 
Therefore $\bigl( X, g (B) + g (M) \bigr)$ is generalized lc. 
\end{proof}

\begin{lem}\label{lem:ext}
Let $\delta$ be a positive real number. 
Let $I \subset \mathbb{R}$ be a finite subset and 
let $I' \subset \operatorname{Span}_{\mathbb{Q}} I$ be a finite subset. 
Then there exists a positive real number $\delta'$ with the following condition: 
If $g: \operatorname{Span}_{\mathbb{Q}} I \to \mathbb{R}$ is a $\mathbb{Q}$-linear function satisfying 
$|g(a) - a| \le \delta'$ for any $a \in I$, then $|g(a) - a| \le \delta$ holds for any $a \in I'$. 
\end{lem}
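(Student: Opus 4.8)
The plan is to reduce everything to a finite piece of linear algebra over $\mathbb{Q}$. First I would set $V := \operatorname{Span}_{\mathbb{Q}} I$, which is a finite-dimensional $\mathbb{Q}$-vector space since $I$ is finite, and extract from $I$ itself a $\mathbb{Q}$-basis $r_1, \ldots, r_n$ of $V$; this is possible precisely because $I$ spans $V$. The point of choosing the basis inside $I$ (rather than an arbitrary basis of $V$) is that the hypothesis ``$|g(a)-a| \le \delta'$ for all $a \in I$'' then in particular controls $g$ on a basis, namely $|g(r_j) - r_j| \le \delta'$ for each $j$.

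Next I would expand an arbitrary $a \in I'$ in this basis: since $I' \subset V$, we may write $a = \sum_{j=1}^n q_{aj} r_j$ with uniquely determined rational numbers $q_{aj}$, and these coefficients depend only on $I$, $I'$ and the chosen basis, not on $g$. Using the $\mathbb{Q}$-linearity of $g$ together with $g(r_j), r_j \in \mathbb{R}$, one gets $g(a) - a = \sum_{j=1}^n q_{aj}\,(g(r_j) - r_j)$, hence $|g(a) - a| \le \bigl(\sum_{j=1}^n |q_{aj}|\bigr)\,\delta'$ whenever $g$ satisfies the hypothesis on $I$.

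Finally I would put $C := \max\bigl\{\,1,\ \sum_{j=1}^n |q_{aj}| \ \big|\ a \in I'\,\bigr\}$, a finite positive number because $I'$ is finite, and take $\delta' := \delta / C$. Then for every $a \in I'$ we obtain $|g(a) - a| \le C\delta' = \delta$, which is the claim. There is essentially no obstacle here; the only two points requiring a little care are that the basis be taken within $I$ so that the hypothesis transfers verbatim to the basis vectors, and that the constant $C$ be manifestly independent of $g$, so that $\delta'$ is a legitimate uniform constant depending only on $\delta$, $I$, and $I'$.
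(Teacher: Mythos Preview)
Your proof is correct and is exactly the kind of straightforward linear-algebra argument the paper has in mind; the paper itself writes only ``The proof is straightforward'' without further detail, so there is nothing to compare.
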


\begin{proof}
The proof is straightforward. 
\end{proof}

\begin{lem}\label{lem:partition}
Let $I \subset \mathbb{R}$ be a finite subset and $\delta$ a positive real number. 
Then there exist finitely many positive real numbers $a_1, \ldots , a_k \in (0,1]$ and 
$\mathbb{Q}$-linear functions $f_1, \ldots, f_k: \operatorname{Span}_{\mathbb{Q}}(I \cup \{ 1 \}) \to \mathbb{Q}$ 
with the following conditions: 
\begin{itemize}
\item Each $f_i$ fixes $\mathbb{Q}$, 
\item $\sum_{i=1} ^k a_i f_i = \rm{id}$ 
(i.e.\ $\sum_{i=1} ^k a_i f_i (a) = a$ holds for any $a \in \operatorname{Span}_{\mathbb{Q}}(I \cup \{ 1 \})$), and
\item $| f_i(a) - a| \leq \delta$ holds for every $a \in I$ and $i$. 
\end{itemize}
\end{lem}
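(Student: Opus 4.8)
The plan is to reduce the statement to an elementary density-and-convexity fact in a Euclidean space of dimension $\dim_{\mathbb{Q}}\operatorname{Span}_{\mathbb{Q}}(I\cup\{1\})-1$. Set $V:=\operatorname{Span}_{\mathbb{Q}}(I\cup\{1\})$ and fix a $\mathbb{Q}$-basis $r_0=1,r_1,\dots,r_\ell$ of $V$. A $\mathbb{Q}$-linear function $f:V\to\mathbb{Q}$ fixing $\mathbb{Q}$ is the same datum as an arbitrary choice of rational values $f(r_1),\dots,f(r_\ell)$, and the identity map of $V$ is the ``function'' with the (possibly irrational) values $r_1,\dots,r_\ell$. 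So I would look for finitely many rational tuples lying near $(r_1,\dots,r_\ell)$ whose convex hull contains this tuple with strictly positive barycentric weights, and use those weights as the $a_i$; the third bullet then becomes a matter of making the tuples close enough.

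First I would pass from ``close on the basis'' to ``close on $I$'': writing $b=\sum_{j=0}^{\ell}q^{(b)}_j r_j$ with $q^{(b)}_j\in\mathbb{Q}$ for $b\in I$ and putting $C:=1+\max_{b\in I}\sum_{j=1}^{\ell}|q^{(b)}_j|$, for any $f$ as above one has $|f(b)-b|=\bigl|\sum_{j=1}^{\ell}q^{(b)}_j\bigl(f(r_j)-r_j\bigr)\bigr|\le C\cdot\max_{1\le j\le\ell}|f(r_j)-r_j|$. Hence it is enough to arrange $|f_i(r_j)-r_j|\le\delta/C$ for all $i$ and $j$. Then, for each $j$, I would choose rationals $c_j^-,c_j^+$ with $r_j-\delta/C<c_j^-<r_j<c_j^+<r_j+\delta/C$, and for each sign vector $\sigma\in\{-,+\}^{\ell}$ let $f_\sigma:V\to\mathbb{Q}$ be the unique $\mathbb{Q}$-linear map with $f_\sigma(r_0)=1$ and $f_\sigma(r_j)=c_j^{\sigma_j}$; such a map exists because $r_0,\dots,r_\ell$ is a basis, takes values in $\mathbb{Q}$, and fixes $\mathbb{Q}$. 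Writing $r_j=t_j c_j^-+(1-t_j)c_j^+$ with $t_j\in(0,1)$, setting $u_j^-:=t_j$, $u_j^+:=1-t_j$, and $a_\sigma:=\prod_{j=1}^{\ell}u_j^{\sigma_j}\in(0,1]$, the product (box) structure gives at once $\sum_\sigma a_\sigma=1$ and $\sum_\sigma a_\sigma f_\sigma(r_j)=r_j$ for every $j$ (including $j=0$), so $\sum_\sigma a_\sigma f_\sigma=\mathrm{id}$ on $V$. Relabelling the $2^{\ell}$ vectors $\sigma$ as $1,\dots,k$ produces the desired $a_i$ and $f_i$, and the degenerate case $\ell=0$ (that is, $I\subset\mathbb{Q}$) is handled by $k=1$, $a_1=1$, $f_1=\mathrm{id}$.

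I do not expect a serious obstacle here: the only point requiring care is that a single system of weights $(a_\sigma)_\sigma$ must simultaneously reconstruct all of $r_0,r_1,\dots,r_\ell$, and this is precisely what the tensor structure of the chosen rational box around $(r_1,\dots,r_\ell)$ arranges. Everything else is the routine estimate in the first step together with the verifications that each $f_\sigma$ is $\mathbb{Q}$-valued, $\mathbb{Q}$-linear, and satisfies $f_\sigma(r_0)=1$.
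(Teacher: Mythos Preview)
Your proof is correct and essentially identical to the paper's own argument: both pick a rational ``box'' around the basis vector $(r_1,\dots,r_\ell)$, use its $2^\ell$ vertices to define the $f_\sigma$, and take the tensor-product barycentric weights as the $a_\sigma$. The only cosmetic difference is that the paper first chooses the basis elements $r_1,\dots,r_\ell$ inside $I$ and invokes Lemma~\ref{lem:ext} to reduce to $I=\{r_1,\dots,r_\ell\}$, whereas you carry out that reduction explicitly via your constant $C$; the content is the same.
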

\begin{proof}
Set $S := \operatorname{Span}_{\mathbb{Q}}(I \cup \{ 1 \})$. 
Let $1, r_1, \ldots, r_n$ be a $\mathbb{Q}$-linear basis of $S$ satisfying $r_1 , \ldots , r_n \in I$. 
We may assume that $I = \{ r_1, \ldots, r_n \}$ by Lemma \ref{lem:ext}. 

For each $1 \le i \le n$, we take $q_{i1} \in [r_i - \delta, r_i) \cap \mathbb{Q}$ and $q_{i2} \in (r_i, r_i + \delta] \cap \mathbb{Q}$, and 
we set 
\[
u_{i1} := \frac{q_{i2} - r_i}{q_{i2} - q_{i1}}, \qquad 
u_{i2} :=\frac{r_i - q_{i1}}{q_{i2} - q_{i1}}. 
\]
Then we have
\[
u_{i1}, u_{i2} > 0, \qquad u_{i1} + u_{i2} = 1, \qquad u_{i1}q_{i1} + u_{i2}q_{i2} = r_i
\]
for each $1 \le i \le n$. 
We denote by $\{ 1, 2 \}^{\{ 1, \ldots , n \}}$ the set of maps $\{ 1, \ldots , n \} \to \{ 1, 2 \}$. 
For each $\sigma \in \{ 1, 2 \}^{\{ 1, \ldots , n \}}$, we define $a_{\sigma} \in (0,1]$ and 
$\mathbb{Q}$-linear functions $f_{\sigma}: S \to \mathbb{Q}$ as follows: 
\[
a_{\sigma} = \prod _{1 \le i \le n} u_{i \sigma(i)}, \qquad 
f_{\sigma}(1)=1, \quad f_{\sigma}(r_i) = q_{i \sigma(i)}. 
\]
Then it is easy to see that $f_{\sigma}$'s satisfy the first and the third conditions. 
Furthermore, we have 
\[
\Bigl( \sum _{\sigma \in \{ 1, 2 \}^{\{ 1, \ldots , n \}}} a_{\sigma} f_{\sigma} \Bigr) (1)
= \sum _{\sigma \in \{ 1, 2 \}^{\{ 1, \ldots , n \}}} \prod _{1 \le i \le n} u_{i \sigma (i)}
= \prod _{1 \le i \le n} (u_{i1} + u_{i2}) = 1, 
\]
and 
\begin{align*}
\Bigl( \sum _{\sigma \in \{ 1, 2 \}^{\{ 1, \ldots , n \}}} a_{\sigma} f_{\sigma} \Bigr) (r_j)
&= \sum _{\sigma \in \{ 1, 2 \}^{\{ 1, \ldots , n \}}} \Bigl( \prod _{1 \le i \le n} u_{i \sigma (i)} \Bigr) q_{j\sigma(j)} \\
&= \sum _{\sigma' \in \{ 1, 2 \}^{\{ 1, \ldots , n \} \setminus \{ j \}}} 
	(u_{j1}q_{j1} + u_{j2}q_{j2}) \Bigl( \prod _{i \not = j} u_{i \sigma' (i)} \Bigr) \\
&= (u_{j1}q_{j1} + u_{j2}q_{j2}) \prod _{i \not = j} (u_{i1} + u_{i2}) = r_j
\end{align*}
for each $1 \le j \le n$. 
Therefore $a_{\sigma}$'s and $f_{\sigma}$'s satisfy the second condition. 
The proof is complete. 
\end{proof}

\begin{rmk}
The positive real numbers $a_1, \ldots , a_k$ in Lemma \ref{lem:partition} satisfy $\sum _{i=1} ^k a_i = 1$
due to the first and the second conditions.
\end{rmk}

Now, we prove Theorem \ref{thm:perturbmld} as an application of Theorem \ref{thm:ACCfixed}. 
This gives a generalization of \cite[Lemma 3.4]{CH21} by removing the boundedness criterion on the minimal log discrepancy, 
which is assumed in \cite[Lemma 3.4]{CH21}.

\begin{thm}\label{thm:perturbmld}
Let $d,r \in \mathbb{Z} _{>0}$ and $\epsilon \in \mathbb{R}_{\ge 0}$, and let $I \subset [0, + \infty)$ be a finite set.
Let $P(d,r,I)$ be the set of generalized lc pairs defined in Definition \ref{defi:P}. 
Suppose $\epsilon \in \operatorname{Span}_{\mathbb{Q}} (I \cup \{ 1 \} )$. 
Then there exists a positive real number $\delta$ depending only on $d$, $r$, $\epsilon$ and $I$ such that the following holds: 
If
\begin{itemize}
 \item $(X, B + M)/Z \in P(d,r,I)$ and $x \in X$ is a scheme-theoretic point, 
 \item $\operatorname{mld}_x\bigl( X, B + M \bigr)\geq \epsilon$, 
 \item $f:\operatorname{Span}_{\mathbb{Q}} (I\cup \{1\})\rightarrow \mathbb{R}$ is a $\mathbb{Q}$-linear function fixing $\mathbb{Q}$, and
 \item $|f(a)-a|\leq\delta$ holds for every $a \in I$, 
\end{itemize}
then $\operatorname{mld}_x\bigl( X, f(B) + f(M) \bigr)\geq f(\epsilon)$. 
\end{thm}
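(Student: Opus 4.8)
\textbf{Proof plan for Theorem \ref{thm:perturbmld}.}

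The plan is to combine the discreteness result of Theorem \ref{thm:ACCfixed} with the uniform $\epsilon$-lc perturbation of Theorem \ref{thm:perturblc}, via the convex-combination decomposition of Lemma \ref{lem:partition}. The key point is that the log discrepancy function $a_E(X, B+M)$ is, after applying a $\mathbb{Q}$-linear function $f$ fixing $\mathbb{Q}$, simply transported: $f(a_E(X,B+M)) = a_E(X, f(B)+f(M))$ by Lemma \ref{lem:f(gen)}(3). So I first reduce the problem: replacing $I$ by a larger finite set if necessary (using Lemma \ref{lem:ext} to absorb the extra coefficients $\epsilon$ and any rationals arising), I may assume $\epsilon \in I$ and that all relevant coefficients of $B$, $M$, and the divisors $f(B)$, $f(M)$ lie in the span of $I$. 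By Lemma \ref{lem:f(gen)}(2), $a_E(X,B+M) \in \operatorname{Span}_{\mathbb{Q}}(I\cup\{1\})$, so the quantity $a := a_E(X,B+M) - \epsilon$ lives in a fixed finite-dimensional $\mathbb{Q}$-vector space; I want to show that whenever $a \geq 0$, also $f(a) = f(a_E(X,B+M)) - f(\epsilon) \geq 0$ for all $f$ close to the identity.

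The main device is as follows. Suppose the conclusion fails: there is a sequence of pairs $(X_n, B_n+M_n)/Z_n \in P(d,r,I)$, points $x_n$, divisors $E_n$ over $X_n$ computing or nearly computing $\operatorname{mld}_{x_n}$, and $\mathbb{Q}$-linear functions $f_n$ fixing $\mathbb{Q}$ with $|f_n(a)-a| \to 0$ for $a \in I$, such that $a_{E_n}(X_n, B_n+M_n) \geq \epsilon$ but $f_n(a_{E_n}(X_n, f_n(B_n)+f_n(M_n)))$... — more precisely, $\operatorname{mld}_{x_n}(X_n, f_n(B_n)+f_n(M_n)) < f_n(\epsilon)$. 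Here I would use Theorem \ref{thm:perturblc}: for $\delta$ small enough depending only on $d$ and $I$, generalized lc-ness of $(X, B+M)$ is preserved under $g = f$ with $|f(a)-a|\le\delta$; applying this to the pair $(X, B+M+tD)$ for the divisor $D$ and threshold computations, or more directly noting that $\operatorname{mld}_x \geq \epsilon$ means $(X, B+M)$ is generalized $\epsilon$-lc at $x$, one perturbs both the coefficients and the parameter $\epsilon$ simultaneously by writing $\operatorname{id} = \sum a_i f_i$ as in Lemma \ref{lem:partition} and using that $f_i(B)$, $f_i(M)$ have rational coefficients in a controlled finite set, so $f_i(a_E(X,B+M)) = a_E(X, f_i(B)+f_i(M))$ lies in $B_{\textup{gen}}(d, r, I_i')$ for the corresponding finite sets $I_i'$, which is discrete by Theorem \ref{thm:ACCfixed}. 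Discreteness forbids the failure values from accumulating, giving a contradiction once $\delta$ is taken small enough relative to the gap in the discrete set near $f(\epsilon)$ — but since $\delta$ must be uniform, I instead argue by the convex-combination trick: $a_E(X, f(B)+f(M)) = \sum_i a_i \cdot a_E(X, f\circ f_i(B) + f\circ f_i(M))$ where each $f\circ f_i$ is again $\mathbb{Q}$-linear fixing $\mathbb{Q}$ and (for $\delta$ small) still $\delta'$-close to the identity on $I_i'$; then Theorem \ref{thm:perturblc} applied to the rational pairs $(X, f_i(B)+f_i(M))$ shows each summand is $\geq f\circ f_i(\epsilon) \cdot$(appropriate bound), and summing recovers $\geq f(\epsilon)$.

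Let me restructure cleanly. I would (i) fix $\delta_1$ from Theorem \ref{thm:perturblc} for $(d, I)$; (ii) apply Lemma \ref{lem:partition} to $I$ and $\delta_1$ to get $a_i \in (0,1]$, $\sum a_i = 1$, and $f_i : \operatorname{Span}_{\mathbb{Q}}(I\cup\{1\}) \to \mathbb{Q}$ fixing $\mathbb{Q}$ with $|f_i(a)-a|\le\delta_1$; (iii) conclude by Theorem \ref{thm:perturblc} that each rational pair $\bigl(X, f_i(B)+f_i(M)\bigr)$ is generalized lc, hence lies in $P(d, r, I_i)$ for the finite rational set $I_i := \{f_i(a) : a \in I\} \cup \{1\}$, and since $\operatorname{mld}_x \geq \epsilon$, a further application of Theorem \ref{thm:perturblc} (to the pair with boundary scaled toward the $\epsilon$-lc condition, or directly since the perturbation preserves the inequality on every exceptional divisor because $f_i$ fixes $\mathbb{Q}$ and the correction is controlled) — actually this step needs care: $\epsilon$-lc-ness is not literally lc-ness of a pair unless $\epsilon$ is itself a coefficient, so here I would instead run the whole argument for the statement "$a_E \geq \epsilon \Rightarrow f(a_E) \geq f(\epsilon)$" by noting both sides are $\mathbb{Q}$-linear in the data and using that $\{a_E(X,B+M) : \cdot\} $ together with $\epsilon$ sits in a space where, after perturbing to rational coefficients, discreteness of $B_{\textup{gen}}$ pins down finitely many values near $\epsilon$; (iv) take $\delta$ smaller than $\delta_1$ and smaller than (half) the minimal gap between $f(\epsilon)$ and the discrete set $B_{\textup{gen}}(d,r, \bigcup_i I_i)$ — this gap is positive and depends only on $d,r,\epsilon,I$. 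The main obstacle I anticipate is step (iii): correctly handling the threshold $\epsilon$, which is a real number and not a coefficient, so one cannot naively say "the pair is $\epsilon$-lc hence some associated pair is lc." I expect the resolution is exactly the one in \cite[Lemma 3.4]{CH21} combined with Theorem \ref{thm:UP}/\ref{thm:perturblc}: realize the $\epsilon$-lc condition as $\operatorname{mld}_x \geq \epsilon$, use Theorem \ref{thm:ACCfixed} to see $a_E(X,f_i(B)+f_i(M))$ takes values in a fixed discrete set, and let $\delta$ undercut the gap; the convex combination $\operatorname{id} = \sum a_i f_i$ then yields $a_E(X, f(B)+f(M)) \geq \sum a_i f_i(\epsilon) = \epsilon$ — wait, that gives $\geq \epsilon$ not $\geq f(\epsilon)$, so the final bookkeeping is to replace $f_i$ by $f \circ f_i$ throughout, which remains rational-valued and $\delta$-close for $\delta$ small, giving $a_E(X, f(B)+f(M)) = \sum a_i\, a_E(X, (f\circ f_i)(B) + (f\circ f_i)(M)) \geq \sum a_i\, (f\circ f_i)(\epsilon) = f(\epsilon)$. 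That is the argument I would write out.
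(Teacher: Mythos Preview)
Your proposal has a genuine gap in the final convex-combination step. You decompose $\operatorname{id} = \sum_i a_i f_i$ via Lemma \ref{lem:partition}, which gives $B+M = \sum_i a_i\bigl(f_i(B)+f_i(M)\bigr)$, not $f(B)+f(M)$. Your attempted fix --- replacing $f_i$ by $f\circ f_i$ --- does nothing: since each $f_i$ takes values in $\mathbb{Q}$ and $f$ fixes $\mathbb{Q}$, we have $f\circ f_i = f_i$ identically, so the decomposition is still of $\operatorname{id}$, and the affine-linearity identity you write for $a_E\bigl(X, f(B)+f(M)\bigr)$ is simply false. More broadly, step (iii) as you flag it is circular: the inequality $a_E\bigl(X, f_i(B)+f_i(M)\bigr) \ge f_i(\epsilon)$ is precisely the theorem you are trying to prove, specialized to $f = f_i$, and the discreteness of $B_{\textup{gen}}$ alone does not furnish it without some further mechanism.

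The paper's argument avoids this by decomposing $f$ itself, not $\operatorname{id}$. Concretely: let $\delta'$ be the constant from Theorem \ref{thm:perturblc} and set $\lambda := \min\{a-\epsilon \mid a \in B_{\textup{gen}}(d,r,I),\ a>\epsilon\} > 0$ using Theorem \ref{thm:ACCfixed}. Choose $\delta \le \delta'$ small (explicitly $\delta = \min\{\delta', \tfrac{\delta'\lambda}{\epsilon+\lambda+\delta'}\}$) and, given $f$ with $|f(a)-a|\le\delta$, amplify it to $g := \tfrac{\delta'}{\delta}(f-\operatorname{id})+\operatorname{id}$, so that $|g(a)-a|\le\delta'$ and $f = \tfrac{\delta}{\delta'} g + \bigl(1-\tfrac{\delta}{\delta'}\bigr)\operatorname{id}$ is a genuine convex combination. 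Now affine linearity of log discrepancies gives
\[
a_F\bigl(X, f(B)+f(M)\bigr) = \tfrac{\delta}{\delta'}\, a_F\bigl(X, g(B)+g(M)\bigr) + \bigl(1-\tfrac{\delta}{\delta'}\bigr)\, a_F(X, B+M).
\]
The first summand is $\ge 0$ by Theorem \ref{thm:perturblc}, and for any $F$ with $a_F(X,B+M) > \epsilon$ the second factor is $\ge \epsilon+\lambda$ by the gap. The arithmetic then yields $a_F\bigl(X, f(B)+f(M)\bigr) \ge \epsilon + \delta \ge f(\epsilon)$ (using $|f(\epsilon)-\epsilon|\le\delta$ after enlarging $I$ to contain $\epsilon$ via Lemma \ref{lem:ext}). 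So the missing idea is: rather than decomposing the identity into rational perturbations and trying to bootstrap, decompose $f$ into the identity and an \emph{extreme} perturbation, and let the discreteness gap absorb the loss from the extreme term being merely $\ge 0$.
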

\begin{proof}
We set $S := \operatorname{Span}_{\mathbb{Q}} (I \cup \{1\})$. 
By Lemma \ref{lem:ext}, we may replace the fourth condition in the statement with the following stronger condition: 
\begin{itemize}
\item[($\heartsuit$)] $|f(a)-a| \leq \delta$ holds for every $a \in I \cup \{ \epsilon \}$. 
\end{itemize}

Let $B_{\rm{gen}}(d,r,I)$ be the discrete set of log discrepancies obtained from $P(d,r,I)$ defined in Theorem \ref{thm:ACCfixed}. 
We set 
\[
\lambda := \min \bigl\{ a - \epsilon \ \big| \  a \in B_{\rm{gen}}(d,r,I) ,\ a > \epsilon \bigr\} > 0. 
\]
Let $\delta'$ be the $\delta$ given in Theorem \ref{thm:perturblc}. 
Then we define $\delta$ by
\[
\delta := \min \left\{ \delta ',  \frac{\delta' \lambda}{\epsilon + \lambda+\delta'} \right\} > 0. 
\]
In what follows, we shall see that this $\delta$ satisfies the statement. 

Let $(X,B+M)/Z$, $x$, and $f$ be as in the assumptions and ($\heartsuit$). 
Then the pair $\bigl( X, f(B) + f(M) \bigr)$ is also generalized lc by the choice of $\delta '$. 
By Lemma \ref{lem:f(gen)}, we have 
\[
a_E \bigl( X, f(B)+ f(M) \bigr) = f \bigl( a_E(X, B+M) \bigr) 
\]
for any divisor $E$ over $X$. 
Let $F$ be a divisor that satisfies $a_F(X, B+M) > \epsilon$ and $c_X(F) = \overline{\{ x \}}$. 
Then it is sufficient to show that 
\[
a_F \bigl( X, f(B) + f(M) \bigr) \ge f (\epsilon). 
\]

By the definition of $\lambda$, we have $a_F(X, B+M) \ge \epsilon + \lambda$. 
We define a $\mathbb{Q}$-linear function $g : S \to \mathbb{R}$ by $g := \frac{\delta '}{\delta} (f - \rm{id}) + \rm{id}$. 
Then $g$ fixes $\mathbb{Q}$ and satisfies $| g(a) - a | \le \delta '$ for any $a \in I$. 
Therefore the pair $\bigl( X, g(B) + g(M) \bigr)$ is generalized lc by the choice of $\delta '$, 
and hence $a_F \bigl( X, g(B) + g(M) \bigr) \ge 0$. 
Therefore we have 
\begin{align*}
a_F \bigl( X, f(B) + f(M) \bigr) 
&= a_F \Bigl( X, \frac{\delta}{\delta'}\bigl( g(B) + g(M) \bigr) + \Bigl( 1-\frac{\delta}{\delta'} \Bigr)(B+M) \Bigr) \\
&= \frac{\delta}{\delta'} a_F \bigl( X, g(B) + g(M) \bigr) + 
	\Bigl( 1-\frac{\delta}{\delta'} \Bigr) a_F(X, B+M) \\
&\ge \Bigl( 1-\frac{\delta}{\delta'} \Bigr)(\epsilon + \lambda) \\
&\ge \Bigl( 1-\frac{1}{\delta'} \cdot \frac{\delta' \lambda}{\epsilon + \lambda+\delta'} \Bigr)(\epsilon + \lambda) \\
&= \epsilon + \frac{\delta ' \lambda}{\epsilon + \lambda + \delta '} \\
&\ge \epsilon +  \delta \\
&\ge f(\epsilon). 
\end{align*}
Here, we used the assumption $(\heartsuit)$ for the last inequality. 
This completes the proof. 
\end{proof}

\subsection{Application to complement}\label{subsection:compl}
Based on Birkar's result (Theorem \ref{thm:Bircomp}), 
Filipazzi and Moraga in \cite[Theorem 1.2]{FM} show the existence of bounded strong complements 
for generalized pairs with coefficients in a closed rational DCC set. 
Moreover, in \cite[Theorem 1.3]{FM}, they show the existence of bounded $\epsilon$-complements for such pairs and any $\epsilon>0$ 
when $\dim Z=0$ and $M'=0$, using the BAB theorem.
Using Theorem \ref{thm:UP}, G.\ Chen and Q.\ Xue removed the rationality condition on the coefficient set 
if the coefficient set is bounded ({\cite[Theorem 1.3]{C20}} and {\cite[Theorem 1.3]{CX20}}). 

In this subsection, 
we first prove Theorem \ref{m0}, which allows us to reduce the problem to the case when the coefficient set is bounded.
As a corollary (Corollary \ref{cor1}), we remove the rationality condition from the statement of \cite[Theorem 1.2]{FM}. 
We also give a proof of {\cite[Theorem 1.3]{CX20}} in Corollary \ref{cor2}.

We begin with the definition of the complements for generalized pairs.
\begin{defi}[{\cite[2.18]{Bir19a}}, {\cite[Definition 3.17]{HLS}}]\label{comp}
Let $(X,B+M)/Z$ be a generalized pair with $B \in [0,1]$. 
\begin{enumerate}
\item 
Let $n$ be a positive integer.
An \textit{(generalized) $n$-complement} of $K_X+B+M$ over a point $z\in Z$ is 
of the form $K_X+B^++M$ with the following conditions over a neighborhood of $z$: 
\begin{itemize}
\item $(X,B^++M)$ is generalized lc,
\item $n(K_X+B^++M)\sim 0$ and $nM'$ is Cartier, and
\item $n B^+ \geq n \lfloor B \rfloor + \left \lfloor(n+1) (B - \lfloor B \rfloor) \right \rfloor$. 
\end{itemize}
For a non-negative real number $\epsilon$, 
an $n$-complement $K_X+B^+ + M$ is called \textit{$\epsilon$-lc} (resp.\ \textit{klt})
if the corresponding generalized pair $(X,B^+ + M)$ is generalized $\epsilon$-lc (resp.\ klt). 

\item 
An \textit{(generalized) $\mathbb{R}$-complement} 
of $K_X+B+M$ over a point $z \in Z$ is of the form $K_X+B^++M^+$ with the following conditions over a neighborhood of $z$:
\begin{itemize}
\item $(X,B^++M^+)$ is generalized lc,
\item $K_X+B^++M^+\sim_{\mathbb{R}}0$, 
\item $B^+\geq B$ and $M^{+ \prime}-M'$ is nef over $Z$.
\end{itemize}
We also define an \textit{$\epsilon$-lc $\mathbb{R}$-complement} similarly. 

\item
An \textit{$n$-complement} (resp.\ \textit{$\mathbb{R}$-complement}) of $K_X+B+M$ is of the form $K_X + B^+ + M^+$ 
such that $K_X + B^+ + M^+$ is an $n$-complement (resp.\ $\mathbb{R}$-complement) of $K_X+B+M$ over $z$ for every $z \in Z$.
We say that $(X, B+M)$ is \textit{$\mathbb{R}$-complementary} if there exits an $\mathbb{R}$-complement of $K_X+B+M$.
We say that $(X, B+M)$ is \textit{$(\epsilon,\mathbb{R})$-complementary} if there exits an $\epsilon$-lc $\mathbb{R}$-complement of $K_X+B+M$.
\end{enumerate}
\end{defi}

\begin{rmk}\label{rmk:compl}
We take over the notations in Definition \ref{comp}.
\begin{enumerate}
\item An $n$-complement is called \textit{strong} if $B^+ \geq B$ holds. 
In this subsection, we will consider only strong complements. 
We remark that if $n(K_X+B^++M) \sim 0$ and $B^+ \geq B$ hold, and $nM'$ is Cartier, 
then the condition $n B^+ \geq n \lfloor B \rfloor + \left \lfloor(n+1) (B - \lfloor B \rfloor) \right \rfloor$ 
automatically holds (see {\cite [6.1(1)]{Bir19a}}).

\item 
Let $(X_1, B_1 + M_1)$ and $(X_2, B_2 + M_2)$ be generalized pairs with 
a birational map $X_1 \dasharrow X_2$. 
Let $\varphi _1: X' \to X_1$ and $\varphi _2 : X' \to X_2$ be the projective birational morphisms 
in Definition \ref{defi:gp} (possilby replacing $X'$ with a higher model). 
Suppose that 
\[
\varphi_1 ^*(K_{X_1} + B_1 + M_1)+P= \varphi_2^* (K_{X_2} + B_2 + M_2)
\]
holds for some $P \geq 0$. 
In this case, if $K_{X_2} + B_2 + M_2$ has a strong $\epsilon$-lc $n$-complement (resp.\ $\mathbb{R}$-complement), 
then so does $K_{X_1} + B_1 + M_1$ (see \cite[6.1(2)(3)]{Bir19a}).
\end{enumerate}
\end{rmk}

\noindent
Birkar proved in \cite{Bir19a} the boundedness of complements for generalized pairs. 
\begin{thm}[{\cite[Theorem 1.10]{Bir19a}}]\label{thm:Bircomp}
Let $d$ and $p$ be positive integers. 
Let $I \subset [0,1] \cap \mathbb{Q}$ be a finite subset. 
Then there exists a positive integer $n$ depending only on $d$, $p$ and $I$ such that 
if 
\begin{itemize}
\item $(X,B+M)/Z$ is a generalized lc pair with $\dim X=d$, 
\item $\dim Z=0$,
\item $B \in I$ holds and $pM'$ is Cartier,
\item $X$ is of Fano type, and
\item $-(K_X+B+M)$ is nef, 
\end{itemize}
then there exists a strong $n$-complement $K_X+B^++M$ of $K_X+B+M$.
\end{thm}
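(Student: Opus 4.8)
Since the statement is precisely \cite[Theorem 1.10]{Bir19a}, the honest ``proof'' is a citation; what follows is a sketch of Birkar's argument, indicating where the material recalled above enters. The plan is an induction on $d$. One first reduces to a convenient situation: by Remark \ref{rmk:compl}(2) complements descend along birational morphisms dominated by a common model, so after passing to a $\mathbb{Q}$-factorial generalized dlt model (Definition \ref{defi:defs}(\ref{item:dltmodel})) and running a $(K_X+B+M)$-MMP one may assume $X$ is $\mathbb{Q}$-factorial of Fano type with $-(K_X+B+M)$ nef, and that a generalized $\mathbb{R}$-complement exists. One then splits into two cases according to whether some generalized $\mathbb{R}$-complement of $K_X+B+M$ fails to be generalized klt.

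In the \emph{non-exceptional} case, fix an $\mathbb{R}$-complement with a generalized non-klt center; after a further dlt modification and MMP one extracts a prime divisor $S$ of dimension $d-1$ occurring with coefficient $1$. Applying the divisorial adjunction of Definition \ref{def:gadj} produces a generalized pair $(S, B_S + M_S)$, and the key point is Lemma \ref{lem:adjcoef}: the coefficients of $B_S$ lie in a finite set and $M_S'$ is again a non-negative combination of nef Cartier divisors, all with denominators controlled only by $d$, $I$, $p$. Hence the induction hypothesis applies on $S$ and gives a strong $n_0$-complement there for a bounded $n_0$. One then \emph{lifts} this complement from $S$ back to $X$ via a Kawamata--Viehweg vanishing argument (using that $S$ is the only relevant lc place, together with a connectedness statement), replacing $n_0$ by a bounded factor. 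This lifting step is the main obstacle.

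In the \emph{exceptional} case every $\mathbb{R}$-complement is generalized klt, which makes the data rigid: the ACC for generalized lc thresholds (Theorem \ref{thm:LACC}) forces the relevant coefficients and multiplicities to stay bounded away from $1$, and combined with effective birationality and the boundedness of Fano varieties (the BAB theorem \cite{Bir16b}) one concludes that the pairs arising here form a bounded family. On a bounded family the global ACC for generalized pairs (Theorem \ref{thm:GACC}) forces the coefficients of any $\mathbb{R}$-complement to be rational with uniformly bounded denominators, so an $\mathbb{R}$-complement automatically upgrades to a strong $n$-complement for a uniform $n$ (cf.\ Remark \ref{rmk:compl}(1)). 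Taking $n$ to be a common multiple of the bounds from the two cases completes the induction; the two genuinely deep inputs, beyond the adjunction bookkeeping, are the vanishing-theorem lifting in the non-exceptional case and the appeal to boundedness of Fano varieties in the exceptional case.
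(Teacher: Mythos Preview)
The paper does not prove this statement at all: it is quoted verbatim as \cite[Theorem 1.10]{Bir19a} and used as a black box, with no accompanying argument. You correctly identify that the honest proof here is simply a citation, so in that sense your proposal matches the paper exactly.

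Your added sketch is extra material not present in the paper. Its broad architecture (induction on $d$; reduction via dlt modification and MMP; the non-exceptional/exceptional dichotomy; adjunction plus lifting in the former, boundedness in the latter) is a fair summary of Birkar's strategy in \cite{Bir19a}. One point of caution: in the exceptional case you invoke ``the boundedness of Fano varieties (the BAB theorem \cite{Bir16b})''. This is logically circular as stated, since BAB in \cite{Bir16b} is proved \emph{using} the boundedness of complements from \cite{Bir19a} as an input. In Birkar's actual argument the exceptional case is handled via the boundedness of \emph{exceptional} log Fano pairs, which is established inside \cite{Bir19a} itself from effective birationality and related results, not from the full BAB. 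If you keep the sketch, you should replace the BAB citation with a reference to the internal boundedness results of \cite{Bir19a}.
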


\noindent
We recall the following theorems by Filipazzi and Moraga. 

\begin{thm}[{\cite [Theorem 1.2]{FM}}]\label{thm:FM18c}
Let $d$ and $p$ be positive integers. 
Let $I \subset [0,1] \cap \mathbb{Q}$ be a finite subset. 
Then there exists a positive integer $n$ depending only on $d$, $p$ and $I$ 
such that if 
\begin{itemize}
\item $(X,B+M)/Z$ is a generalized lc pair with $\dim X=d$, 
\item $B \in I$ holds and $pM'$ is Cartier, 
\item $X$ is of Fano type over $Z$, and 
\item $-(K_X+B+M)$ is nef$/Z$, 
\end{itemize}
then for every point $z \in Z$, there exists a strong $n$-complement $K_X+B^++M$ of $K_X+B+M$ over $z$.
\end{thm}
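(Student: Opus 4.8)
The plan is to deduce Theorem~\ref{thm:FM18c} from the absolute case Theorem~\ref{thm:Bircomp}, following the argument of Filipazzi and Moraga \cite{FM}; the only point that genuinely uses the generalized machinery developed above is the behaviour of the moduli part under adjunction, which is controlled by Lemma~\ref{lem:adjcoef}.

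First I would localize the problem. A strong $n$-complement of $K_X+B+M$ over $z$ concerns only a neighbourhood of the fibre $X_z$, so one may shrink $Z$ to an affine neighbourhood of $z$; this preserves the hypotheses that $X$ is of Fano type over $Z$ and that $-(K_X+B+M)$ is nef$/Z$. It is convenient to replace $(X,B+M)/Z$ by a $\mathbb{Q}$-factorial generalized dlt model (Definition~\ref{defi:defs}(\ref{item:dltmodel})), since by Remark~\ref{rmk:compl}(2) a strong $n$-complement of the model induces one of the original pair. As $X$ is of Fano type over $Z$, the nef$/Z$ divisor $-(K_X+B+M)$ is semiample over $Z$, hence defines a contraction $g\colon X\to V$ over $Z$ on whose fibres $-(K_X+B+M)$ is numerically trivial. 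One is thereby reduced to producing a strong $n$-complement of $K_X+B+M$ near a fibre of $g$; on such a fibre $X_v$ the restriction $-(K_{X_v}+(B+M)|_{X_v})$ is numerically trivial and $X_v$ is of Fano type. Restricting the generalized boundary via the divisorial adjunction of Definition~\ref{def:gadj}, Lemma~\ref{lem:adjcoef} shows that the induced boundary on $X_v$ has coefficients in a finite set depending only on $d$ and $I$, and that the induced moduli part is again a combination, with the same weights, of nef Cartier divisors, so that the integer $p$ is unchanged. Theorem~\ref{thm:Bircomp} then provides a strong $n$-complement with $n=n(d,p,I)$ on $X_v$, and it remains to lift these fibrewise complements to a strong $n$-complement of $K_X+B+M$ over $z$ and to transport it back along the modification.

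The main obstacle is this lifting step: passing from complements on the fibres of $g$ --- equivalently, after cutting $V$ by general hyperplanes, from a complement on a divisor --- to a complement on the total space over a neighbourhood of $X_z$. As in Birkar's treatment of the relative case of bounded complements in \cite{Bir19a}, this rests on a Kawamata--Viehweg-type vanishing theorem together with a careful analysis of the non-klt locus, and in the generalized setting one must moreover ensure that the moduli b-divisor descends compatibly near the relevant strata --- which is precisely why one first passes to a generalized dlt model, so that $M'$ is the pullback of $M$ there, and why one needs the explicit form of the adjunction coefficients in Lemma~\ref{lem:adjcoef}. If instead one invokes Birkar's relative reduction as a black box, the remaining work is purely the bookkeeping of coefficients and Cartier indices under adjunction, which is exactly what Lemma~\ref{lem:adjcoef} is tailored for.
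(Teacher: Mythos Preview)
The paper does not prove Theorem~\ref{thm:FM18c} at all: it is merely quoted from \cite[Theorem~1.2]{FM} and then used as a black box (in Case~2 of the proof of Theorem~\ref{thm:bdd-e-comple}). There is therefore no ``paper's own proof'' to compare your proposal against.

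Since you have written a sketch anyway, a remark on its content. Your outline follows the expected shape of a relative-to-absolute reduction, but the invocation of Definition~\ref{def:gadj} and Lemma~\ref{lem:adjcoef} for the fibre $X_v$ is misplaced. Those statements concern adjunction to a component $S$ of $\lfloor B\rfloor$; a general fibre $X_v$ of $g$ is not such a component, and restriction to it is the trivial adjunction $K_{X_v}+B|_{X_v}+M|_{X_v}=(K_X+B+M)|_{X_v}$, with the coefficients of $B$ unchanged and $pM'|_{X'_v}$ still Cartier. So Lemma~\ref{lem:adjcoef} is not needed there. Where divisorial adjunction genuinely enters in the Filipazzi--Moraga argument is in the inductive step on the base: one cuts $Z$ (or $V$) by a general hyperplane $H$, adds its preimage to the boundary with coefficient~$1$, and adjoins; Lemma~\ref{lem:adjcoef} then controls the resulting coefficient set. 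Your second paragraph hints at this (``after cutting $V$ by general hyperplanes''), but the first paragraph places the lemma at the wrong step. The lifting step you flag is indeed the substantive part, and treating it as a black box from \cite{Bir19a} or \cite{FM} is reasonable for a sketch but is the whole difficulty of the theorem.
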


\begin{thm}[{\cite [Theorem 1.3]{FM}}]\label{thm:FM18b}
Let $d$ and $p$ be positive integers, and let $\epsilon$ be a positive real number. 
Let $I \subset [0,1] \cap \mathbb{Q}$ be a finite subset. 
Then there exists a positive integer $n$ depending only on $d$, $p$, $\epsilon$ and $I$ 
such that if 
\begin{itemize}
\item $(X,B+M)/Z$ is a generalized $\epsilon$-lc pair with $\dim X = d$, 
\item $\dim Z=0$, 
\item $B \in I$ holds and $pM'$ is Cartier, 
\item $X$ is of Fano type, and 
\item $-(K_X+B+M)$ is nef, 
\end{itemize}
then there exists a strong $\epsilon$-lc $n$-complement $K_X+B^++M$ of $K_X+B+M$.
\end{thm}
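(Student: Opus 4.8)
The plan is to combine Birkar's boundedness of complements \emph{without} the $\epsilon$-lc condition (Theorem \ref{thm:FM18c}, equivalently Theorem \ref{thm:Bircomp} since $\dim Z=0$) with the BAB theorem, and to exhibit the $\epsilon$-lc complement directly as a small general member of a bounded anti-log-canonical linear system. First I would make the standard reductions. Replacing $X$ by a small $\mathbb{Q}$-factorialization preserves ``generalized $\epsilon$-lc'', ``Fano type'', and nefness of $-(K_X+B+M)$, so I may assume $X$ is $\mathbb{Q}$-factorial; since $B,M\ge 0$ the hypothesis forces $X$ itself to be $\epsilon$-lc, and since $X$ is of Fano type, $-K_X$ is big and its section ring is finitely generated, so the anti-canonical model $X\to X_{\mathrm{an}}$ is a crepant birational contraction to a $d$-dimensional $\epsilon$-lc Fano variety. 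By the BAB theorem $X_{\mathrm{an}}$, hence $X$, moves in a bounded family depending only on $d$ and $\epsilon$. In particular there is $\ell_0=\ell_0(d,p,\epsilon,I)$ which is a multiple of $p$, with $\ell_0K_X$ Cartier and $\ell_0 D$ Cartier for every Weil divisor $D$, so that $\ell_0(K_X+B+M)$ is Cartier; and a bounded multiple $m$ of $\ell_0$ makes $-m(K_X+B+M)$, which is nef on the Mori dream space $X$, base point free.

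Next I would pin down the ``tight'' locus of $(X,B+M)$ and construct $B^+$. By Lemma \ref{lem:ambro} and the inequality $\operatorname{mld}_{\eta}(X,B+M)\ge\operatorname{mld}(X,B+M)\ge\epsilon$, any irreducible subvariety along whose generic point the mld equals $\epsilon$ must be a single point; hence the set $V_0$ of closed points $x$ with $\operatorname{mld}_x(X,B+M)=\epsilon$ is finite, and the union of the positive-dimensional centers of divisors $E$ over $X$ with $a_E(X,B+M)=\epsilon$ is a proper closed subset $T\subsetneq X$. Choose a general member $N_m\in\bigl|{-m(K_X+B+M)}\bigr|$ and put $B^+:=B+\tfrac1m N_m$ (if the system is $0$-dimensional then $K_X+B+M\sim_{\mathbb{Q}}0$ already and $B^+:=B$ works). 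Since the system is base point free, $\tfrac1mN_m$ is reduced with coefficient $1/m$, and its support shares no component with $B$, contains no component of $T$, and misses the finitely many points of $V_0$; moreover $m(K_X+B^++M)=m(K_X+B+M)+N_m\sim 0$ with $mM'$ Cartier, so $K_X+B^++M$ is a strong $m$-complement of $K_X+B+M$, with $m$ bounded in terms of $d$, $p$, $\epsilon$, $I$.

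It remains to verify that $(X,B^++M)$ is generalized $\epsilon$-lc with $m$ chosen uniformly, and this is the step I expect to be the main obstacle. The divisorial log discrepancies cause no trouble, the coefficients of $B^+$ being those of $B$ or $1/m$, all $\le 1-\epsilon$ for $m$ large; for a divisor $E$ over $X$ one has $a_E(X,B^++M)=a_E(X,B+M)-\tfrac1m\operatorname{ord}_E(N_m)$, which equals $a_E(X,B+M)\ge\epsilon$ whenever $c_X(E)\not\subseteq\operatorname{Supp}(N_m)$ — in particular for every tight $E$, whose center lies in $V_0\cup T$ and hence, by the choice of $N_m$, is not contained in $\operatorname{Supp}(N_m)$. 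For the remaining divisors one invokes the discreteness of minimal log discrepancies on varieties of bounded Gorenstein index (Theorem \ref{thm:ACCfixed}, applied with the finite coefficient set determined by $m$, $p$ and $I$) to obtain a uniform gap $\lambda_0>0$ with $a_E(X,B+M)\ge\epsilon+\lambda_0$ for every non-tight $E$, and then one bounds $\tfrac1m\operatorname{ord}_E(N_m)$ by $\lambda_0$ once $m$ is large relative to $\lambda_0$ and the bounded geometry of the family, using the usual estimates relating $\operatorname{ord}_E$ of a general hypersurface to $\operatorname{ord}_E$ of the maximal ideal of $c_X(E)$ and to $a_E(X)$; combining these gives $a_E(X,B^++M)\ge\epsilon$ in all cases. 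The genuinely delicate points are the simultaneous control of $\operatorname{ord}_E(N_m)$ over all (infinitely many) $E$ and the uniformity of $\lambda_0$ and of the required $m$ across the family; both are handled by running the whole argument over the fixed bounded family produced by BAB, reproducing in the generalized setting the argument of Filipazzi and Moraga, with the generalized-pair bookkeeping already set up in Section \ref{section:ACCfixed}.
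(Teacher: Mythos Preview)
This theorem is not proved in the present paper; it is quoted from Filipazzi--Moraga \cite{FM} and used as a black box in the proof of Theorem~\ref{thm:bdd-e-comple}. There is thus no proof here to compare your attempt against.

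Your outline follows the strategy of \cite{FM}: bound $X$ via BAB (Lemma~\ref{lem:bab}), obtain a bounded base-point-free multiple of $-(K_X+B+M)$, and set $B^+=B+\tfrac{1}{m}N_m$ with $N_m$ general. One minor slip: the sentence ``any irreducible subvariety along whose generic point the mld equals $\epsilon$ must be a single point'' is false as stated --- a component of $B$ with coefficient $1-\epsilon$ already violates it. What is true, and what you actually need, is that $V_0=\{x\in|X|_{\mathrm{cl}}:\operatorname{mld}_x=\epsilon\}$ is finite; this follows from Lemmas~\ref{lem:const} and~\ref{lem:ambro}, since a positive-dimensional constructible piece of $V_0$ with generic point $\xi$ would force $\operatorname{mld}_\xi=\epsilon-\dim\xi<\epsilon$. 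With that correction your avoidance argument for the tight divisors is fine.

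The substantive gap is the non-tight case. You propose to combine the uniform gap $\lambda_0$ from Theorem~\ref{thm:ACCfixed} with a bound $\tfrac{1}{m}\operatorname{ord}_E(N_m)\le\lambda_0$ valid for \emph{every} non-tight $E$. No such bound can hold: there are infinitely many $E$ with center inside $\operatorname{Supp}(N_m)$, and $\operatorname{ord}_E(N_m)$ grows without bound along towers of blow-ups over a point of $N_m$, so no fixed $m$ makes $\tfrac{1}{m}\operatorname{ord}_E(N_m)\le\lambda_0$ for all of them at once. What you really need is $\tfrac{1}{m}\operatorname{ord}_E(N_m)\le a_E(X,B+M)-\epsilon$, whose right-hand side \emph{also} grows with $E$; comparing the two growths is the whole content, and the fixed constant $\lambda_0$ is irrelevant to it. The clean route bypasses $\lambda_0$ entirely: on a log resolution $\varphi\colon X'\to X$ of $(X,B)$ carrying $M'$, a general $N_m$ satisfies $\varphi^*N_m=\varphi^{-1}_*N_m$ (the finitely many exceptional centers are avoided) and, by Bertini on $X'$, $\varphi^{-1}_*N_m$ is smooth and transversal to $\operatorname{Supp}(B')$. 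Then $\bigl(X',\,B'+\tfrac{1}{m}\varphi^{-1}_*N_m\bigr)$ is log smooth with every coefficient at most $\max\{1-\epsilon,\tfrac{1}{m}\}$, hence $\epsilon$-lc once $m\ge\tfrac{1}{1-\epsilon}$, and this handles all $E$ simultaneously. The only uniformity required is a bounded $m$ making $-m(K_X+B+M)$ Cartier and base-point-free, which is precisely what the boundedness from BAB supplies; Theorem~\ref{thm:ACCfixed} is not needed for this step.
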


\begin{thm}[{cf.\ \cite [Lemma 5.13, Theorem 5.15] {HLS}}]\label{thm:peff}
Fix $d \in \mathbb{Z} _{>0}$. 
Let $r_1, \ldots, r_{\ell}$ be positive real numbers and let $r_0 = 1$. 
Assume that $r_0, r_1,  \ldots, r_{\ell}$ are $\mathbb{Q}$-linearly independent. 
Let $s^{B} _1, \ldots, s^{B} _{c^{B}} : \mathbb{R}^{\ell +1} \to \mathbb{R}$ (resp.\ 
$s^{M} _1, \ldots, s^{M} _{c^{M}} : \mathbb{R}^{\ell +1} \to \mathbb{R}$) be $\mathbb{Q}$-linear functions.
Assume that $s^{B}_i (r _0, \ldots , r_{\ell}) \ge 0$ and 
$s^{M}_j (r_0, \ldots , r_{\ell}) \ge 0$ for each $i$ and $j$. 
Then there exists a positive real number $\epsilon >0$ such that the following holds:
If $(X,B+M)/Z$ is a generalized pair with $\dim X=d$ such that
\begin{itemize}
\item $B=\sum_{1 \le i \le c^{B}} s^B _i(r_0, \ldots, r _{\ell}) B_i$ for some Weil divisors $B_i$ on $X$,
\item $M'=\sum_{1 \le j \le c^{M}} s^M_j(r_0, \ldots, r_{\ell}) M'_j$ for some nef$/Z$ Cartier divisors $M'_j$ on $X'$, and
\item $(X, B+M)$ is $\mathbb{R}$-complementary,
\end{itemize}
then for every $t\in [r_{\ell}-\epsilon, r_{\ell}+\epsilon]$, 
$\bigl( X,B(t)+M(t) \bigr)$ is generalized lc and $- \bigl( K_X+B(t)+M(t) \bigr)$ is pseudo-effective over $Z$, where we set
\[
B(t) := \sum_{1 \le i \le c^{B}} s^B _i(r_0,  \ldots, r_{\ell -1}, t) B_i, \quad 
M'(t) := \sum_{1 \le j \le c^{M}} s^M _j(r_0,  \ldots, r_{\ell -1}, t) M'_j.
\]
\end{thm}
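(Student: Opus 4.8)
The plan is to prove the two conclusions separately: generalized log canonicity of $(X, B(t) + M(t))$, which is routine, and pseudo-effectivity over $Z$ of $-(K_X + B(t) + M(t))$, which is the heart of the matter. For the first, note that an $\mathbb{R}$-complementary pair is in particular generalized lc: if $K_X + B^+ + M^+$ is an $\mathbb{R}$-complement of $K_X+B+M$, then $B^+ \ge B$ and $M^{+\prime} - M'$ nef$/Z$ force $a_E(X, B+M) \ge a_E(X, B^+ + M^+) \ge 0$ for every divisor $E$ over $X$ by the negativity lemma. Since the coefficients of $B$ and of $M'$ are $\mathbb{Q}$-linear functions of $(r_0, \dots, r_\ell)$, Theorem \ref{thm:UP} applied to $(X, B+M)$ then produces $\epsilon_1 > 0$, depending only on $d$ and the data $r_i, s^B_i, s^M_j$, such that $(X, B(t) + M(t))$ is generalized lc for every $t \in [r_\ell - \epsilon_1, r_\ell + \epsilon_1]$.

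For the second conclusion I would follow the strategy of \cite{HLS}. One first reduces to the case where $X$ is $\mathbb{Q}$-factorial and $(X,B+M)$ is generalized dlt by passing to a $\mathbb{Q}$-factorial generalized dlt model $f\colon Y\to X$ (Definition \ref{defi:defs}(\ref{item:dltmodel})); since $f^*\bigl(-(K_X + B(t) + M(t))\bigr) = -(K_Y + B_Y(t) + M_Y(t))$ and pseudo-effectivity over $Z$ is preserved under pullback and push-forward along a birational morphism, it is harmless to work on $Y$, where $-(K_Y + B_Y + M_Y)$ is pseudo-effective over $Z$ because $-(K_X+B+M)$ is. For such a pair one shows that, inside the rational polytope of coefficient vectors $v$ for which $(Y, B_Y(v) + M_Y(v))$ is generalized lc with $B_Y(v)\ge 0$, the locus where $-(K_Y + B_Y(v) + M_Y(v))$ is pseudo-effective over $Z$ is again a rational polytope. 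This is the ``geography of models'' input: it is obtained by running $(K_Y + B_Y(v) + M_Y(v))$-MMPs with scaling over $Z$ (Definition \ref{LMMPS}, Theorem \ref{thm:MFS}) and invoking finiteness of the resulting models as $v$ ranges over the polytope. The remaining, and genuinely delicate, point is \emph{uniformity}: the size of the neighborhood of $v_0 = (r_0, \dots, r_\ell)$ inside this polytope must be bounded below independently of $(X, B+M)$. This is extracted by a contradiction argument — a hypothetical sequence $(X_m, B_m + M_m)/Z_m$ with $t_m \to r_\ell$ and $-(K_{X_m} + B_m(t_m) + M_m(t_m))$ not pseudo-effective over $Z_m$ — combined with the ACC for generalized lc thresholds (Theorem \ref{thm:LACC}), and, in the case $\dim Z = 0$, the ACC statement for generalized pairs with numerically trivial log canonical class (Theorem \ref{thm:GACC}) together with boundedness of complements (Theorem \ref{thm:Bircomp}); these force the thresholds attached to the offending valuations to accumulate at a rational number, contradicting the $\mathbb{Q}$-linear independence of $r_0, \dots, r_\ell$. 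In fact the cleanest formulation, which is what \cite{HLS} establishes, is that generalized $\mathbb{R}$-complementarity itself is preserved for all $t \in [r_\ell - \epsilon, r_\ell + \epsilon]$, and our two conclusions then follow, since $\mathbb{R}$-complementarity implies both generalized lc and pseudo-effectivity of $-(K_X + B + M)$ over $Z$.

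The main obstacle is precisely this uniform pseudo-effectivity statement. Unlike the log-canonicity perturbation — where the relevant conditions are linear inequalities in the coefficients, so Theorem \ref{thm:UP} applies verbatim — pseudo-effectivity is a closed condition, and the perturbation $t \mapsto B(t)$ is not monotone in the individual coefficients, so one cannot simply enlarge the complement boundary to absorb the change. One is therefore forced to use the full strength of the generalized minimal model program (existence of minimal models in the relevant setting and finiteness of models within a polytope) together with the ACC-type finiteness results quoted above; assembling these exactly as in \cite[Lemma 5.13 and Theorem 5.15]{HLS}, which go through in the generalized setting because all of the needed MMP and ACC ingredients are available, completes the proof.
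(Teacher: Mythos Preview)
Your broad strategy—contradiction via a sequence $(X_k, B_k+M_k)/Z_k$ with $t_k \to r_\ell$, then an ACC-type finiteness forcing a rational accumulation that contradicts the $\mathbb{Q}$-linear independence of $r_0,\dots,r_\ell$—matches the paper. But the specific mechanism you invoke diverges and leaves a gap.

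The paper does \emph{not} use a ``geography of models'' / rational-polytope-of-pseudo-effectivity argument, and it uses neither Theorem~\ref{thm:LACC} nor Theorem~\ref{thm:Bircomp}. Instead, for each $k$ it first performs a careful dlt modification (STEP~1) arranging that a $-\bigl(K_{X_k}+B_k(t_k)+M_k(t_k)\bigr)$-MMP over $Z_k$ can be run; since this class is assumed not pseudo-effective over $Z_k$, the MMP terminates in a Mori fiber space $\overline{X}_k\to W_k$ (Theorem~\ref{thm:MFS}). Restricting to a general fiber $F_k$ is what reduces to $\dim Z=0$: on $F_k$ one has $K_{F_k}+B_{F_k}(t_k)+M_{F_k}(t_k)$ ample while $-\bigl(K_{F_k}+B_{F_k}(r_\ell)+M_{F_k}(r_\ell)\bigr)$ is nef, so an intermediate value $u_k$ between $t_k$ and $r_\ell$ gives $K_{F_k}+B_{F_k}(u_k)+M_{F_k}(u_k)\equiv 0$ with $(F_k,B_{F_k}(u_k)+M_{F_k}(u_k))$ generalized lc. Now the Global ACC (Theorem~\ref{thm:GACC}, in the precise form \cite[Theorem~3.6]{C20}) forces $\lim_k u_k=r_\ell$ to lie in $\operatorname{Span}_{\mathbb{Q}}(r_0,\dots,r_{\ell-1})$, the contradiction.

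Your sketch misses this Mori-fiber-space/general-fiber step, which is exactly how one lands in the hypotheses of Theorem~\ref{thm:GACC}; your phrase ``in the case $\dim Z=0$ use GACC'' is right in spirit but you never say how to get there when $\dim Z>0$. A smaller issue: your dlt-model reduction asserts $f^*\bigl(K_X+B(t)+M(t)\bigr)=K_Y+B_Y(t)+M_Y(t)$ for all $t$, but the exceptional coefficients of the dlt model were set to $1$ using the complement at $t=r_\ell$ and do not vary with $t$; the paper's STEP~1 handles precisely this by absorbing $\lfloor B_Y(r_\ell)+D_Y\rfloor$ into the boundary and checking the needed support disjointness so that the perturbed negative log canonical class is still not pseudo-effective.
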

\begin{proof}
Enlarging the set $\bigl \{s^B_i \ \big| \  i \bigr \}$, we may assume that $s^B _i(r_0, \ldots, r _{\ell})=r_0=1$ for some $i$.
We may also remove the zero functions from $\bigl \{s^B_i , s^M_j \ \big| \  i,j \bigr \}$.
Let $\epsilon'$ be the $\epsilon$ appearing in Theorem \ref{thm:UP} and let $u_0$ be a rational number in $[r_{\ell}-\epsilon', r_{\ell}+\epsilon']$.

Suppose the contrary that for each $k \ge 1$,
there exist a real number $t_k \in \bigl[ r_{\ell}-\frac{\epsilon'}{k}, r_{\ell}+\frac{\epsilon'}{k} \bigr]$ 
and a generalized pair $( X_k, B_k + M_k )/Z_k$ with $\dim X_k =d$ such that
\begin{itemize}
\item $B_k(t) = \sum_{1 \le i \le c^{B}} s^B _i(r_0, \ldots, r _{\ell-1},t) B_{k,i}$ for some Weil divisors $B_{k,i}$ on $X_k$,
\item $M'_k(t) = \sum_{1 \le i \le c^{M}} s^M_i(r_0, \ldots, r_{\ell -1},t) M'_{k,i}$ for some nef$/Z_k$ Cartier divisors $M'_{k,i}$ on $X'_k$, 
\item $B_k = B_k(r_{\ell})$ and $M_k = M_k(r_{\ell})$, 
\item $( X_k, B_k + M_k )$ is $\mathbb{R}$-complementary, and
\item $- \bigl( K_{X_k} + B_k( t_k) + M_k( t_k) \bigr)$ is not pseudo-effective over $Z_k$.
\end{itemize}
Moreover, restricting $X_k$ over a neighborhood of a point $z \in Z_k$ at which 
$- \bigl( K_{X_k} + B_k( t_k) + M_k( t_k) \bigr)$ is not pseudo-effective over $Z_k$, 
we may assume that $K_{X_k} + B_k + M_k$ has an $\mathbb{R}$-complement $K_{X_k}+B_k^++M_k^+$ satisfying $K_{X_k}+B_k^++M_k^+\sim_{\mathbb{R}} 0$.

In the following paragraphs, we fix $k \in \mathbb{Z}_{> 0}$ and denote $(X, B+M)/Z=(X_k,B_k+M_k)/Z_k$ for simplicity. 

\vspace{2mm}

\noindent
\underline{\textbf{STEP 1:}}\ \ 
Let $K_X + B^+ + M^+ \sim_{\mathbb{R}} 0$ be an $\mathbb{R}$-complement of $K_X+B+M$. 
We set 
\[
D = B^+-B, \qquad N' = M^{+ \prime} - M'. 
\]
Let $N$ be the push-forward of $N'$ on $X$. 
This step shows that we may assume the following conditions: 
\begin{itemize}
\item $(X,B^++M^+)/Z$ is a $\mathbb{Q}$-factorial generalized dlt pair. 
\item $\operatorname{Supp}D$ and $\operatorname{Supp}\lfloor B^+ \rfloor$ have no common divisors. 
\item $\operatorname{Supp}(B - B(t_k))$ and $\operatorname{Supp}\lfloor B^+ \rfloor$ have no common divisors.
\end{itemize} 

Let $f: Y \to X$ be a $\mathbb{Q}$-factorial generalized dlt model of $(X,B^++M^+)$ 
(Definition \ref{defi:defs}(\ref{item:dltmodel})). 
Let $T$ be the sum of the $f$-exceptional divisors, 
let $B_Y (t)$ and $D_Y$ be the strict transforms of $B (t)$ and $D$ on $Y$, and
let $M_Y (t)$ and $N_Y$ be the push-forwards of $M' (t)$ and $N'$ on $Y$ (possibly replacing $X'$ by a higher model).
Then 
\[
\bigl( Y,  (T + B_Y ( r _{\ell})+D_Y )+ (M_Y ( r _{\ell})+N_Y) \bigr)/Z
\]
is a $\mathbb{Q}$-factorial generalized dlt pair, and we have 
\begin{align*}
K_Y &+ \bigl( T + B_Y ( r _{\ell})+D_Y \bigr) + \bigl( M_Y ( r _{\ell})+N_Y \bigr) \\
& = f^* \bigl( K_X + (B+D) + (M+N) \bigr) \sim_{\mathbb{R}} 0. 
\end{align*}

We may write 
\[
B_Y(t) = \sum s^B _i(r_0, \ldots, r _{\ell-1},t) B_{Y,i},
\]
where $B_{Y,i}$ is the strict transform of $B_i$ on $Y$. 
We define a Weil divisor $\overline{B}_i$ on $Y$ as 
\[
\operatorname{coeff}_G \overline{B}_i = 
\begin{cases}
0 & \text{if $G \le \lfloor B_Y ( r _{\ell})+D_Y \rfloor$}, \\
\operatorname{coeff}_G B_{Y,i} & \text{otherwise}, 
\end{cases}
\]
where $G$ is a prime divisor on $Y$. 
Then we set 
\[
B'_Y(t) := \sum s^B _i(r_0, \ldots, r _{\ell-1},t) \overline{B}_i. 
\]
Similarly, write $D=\sum_i d_iD_i$, where $D_i$'s are distinct prime divisors, 
and let $D'_Y := \sum d_k \overline{D}_k$, 
where $\overline{D}_k$ is the strict transform of $D_k$ on $Y$ and the sum is taken over all the $k$ 
satisfying $\overline{D}_k \not \leq \lfloor B_Y ( r _{\ell})+D_Y \rfloor$. 
Then the following hold by the construction: 
\begin{itemize}
\item $\lfloor B_Y ( r _{\ell})+D_Y \rfloor+B'_Y ( r _{\ell})+D'_Y=B_Y ( r _{\ell})+D_Y$.

\item $\operatorname{Supp} D'_Y$ and 
$\operatorname{Supp} \bigl( T+\lfloor B_Y ( r _{\ell})+D_Y \rfloor \bigr)$ have no common divisors. 

\item $\operatorname{Supp} \bigl( B'_Y(r_{\ell}) - B'_Y(t_k) \bigr)$ and 
$\operatorname{Supp} \bigl( T+\lfloor B_Y ( r _{\ell})+D_Y \rfloor \bigr)$ have no common divisors. 
\end{itemize}
We shall see that we may assume 
\begin{itemize}
\item $- \bigl( K_Y +T+\lfloor B_Y ( r _{\ell})+D_Y \rfloor+B'_Y(t_k)+M_Y(t_k) \bigr)$ is not pseudo-effective over $Z$. 
\end{itemize}
Note that $- \bigl( K_X + B(t_k)+ M(t_k) \bigr)$ is not pseudo-effective over $Z$ by the assumption. 
Since 
\[
f_*\bigl( K_Y +T+B_Y(t_k)+M_Y(t_k) \bigr) = K_X + B(t_k)+ M(t_k), 
\]
$- \bigl( K_Y +T+B_Y(t_k)+M_Y(t_k) \bigr)$ is not pseudo-effective over $Z$. 
On the other hand, 
for every $i$, if $s^B _i(r_0, \ldots, r _{\ell})=1$, 
then $s^B _i(r_0,  \ldots, r_{\ell -1}, t)$ should be constantly one by the linear independence of $r_0, r_1,  \ldots, r_{\ell}$. 
Therefore, passing to a tail of the sequence, we may assume that $s^B _i(r_0, \ldots, r _{\ell-1}, t_k) \leq 1$. 
Hence we have
\[
\lfloor B_Y ( r _{\ell})+D_Y \rfloor+B'_Y(t_k)-B_Y(t_k)\geq 0, 
\]
and therefore $- \bigl( K_Y +T+\lfloor B_Y ( r _{\ell})+D_Y \rfloor+B'_Y(t_k)+M_Y(t_k) \bigr)$ is not pseudo-effective over $Z$. 

Therefore, we may replace 
\begin{itemize}
\item $\bigl( X,B(t)+M(t) \bigr)$ with $\bigl( Y,(T+\lfloor B_Y ( r _{\ell})+D_Y \rfloor+B'_Y(t))+M_Y(t) \bigr)$,
\item $D$ with $D'_Y$, $N$ with $N_Y$, and 
\item $B^++M^+$ with $(T +  B_Y ( r _{\ell})+D_Y ) + (M_Y ( r _{\ell})+N_Y)$, 
\end{itemize}
and the new pair satisifies the desired three conditions.

\vspace{2mm}

\noindent
\underline{\textbf{STEP 2:}}\ \ 
By the linear independence of $r_0, r_1,  \ldots, r_{\ell}$, we have 
\[
s^B _i(r_0, \ldots, r _{\ell}) > 0, \quad 
s^M _j(r_0, \ldots, r _{\ell}) > 0.  
\]
Hence by STEP 1, we may take $\delta_k > 0$ such that
\[
\bigl( X, \bigl( B^++\delta_k D+\delta_k(B-B(t_k)) \bigr) + \bigl( M^+ + \delta_k N + \delta_k (M- M(t_k)) \bigr) \bigr)
\]
is generalized dlt. 
Since
\begin{align*}
& - \delta_k \bigl( K_X + B( t_k) + M( t_k) \bigr) \\
&\sim_{\mathbb{R}} \delta_k \bigl( (B-B(t_k)+D)+(M-M(t_k)+N) \bigr)\\
&\sim_{\mathbb{R}}K_X + \bigl( B^++\delta_k D+\delta_k(B-B(t_k)) \bigr) 
+ \bigl( M^+ + \delta_k N + \delta_k(M-M(t_k)) \bigr),
\end{align*}
there exists a $- \bigl( K_X + B( t_k) + M( t_k) \bigr)$-MMP over $Z$ (Theorem \ref{thm:MFS}) and it ends with 
a Mori fiber space $\overline{X} \to W$ such that 
$K_{\overline{X} }+ B_{\overline{X}}( t_k) + M_{\overline{X}}( t_k)$ is ample over $W$, 
where $B_{\overline{X}}(t)$ is the strict transform of $B(t)$ on $\overline{X}$, and 
$M_{\overline{X}}(t)$ is the push-forward of $M'(t)$ on $\overline{X}$ (possibly replacing $X'$ with a higher model).
On the other hand, since $-(K_X+B+M)$ is pseudo-effective over $Z$, 
$- \bigl( K_{\overline{X}}+B_{\overline{X}}(r_{\ell}) + M_{\overline{X}}(r_{\ell}) \bigr)$ is nef over $W$.
Since $(X,B+M)$ is $\mathbb{R}$-complementary, 
$\bigl( \overline{X}, B_{\overline{X}}(r_{\ell}) + M_{\overline{X}}(r_{\ell}) \bigr)$ is generalized lc.
Therefore, $\bigl( \overline{X},B_{\overline{X}}(t_k)+M_{\overline{X}}(t_k) \bigr)$ is also generalized lc 
since $t_k \in [r_{\ell}-\epsilon', r_{\ell}+\epsilon']$.

Let $F$ be a general fiber of ${\overline{X}}\to W$.
We set 
\[
B_F(t) = B_{\overline{X}}(t)|_F, \qquad M_F(t)=M_{\overline{X}}(t)|_F.
\]
Then $\bigl( F,B_F(r_{\ell})+M_F(r_{\ell}) \bigr)$ and $\bigl( F,B_F(t_k)+M_F(t_k) \bigr)$ are generalized lc, 
$- \bigl( K_F+B_F(r_{\ell})+M_F(r_{\ell}) \bigr)$ is nef and $K_F+B_F(t_k)+M_F(t_k)$ is ample.
Therefore, we may find a real number $u_k$ between $t_k$ and 
$r_{\ell}$ satisfying $K_F + B_{F}(u_k)+M_{F}(u_k) \equiv 0$. 
Note that $\bigl( F,B_F(u_k)+M_F(u_k) \bigr)$ is also generalized lc. 

We may write 
\begin{align*}
s^B _i(r_0, \ldots, r_{\ell -1}, t) &= s^B _i(r_0, \ldots, r_{\ell -1}, u_0)+q_i (t-u_0), \\
s^M _j(r_0, \ldots, r_{\ell -1}, t) &= s^M _j(r_0, \ldots, r_{\ell -1}, u_0)+p_j (t-u_0)
\end{align*}
with $q_i, p_j \in \mathbb{Q}$.
Let 
\[
I := \bigl \{s^B _i(r_0, \ldots, r_{\ell -1}, u_0),s^M _j(r_0, \ldots, r_{\ell -1}, u_0) \ \big| \  i, j \bigr\}.
\]
Take a positive integer $m$ such that $mq_i, mp_j \in \mathbb{Z}$ holds for every $i$ and $j$.
Since $K_{F}+B_{F}(t_k)+M_{F}(t_k) \not \equiv 0$ and $\lim_{k\to\infty}u_k=r_{\ell}$, 
we have
\[
\frac{r_{\ell}-u_0}{m} = \lim_{k \to \infty} \frac{u_k -u_0}{m} \in
\operatorname{Span} _{\mathbb{Q}} \bigl( I \cup \{ 1 \} \bigr) \subset \operatorname{Span} _{\mathbb{Q}} (r_0, \ldots, r_{\ell -1})
\]
by {\cite[Theorem 3.6]{C20}}. 
This contradicts the $\mathbb{Q}$-linearly independence of $r_0, \ldots , r_{\ell}$.
\end{proof}

\begin{thm}\label{thm:perturbpeff}
Let $d$ be a positive integer and let $I \subset [0,+\infty)$ be a finite subset. 
Then, there exists a positive real number $\delta$ depending only on $d$ and $I$ such that the following holds: 
If 
\begin{itemize}
\item $(X,B+M)/Z$ is an $\mathbb{R}$-complementary generalized pair of dimension $d$ with $B , M \in I$, and 
\item $g : \operatorname{Span}_{\mathbb{Q}} (I\cup \{1\}) \rightarrow \mathbb{R}$ is a $\mathbb{Q}$-linear function 
fixing $\mathbb{Q}$ and satisfies $|g(a)-a|\leq\delta$ for every $a \in I$, 
\end{itemize}
then $\bigl( X, g(B) +g(M) \bigr)$ is generalized lc and $- \bigl( K_X+g(B)+g(M) \bigr)$ is pseudo-effective over $Z$.
\end{thm}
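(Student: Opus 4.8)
The plan is to run the same induction as in the proof of Theorem~\ref{thm:perturblc}, with Theorem~\ref{thm:peff} playing the role of Theorem~\ref{thm:UP} and with the property ``generalized lc'' replaced throughout by ``generalized lc and $-(K_X+B+M)$ pseudo-effective over $Z$''. Set $S:=\operatorname{Span}_{\mathbb{Q}}(I\cup\{1\})$ and fix a $\mathbb{Q}$-basis $r_0=1,r_1,\dots,r_\ell$ of $S$ with $r_1,\dots,r_\ell\in I$; by Lemma~\ref{lem:ext} it suffices to prove the formally stronger statement in which the hypothesis on $g$ is only $|g(r_i)-r_i|\le\delta$ for each $i$. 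I would induct on $\dim_{\mathbb{Q}}S$. If $\dim_{\mathbb{Q}}S=1$ then $g=\mathrm{id}$ and there is nothing to prove, since an $\mathbb{R}$-complementary generalized pair is generalized lc and satisfies $-(K_X+B+M)\sim_{\mathbb{R}}(B^+-B)+\varphi_*(M^{+\prime}-M')$, a sum of an effective divisor and the pushforward of a divisor nef over $Z$, hence has $-(K_X+B+M)$ pseudo-effective over $Z$.

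For the inductive step, write each $a\in I$ as $a=s_a(r_0,\dots,r_\ell)$ with $s_a$ a $\mathbb{Q}$-linear function, and group the prime components of $B$ (resp.\ the nef$/Z$ Cartier summands of $M'$) according to their coefficients; this exhibits $(X,B+M)$ in the shape required by Theorem~\ref{thm:peff}. Applying Theorem~\ref{thm:peff} with this basis and these functions yields $\epsilon>0$ depending only on $d$ and $I$; fix rationals $t_1\in[r_\ell-\epsilon,r_\ell)$ and $t_2\in(r_\ell,r_\ell+\epsilon]$, and let $g_1,g_2\colon S\to S':=\operatorname{Span}_{\mathbb{Q}}\{1,r_1,\dots,r_{\ell-1}\}$ be the $\mathbb{Q}$-linear maps fixing $r_1,\dots,r_{\ell-1}$ with $g_i(r_\ell)=t_i$. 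Then for every $\mathbb{R}$-complementary $(X,B+M)$ as in the hypothesis, Theorem~\ref{thm:peff} gives that $\bigl(X,g_i(B)+g_i(M)\bigr)$ is generalized lc with $-(K_X+g_i(B)+g_i(M))$ pseudo-effective over $Z$, and its coefficients lie in a fixed finite set $I'\subset S'$. Let $\delta'$ be supplied by the inductive hypothesis for $I'$, let $\delta''>0$ be as in Lemma~\ref{lem:ext} so that $|g(r_j)-r_j|\le\delta''$ for $j<\ell$ forces $|g(a)-a|\le\delta'$ for all $a\in I'$, and set $\delta:=\min\{\delta'',\,r_\ell-t_1,\,t_2-r_\ell,\,\min(I\cap(0,\infty))\}$. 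The only non-formal point here, which I expect to be the main obstacle, is that to feed $\bigl(X,g_i(B)+g_i(M)\bigr)$ into the inductive hypothesis one needs it to be $\mathbb{R}$-complementary, whereas Theorem~\ref{thm:peff} a priori yields only ``generalized lc with $-(K_X+\cdot)$ pseudo-effective over $Z$''; I would close this gap either by the equivalence of these two conditions for generalized lc pairs, or by arguing that the $\mathbb{R}$-complement used in the proof of Theorem~\ref{thm:peff} persists along the perturbation, and this is exactly where the minimal model program for generalized pairs is invoked.

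Granting this, apply the inductive hypothesis to $\bigl(X,g_i(B)+g_i(M)\bigr)$ with the perturbation $g|_{S'}$ (legitimate because $|g(r_j)-r_j|\le\delta\le\delta''$ for $j<\ell$): one gets that $\bigl(X,(g\circ g_i)(B)+(g\circ g_i)(M)\bigr)$ is generalized lc with $-(K_X+(g\circ g_i)(B)+(g\circ g_i)(M))$ pseudo-effective over $Z$, where $g\circ g_i\colon S\to\mathbb{R}$ agrees with $g$ on $r_1,\dots,r_{\ell-1}$ and sends $r_\ell$ to $t_i$. Finally put $u_1:=\tfrac{t_2-g(r_\ell)}{t_2-t_1}$ and $u_2:=\tfrac{g(r_\ell)-t_1}{t_2-t_1}$; then $u_1,u_2\ge0$ (since $|g(r_\ell)-r_\ell|\le\delta\le\min\{r_\ell-t_1,t_2-r_\ell\}$), $u_1+u_2=1$, and a check on basis elements gives $g=u_1(g\circ g_1)+u_2(g\circ g_2)$, so $g(B)=u_1(g\circ g_1)(B)+u_2(g\circ g_2)(B)$ and likewise for $g(M)$. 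Hence, on a common resolution, the generalized log discrepancy of each divisor over $X$ for $\bigl(X,g(B)+g(M)\bigr)$ is the corresponding convex combination of the discrepancies for the two pairs $\bigl(X,(g\circ g_i)(B)+(g\circ g_i)(M)\bigr)$, so $\bigl(X,g(B)+g(M)\bigr)$ is generalized lc; and $-(K_X+g(B)+g(M))=u_1\bigl(-(K_X+(g\circ g_1)(B)+(g\circ g_1)(M))\bigr)+u_2\bigl(-(K_X+(g\circ g_2)(B)+(g\circ g_2)(M))\bigr)$ is pseudo-effective over $Z$ by convexity of the pseudo-effective cone. That $K_X+g(B)+g(M)$ is $\mathbb{R}$-Cartier and $g(B),g(M)\ge0$ follows from Lemma~\ref{lem:f(gen)}(1) together with $\delta\le\min(I\cap(0,\infty))$, completing the induction.
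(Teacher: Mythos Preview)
Your plan is precisely the paper's one-line proof (``the same proof as in Theorem~\ref{thm:perturblc} works due to Theorem~\ref{thm:peff}''), and you have put your finger on the one point where the substitution is not formal: the inductive hypothesis at level $I'$ asks for an $\mathbb{R}$-complementary pair, while Theorem~\ref{thm:peff} only delivers ``generalized lc with $-(K_X+\cdot)$ pseudo-effective over $Z$'' for the specialized pair $\bigl(X,g_i(B)+g_i(M)\bigr)$. Your first proposed resolution (equivalence of the two conditions) is false in general without extra hypotheses such as Fano type, and your second (persistence of the $\mathbb{R}$-complement under the specialization) would require an additional argument that neither you nor the paper supplies. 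So the induction, read literally, does have a gap.

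There is, however, a clean fix that bypasses the induction for the pseudo-effective part. First, the generalized lc conclusion follows directly from Theorem~\ref{thm:perturblc} (an $\mathbb{R}$-complementary pair is generalized lc). For pseudo-effectivity, apply Theorem~\ref{thm:peff} once for each index $k=1,\dots,\ell$, with $r_k$ playing the role of the perturbed variable (relabelling the basis); this yields $\epsilon_k>0$, depending only on $d$ and $I$, such that for every $\mathbb{R}$-complementary $(X,B+M)$ as in the hypothesis and every $t\in[r_k-\epsilon_k,r_k+\epsilon_k]$, the divisor $-\bigl(K_X+B(r_1,\dots,t,\dots,r_\ell)+M(r_1,\dots,t,\dots,r_\ell)\bigr)$ is pseudo-effective over $Z$. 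Set $\delta:=\ell^{-1}\min_k\epsilon_k$. For any $g$ with $|g(r_k)-r_k|\le\delta$, the point $\bigl(g(r_1),\dots,g(r_\ell)\bigr)$ lies in the convex hull of $r:=(r_1,\dots,r_\ell)$ and the $2\ell$ axis points $r\pm\epsilon_k e_k$: take $\lambda_{k}^{\pm}:=\max\{0,\pm(g(r_k)-r_k)/\epsilon_k\}$ and $\lambda_0:=1-\sum_k(\lambda_k^++\lambda_k^-)\ge 0$. Since $t\mapsto -(K_X+B(t)+M(t))$ is affine in $t$ and the pseudo-effective cone is convex, and since each of the $2\ell+1$ extreme divisors is pseudo-effective (each obtained from the \emph{original} $\mathbb{R}$-complementary pair by a single one-variable perturbation, so Theorem~\ref{thm:peff} applies directly), the convex combination $-(K_X+g(B)+g(M))$ is pseudo-effective over $Z$. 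This uses Theorem~\ref{thm:peff} only against the fixed $\mathbb{R}$-complementary pair $(X,B+M)$, so the mismatch between hypothesis and conclusion never arises.
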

\begin{proof}
The same proof as in Theorem \ref{thm:perturblc} works due to Theorem \ref{thm:peff}. 
\end{proof}

The following lemma follows from the BAB Theorem ({\cite [Theorem 1.1]{Bir16b}}).

\begin{lem}\label{lem:bab}
Let $d$ be a positive integer and let $\epsilon$ be a positive real number.
Then the projective $\mathbb{Q}$-factorial varieties $X$ of dimension $d$ such that 
\begin{itemize}
\item $(X,B+M)/Z$ is generalized $\epsilon$-lc for some generalized boundary $B+M$ with
\item $\dim Z=0$,
\item $K_X + B + M \sim_{\mathbb{R}} 0$, and
\item $X$ is of Fano type
\end{itemize}
form a bounded family.
\end{lem}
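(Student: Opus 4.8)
The plan is to deduce the statement from Birkar's BAB Theorem \cite[Theorem~1.1]{Bir16b}, which asserts that, for fixed $d$ and $\epsilon$, the $d$-dimensional projective varieties $X$ admitting an effective $\mathbb{R}$-divisor $\Delta$ with $(X,\Delta)$ being $\epsilon$-lc and $-(K_X+\Delta)$ nef and big form a bounded family. So it suffices to equip each $X$ in our class with a \emph{usual} boundary $B^{+}\ge 0$ such that $(X,B^{+})$ is $\epsilon'$-lc for some $\epsilon'>0$ depending only on $d$ and $\epsilon$, and $-(K_X+B^{+})$ is nef and big. Two preliminary observations: since $\operatorname{ord}_E(B+M)\ge 0$ for every divisor $E$ over $X$ (Definition~\ref{defi:defs}(\ref{item:ord})) and $X$ is $\mathbb{Q}$-Gorenstein, one has $a_E(X)=a_E(X,B+M)+\operatorname{ord}_E(B+M)\ge\epsilon$, so $X$ itself is $\epsilon$-lc; and $-K_X\sim_{\mathbb{R}}B+M$ is big because $X$ is of Fano type.

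The heart of the argument is the reduction from the generalized $\epsilon$-lc structure with $K_X+B+M\sim_{\mathbb{R}}0$ to a usual one. First, fixing $\Gamma\ge 0$ with $(X,\Gamma)$ klt and $-(K_X+\Gamma)$ ample (which exists as $X$ is of Fano type), and a rational $t\in(0,\tfrac12]$, convexity of log discrepancies shows that $\bigl(X,(1-t)(B+M)+t\Gamma\bigr)$ is generalized $\tfrac{\epsilon}{2}$-lc while $-\bigl(K_X+(1-t)(B+M)+t\Gamma\bigr)\sim_{\mathbb{R}}t\bigl(-(K_X+\Gamma)\bigr)$ is ample; so we may assume $-(K_X+B+M)$ is ample, at the cost of halving $\epsilon$. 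Next, let $\varphi\colon X'\to X$ carry the nef $\mathbb{R}$-Cartier divisor $M'$ with $K_{X'}+B'+M'=\varphi^{*}(K_X+B+M)$. Since $X$ is of Fano type and $\varphi$ is projective birational, $X'$ is of Fano type over $X$, so the divisor $M'$, being nef$/X$, is semiample over $X$. Exploiting this, one descends the nef b-part to $X$ by choosing a suitable general effective representative and running a generalized MMP over $X$ (using Theorem~\ref{thm:MFS} and the machinery of \cite{BZ16}), producing a usual boundary $B^{+}\ge 0$ on $X$ (or on a bounded-to-$X$ birational model, from which boundedness of $X$ follows) with $-(K_X+B^{+})$ nef and big, and with $(X,B^{+})$ being $\epsilon'$-lc for an $\epsilon'>0$ bounded below purely in terms of $\epsilon$. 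Applying \cite[Theorem~1.1]{Bir16b} then shows $X$ lies in a bounded family depending only on $d$ and $\epsilon$.

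The step I expect to require the most care is this last descent of the nef b-part: one cannot simply replace $M$ by an arbitrary general effective $M_1\sim_{\mathbb{R}}M$, because the coefficients of $M_1$ are not controlled and $(X,B+M_1)$ may fail to be log canonical, let alone $\epsilon'$-lc. The correct reduction must use the Fano-type hypothesis — via the semiampleness of $M'$ over $X$ and a relative-MMP argument — to bound, \emph{uniformly in the pair}, the singularity created when the b-part is made honest; once this uniform control is in place, the remainder is routine bookkeeping with convexity of discrepancies, the negativity lemma, and the citation of \cite[Theorem~1.1]{Bir16b}.
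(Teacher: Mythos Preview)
Your proposal correctly isolates the key reduction --- that $X$ itself is $\epsilon$-lc --- but then takes an unnecessarily complicated route that you do not actually complete. The ``descent of the nef b-part'' in your second and third paragraphs is left as a hand-wave: you invoke semiampleness of $M'$ over $X$ and ``a relative-MMP argument'' but never specify which MMP is run, what its output is, or why the resulting $(X,B^{+})$ is $\epsilon'$-lc with $\epsilon'$ bounded below uniformly in $d,\epsilon$. You yourself flag this as the delicate step and then do not resolve it. As written, this is a genuine gap.

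The paper's proof bypasses this difficulty entirely, and the reason is already contained in your first paragraph: once you know $X$ is $\epsilon$-lc, you can \emph{discard $B+M$ completely}. The paper runs a $(-K_X)$-MMP (which terminates since $X$ is of Fano type), landing on a model $Y$ where $-K_Y$ is nef and hence semiample; a general $\Delta_Y\in|{-}K_Y|_{\mathbb{R}}$ then gives an $\epsilon$-lc pair $(Y,\Delta_Y)$ with $K_Y+\Delta_Y\sim_{\mathbb{R}}0$, and this is transported back to a boundary $\Delta$ on $X$. The citation is to \cite[Corollary~1.4]{Bir16b} rather than Theorem~1.1. The moral is that the generalized boundary $B+M$ serves only to certify that $X$ is $\epsilon$-lc; after that, the argument is purely about $X$ and its anticanonical system, and no descent of the b-divisor is needed at all.
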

\begin{proof}
Let $(X,B+M)$ be as in the assumption.
By {\cite [Corollary 1.4]{Bir16b}}, it is enough to show that $(X,\Delta)$ is $\epsilon$-lc and 
$K_X+\Delta\sim_{\mathbb{R}}0$ for some (usual) boundary $\Delta$.

Let $X \dasharrow Y$ be the end result of a $(-K_X)$-MMP. 
Since $-K_Y$ is semi-ample and $Y$ is $\epsilon$-lc, there exists a boundary $\Delta _Y$ such that 
$(Y, \Delta _Y)$ is $\epsilon$-lc and $K_Y + \Delta _Y \sim _{\mathbb{R}} 0$. 
Therefore, there exists a boundary $\Delta$ on $X$ such that $(X, \Delta)$ is $\epsilon$-lc and $K_X+\Delta\sim_{\mathbb{R}}0$. 
The proof is complete. 
\end{proof}

We then define the $(n, \Gamma)$-decomposable complements for generalized pairs, modifying {\cite[Definition 1.9]{HLS}}.

\begin{defi} [cf.\ {\cite[Definition 1.9]{HLS}}]
Let $(X,B+M)/Z$ be a generalized pair. 
Let $\Gamma = (a_1, \ldots , a_k)$ be a finite partition of one 
(i.e.\ a finite sequence $a_1, \ldots , a_k$ of non-negative real nunmbers 
with $\sum _{i =1} ^k a_i = 1$). 
\textit{An (generalized) $(n,\Gamma)$-decomposable complement} of $K_X + B + M$
is of the form $K_X+B^++M^+$ such that
\begin{itemize}
\item $(X,B^++M^+)$ is an $\mathbb{R}$-complement of $K_X+B+M$, 
\item $B^+ = \sum_{i=1}^k a_i B^+_i$ and $M^{+ \prime} = \sum_{i=1}^k a_i M^{+ \prime}_i$ hold for some generalized boundaries $B^+_i+M^+_i$ of $X$, and 
\item $K_X+B^+_i+M^+_i$ is an $n$-complement of $K_X+B^+_i + M^+_i$ itself for each $i=1,2,\ldots,k$.
\end{itemize}

For a non-negative real number $\epsilon$, an $(n,\Gamma)$-decomposable complement $K_X+B^++M^+$ is called \textit{$(\epsilon, n, \Gamma)$-decomposable complement} 
if $(X, B^++M^+)$ is generalized $\epsilon$-lc. 
\end{defi}

We shall show the existence of bounded strong complements for generalized pairs with coefficients in a finite set. 
This is a special case of {\cite[Theorem 1.3]{C20}} and {\cite[Theorem 1.5]{CX20}}. 
We give a proof using the perturbation technique introduced in Subsection \ref{subsectionperturb}. 

\begin{thm}\label{thm:bdd-e-comple}
Let $d$ be a positive integer, $\epsilon$ a non-negative real number, 
and $I \subset [0, + \infty)$ a finite subset. 
Then there exist a positive integer $n$ and a partition $\Gamma = (a_1, \ldots , a_k)$ of one depending only on $d$, $\epsilon$ and $I$ 
such that the following condition holds: If 
\begin{itemize}
\item $(X,B+M)/Z$ is a generalized $\epsilon$-lc pair of dimension $d$, 
\item $\dim Z=0$ if $\epsilon > 0$,
\item $B, M \in I$ holds, 
\item $X$ is of Fano type over $Z$, and 
\item $(X,B+M)$ is $(\epsilon,\mathbb{R})$-complementary over $z$ for every $z \in Z$, 
\end{itemize}
then for every $z \in Z$, there exists a strong generalized $(\epsilon,n,\Gamma)$-decomposable complement $K_X+B^++M$ of $K_X+B+M$ over $z$. 
\end{thm}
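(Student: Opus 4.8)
The plan is to reduce Theorem \ref{thm:bdd-e-comple} to the rational case (Theorems \ref{thm:FM18c} and \ref{thm:FM18b}) by perturbing the irrational coefficients of $B$ and $M$ into rational ones while preserving both the $\epsilon$-log canonicity and the $\mathbb R$-complementarity, and then assembling the rational complements into a decomposable one via a convex decomposition of the identity. First I would write $S := \operatorname{Span}_{\mathbb Q}(I \cup \{1\})$ and fix $\delta$ to be the minimum of the two numbers produced by Theorem \ref{thm:perturblc} (uniform perturbation of $\epsilon$-lc, applied with the exponent $\epsilon$ replaced by the relevant value — here we need the version controlling $\operatorname{mld}$, i.e.\ Theorem \ref{thm:perturbmld}, but since $\dim Z = 0$ when $\epsilon > 0$ and the pairs lie in $P(d,r,I)$ after passing to a bounded family, this applies) and by Theorem \ref{thm:perturbpeff} (uniform perturbation of pseudo-effectivity for $\mathbb R$-complementary pairs). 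Then apply Lemma \ref{lem:partition} to this $\delta$ and $I$: it yields finitely many positive reals $a_1, \dots, a_k \in (0,1]$ with $\sum a_i = 1$ and $\mathbb Q$-linear functions $f_1, \dots, f_k : S \to \mathbb Q$ fixing $\mathbb Q$, with $\sum_i a_i f_i = \operatorname{id}$ and $|f_i(a) - a| \le \delta$ for all $a \in I$. Set $\Gamma := (a_1, \dots, a_k)$; this is our partition of one, depending only on $d, \epsilon, I$.

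Next, given $(X, B+M)/Z$ as in the hypotheses and a point $z \in Z$: for each $i$, the pair $\bigl(X, f_i(B) + f_i(M)\bigr)$ has rational coefficients in the fixed finite set $f_i(I) \cup (\text{coefficients of } f_i \text{ applied to } \mathbb Q\text{-parts})$, which is bounded as $i$ ranges over $1, \dots, k$. By Theorem \ref{thm:perturblc} (resp.\ Theorem \ref{thm:perturbmld} in the $\epsilon > 0$ case), $\bigl(X, f_i(B)+f_i(M)\bigr)$ is still generalized $\epsilon$-lc; by Theorem \ref{thm:perturbpeff}, $-\bigl(K_X + f_i(B) + f_i(M)\bigr)$ is pseudo-effective over $Z$, and in fact the perturbed pair remains $(\epsilon, \mathbb R)$-complementary over $z$ because the $\mathbb R$-complement $B^+ \ge B$, $M^{+\prime} - M'$ nef can be perturbed simultaneously (running an MMP as in Theorem \ref{thm:peff}/\ref{thm:MFS} to land on a Mori fiber space or minimal model with $\mathbb R$-trivial log canonical divisor, whence a genuine $\mathbb R$-complement). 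Now $X$ is of Fano type over $Z$ and, after replacing $X$ by the output of a $-(K_X + f_i(B) + f_i(M))$-MMP over $Z$ (which does not decrease log discrepancies and, by Remark \ref{rmk:compl}(2), pulls back complements), we may assume $-(K_X + f_i(B) + f_i(M))$ is nef over $Z$. When $\epsilon = 0$ we apply Theorem \ref{thm:FM18c} over $z$; when $\epsilon > 0$ (so $\dim Z = 0$, and by Lemma \ref{lem:bab} the relevant $X$ form a bounded family, which lets us fix the Cartier index $p$ of $M'_i$) we apply Theorem \ref{thm:FM18b}. Either way we get a uniform $n_i$ and a strong $\epsilon$-lc $n_i$-complement $K_X + B_i^+ + M_i^{+}$ of $K_X + f_i(B) + f_i(M)$ over $z$; take $n := \operatorname{lcm}(n_1, \dots, n_k)$ (every $n_i$-complement is also an $n$-complement), so $n$ depends only on $d, \epsilon, I$.

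Finally, set $B^+ := \sum_i a_i B_i^+$ and $M^{+\prime} := \sum_i a_i M_i^{+\prime}$, with $M^+$ its push-forward. Since $\sum_i a_i f_i = \operatorname{id}$ and each $f_i$ fixes $\mathbb Q$, we get $\sum_i a_i f_i(B) = B$ and $\sum_i a_i f_i(M) = M$, so $B^+ = \sum a_i B_i^+ \ge \sum a_i f_i(B) = B$ and likewise $M^{+\prime} - M' = \sum a_i (M_i^{+\prime} - f_i(M)') $ is nef over $Z$; moreover $K_X + B^+ + M^+ = \sum_i a_i (K_X + B_i^+ + M_i^+) \sim_{\mathbb R} 0$, and generalized $\epsilon$-lc-ness of each summand gives generalized $\epsilon$-lc-ness of the convex combination (log discrepancies are affine in the boundary). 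Thus $K_X + B^+ + M^+$ is an $\mathbb R$-complement of $K_X + B + M$ over $z$ which is $(\epsilon, n, \Gamma)$-decomposable by construction. I expect the main obstacle to be the third bullet point of STEP 2 above, i.e.\ verifying carefully that the $\mathbb R$-complementarity (not merely the pseudo-effectivity of $-(K_X + f_i(B) + f_i(M))$) is preserved under the perturbation $f_i$ — this requires the MMP argument of Theorem \ref{thm:peff} together with a clean bookkeeping of boundary and b-divisor strict transforms so that the nef-over-$Z$ condition on $M^{+\prime} - M'$ survives — and, in the $\epsilon > 0$ case, correctly invoking Lemma \ref{lem:bab} to obtain a uniform Cartier index $p$ before applying Theorem \ref{thm:FM18b}.
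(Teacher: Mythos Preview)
Your overall strategy matches the paper's: perturb coefficients to $\mathbb{Q}$ via Lemma~\ref{lem:partition}, make the perturbed anti-log-canonical divisor nef by an MMP, invoke Theorems~\ref{thm:FM18c}/\ref{thm:FM18b}, and reassemble convexly. There is, however, a genuine gap in your ordering when $\epsilon>0$. You assert that the $-(K_X+f_i(B)+f_i(M))$-MMP ``does not decrease log discrepancies''. This is false for the pair $\bigl(X,f_i(B)+f_i(M)\bigr)$: each step contracts a ray $R$ with $-(K_X+f_i(B)+f_i(M))\cdot R<0$, i.e.\ a $(K_X+f_i(B)+f_i(M))$-\emph{positive} ray, and along such steps the log discrepancies of that pair can strictly drop. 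So even if Theorem~\ref{thm:perturbmld} gives $f_i(\epsilon)$-lc on $X$, you cannot conclude it on the output $Y_i$, and Theorem~\ref{thm:FM18b} does not apply there.

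The paper avoids this by reversing the order. First run the MMP $X\dashrightarrow Y_i$, using only Theorem~\ref{thm:perturbpeff}. On $Y_i$ the \emph{original} pair $(Y_i,B_i+M_i)$ is still $(\epsilon,\mathbb{R})$-complementary, because the $\epsilon$-lc $\mathbb{R}$-complement is numerically trivial, hence crepant under any birational contraction. Now Lemma~\ref{lem:bab} puts $Y_i$ in a bounded family, giving a uniform Cartier index $r$ and $(Y_i,B_i+M_i)\in P\bigl(d,r,\tfrac{1}{r}I\bigr)$; \emph{then} Theorem~\ref{thm:perturbmld}, applied on $Y_i$, yields that $\bigl(Y_i,f_i(B_i)+f_i(M_i)\bigr)$ is $f_i(\epsilon)$-lc, and Theorem~\ref{thm:FM18b} gives a strong $f_i(\epsilon)$-lc $n$-complement on $Y_i$, pulled back to $X$ via Remark~\ref{rmk:compl}(2) (which works precisely because the MMP was $(K+f_i(B)+f_i(M))$-positive). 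The ``main obstacle'' you flag --- preserving $(\epsilon,\mathbb{R})$-complementarity of the \emph{perturbed} pair --- is never needed.

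Two minor corrections: Theorem~\ref{thm:perturbmld} gives $f_i(\epsilon)$-lc, not $\epsilon$-lc; the $\epsilon$-lc conclusion for $K_X+B^++M$ comes from $\sum_i a_i f_i(\epsilon)=\epsilon$. And Theorems~\ref{thm:FM18c}/\ref{thm:FM18b} keep the nef part unchanged, so $M_i^{+\prime}=f_i(M)'$ and hence $M^{+\prime}=\sum_i a_i f_i(M)'=M'$; this is why the conclusion reads $K_X+B^++M$ rather than $K_X+B^++M^+$.
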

\begin{proof}
Replacing $I$ with $I \cup \{ \epsilon \}$, we may assume that $\epsilon \in I$. 
The proof is divided into the cases where $\epsilon >0$ and where $\epsilon =0$. 

\noindent
\underline{\textbf{Case 1}}\ \ Suppose $\epsilon > 0$. 
In this case, $Z$ in consideration satisfies $\dim Z=0$ by assumption.

First, we claim that there exists a positive integer $r$ depending only on $d$ and $\epsilon$ such that 
\begin{itemize}
\item If $(Y , D + N)/Z$ is a $\mathbb{Q}$-factorial generalized pair with the following conditions: 
\begin{itemize}
\item $\dim Z = 0$ and $\dim Y = d$, 
\item $D, N \in I$, 
\item $Y$ is of Fano type, and
\item $(Y, D+N)$ has an $\epsilon$-lc $\mathbb{R}$-complement, 
\end{itemize}
then it follows that $(Y, D + N) \in P \bigl( d,r,\frac{1}{r}I \bigr)$, 
where the notion $P \bigl( d,r,\frac{1}{r}I \bigr)$ is defined in Definition \ref{defi:P}. 
\end{itemize}
Indeed, by Lemma \ref{lem:bab}, such $Y$ belongs to a bounded family depending on $d$ and $\epsilon$. 
Hence, there exists a positive integer $r$ depending only on $d$ and $\epsilon$ such that $r F$ is Cartier for any Weil divisor $F$ on $Y$ 
(cf.\ \cite[Lemma 1.12]{Kaw88}). 
Therefore we have $(Y, D + N) \in P \bigl( d,r,\frac{1}{r}I \bigr)$. 

We set $\delta := \min \{ \delta_1, \delta _2 \}$, where 
$\delta _1$ is the $\delta$ given in Theorem \ref{thm:perturbmld} applied to $d$, $r$, $\epsilon$ and $I' := \frac{1}{r}I$, 
and $\delta _2$ is the $\delta$ given in Theorem \ref{thm:perturbpeff}. 
By Lemma \ref{lem:partition}, 
there exist a partition $\Gamma = (a_1, \ldots, a_k)$ of one and 
finitely many $\mathbb{Q}$-linear functions 
$f_1, \ldots , f_k : \operatorname{Span}_{\mathbb{Q}} (I \cup \{ 1 \} ) \to \mathbb{Q}$ fixing $\mathbb{Q}$ such that 
\begin{itemize}
\item $\sum_{i=1}^ka_i f_i = \rm{id}$ and 
\item $|f_i(a)-a| \leq \delta$ holds for every $a \in I$ and $i \in \{ 1, \ldots , k \}$. 
\end{itemize}

Let $(X,B+M)$ be a generalized pair satisfying the conditions in the statement. 
Note that strong complements are preserved by taking push-forwards of small birational maps. 
Therefore we may assume that $X$ is $\mathbb{Q}$-factorial by replacing $X$ with its small $\mathbb{Q}$-factorialization.
Then by the choice of $\delta _2$, it follows that $- \bigl( K_X + f_i(B) + f_i(M) \bigr)$ is pseudo-effective. 
Let $X \dasharrow Y_i$ be the end result of a $- \bigl( K_X + f_i(B) + f_i(M) \bigr)$-MMP. 
We denote by $B_i$ and $M_i$ the pushforward of $B$ and $M$ on $Y_i$, respectively. 
Since $(X,B+M)$ is $(\epsilon,\mathbb{R})$-complementary by assumption, so is $(Y_i, B_i + M_i)$.
In particular, $(Y_i, B_i + M_i)$ is generalized $\epsilon$-lc and $(Y_i, B_i + M_i) \in P(d,r,I')$. 
Hence by the choice of $\delta _1$, $\bigl( Y_i, f_i(B_i) + f_i(M_i) \bigr)$ is generalized $f_i(\epsilon)$-lc. 
Therefore by Theorem \ref{thm:FM18b} and Remark \ref{rmk:compl}(2), 
there exists a positive integer $n$ depending only on $d$, $\epsilon$ and $I$ such that 
there is a generalized strong $f_i (\epsilon)$-lc 
$n$-complement $K_{X} + B_i ^+ + f_i(M)$ of $K_X + f_i(B) + f_i(M)$ for each $i$. 
We set $B^+ = \sum_{i=1}^ka_iB_i^+$. 
Then we have 
\begin{align*}
\operatorname{mld} ( X, B^+ + M ) 
&= \operatorname{mld} \Bigl( X, \sum_{i=1}^ka_iB_i^+ +\sum_{i=1}^k a_i f_i(M) \Bigr) \\
&\geq  \sum_{i=1}^ka_i \operatorname{mld} \bigl( X, B_i^+ +f_i( M) \bigr)\\
&\ge \sum_{i=1}^k a_i f_i (\epsilon) = \epsilon. 
\end{align*}
Therefore $K_X + B^+ +M$ is a strong generalized $(\epsilon,n,\Gamma)$-decomposable complement of $K_X+B+M$. 

\vspace{2mm}

\noindent
\underline{\textbf{Case 2}}\ \ Suppose $\epsilon = 0$. 

The same argument as in Case 1 works by replacing Theorem \ref{thm:perturbmld} with Theorem \ref{thm:perturblc}, 
and Theorem \ref{thm:FM18b} with Theorem \ref{thm:FM18c} in the argument. 
\end{proof}

\begin{thm}\label{m0}
Let $d$ be a positive integer and $\Lambda \subset [0, + \infty)$ a DCC set. 
Then there exists a positive real number $m'$ depending only on $d$ and $\Lambda$ such that 
the following condition holds: If 
\begin{itemize}
\item $(X,B+M)/Z$ is a generalized pair of dimension $d$, 
\item $X$ is of Fano type over $Z$,
\item $(X,B+M)$ is $\mathbb{R}$-complementary, 
\item $B \in \Lambda$, and
\item $M' = \sum_j m_j M'_j$ holds for some $m_j \in \Lambda$ and some nef$/Z$ Cartier divisors $M'_j \not \equiv _{Z} 0$, 
\end{itemize}
then $m_j\leq m'$ holds for each $j$.
\end{thm}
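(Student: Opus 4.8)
The plan is to bound the coefficients $m_j$ using the DCC property of $\Lambda$ together with the boundedness of complements. First I would reduce to a favorable birational model: since $(X,B+M)$ is $\mathbb{R}$-complementary and $X$ is of Fano type over $Z$, I may run a $-(K_X+B+M)$-MMP over $Z$ and replace $X$ by a minimal model, so that $-(K_X+B+M)$ becomes nef$/Z$; taking a $\mathbb{Q}$-factorialization first, we may also assume $X$ is $\mathbb{Q}$-factorial. Passing to a general fiber of the resulting Mori fiber contraction (or restricting over a closed point $z \in Z$), it suffices to handle the case $\dim Z = 0$, where $K_X+B+M\equiv 0$ after a further step; in this absolute case $B+M$ has $K_X+B+M \sim_{\mathbb R} 0$ (up to an $\mathbb{R}$-complement adjustment) and we are in the setting of Theorem \ref{thm:GACC}.

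The key step is then to suppose, for contradiction, that there is a sequence $(X_k, B_k + M_k)/Z_k$ as in the statement with $\max_j m_{j}^{(k)} \to \infty$ (or merely unbounded). After the reductions above we may assume $\dim Z_k = 0$, $X_k$ is $\mathbb{Q}$-factorial of Fano type, and $K_{X_k}+B_k^+ + M_k^+ \sim_{\mathbb R} 0$ for the $\mathbb R$-complement; since $B_k^+ \ge B_k \in \Lambda$ and $M_k^{+\prime} - M_k'$ is nef, enlarging $\Lambda$ to a DCC set $\Lambda' \supset \Lambda$ we may take $B_k^+ \in \Lambda'$ and $M_k^{+\prime} = \sum_j m_j^{(k)} M_{k,j}'$ plus an extra nef Cartier piece — so we still have $M_k^+ \in \Lambda'$ with the same large coefficients. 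Now apply Theorem \ref{thm:GACC} with $d$ and $I := \Lambda'$: it yields a \emph{finite} subset $J \subset \Lambda'$ such that $B_k^+ \in J$ and $M_k^+ \in J$. But $M_k^+ \in J$ forces each $m_j^{(k)} \in J$, so $\{m_j^{(k)}\}$ lies in a fixed finite set, contradicting $m_j^{(k)} \to \infty$. Hence the $m_j$ are uniformly bounded by $m' := \max J$, which depends only on $d$ and $\Lambda$.

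The main obstacle I expect is the birational reduction making Theorem \ref{thm:GACC} applicable: one must ensure that after running the MMP over $Z$ and restricting to a fiber, the resulting generalized pair still has $K + B + M \equiv 0$ and retains coefficients in a DCC set with the large $m_j$'s surviving — in particular that passing to a general fiber $F$ (with $B_F = B|_F$, $M_F' = M'|_F$) does not lose the offending coefficients, which is clear since restriction to a general fiber preserves the coefficients of components dominating the base and $M_{k,j}'$ is nef on $F$. A secondary subtlety is handling the components of $M'_j$ that become numerically trivial on the fiber: the hypothesis $M'_j \not\equiv_Z 0$ guarantees at least the relevant components persist, and Theorem \ref{thm:GACC} is stated precisely with the condition $M'_k \not\equiv 0$, so this matches. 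Once the reduction is in place, the contradiction via Theorem \ref{thm:GACC} is immediate.
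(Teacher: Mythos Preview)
Your proposal has a genuine gap at the step where you invoke Theorem \ref{thm:GACC}. That theorem requires the generalized pair $(X, B+M)$ with $K_X+B+M \equiv 0$ to have $B \in I$ and $M' = \sum m_k M'_k$ with $m_k \in I$ for a \emph{fixed} DCC set $I$. You want to apply it to an $\mathbb{R}$-complement $K_X + B^+ + M^+$ and assert that ``enlarging $\Lambda$ to a DCC set $\Lambda' \supset \Lambda$ we may take $B_k^+ \in \Lambda'$''. This is false: the only constraints on the complement are $B^+ \geq B$ and that $(X,B^++M^+)$ is generalized lc, so the coefficients of $B^+$ can be arbitrary real numbers in $[0,1]$, not lying in any prescribed DCC set. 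The same problem afflicts $M^{+\prime}$: the extra nef piece $M^{+\prime} - M'$ has no reason to be a combination of nef Cartier divisors with coefficients in $\Lambda'$. Without coefficient control, Theorem \ref{thm:GACC} simply does not apply, and the contradiction evaporates.

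The paper avoids this by \emph{not} passing to the complement. It splits into two cases. If $M'_j \neq \varphi^*(M_j)$, then the generalized lc threshold of $M_j$ with respect to $\bigl(X, B + (M - m_j M_j)\bigr)$ is finite and at least $m_j$; Theorem \ref{thm:LACC} (ACC for generalized lc thresholds) then gives a uniform bound. If $M'_j = \varphi^*(M_j)$, one runs a $-(K_X + B + M + t M_j)$-MMP over $Z$ (not a $-(K_X+B+M)$-MMP as you suggest) to reach a Mori fiber space on which $K + B + M + t M_j$ is ample over the base while $-(K+B+M)$ remains nef on a general fiber $F$; an intermediate value argument produces $\lambda \in [0,t)$ with $K_F + B_F + M_F + \lambda M_{j,F} \equiv 0$. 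The coefficients of this pair lie in $\Lambda \cup \{m_j + \lambda\}$, and the offending coefficient $m_j + \lambda \geq m_j$ genuinely survives because the MMP was chosen so that $M_{j,F}$ is ample, hence $\not\equiv 0$. Now Theorem \ref{thm:GACC} applies and forces $m_j + \lambda$ into a finite set, contradicting unboundedness. The first case is also reused inside the second to check that $M'_j$ still descends on the Mori fiber space model, so that the pair stays generalized lc after adding $\lambda M_j$. Your argument supplies neither the coefficient control on the complement nor any mechanism ensuring the large-coefficient $M'_j$ remains numerically nontrivial on the eventual fiber.
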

\begin{proof}
Replacing $X$ with its small $\mathbb{Q}$-factorial modification, 
we may assume that $X$ is $\mathbb{Q}$-factorial. 

First, we shall prove the boundedness of $m_j$ for $j$ satisfying $M'_j \not =\varphi^*(M_j)$. 
Let $\Gamma$ be the ACC set $J$ appearing in Theorem \ref{thm:LACC} when applying it to $I = \Lambda$. 
For each $j$ with $M'_j \not =\varphi^*(M_j)$, 
the generalized lc threshold $t$ of $M_j$ with respect to 
$\bigl( X , B + (M - m_jM_j) \bigr)$ satisfies $m_j \leq t < + \infty$ and $t \in \Gamma$. 
In particular, $m_j$ is bounded from above by the maximum element $\max \Gamma$ of the ACC set $\Gamma$.

Next, we shall prove the boundedness of $m_j$ for $j$ satisfying $M'_j = \varphi^*(M_j)$. 
Suppose the contrary that for each $i \ge 1$, there exists a generalized lc pair $(X_i, B_i+M_i)/Z_i$ of dimension $d$ with the following conditions: 
\begin{itemize}
\item $(X_i, B_i + M_i)/Z_i$ satisfies the assumptions in the statement. 

\item $M'_{i,0} = \varphi^*(M_{i,0})$ and $m_{i,0} \ge i$ hold 
when we write $M'_i = \sum_{j \ge 0} m_{i,j} M'_{i,j}$ as in the assumption. 
\end{itemize}
Since $M_{i,0} \not \equiv _{Z_i} 0$ by the assumption, 
$-(K_{X_i}+B_i+M_i+t_iM_{i,0})$ is not pseudo-effective over $Z_i$ for some $t_i>0$. 
Since $X_i$ is of Fano type over $Z_i$, there exists a $-(K_{X_i} + B_i + M_i + t_i M_{i,0})$-MMP over $Z_i$ and 
it ends with a Mori fiber space $Y_i \to W_i$. 
Then 
\[
K_{Y_i}+B_{i,Y_i}+M_{i,Y_i}+t_iM_{i,0,Y_i}
\] 
is ample over $W_i$, where $B_{i,Y_i}$, $M_{i,Y_i}$, and $M_{i,0,Y_i}$ are 
the strict transforms of $B_i$, $M_i$, and $M_{i,0}$, respectively.
Since $(X_i, B_i+M_i)$ is $\mathbb{R}$-complementary, $(Y_i, B_{i,Y_i}+M_{i,Y_i})$ is generalized lc. 
Possibly replacing $X'_i$ with a higher model, we may assume that $\psi: X'_i \to Y_i$ is a morphism. 
Then for $i > \max\Gamma$, we have $\psi^*(M_{i,0,Y_i})=M'_{i,0}$ by the same argument in the previous paragraph. 
Hence $(Y_i,B_{i,Y_i}+M_{i,Y_i}+t_iM_{i,0,Y_i})$ is also generalized lc.
Let $F_i$ be a general fiber of $Y_i \to W_i$.
We set 
\[
B_{F_i} := B_{i,Y_i}|_{F_i}, \quad M_{F_i} := M_{i,Y_i}|_{F_i}, \quad M_{i,0,F_i} := M_{i,0,Y_i}|_{F_i}.
\]
Then $(F_i,B_{F_i} + M_{F_i} + t_iM_{i,0,F_i})$ is generalized lc, 
$-(K_{F_i}+B_{F_i}+M_{F_i})$ is nef and $K_{F_i}+B_{F_i}+M_{F_i}+t_iM_{i,0,F_i}$ is ample. 
Therefore, there exists $\lambda _i \in [0, t_i)$ such that 
\begin{itemize}
\item $K_{F_i} + B_{F_i} + M_{F_i} + \lambda_i M_{i,0,F_i} \equiv 0$, and 
\item $(F_i, B_{F_i} + M_{F_i} + \lambda_i M_{i,0,F_i})$ is generalized lc. 
\end{itemize}
Note that $B_{F_i} \in \Lambda$ and $M_{F_i} + \lambda_i M_{i,0,F_i} \in \Lambda \cup \{ m_{i,0} + \lambda _i \mid i \}$, 
and that $m_{i,0} + \lambda _i \ge i$. 
Therefore, we get a contradiction by Theorem \ref{thm:GACC}. 
\end{proof}

\begin{cor}\label{cor1}
Fix a positive integer $d$ and a DCC subset $\Lambda \subset [0, + \infty)$ of real numbers. 
Then there exist a positive integer $n$ and a finite partition $\Gamma$ of one depending only on $d$ and $\Lambda$
such that the following condition holds: If 
\begin{itemize}
\item $(X,B+M)/Z$ is a generalized lc pair of dimension $d$, 
\item $B, M \in \Lambda$ holds, 
\item $X$ is of Fano type over $Z$, and
\item $(X,B+M)$ is $\mathbb{R}$-complementary over $z$ for every $z \in Z$, 
\end{itemize}
then for every point $z\in Z$, there exists a strong $(n,\Gamma)$-decomposable complement $K_X+B^++M^+$ of $K_X+B+M$ over $z$.
\end{cor}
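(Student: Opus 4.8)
The plan is to deduce the corollary from Theorem~\ref{thm:bdd-e-comple} applied with $\epsilon=0$. In that case Theorem~\ref{thm:bdd-e-comple} takes a generalized lc pair of Fano type over $Z$ which is $\mathbb{R}$-complementary over every $z\in Z$ and produces a strong $(0,n,\Gamma)$-decomposable complement, which is exactly the object we want: for an $\mathbb{R}$-complement being generalized lc is automatic, so $(0,\mathbb{R})$-complementary just means $\mathbb{R}$-complementary, and a $(0,n,\Gamma)$-decomposable complement is an $(n,\Gamma)$-decomposable one. The one hypothesis that is stronger in Theorem~\ref{thm:bdd-e-comple} is that the coefficient set there is required to be \emph{finite}, while here $\Lambda$ is only DCC. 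Thus the whole content of the proof is to replace $\Lambda$ by a finite subset depending only on $d$ and $\Lambda$.

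First I would pass to a small $\mathbb{Q}$-factorialization of $X$, which affects neither Fano type, nor $\mathbb{R}$-complementarity, nor the existence of strong complements (Remark~\ref{rmk:compl}(2)), so we may assume $X$ is $\mathbb{Q}$-factorial. Next, since $(X,B+M)$ is generalized lc we automatically have $B\in[0,1]$, and Theorem~\ref{m0} bounds the coefficients $m_j$ of $M'$ above by some $m'=m'(d,\Lambda)$; hence every pair in the statement has all of its coefficients in the bounded DCC set $\Lambda_0:=\Lambda\cap[0,\max\{1,m'\}]$. (The irrationality of $\Lambda_0$ is harmless, since Theorem~\ref{thm:bdd-e-comple} already absorbs irrational coefficients through the perturbation technique of Subsection~\ref{subsectionperturb}.)

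The remaining step, and the main obstacle, is to pass from the bounded DCC set $\Lambda_0$ to a finite set. I would argue by contradiction: assuming no $(n,\Gamma)$ depending only on $d,\Lambda$ works, one produces a sequence of pairs $(X_i,B_i+M_i)/Z_i$ and points $z_i\in Z_i$ admitting no $n$-complement over $z_i$ for $n$ below any prescribed bound. Restricting over $z_i$ we may assume $K_{X_i}+B_i+M_i$ has an $\mathbb{R}$-complement with $K_{X_i}+B_i^++M_i^+\sim_{\mathbb{R}}0$; running an MMP over $Z_i$ as in the proofs of Theorems~\ref{m0} and~\ref{thm:peff} we reach a Mori fiber space $Y_i\to W_i$, and restricting to a general fiber $F_i$ we obtain generalized lc pairs $(F_i,B_{F_i}+M_{F_i})$ with $K_{F_i}+B_{F_i}+M_{F_i}\equiv 0$ and coefficients still in $\Lambda_0$. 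Theorem~\ref{thm:GACC} then forces, after passing to a tail, all coefficients that genuinely occur to lie in a fixed finite subset $I_0\subset\Lambda_0$ determined by $d$ and $\Lambda$, and Theorem~\ref{thm:bdd-e-comple} with $\epsilon=0$ and the finite set $I_0$ yields a uniform $(n,\Gamma)$, contradicting the construction.

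I expect the delicate point to be precisely this last reduction: the global ACC of Theorem~\ref{thm:GACC} only controls coefficients in a setting where $K+B+M\equiv 0$, so one must check that running the MMP and restricting to a general fiber does not lose the coefficients relevant to the original pair, and that a complement obtained on the model reached by the MMP (or on $F_i$) pulls back to a strong complement of $(X_i,B_i+M_i)$ over $z_i$; the latter is handled via Remark~\ref{rmk:compl}(2) together with the construction in the proof of Theorem~\ref{m0}, exactly as one argues there to bound the $m_j$.
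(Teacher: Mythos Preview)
Your reduction has two gaps, one minor and one serious.

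First, Theorem~\ref{m0} only bounds those $m_j$ for which $M'_j\not\equiv_Z 0$; it says nothing about the coefficients of the numerically trivial summands. The paper handles these separately: if $N'_h\equiv_Z 0$ then $N'_h=\varphi^*N_h$ by the negativity lemma, and $N_h\sim_{\mathbb{Q},Z}0$ by the base point free theorem since $X$ is of Fano type over $Z$. One may therefore replace each coefficient $n_h$ by $\lceil n_h\rceil$ without destroying $\mathbb{R}$-complementarity, and $\lceil n_h\rceil N'_h$ is then itself a nef$/Z$ Cartier divisor, so may be given coefficient $1$. Only after this manoeuvre does the coefficient set become bounded.

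Second, and more importantly, your passage from the bounded DCC set $\Lambda_0$ to a finite set does not work as sketched. You do not specify which MMP reaches a Mori fibre space: in the proofs you cite, the MMP is run on a divisor known in advance not to be pseudo-effective, whereas here $-(K_X+B+M)$ \emph{is} pseudo-effective by $\mathbb{R}$-complementarity, so a $-(K+B+M)$-MMP terminates in a minimal model rather than a Mori fibre space. Even granting some Mori fibre structure, the global ACC (Theorem~\ref{thm:GACC}) only constrains the coefficients that survive on the general fibre $F_i$: components of $B_i$ contracted by the MMP or disjoint from $F_i$, and summands $M'_{i,j}$ restricting trivially to $F_i$, escape the conclusion entirely, so you cannot deduce that the original pairs have coefficients in a fixed finite set. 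And Remark~\ref{rmk:compl}(2) transports complements along birational maps, not from a fibre back up to the total space.

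The paper sidesteps this second reduction altogether: once $\Lambda$ is bounded (via Theorem~\ref{m0} together with the rounding trick for the numerically trivial part), it invokes \cite[Theorem~1.3]{C20} directly, which already treats bounded DCC coefficient sets. Theorem~\ref{thm:bdd-e-comple} is not used in the proof of Corollary~\ref{cor1}; the genuine DCC-to-finite reduction lives inside \cite{C20} and proceeds by a different mechanism than the one you outline.
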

\begin{proof}
Replacing $X$ with a small $\mathbb{Q}$-factorial modification, we may assume that $X$ is $\mathbb{Q}$-factorial. 
Since $M \in \Lambda$, we may write 
\[
M' = \sum _j m_j M'_j + \sum_h n_h N'_h
\]
with $m_j, n_h \in \Lambda$ and some nef$/Z$ Cartier divisors $M'_j$ and $N'_h$ on $X'$, 
where each $M'_j$ satisfies $M'_j \not \equiv _Z 0$ and each $N'_h$ satisfies $N'_h \equiv _Z 0$.
Let $M_j$ and $N_h$ be the push-forwards of $M'_j$ and $N'_h$ on $X$, respectively.
By Theorem \ref{m0}, there exists a positive real number $m'$ such that $m_j \leq m'$ holds for every $j$.
On the other hand, by the negativity lemma, $N'_h = \varphi^*(N_h)$ holds for every $h$. 
Since $N_h$ is a numerically trivial$/Z$ $\mathbb{Q}$-Cartier divisor on a Fano type variety $X$ over $Z$, 
it follows that $N_h \sim _{\mathbb{Q}, Z} 0$ by the base point free theorem (cf.\ \cite[Theorem 3.1.1]{KMM}). 
In particular, for every point $z \in Z$, $N_h \sim _{\mathbb{Q}} 0$ holds over a neighborhood of $z$. 
By assumption, for every $z$, there exists an $\mathbb{R}$-complement $K_X+B^++M^+$ of $K_X+B+M$ over $z$. 
Therefore, $K_X+B^++M^++\sum_h(\lceil n_h\rceil-n_h) N_h$ is an $\mathbb{R}$-complement of 
$K_X+B+M+\sum_h(\lceil n_h\rceil-n_h) N_h$ over $z$. 
Hence, $\bigl( X,B+\sum _j m_j M_j +\sum_h\lceil n_h\rceil N_h \bigr)$ is $\mathbb{R}$-complementary over $z$. 
Then, the assertion follows from the case proved in {\cite[Theorem 1.3]{C20}} where $\Lambda$ is bounded.
\end{proof}

When $M'=0$, the boundedness of complements are shown in \cite[Theorem 1.5]{CX20}. 
We give a brief proof here using Theorem \ref{thm:bdd-e-comple}. 

\begin{cor}[{\cite[Theorem 1.5]{CX20}}]\label{cor2}
Fix a positive integer $d$, a DCC subset $\Lambda \subset [0,1]$ of real numbers, and a non-negative real number $\epsilon$.
Then there exist a positive integer $n$ and a finite partition $\Gamma$ of one depending only on $d$, $\Lambda$ and $\epsilon$ 
such that the following condition holds: If 
\begin{itemize}
\item $(X,B)$ is a projective pair of dimension $d$, 
\item $B \in \Lambda$ holds, 
\item $X$ is of Fano type, and
\item $(X,B)$ is $(\epsilon, \mathbb{R})$-complementary, 
\end{itemize}
then there exists a strong $(\epsilon , n , \Gamma)$-decomposable complement $K_X+B^+$ of $K_X+B$.
\end{cor}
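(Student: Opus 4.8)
The plan is to deduce Corollary \ref{cor2} from Theorem \ref{thm:bdd-e-comple} by viewing a usual pair $(X,B)$ as a generalized pair with $M=0$. First I would set $I := \Lambda$, which is only a DCC set rather than a finite set, so the immediate task is to reduce to a finite coefficient set. Since $\Lambda \subset [0,1]$ is DCC and $B \in \Lambda$, the usual trick is to invoke the rationality/perturbation machinery: because the mld is bounded below by $\epsilon$ and the coefficients live in a DCC set, only finitely many coefficient values actually matter for the problem at hand once we fix the dimension; concretely, I would use \cite[Theorem 1.6]{CX20} or the arguments behind Theorem \ref{m0} to replace $\Lambda$ by a finite subset $I \subset [0,+\infty)$ depending only on $d$ and $\Lambda$ (one may even need to enlarge $\Lambda$ to a closed DCC set and then use that accumulation points of relevant coefficients are controlled). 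The cleanest route, given what is available in the excerpt, is to note that the statement of Corollary \ref{cor1} already handles the case $M \ne 0$ with $\Lambda$ merely DCC, so for $M = 0$ one can either specialize Corollary \ref{cor1} directly or run the finite-coefficient argument.

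Assuming the reduction to a finite set $I$ is in hand, the main step is a direct application of Theorem \ref{thm:bdd-e-comple} with this $d$, this $\epsilon$, and $I := \Lambda$ (finite). That theorem requires $(X,B+M)/Z$ to be generalized $\epsilon$-lc, $\dim Z = 0$ when $\epsilon > 0$, $B, M \in I$, $X$ of Fano type over $Z$, and $(X,B+M)$ to be $(\epsilon,\mathbb{R})$-complementary over every $z \in Z$. In our situation $Z = \operatorname{Spec}\mathbb{C}$ (the pair $(X,B)$ is projective), so $\dim Z = 0$ automatically; $M = 0 \in I$; $X$ is of Fano type by hypothesis; $(X,B)$ is $(\epsilon,\mathbb{R})$-complementary by hypothesis, which in particular forces $(X,B)$ to be generalized (equivalently usual) $\epsilon$-lc. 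Thus all hypotheses of Theorem \ref{thm:bdd-e-comple} are met, and it yields a positive integer $n$ and a partition $\Gamma = (a_1,\dots,a_k)$ of one, depending only on $d$, $\epsilon$, and $I$ (hence only on $d$, $\Lambda$, $\epsilon$), together with a strong generalized $(\epsilon, n, \Gamma)$-decomposable complement $K_X + B^+ + M$ of $K_X + B$. Since $M = 0$, this is precisely a strong $(\epsilon, n, \Gamma)$-decomposable complement $K_X + B^+$ of $K_X + B$ in the sense of usual pairs, which is the assertion.

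I expect the only genuine obstacle to be the passage from the DCC set $\Lambda$ to a finite set, since Theorem \ref{thm:bdd-e-comple} is stated only for finite $I$. The honest fix here is that this reduction is exactly the content of \cite[Theorem 1.6]{CX20} (and the reason the corollary is attributed to that paper), so in writing the proof I would either (i) cite the DCC-to-finite reduction from \cite{CX20} / the analogue of Theorem \ref{m0} for $M = 0$, or (ii) simply observe that Corollary \ref{cor1}, already proved in the excerpt for arbitrary DCC $\Lambda$ (with $M$ possibly nonzero), specializes to $M = 0$ and yields a strong $(n,\Gamma)$-decomposable complement; one then upgrades it to an $\epsilon$-lc one exactly as in Case 1 of the proof of Theorem \ref{thm:bdd-e-comple}, using Lemma \ref{lem:bab}, Theorem \ref{thm:perturbmld}, Theorem \ref{thm:perturbpeff}, and Lemma \ref{lem:partition}. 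Everything else — checking $\dim Z = 0$, $M = 0 \in \Lambda$, Fano type, and $(\epsilon,\mathbb{R})$-complementarity — is immediate from the hypotheses, so the write-up should be short.

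\begin{proof}
This is the case $M = 0$ of Theorem \ref{thm:bdd-e-comple}. Indeed, since $(X,B)$ is projective we may take $Z = \operatorname{Spec} \mathbb{C}$, so that $\dim Z = 0$; setting $I := \Lambda \cup \{ \epsilon \}$ (which we may assume finite by the DCC-to-finite reduction of \cite[Theorem 1.6]{CX20}, or equivalently by running the argument of Case 1 in the proof of Theorem \ref{thm:bdd-e-comple} together with Lemma \ref{lem:bab}), the generalized pair $(X, B + M)/Z$ with $M = 0$ satisfies $B, M \in I$, $X$ is of Fano type over $Z$, and $(X, B)$ is $(\epsilon, \mathbb{R})$-complementary over the unique point of $Z$ by hypothesis; in particular $(X,B)$ is generalized $\epsilon$-lc. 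Applying Theorem \ref{thm:bdd-e-comple} with these $d$, $\epsilon$, and $I$, we obtain a positive integer $n$ and a partition $\Gamma = (a_1, \ldots, a_k)$ of one, depending only on $d$, $\epsilon$ and $\Lambda$, and a strong generalized $(\epsilon, n, \Gamma)$-decomposable complement $K_X + B^+ + M$ of $K_X + B$. Since $M = 0$, this is a strong $(\epsilon, n, \Gamma)$-decomposable complement $K_X + B^+$ of $K_X + B$, as desired.
\end{proof}
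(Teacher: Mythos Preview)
Your overall plan is right in spirit — reduce to a finite coefficient set and then invoke Theorem \ref{thm:bdd-e-comple} — and you correctly identify the DCC-to-finite reduction as the only real issue. The problem is that neither of your proposed fixes actually closes that gap. Citing \cite[Theorem 1.6]{CX20} is circular: Corollary \ref{cor2} \emph{is} \cite[Theorem 1.6]{CX20}, and the point here is to give an alternative proof via Theorem \ref{thm:bdd-e-comple}. Your other suggestions do not apply either: Theorem \ref{m0} bounds the coefficients $m_j$ of the nef part $M'$, not those of $B$; Corollary \ref{cor1} only produces a (possibly non-$\epsilon$-lc) $(n,\Gamma)$-decomposable complement; and ``running Case 1 of Theorem \ref{thm:bdd-e-comple} together with Lemma \ref{lem:bab}'' cannot bootstrap you to a finite set, because every tool in that argument (Theorems \ref{thm:perturbmld}, \ref{thm:perturbpeff}, Lemma \ref{lem:partition}) already takes a finite $I$ as input.

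The paper's proof supplies exactly the missing step. For $\epsilon=0$ it simply specializes Corollary \ref{cor1} to $M=0$. For $\epsilon>0$ it does not try to shrink the set of coefficients abstractly; instead it enlarges the boundary: by the proof of \cite[Theorem 1.3]{FM} (which does not use the rationality hypothesis for this part), there is a finite subset $\Lambda_0\subset\overline{\Lambda}$ depending only on $d,\Lambda,\epsilon$ such that any such $(X,B)$ admits $B^+\ge B$ with $B^+\in\Lambda_0$ and $(X,B^+)$ still $(\epsilon,\mathbb{R})$-complementary. One then applies Theorem \ref{thm:bdd-e-comple} to $(X,B^+)$ with $I=\Lambda_0$. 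This is where BAB (Lemma \ref{lem:bab}) enters, but it is used inside the \cite{FM} argument to produce $\Lambda_0$ and $B^+$, not merely to bound the Cartier index as in Case 1 of Theorem \ref{thm:bdd-e-comple}. That is the concrete idea you are missing.
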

\begin{proof}
When $\epsilon=0$, the assertion is the special case (i.e.\ $M'=0$) of Corollary \ref{cor1}.
Therefore, we may assume $\epsilon>0$. 
Replacing $\Lambda$ with its closure, we may assume that $\Lambda$ is closed.
Replacing $X$ with its small $\mathbb{Q}$-factorial modification, we may assume that $X$ is $\mathbb{Q}$-factorial.

By the proof of {\cite[Theorem 1.3]{FM}}, there exists a finite subset $\Lambda_0 \subset \Lambda$ 
depending only on $d$, $\Lambda$ and $\epsilon$ with the following condition: 
\begin{itemize}
\item[$(\spadesuit)$]
If $(X,B)$ is a pair satisfying the assumptions in the statement, 
then there exists a boundary $B^+$ of $X$ such that 
$B^+\geq B$, $B^+\in \Lambda_0$ and $(X, B^+)$ is $(\epsilon,\mathbb{R})$-complementary.
\end{itemize}
\noindent
Then the assertion follows from Theorem \ref{thm:bdd-e-comple}. 
Note that in {\cite[Theorem 1.3]{FM}}, the rationality condition $\overline{\Lambda} \subset \mathbb{Q}$ is assumed.
However, for proving $(\spadesuit)$, their proof works even for our setting $\Lambda = \overline{\Lambda} \subset \mathbb{R}$.
\end{proof}

\begin{rmk}\label{rmk:overlap}
\begin{enumerate}
\item Some results on the boundedness of complements are proved in {\cite{CH21}} for surfaces (not necessarily of Fano type).
\item Corollary \ref{cor1} is proved in {\cite[Theorem 1.3]{C20}} for $\Lambda \subset [0,1]$.
\item Corollary \ref{cor2} is proved in \cite[Theorem 1.5]{CX20}. Corollary \ref{cor2} is still open for generalized pairs in general.
\end{enumerate}
\end{rmk}

\begin{bibdiv}
\begin{biblist*}

\bib{Ale93}{article}{
   author={Alexeev, Valery},
   title={Two two-dimensional terminations},
   journal={Duke Math. J.},
   volume={69},
   date={1993},
   number={3},
   pages={527--545},
}

\bib{Amb99}{article}{
   author={Ambro, Florin},
   title={On minimal log discrepancies},
   journal={Math. Res. Lett.},
   volume={6},
   date={1999},
   number={5-6},
   pages={573--580},
}

\bib{Amb06}{article}{
   author={Ambro, Florin},
   title={The set of toric minimal log discrepancies},
   journal={Cent. Eur. J. Math.},
   volume={4},
   date={2006},
   number={3},
   pages={358--370 (electronic)},
}

\bib{BCHM}
{article}{
   author={Birkar, Caucher},
   author={Cascini,Paolo},
   author={Hacon, Christopher D.},
   author={McKernan, James},
   title={Existence of minimal models for varieties of log general type},
   journal={J. Amer. Math. Soc.},
   volume={23},
   date={2010},
   number={2},
   pages={405--468},
}

\bib{Bir19a}{article}{
   author={Birkar, Caucher},
   title={Anti-pluricanonical systems on Fano varieties},
   journal={Ann. of Math. (2)},
   volume={190},
   date={2019},
   number={2},
   pages={345--463},
}

\bib{Bir16b}{article}{
   author={Birkar, Caucher},
   title={Singularities of linear systems and boundedness of Fano varieties},
   journal={Ann. of Math. (2)},
   volume={193},
   date={2021},
   number={2},
   pages={347--405},
}

\bib{Bir20}{article}{
   author={Birkar, Caucher},
   title={Generalised pairs in birational geometry},
   journal={EMS Surv. Math. Sci.},
   volume={8},
   date={2021},
   number={1-2},
   pages={5--24},
}

\bib{BZ16}{article}{
   author={Birkar, Caucher},
   author={Zhang, De-Qi},
   title={Effectivity of Iitaka fibrations and pluricanonical systems of
   polarized pairs},
   journal={Publ. Math. Inst. Hautes \'{E}tudes Sci.},
   volume={123},
   date={2016},
   pages={283--331},
}

\bib{C20}{article}{
   author={Chen, Guodu},
   title={Boundedness of $n$-complements for generalized pairs},
   eprint={arXiv:2003.04237}
}

\bib{CH21}{article}{
   author={Chen, Guodu},
   author={Han, Jingjun},
   title={Boundedness of $(\epsilon, n)$-complements for surfaces},
   journal={Adv. Math.},
   volume={383},
   date={2021},
   pages={Paper No. 107703, 40},
}

\bib{CX20}{article}{
   author={Chen, Guodu},
   author={Xue, Qingyuan},
   title={Boundedness of $(\epsilon,n)$-complements for projective
   generalized pairs of Fano type},
   journal={J. Pure Appl. Algebra},
   volume={226},
   date={2022},
   number={7},
   pages={Paper No. 106988, 11},
}

\bib{EM04}{article}{
   author={Ein, Lawrence},
   author={Musta{\c{t}}{\v{a}}, Mircea},
   title={Inversion of adjunction for local complete intersection varieties},
   journal={Amer. J. Math.},
   volume={126},
   date={2004},
   number={6},
   pages={1355--1365},
}

\bib{EMY03}{article}{
   author={Ein, Lawrence},
   author={Musta{\c{t}}{\u{a}}, Mircea},
   author={Yasuda, Takehiko},
   title={Jet schemes, log discrepancies and inversion of adjunction},
   journal={Invent. Math.},
   volume={153},
   date={2003},
   number={3},
   pages={519--535},
}

\bib{FM}{article}{
   author={Filipazzi, Stefano},
   author={Moraga, Joaqu\'{\i}n},
   title={Strong $(\delta,n)$-complements for semi-stable morphisms},
   journal={Doc. Math.},
   volume={25},
   date={2020},
   pages={1953--1996},
}

\bib{Ful98}{book}{
   author={Fulton, William},
   title={Intersection theory},
   series={Ergebnisse der Mathematik und ihrer Grenzgebiete. 3. Folge. A
   Series of Modern Surveys in Mathematics [Results in Mathematics and
   Related Areas. 3rd Series. A Series of Modern Surveys in Mathematics]},
   volume={2},
   edition={2},
   publisher={Springer-Verlag, Berlin},
   date={1998},
   pages={xiv+470},
}

\bib{GW10}{book}{
   author={G{\"o}rtz, Ulrich},
   author={Wedhorn, Torsten},
   title={Algebraic geometry I},
   series={Advanced Lectures in Mathematics},
   publisher={Vieweg + Teubner, Wiesbaden},
   date={2010},
}

\bib{HMX14}{article}{
   author={Hacon, Christopher D.},
   author={McKernan, James},
   author={Xu, Chenyang},
   title={ACC for log canonical thresholds},
   journal={Ann. of Math. (2)},
   volume={180},
   date={2014},
   number={2},
   pages={523--571},
}

\bib{HMo}{article}{
   author={Hacon, Christopher},
   author={Moraga, Joaqu\'{\i}n},
   title={On weak Zariski decompositions and termination of flips},
   journal={Math. Res. Lett.},
   volume={27},
   date={2020},
   number={5},
   pages={1393--1421},
}

\bib{HL22}{article}{
   author={Han, Jingjun},
   author={Li, Zhan},
   title={Weak Zariski decompositions and log terminal models for
   generalized pairs},
   journal={Math. Z.},
   volume={302},
   date={2022},
   number={2},
   pages={707--741},
}

\bib{HLS}{article}{
   author={Han, Jingjun},
   author={Liu, Jihao},
   author={Shokurov, V. V.},
   title={ACC for minimal log discrepancies of exceptional singularities},
   eprint={arXiv:1903.04338v2}
}

\bib{HanLuo}{article}{
   author={Han, Jingjun},
   author={Luo, Yujie},
   title={On boundedness of divisors computing minimal log discrepancies for surfaces},
   eprint={arXiv:2005.09626v2}
      date={2020},
}

\bib{Kaw14}{article}{
   author={Kawakita, Masayuki},
   title={Discreteness of log discrepancies over log canonical triples on a
   fixed pair},
   journal={J. Algebraic Geom.},
   volume={23},
   date={2014},
   number={4},
   pages={765--774},
}

\bib{Kaw15}{article}{
   author={Kawakita, Masayuki},
   title={A connectedness theorem over the spectrum of a formal power series
   ring},
   journal={Internat. J. Math.},
   volume={26},
   date={2015},
   number={11},
   pages={1550088, 27},
}

\bib{Kaw}{article}{
   author={Kawakita, Masayuki},
   title={On equivalent conjectures for minimal log discrepancies on smooth
   threefolds},
   journal={J. Algebraic Geom.},
   volume={30},
   date={2021},
   number={1},
   pages={97--149},
}

\bib{Kaw88}{article}{
   author={Kawamata, Yujiro},
   title={Crepant blowing-up of $3$-dimensional canonical singularities and
   its application to degenerations of surfaces},
   journal={Ann. of Math. (2)},
   volume={127},
   date={1988},
   number={1},
   pages={93--163},
}

\bib{Kaw92}{article}{
   author={Kawamata, Yujiro},
   title={The minimal discrepancy coefficients of terminal singularities in dimension 3},
   journal={Appendix to V. V. Shokurov, 
   \textit{Three-dimensional log perestroikas}, Izv. Ross. Akad. Nauk Ser. Mat.},
   volume={56},
   date={1992},
   number={1},
   pages={105--203},
   issn={0373-2436},
}

\bib{KMM}{article}{
   author={Kawamata, Yujiro},
   author={Matsuda, Katsumi},
   author={Matsuki, Kenji},
   title={Introduction to the minimal model problem},
   journal={Avd. Stud. Pure Math.},
   volume={10},
   date={1987},
   pages={283--360},
}

\bib{Kol96}{book}{
   author={Koll{\'a}r, J{\'a}nos},
   title={Rational curves on algebraic varieties},
   series={Ergebnisse der Mathematik und ihrer Grenzgebiete. 3. Folge. A
   Series of Modern Surveys in Mathematics}, 
   volume={32},
   publisher={Springer-Verlag, Berlin},
   date={1996},
}

\bib{Kol13}{book}{
   author={Koll{\'a}r, J{\'a}nos},
   title={Singularities of the minimal model program},
   series={Cambridge Tracts in Mathematics},
   volume={200},
   note={With a collaboration of S\'andor Kov\'acs},
   publisher={Cambridge University Press, Cambridge},
   date={2013},
}

\bib{KM98}{book}{
   author={Koll{\'a}r, J{\'a}nos},
   author={Mori, Shigefumi},
   title={Birational geometry of algebraic varieties},
   series={Cambridge Tracts in Mathematics},
   volume={134},
   publisher={Cambridge University Press, Cambridge},
   date={1998},
}

\bib{Mar96}{article}{
   author={Markushevich, Dimitri},
   title={Minimal discrepancy for a terminal cDV singularity is $1$},
   journal={J. Math. Sci. Univ. Tokyo},
   volume={3},
   date={1996},
   number={2},
   pages={445--456},
}

\bib{Mor}{article}{
   author={Moraga, Joaqu\'{\i}n},
   title={Termination of pseudo-effective 4-fold flips},
   eprint={arXiv:1802.10202v2}
}

\bib{Mor2}{article}{
   author={Moraga, Joaqu\'{\i}n},
   title={On termination of flips and fundamental groups},
   eprint={arXiv:2109.05608v1}
}

\bib{Mor21}{article}{
   author={Moraga, Joaqu\'{\i}n},
   title={On minimal log discrepancies and koll\'{a}r components},
   journal={Proc. Edinb. Math. Soc. (2)},
   volume={64},
   date={2021},
   number={4},
   pages={982--1001},
}

\bib{Mor3}{article}{
   author={Moraga, Joaqu\'{\i}n},
   title={Minimal log discrepancies of regularity one},
   journal={Int. Math. Res. Not. IMRN},
   date={2023},
}

\bib{MN18}{article}{
   author={Musta\c{t}\u{a}, Mircea},
   author={Nakamura, Yusuke},
   title={A boundedness conjecture for minimal log discrepancies on a fixed
   germ},
   conference={
      title={Local and global methods in algebraic geometry},
   },
   book={
      series={Contemp. Math.},
      volume={712},
      publisher={Amer. Math. Soc., Providence, RI},
   },
   date={2018},
   pages={287--306},
}

\bib{Nak16a}{article}{
   author={Nakamura, Yusuke},
   title={On semi-continuity problems for minimal log discrepancies},
   journal={J. Reine Angew. Math.},
   volume={711},
   date={2016},
   pages={167--187},
}

\bib{Nak16b}{article}{
   author={Nakamura, Yusuke},
   title={On minimal log discrepancies on varieties with fixed Gorenstein
   index},
   journal={Michigan Math. J.},
   volume={65},
   date={2016},
   number={1},
   pages={165--187},
}

\bib{NS}{article}{
   author={Nakamura, Yusuke},
   author={Shibata, Kohsuke},
   title={Inversion of adjunction for quotient singularities},
   journal={Algebr. Geom.},
   volume={9},
   date={2022},
   number={2},
   pages={214--251},
}

\bib{NS2}{article}{
   author={Nakamura, Yusuke},
   author={Shibata, Kohsuke},
   title={Inversion of adjunction for quotient singularities II: Non-linear actions},
   eprint={arXiv:2112.09502v1}
}

\bib{Sho}{article}{
   author={Shokurov, V. V.},
   title={A.c.c. in codimension 2},
   date={1994, preprint},
}

\bib{Sho93}{article}{
   author={Shokurov, V. V.},
   title={$3$-fold log flips, With an appendix by Yujiro Kawamata},
   journal={Russian Acad. Sci. Izv. Math.},
   volume={40},
   date={1993},
   number={1},
   pages={95--202},
}

\bib{Sho96}{article}{
   author={Shokurov, V. V.},
   title={$3$-fold log models},
   note={Algebraic geometry, 4},
   journal={J. Math. Sci.},
   volume={81},
   date={1996},
   number={3},
   pages={2667--2699},
}

\bib{Sho04}{article}{
   author={Shokurov, V. V.},
   title={Letters of a bi-rationalist. V. Minimal log discrepancies and
   termination of log flips},
   journal={Tr. Mat. Inst. Steklova},
   volume={246},
   date={2004},
   number={Algebr. Geom. Metody, Svyazi i Prilozh.},
   pages={328--351},
   translation={
      journal={Proc. Steklov Inst. Math.},
      date={2004},
      number={3 (246)},
      pages={315--336},
   },
}

\end{biblist*}
\end{bibdiv}

\end{document}